\newcommand{\labbel}[1]{\label{#1} [[{\bf #1}]]}  
\renewcommand{\labbel}{\label}
\newcommand{\ciro}{{\hspace {1pt} \circ \hspace {1pt}}}
\newcommand{\xleftrightarrow}[2][]{\ext@arrow 3359\leftrightarrowfill@{#1}{#2}}
\newcommand{\xdashrightarrow}[2][]{\ext@arrow 0359\rightarrowfill@@{#1}{#2}}
\newcommand{\xdashleftarrow}[2][]{\ext@arrow 3095\leftarrowfill@@{#1}{#2}}
\newcommand{\xdashleftrightarrow}[2][]{\ext@arrow 3359\leftrightarrowfill@@{#1}{#2}}
\def\rightarrowfill@@{\arrowfill@@\relax\relbar\rightarrow}
\def\leftarrowfill@@{\arrowfill@@\leftarrow\relbar\relax}
\def\leftrightarrowfill@@{\arrowfill@@\leftarrow\relbar\rightarrow}
\def\arrowfill@@#1#2#3#4{%
  $\m@th\thickmuskip0mu\medmuskip\thickmuskip\thinmuskip\thickmuskip
   \relax#4#1
   \xleaders\hbox{$#4#2$}\hfill
   #3$%
}
\newtheorem{theorem}{Theorem}[section]
\newtheorem{lemma}[theorem]{Lemma}
\newtheorem{proposition}[theorem]{Proposition} 
\newtheorem{corollary}[theorem]{Corollary}
\newtheorem*{theorem*}{Theorem}
\newtheorem*{corollary*}{Corollary}
\theoremstyle{definition}
\newtheorem{definition}[theorem]{Definition}
\newtheorem{problem}[theorem]{Problem}
\theoremstyle{remark}
\newtheorem{remark}[theorem]{Remark}
\newtheorem{construction}[theorem]{Construction}
\numberwithin{equation}{section}
\newcommand{\brfrt}{\hspace{0 pt}}
\newcommand{\hy}{\frac{\ }{\ }}
\newcommand{\xfix}{0}
\newcommand{\zfix}{1}
\newcommand{\ztox}{^{\rightarrow}}
\newcommand{\xtoz}{^{\leftarrow}}
\begin{document}

\title{Day's Theorem is sharp for $n$ even}

\author{Paolo Lipparini} 
\address{Dipartimento di Matematica\\Viale   Giornaliero della  Ricerca
 Scientifica\\Universit\`a di Roma ``Tor Vergata'' 
\\I-00133 ROME ITALY\\
ORCiD: 0000-0003-3747-6611}
\urladdr{http://www.mat.uniroma2.it/\textasciitilde lipparin}
\email{lipparin@axp.mat.uniroma2.it}

\keywords{Maltsev condition, J{\'o}nsson terms,
 Day terms,
congruence distributive variety, 
congruence modular variety, 
  alvin terms,  Gumm terms, Pixley terms, directed terms, defective  terms,
specular terms, mixed terms, Day's Theorem,
congruence identity, nearlattice} 

\date{\today}

\subjclass[2010]{Primary 08B05; Secondary 08B10; 06B75; 06E75}
\thanks{Work performed under the auspices of G.N.S.A.G.A. Work 
partially supported by PRIN 2012 ``Logica, Modelli e Insiemi''.
The author acknowledges the MIUR Department Project awarded to the
Department of Mathematics, University of Rome Tor Vergata, CUP
E83C18000100006.
}

\begin{abstract}
We solve some problems
about relative lengths of Maltsev conditions,
in particular, we give an affirmative answer
to a classical problem raised 
by A. Day
more than 
fifty years ago.

In detail, both congruence distributive and congruence modular varieties  admit
 Maltsev characterizations by means of  the existence of
a finite but variable number of appropriate terms.
A.\ Day showed that
 from J{\'o}nsson terms $t_0, \dots, t_n$ witnessing congruence distributivity
it is possible to construct  terms
$u_0, \dots, \allowbreak  u _{2n-1} $ 
 witnessing congruence modularity.
We show that Day's result
about the number of such terms 
 is sharp when $n$ is even.

We also deal with other kinds of terms,
such as alvin, Gumm, Pixley, directed, specular, mixed 
and defective.
All the results hold  when  restricted 
 to  locally finite varieties.
 We introduce some  families  of congruence distributive varieties 
and characterize many congruence identities they satisfy.
\end{abstract} 

\maketitle

\section{Introduction} \labbel{intro}

A \emph{variety} is a class of \emph{algebraic 
systems} (henceforth, \emph{algebras}, for short)
of the same type and 
closed under taking products, subalgebras and 
homomorphic images.
Groups and rings form a variety,
fields do not.  
A \emph{congruence} on an algebra
is the equivalence relation associated 
to some homomorphism. 
Described internally,
a congruence is
an equivalence relation respecting all the operations.
E.~g., the  laterals of a normal subgroup 
in a group, or the laterals of a
bilateral ideal in a ring
 are the equivalence classes associated to a congruence.
From the point of view of general algebra the case of groups
and rings
is very special;
 in general, a congruence cannot be described in terms
of a single subset  \cite{GU}. For example, just work out the congruences
of a four-elements linearly ordered set, considered as a lattice
with the operations of $\min$ and $\max$. 

Tight structure theorems have been proved for varieties 
all whose algebras have congruences satisfying
appropriate properties \cite{HMK,KK}.
Two congruences $\alpha$ and $\beta$
of the same algebra   are said to \emph{permute}
if $ \alpha \circ \beta = \beta \circ \alpha $, where
$\circ$ is relational composition.
Varieties consisting of algebras with permuting congruences,
\emph{congruence permutable varieties}, for short,  
 share some properties in common with groups \cite{Sm}.
The set of all congruences of some algebra
has a natural lattice structure.
Varieties all whose algebras have a distributive
congruence lattice share many properties in common with 
lattices \cite{JD} (notice that here lattices play a two-fold role).
 If both congruence permutability and distributivity are present,
we get varieties sharing some good properties 
in common with Boolean algebras.
Turning to weaker conditions, the
 \emph{modular} identity is a lattice identity corresponding
to the Dedekind law holding for normal subgroups of a group.
All congruence permutable algebras 
and all congruence distributive algebras are congruence modular.
Astonishingly,
it has been discovered that  virtually all the properties which
hold both in congruence permutable and in congruence distributive varieties 
hold in congruence modular varieties, as well \cite{FMK,G}. 

The key to many such results 
are  characterizations obtained by means of  the existence
of certain terms (aka words, expressions, polynomials).
The first and simplest such characterization is due 
to A. Maltsev \cite{M}: a variety $\mathcal V$ 
is congruence permutable  if and only if
there is  a term $t$ in the language of $\mathcal V$ 
such that $x=t(x,y,y)$
and $t(x,x,y)=y$ are identities holding in 
all algebras in $\mathcal V$.    
For groups $t(x,y,z)=x y^{-1}z $
is such a term. For quasigroups
$(x / (y \backslash y))(y \backslash z)$ works.  
As an immediate consequence,
varieties with additional operations
are congruence permutable, too.  
This applies to rings,
Boolean algebras,  modules,  groups and quasigroups with operators.

Similar characterizations have been discovered for
congruence distributive \cite{JD} and congruence  modular varieties \cite{D},
with a significant difference. Contrary to the case of congruence permutability,
 in these cases the characterization does not involve a single
term, rather, it involves a sequence of terms, and the length of such a sequence
should be allowed to vary in the set of natural numbers. See Section \ref{prel}
  for examples.
Many other such \emph{Maltsev conditions}  have been subsequently discovered, characterizing
many more properties \cite{HMK,KK}.  

In passing, let us notice that the mentioned conditions
 involving terms have been originally
devised only in search for conditions implying ``good algebraic structure''.
Quite unexpectedly, however, 
more recently the very same conditions, or small variations
thereof, 
 have come to prominence in the field of computational complexity,
particularly in connection with 
aspects related to the algebraic constraint satisfaction problem.
See, e.~g., \cite{Bar,BKW,JKN} for a survey. 

Summarizing, a Maltsev condition is a condition asserting the existence
 of some sequence of terms
satisfying  appropriate sets of equations in some variety.
In general, the above sequences  have variable length, roughly, a Maltsev
condition is a statement of the form
\emph{there is some $n \in \mathbb N$ and there are terms
$t_0, \dots, t_n$ such that so and so.} 

Deep results are known connecting various distinct Maltsev
conditions, both in abstract and in more
specific settings. 
In fact, as we hinted above,
there are many nontrivial results asserting that 
if all the algebras in some variety have some property,
then all the algebras in that variety have some other property.
In general, the proof is obtained by translating the relevant properties
into appropriate Maltsev conditions. 
Notice that, generally, such results hold only
at the ``global'' level of varieties, not at the ``local''  level 
of single algebras.  Namely,
if the first property holds in some algebra, it is not necessarily the case
that the second property holds in that algebra.
Usually only global properties holding within a variety 
do imply Maltsev conditions, this fact partially explains the reason
for the usefulness of Maltsev conditions.
For example, a $3$-elements algebra $\mathbf A$ with no operation
has a  modular  congruence lattice, hence it satisfies a comparatively
strong local property, but the variety $\mathcal V$ generated
by $\mathbf A$ is trivial, hence, in a sense,  it satisfies no nontrivial 
global property. In any case, $\mathcal V$ satisfies no nontrivial
Maltsev condition.

 A significant specific example 
of applications of Maltsev conditions
concerns congruence modularity.
There is a bunch of really different-looking Maltsev conditions,
all of them
 characterizing congruence modularity
 \cite{D,DF,DKS,FJ,G1,G,adjt,Na,T}.  Hence
all such Maltsev conditions are actually equivalent.
Each condition has specific features 
leading to particular applications which are difficult or virtually
impossible to obtain using the other conditions.
Most proofs are highly nontrivial. 
Some  conditions characterizing congruence modularity
shall be explicitly recalled and studied here; see, in particular,
Sections \ref{prel} and  \ref{gummdef}. 

Other deep results connecting apparently distinct Maltsev conditions
can be found in  \cite{HMK,KK}.
In passing, let us also mention an unexpected quite recent result:
there is the weakest strong Maltsev condition among all the idempotent nontrivial 
Maltsev conditions \cite{Sig,Ols}. A Maltsev  condition
is \emph{idempotent} if all the terms defining it are idempotent, that is,
they satisfy $t(x,x,x, \dots ) = x$. The product $xy$ in a group is not
idempotent, but the Maltsev term $t(x,y,z)=x y^{-1}z$ witnessing
congruence permutability is  idempotent.  Similarly, the majority
of the interesting Maltsev  conditions are indeed idempotent.

While, as we have just mentioned, 
many deep results are known about Maltsev conditions
and their interplay, really little is known about the
possible  lengths of the corresponding
sequences.
In more detail, 
 if $\mathfrak M$ 
and $\mathfrak M'$ are Maltsev conditions expressed in the standard formulation and 
$\mathfrak M$ implies $\mathfrak M'$,
then, for every $n \in \mathbb N$, there is $m \in \mathbb N$   
such that, whenever some variety $\mathcal V$ 
satisfies $\mathfrak M$, as witnessed by $n$ terms,
then   $\mathcal V$ 
satisfies $\mathfrak M'$, as witnessed by $m$ terms.
See the following subsection for details, exemplified in the specific case
of interest here.
We are in the  really strange situation that 
highly nontrivial results are known about different
Maltsev conditions, while only a few facts are  known
about the relationship among the specific lengths of
the corresponding sequences.

Among the  few results known in this direction,
a relatively easy argument by A. Day \cite{D} 
shows that a variety with $n$ J{\'o}nsson 
terms witnessing congruence distributivity
has $2n-1$ Day terms witnessing congruence modularity.
Shortly after the proof of his theorem, Day 
himself asked whether the result is
optimal. To the best of our knowledge, the problem has 
not been solved before. We show that Day's Theorem
is optimal when $n$ is even.
When $n$ is odd, some arguments from 
Lampe, Taylor and Tschantz \cite{LTT}  show that 
Day's  value can be improved by $1$  and our results here
imply that for $n$ odd Day's  value cannot be improved by $3$, leaving
open the possibility that it can be improved by $2$.  

The arguments from \cite{LTT}
use an assumption weaker than congruence distributivity;
in particular \cite{LTT} is mainly concerned with two
distinct Maltsev conditions both characterizing congruence modularity,
the already mentioned condition by Day and
another condition discovered by Gumm.
The arguments in the present note show
also that the evaluation in \cite{LTT} of the number of
Day terms obtainable from a sequence of Gumm terms is optimal,
for $n$ even.
The converse problem, already asked in 
\cite{LTT}, namely, the evaluation of the minimal number
of Gumm terms obtainable from a sequence of Day's terms
is still largely open.
The relevance of this problem has been stressed by a recent result
\cite{LGA} 
in which we show that a congruence distributive variety 
with $n$ Gumm terms has $n+1$ J{\'o}nsson terms.
Together with \cite{LTT}, the above result implies that 
\emph{in a congruence distributive variety} 
the number of Day terms (for congruence modularity)
directly influences the number of J{\'o}nsson terms
(for congruence distributivity)! A
 solution of  Lampe, Taylor and Tschantz' problem
is necessary in order to evaluate exactly the effect 
of this influence.
See Problem \ref{!} for further comments and details.  
 
A similar problem about lengths of sequences appears in 
\cite{adjt}, where the classical Maltsev condition by J{\'o}nsson 
for congruence distributivity is showed to be equivalent
to another condition involving ``directed'' terms.  
In \cite{adjt}  directed Gumm terms are also introduced. 
Directed terms have proved to be very interesting and
useful \cite{Z,adjt,jds,KV}. 
Again, we show that the evaluation of the number
of ``undirected'' terms obtainable from a sequence of directed terms
is optimal, the reverse direction being still open. 

As the reader has surely already appreciated,
the main emphasis in this introduction concerns how much
is known about Maltsev conditions and how little is known
about the lengths of the corresponding sequences of terms.
Astonishingly, already the examples dealing with a single Maltsev condition
turn out to be  generally rather tricky.
Let us explain the point in more detail.
As we already mentioned, say, congruence distributivity
can be characterized by 
sequences of Maltsev terms of length not prescribed in advance.
This suggests that, for example, 
there is a congruence distributive variety 
whose congruence distributivity is witnessed by a sequence 
consisting of 5 terms, but cannot be witnessed by 4 terms or less.
Similar examples necessarily can be found for
any other Maltsev condition whose definition---in contrast, for example, with the
 Maltsev condition characterizing congruence permutability---depends
 on a sequence
of variable length.
 While such examples can be worked out abstractly,
essentially as some hard exercise on term-rewriting \cite{Fi},
until recently \cite{FV,csmc} no natural \cite[p. 368]{CV}  example was known.
As a by-product of our techniques, we provide new examples 
for congruence distributivity and,
 possibly, the first explicit
locally finite examples for congruence modularity.
 See Section \ref{moreex}, in particular, 
Corollaries \ref{corsumupdist}, \ref{corsumupmod} 
and \ref{corsumupgumm}.   
The   examples we present
in Section \ref{moreex}
are locally finite  congruence distributive 
varieties generated by $2$-elements algebras.
These examples have probably  intrinsic interest
 for their own sake. 

As hinted in the above paragraphs, 
our techniques are quite general and go far beyond
the proof that Day's 
 Theorem 
is sharp. Our basic constructions outlined
in Section \ref{mainc} work in a rather abstract context
and have applications
outside the theory of congruence modular varieties.
In fact, we have results about much weaker congruence
identities; see Remark \ref{polin}. 
In Section \ref{specm} we notice that all the constructions
we perform produce varieties with \emph{specular} terms,
a notion weaker than full symmetry  
and which probably deserve further study, as suggested also by
\cite{chic}.
 In Section \ref{mixed}
we hint a connection with Maltsev conditions
 associated to certain directed paths and devised in 
\cite{KV}. Moreover, as  a byproduct of our techniques, we 
evaluate exactly certain congruence identities satisfied by 
the variety of nearlattices,  complementing some results
from \cite{B}.   See 
Proposition \ref{baker+} and Corollary \ref{nl}. 

We now summarize the paper
in more detail, 
touching upon a few significant technical issues,
when convenient.
We assume the reader is familiar
with the basic notions of universal algebra.
See, e.~g., \cite{G,HMK,CV,KK,MMT}
for background.

\subsection*{The ways congruence 
modularity follows from distributivity} 
 \labbel{twc} 
It is plain that every 
congruence distributive variety is
congruence modular. 
Since both congruence distributivity and modularity admit
Maltsev characterizations, it is theoretically possible 
to construct a sequence of terms 
witnessing congruence modularity from any
 sequence of terms witnessing congruence distributivity.

In more detail, a classical theorem by B.\ J{\'o}nsson  
\cite{JD} asserts that a variety is congruence distributive 
if and only if, for some natural number
$n$,  there are  terms
$t_0, \dots, t_n$ satisfying an appropriate set of equations.
A parallel result has been proved by A.\ Day \cite{D}
with respect to congruence modularity.
See Section \ref{prel} for details and explicit definitions. 
If, for each $n$, we consider a variety $\mathcal {V}$ with exactly 
 operations
satisfying J{\'o}nsson equations, then $\mathcal {V}$ is
congruence distributive, hence congruence modular, so finally,
by Day's theorem, $\mathcal {V}$ has a certain number
$u_0, \dots, u_m$ of Day terms.
Working in an appropriate free algebra,
we can thus express Day terms in function of J{\'o}nsson terms.
Since, by its very definition, $\mathcal {V}$ is interpretable in any variety 
having J{\'o}nsson terms $t_0, \dots, t_n$,
we get that every variety $\mathcal W$ with
J{\'o}nsson terms $t_0, \dots, t_n$
has  Day terms $u_0, \dots, u_m$ for the same
$m$ as above, actually, the $u_k$'s are expressed 
in the same way. 

While the above general argument is useful 
in very complex situations,
in the special case at hand the construction of Day
terms from J{\'o}nsson terms can be obtained in a relatively simple way.
We shall recall Day's argument in
the proof of Corollary \ref{dayopt}(i) below
and variations on it appear in Theorem \ref{dir}(ii) 
and  Lemma \ref{dayoptlem}.
The latter  merges the original  Day's argument
with some ideas from \cite{LTT}.

It is customary to say that a variety is
\emph{$n$-distributive} if it has 
J{\'o}nsson terms  $t_0, \dots, t_n$; the definition of
an \emph{$m$-modular} variety is similar.
See Section \ref{prel} for details.   
The following theorem already appeared in 
\cite{D}, in the same paper where the Maltsev characterization 
of congruence modularity has been presented.

\begin{theorem} \labbel{day}
(A.\ Day \cite[Theorem on p.\ 172]{D}) 
If $n>0$, then every $n$-distributive variety
is $2n{-}1$-modular.  
 \end{theorem}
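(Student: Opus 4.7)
The plan is to construct Day terms $u_0, u_1, \ldots, u_{2n-1}$ directly from given Jónsson terms $t_0, \ldots, t_n$, by \emph{doubling up} each interior Jónsson term: every $t_k$ with $1 \le k \le n-1$ will appear twice among the $u_i$, once with middle argument $y$ and once with $z$. This lifts the $n+1$ Jónsson terms to exactly $2n$ Day terms, indexed $0, 1, \ldots, 2n-1$, which is what $(2n{-}1)$-modularity demands.

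Concretely, I set $u_0(x,y,z,w) = x$, $u_{2n-1}(x,y,z,w) = w$, and for $1 \le i \le 2n-2$ define
\[
u_i(x,y,z,w) \;=\; t_{\lceil i/2 \rceil}\bigl(x,\; m(i),\; w\bigr),
\]
where $m(i) = y$ if $\lfloor i/2 \rfloor$ is even and $m(i) = z$ if $\lfloor i/2 \rfloor$ is odd, producing the sequence $t_0(x,y,w),\; t_1(x,y,w),\; t_1(x,z,w),\; t_2(x,z,w),\; t_2(x,y,w),\; t_3(x,y,w),\ldots$ The Day absorption law $u_i(x,y,y,x) = x$ is then immediate from the Jónsson equation $t_k(x,y,x) = x$, and the endpoint conditions follow since $t_0$ and $t_n$ are projections onto the first and third coordinates. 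At each \emph{odd}-index Day transition the adjacent terms $u_i, u_{i+1}$ share the same $t_k$ and differ only in the middle slot, so evaluating at $(x,y,y,w)$ gives $t_k(x,y,w)$ on both sides. At each \emph{even}-index Day transition the adjacent terms use consecutive $t_k, t_{k+1}$ with a common middle argument, and evaluating at $(x,x,w,w)$ reduces the required equality to a single Jónsson identity.

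The delicate point is to synchronize the middle choices $m(i)$ with the parity of the Jónsson identities: $t_k(x,x,w) = t_{k+1}(x,x,w)$ is available only for even $k$, while $t_k(x,w,w) = t_{k+1}(x,w,w)$ is available only for odd $k$. The block-of-two alternation between $y$ and $z$ prescribed above is calibrated precisely so that whenever the even-index Day transition demands $k$ even we are in a $y$-block, hence reduce to $t_k(x,x,w) = t_{k+1}(x,x,w)$, and whenever it demands $k$ odd we are in a $z$-block, hence reduce to $t_k(x,w,w) = t_{k+1}(x,w,w)$. Once this bookkeeping is in place, the verification of the Day chain is a routine case analysis on the residue of $i$ modulo $4$; the main obstacle of the proof is precisely the alignment of parities just described.
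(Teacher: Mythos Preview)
Your proposal is correct and is exactly Day's original argument, which the paper recalls in the proof of Corollary~\ref{dayopt}(i): the chain $u_1 = t_1(x,y,w)$, $u_2 = t_1(x,z,w)$, $u_3 = t_2(x,z,w)$, $u_4 = t_2(x,y,w)$, $u_5 = t_3(x,y,w),\ldots$ is precisely your formula $u_i = t_{\lceil i/2\rceil}(x,m(i),w)$ with the block-of-two alternation of the middle argument. Your parity bookkeeping matches the verification the paper sketches; there is no substantive difference.
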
 

Among other, in the present paper we show that
Day's Theorem  is the best possible result
for $n$ even.

\begin{theorem} \labbel{daysh}
If $n>0$ and $n$ is even, then there is a
locally finite  $n$-distributive variety
which is not  $2n{-}2$-modular.  
 \end{theorem}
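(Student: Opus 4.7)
The natural strategy is to produce, for each even $n>0$, an explicit locally finite variety $\mathcal V_n$ that is $n$-distributive but not $(2n-2)$-modular, by setting $\mathcal V_n = \mathrm{HSP}(\mathbf A_n)$ for a carefully designed finite algebra $\mathbf A_n$; local finiteness is automatic from finiteness of $\mathbf A_n$. The three stages of the argument I would follow are construction, verification of $n$-distributivity, and verification of the failure of $(2n-2)$-modularity.

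\emph{Construction.} Take as $\mathbf A_n$ a finite algebra on a small base set carrying a distinguished compatible reflexive binary relation $R$ whose combinatorial type (a directed chain, or an alternating ``zig-zag'' of prescribed length) is calibrated to encode $2n-1$ transitions. The basic operations are $n+1$ ternary operations designed to satisfy Jónsson's identities while preserving $R$. A clean way to organise this is to start from the relatively free algebra on three generators in the class of algebras with $n+1$ ternary operations satisfying the Jónsson identities, and then quotient by the congruence generated by the intended $R$-pairs. By design the basic operations are Jónsson terms of length $n$, so every algebra in $\mathcal V_n$ is congruence $n$-distributive.

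\emph{Failure of $(2n-2)$-modularity.} Suppose, toward a contradiction, that $\mathcal V_n$ admitted Day terms $u_0,\ldots,u_{2n-2}$. Specialise them at the standard Day configuration $a,b,c,d$, i.e.\ elements of (a suitable quotient of) $\mathbf A_n$ together with congruences $\alpha,\beta,\gamma$ satisfying the usual incidences. The Day identities, combined with the compatibility of $R$ with every term operation of $\mathcal V_n$, should force the successive pairs $(u_k(a,b,c,d),u_{k+1}(a,b,c,d))$ to lie in $R$ (or in $R^{-1}$) in a pattern alternating with $k$ and governed by the Day ``square/diamond'' toggle. Accumulating these forced $R$-displacements across the $2n-2$ indices produces a net displacement whose parity clashes with the length-$(2n-1)$ alternation encoded by $R$; the mismatch occurs precisely when $n$ is even, explaining the parity hypothesis.

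\emph{Main obstacle.} The creative content lies in the simultaneous calibration of $R$ and the $n+1$ basic operations: $R$ must be tight enough to obstruct Day chains of length $2n-2$, yet loose enough that the Jónsson identities of length $n$ can be realised by operations preserving it. Day's positive construction in Theorem~\ref{day}, recalled in Corollary~\ref{dayopt}(i), pinpoints exactly how tight this has to be, since it saturates the bound $2n-1$; I expect the construction to be obtained by running Day's conversion ``in reverse'' on an optimally chosen finite structure, in the spirit of Lemma~\ref{dayoptlem}, whose proof the paper advertises as combining Day's argument with ideas from~\cite{LTT}. Once $R$ and $\mathbf A_n$ are correctly identified, checking preservation of $R$ and deriving the parity contradiction should be a routine, if delicate, case-analysis.
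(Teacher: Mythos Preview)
Your proposal is not a proof but a wish list: you specify the shape of the desired counterexample (a finite algebra carrying a compatible relation $R$ of a calibrated ``zig-zag'' length, with basic J\'onsson operations preserving $R$) and declare that a parity mismatch should obstruct Day chains, but you never actually exhibit $R$, the operations, or the congruences $\alpha,\beta,\gamma$ witnessing the failure. The phrase ``running Day's conversion in reverse'' is suggestive but does not name any concrete mechanism, and your appeal to Lemma~\ref{dayoptlem} is misdirected: that lemma proves a \emph{positive} implication ($n$-alvin implies $(2n{-}3)$-reversed-modular), not a failure, and gives no hint as to how a counterexample would arise.

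The paper's proof does something structurally quite different, and the key idea is absent from your outline. It proceeds by a \emph{simultaneous induction} that alternates between the $n$-distributive and the $n$-alvin conditions: from an $(n{-}2)$-alvin algebra failing a modularity identity one manufactures an $n$-distributive algebra failing a longer one, and symmetrically with the roles exchanged (this is Theorem~\ref{buh}). Each induction step is carried out by an explicit construction (Constructions~\ref{c}, \ref{bak}, \ref{ba}): one shifts the J\'onsson/alvin terms of the old algebra by one index, pads with trivial projections, takes a product with specific term-reducts of distributive lattices or Boolean algebras, and then isolates a particular finite subalgebra $\mathbf B$ (Theorem~\ref{thmB}) in which one can write down concrete elements and congruences witnessing that the required identity from Remark~\ref{conidmod} fails (Theorems~\ref{thmbak} and~\ref{thmba}). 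None of this---the alvin/J\'onsson alternation, the shift-and-pad trick, the product with lattice or Boolean reducts, or the four-type subalgebra---appears in your sketch, and without at least one of these ingredients the ``calibration of $R$'' you flag as the main obstacle has no candidate solution.
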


Theorem \ref{daysh} is a special case of Theorem \ref{buh}(i)
which shall be proved below. 
The reader interested only in a quick tour
towards the proof of Theorem \ref{daysh}
might go directly to Section \ref{mainc},
turning back to Section \ref{prel} only if necessary
to check notation and terminology.
A large part of Section \ref{mainc} is not necessary for the 
proof of Theorem \ref{daysh}, either.
A quick route to the proof of Theorem \ref{buh}(i)
goes through Constructions \ref{c}, \ref{bak}, \ref{ba},  
Lemma \ref{lembak}(i) 
and Theorems \ref{thmB}, \ref{thmbak} and \ref{thmba}(i)(ii).
An explicit counterexample witnessing Theorem \ref{daysh}
is the variety $\mathcal V_n^a$ which shall be introduced in 
 Definition \ref{simpldef2}(a).
See Theorem \ref{sumup}.   

 However, in the author's humble opinion,
the present paper contains some further results which
might be of interest to scholars working on congruence distributive 
and congruence modular varieties. 
In particular, as a by-product of our proof of 
Theorem \ref{daysh}, we get the best evaluation for the
modularity levels of varieties with Gumm terms, again
in the case $n$ even. See Proposition \ref{gummopt}.
Moreover, similar arguments can be used to show that a theorem   
by A.\ Mitschke \cite{Mi}, too, gives the best possible evaluation.
See \cite{misharp}. As we are going to explain soon, we get quite neat
results about certain directed, reversed and specular conditions.
Occasionally, we shall also deal with  congruence identities
which fail to imply congruence modularity. See Remark \ref{polin}
below.

Concerning the case  $n$   odd
in Day's Theorem \ref{day}, 
we mentioned  in \cite{ntcm} that
if $n>1$ and $n$ is odd, then 
 Day's Theorem can be improved
(at least) by $1$.
The improvement follows already from the arguments 
in the proof of \cite[Theorem 1 (3) $\rightarrow $   (1)]{LTT},
actually, under a hypothesis weaker than congruence distributivity.
See Lemma \ref{dayoptlem} and Section \ref{gummdef},
in particular, Proposition \ref{gummopt}(i). 
 We do not know what 
is the best possible result for $n$ odd. 
We shall briefly discuss the issue 
 in Remark \ref{odd}
below. 

\subsection*{``Reversed'' conditions and hints to 
 the inductive proof that Day's result is sharp} \labbel{rev}  
Let us now comment a bit about the proof of 
Theorem \ref{daysh}.
We shall recall J{\'o}nsson and Day terms in Section \ref{prel}.
In both cases, the terms are required to satisfy distinct
conditions for even and odd indices.
If we exchange odd and even in the condition
for congruence distributivity, we get the so called alvin terms
\cite[Theorem 4.144]{MMT}. While a variety is congruence distributive 
if and only if it has J{\'o}nsson terms, if and only if it
has alvin terms,  we get different conditions, in general, if 
we keep the number of terms fixed \cite{FV}.   
Similarly, let us say that a variety $\mathcal {V}$ is $m$-reversed-modular
if  $\mathcal {V}$ has terms 
$u_0, \dots, u_m$
satisfying Day's conditions with odd and even exchanged.
See Definitions \ref{daydef} and \ref{nm} for precise details. 
We have results also for the reversed conditions.
Actually, the use of the
reversed conditions seems fundamental in our arguments;
 the proof of Theorem \ref{daysh}  proceeds through a simultaneous
induction
which deals alternatively 
with 
  \begin{enumerate}[(a)]    
\item   distributivity in combination with reversed modularity,
and 
\item
 alvin in combination with modularity.
  \end{enumerate} 
In the next theorem we state our main results
about the reversed conditions.

\begin{theorem} \labbel{dayshr}
Suppose that  $n \geq 4$ and $n$ is even.
   \begin{enumerate}[(i)]
    \item  
Every $n$-alvin variety is $2n{-}3$-reversed-modular.
\item
There is a locally finite  $n$-alvin variety which is not $2n{-}3$-modular,
 \end{enumerate} 
 \end{theorem}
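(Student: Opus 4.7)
For part (i), the plan is to adapt Day's original construction, as refined in Lemma \ref{dayoptlem}, to the alvin setting. Starting with alvin terms $t_0, \dots, t_n$ of an $n$-alvin variety, I would define reversed-Day terms $u_0, \dots, u_{2n-3}$ by formulas of the same style as Day's: each pair $u_{2i-1}, u_{2i}$ would be built from $t_i$ and $t_{i+1}$ with variable substitutions that enforce the chain conditions. Because an alvin scheme satisfies the opposite parity of endpoint equations compared to a Jónsson scheme, two of the Day-style terms that would appear at the extremes of the chain become redundant --- their defining identities are already witnessed by $t_0 \approx x$ and $t_n \approx z$ read under the alvin parity. This saves two terms compared to the raw count of $2n-1$ given by Day's theorem, yielding $2n-3$; the hypothesis that $n$ is even is precisely what makes the cancellation symmetric at both ends and aligns the resulting chain with the reversed-modular parity scheme.

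For part (ii), the plan is to follow the template used for Theorem \ref{daysh} via Constructions \ref{c}, \ref{bak}, \ref{ba}, Lemma \ref{lembak}, and Theorems \ref{thmB}, \ref{thmbak}, \ref{thmba}, producing a locally finite variety --- call it $\mathcal V_n^b$, by analogy with $\mathcal V_n^a$ of Definition \ref{simpldef2}(a) --- whose fundamental operations are tuned so that its term operations admit $n$ alvin terms (rather than $n$ Jónsson terms) but obstruct any construction of $2n-3$ Day terms. Concretely, I would identify a finitely generated free algebra in $\mathcal V_n^b$ together with a triple of congruences $\alpha, \beta, \gamma$ witnessing the failure of a Day chain of length $2n-3$, paralleling the triple used in $\mathcal V_n^a$ to rule out $2n-2$ Day terms. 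If the machinery of Section \ref{mainc} is sufficiently modular, the shift from Jónsson input to alvin input should correspond to a uniform modification in the construction, with the obstruction strength naturally increasing by one step to match the improved upper bound proven in part (i).

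The main obstacle, I expect, is the combinatorial verification on the counterexample side: the encoding must simultaneously satisfy all $n+1$ alvin identities --- which differ from Jónsson precisely at the endpoints --- while failing $(2n-3)$-modularity, a strictly stronger failure than the $(2n-2)$-failure witnessed by $\mathcal V_n^a$. A naive translation of the Jónsson counterexample would likely give only $(2n-2)$-failure, so the subtle ingredient should be a careful choice of identifications in the backbone construction \ref{bak} that preserves the alvin endpoint equations while freeing one extra generator in the triple obstructing modularity. For part (i), the risk is that the cancellation argument saves only one term rather than two, in which case an additional application of Lemma \ref{dayoptlem}, or an exploitation of the symmetry inherent in the reversed conditions, would be required to recover the full two-term improvement.
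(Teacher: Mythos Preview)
Your plan for part (i) is correct and coincides with the paper's approach: Lemma~\ref{dayoptlem} is precisely the alvin adaptation of Day's construction with the trick from \cite{LTT} performed at both ends. One small correction of description: the saving is not that two terms become ``redundant'', but that at the extremes one uses the variable patterns $u_1=t_1(x,y,z)$ and $u_{2n-4}=t_{n-1}(y,z,w)$ in place of the usual $t_1(x,y,w)$ and $t_{n-1}(x,z,w)$. The alvin endpoint identities $x=t_1(x,z,z)$ and $t_{n-1}(x,x,z)=z$ then link these directly to the projections, eliminating one intermediate term at each end.

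For part (ii) you have the right toolkit but the assembly is off in a way that matters. The $n$-alvin counterexample is \emph{not} obtained by tweaking the $n$-distributive counterexample at the same $n$, nor by feeding alvin input into Construction~\ref{bak}. The paper runs a \emph{simultaneous alternating} induction: the $n$-alvin, not $(2n{-}3)$-modular variety is built from the $(n{-}2)$-\emph{distributive}, not $(2n{-}5)$-reversed-modular variety of the previous stage via Construction~\ref{ba} (Boolean-algebra reducts) and Theorem~\ref{thmba}(i)(ii); conversely, the $n$-distributive stage is built from the $(n{-}2)$-alvin stage via Construction~\ref{bak} and Theorem~\ref{thmbak}(i). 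The two constructions carry asymmetric shifts in the modularity parameter ($+2$ in Theorem~\ref{thmba}(ii) versus $+6$ in Theorem~\ref{thmbak}(i)); alternating them gives the correct slope on both sides. Your worry about needing to ``free one extra generator'' in Construction~\ref{bak} is therefore misplaced: the stronger obstruction for the alvin case comes entirely from switching to the Boolean-based Construction~\ref{ba}, whose Pixley-type outer terms make the algebra $n$-alvin while Theorem~\ref{thmba}(ii) adds only $2$ to the failure level inherited from the previous distributive stage.
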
  

Theorem \ref{dayshr} follows from  
 Lemma \ref{dayoptlem}(a)  and Theorem \ref{buh}(ii)  which shall be proved below. 
Notice that the conclusions in Theorems \ref{day} and \ref{daysh}
deal with $2n-1$ and  $2n-2$, while Theorem \ref{dayshr}
deals with the different parameter   $2n-3$. 
The proof  of Theorem \ref{dayshr}(i)
uses the methods from Day \cite{D} with a small known
variation ``on the outer edges''.
An alternative proof using 
relation identities is presented in Section \ref{mixed}. 

The bounds in 
Theorems \ref{day} and  \ref{dayshr} 
are shown to be optimal by constructing appropriate counterexamples
by induction.     
In each case, the induction at step
$n$ uses the counterexample constructed for $n-2$
in the parallel theorem. We shall prove a slight
improvement of Theorem \ref{daysh}, to the effect that,
for $n \geq 2$ and $n$ even, there is an $n$-distributive variety
which is not $2n{-}1$-reversed-modular.
 Then the induction goes as follows: from an
$n{-}2$-alvin not $2n{-}7$-modular
variety we construct an
$n$-distributive variety
which is not  $2n{-}1$-reversed-modular.
In the other case,   
from an $n{-}2$-distributive variety
which is not  $2n{-}5$-reversed-modular
we construct an $n$-alvin not $2n{-}3$-modular
variety. 
Notice that the shift in the modularity level is
$6$ in the former case and $2$ in the latter case.  
On average, we get a shift by $8$ each time
$n$ is increased by $4$, in agreement with the 
statements of the results.   

A more explicit description of varieties furnishing the 
counterexamples is presented in Section \ref{moreex}.
The varieties introduced in Section \ref{moreex}
are described in a relatively simple way;
however, as far as we can see, there is no direct proof
that such varieties do the requested job.
Any possible proof seems to use and mimic---more or less in 
disguise---the general constructions performed
 in Section \ref{mainc} and the inductive arguments presented in
Section \ref{opti}. 
In fact, in a parallel situation, in \cite[Section 4]{misharp}
we worked the explicit details: the resulting computations
turned out to occupy almost the same space as the  
abstract general considerations, even though 
some details were only  hinted.
This is the reason why, in the present 
situation, we shall  exhibit  the
more abstract  constructions
in Section \ref{mainc}.  
Such constructions have also the advantage of furnishing a uniform treatment
for various different situations and have further applications
to some extended settings, as we shall hint 
in
Remarks \ref{merge} - \ref{bestnu},   \ref{rmkka} - \ref{expres}.

\subsection*{Directed and mixed conditions} \labbel{dam}    
We shall also deal with other  conditions equivalent to 
congruence distributivity.  
Kazda,  Kozik,   McKenzie, Moore
\cite{adjt} have studied a ``directed'' variant of J{\'o}nsson condition
and they proved that this directed J{\'o}nsson condition, too, is equivalent to 
congruence distributivity.
For the directed variant there is no distinction between 
even and odd indices.
We shall prove optimal results also
for the modularity levels of varieties with such directed J{\'o}nsson terms.
See Theorem \ref{dir}.
Furthermore, Kazda,  Kozik,   McKenzie and Moore
noticed that every variety with directed J{\'o}nsson 
terms\footnote{Warning: 
here we are using a different counting convention for the terms,
in comparison with \cite{adjt}.
As far as the techniques and results here are concerned, it will subsequently appear 
evident that this counting convention is terminologically
more convenient.
See Remark \ref{count} for a discussion of this aspect.}
 $d_0, \dots, d_n$ has J{\'o}nsson  
terms $t_0, \dots, t_{2n-2}$.
We shall show in Theorem \ref{dirthm}(ii)
that the above observation from \cite{adjt} 
cannot be sharpened, as far as the
number of terms is concerned. In the other direction,
a hard result from \cite{adjt} shows that from
some sequence of J{\'o}nsson terms one can construct a sequence
of directed J{\'o}nsson terms. 
In Theorem \ref{dirthm}(i) we show that, in general, 
the latter sequence cannot be taken
to be shorter than the former sequence. However it would
be really surprising if this turns out to be the best possible result.
See Remark \ref{dirmin}(b)
and the comment after Proposition \ref{4dir}.
  
The idea of directed J{\'o}nsson terms
suggests an even  more general
notion of mixed J{\'o}nsson terms; see Definition \ref{mix}.
Mixed terms
have been first introduced in Kazda and Valeriote \cite{KV}
using different notation and terminology.
Roughly,  a mixed condition
involves, for each index,  either the J{\'o}nsson 
or the directed 
or the specular 
directed 
equation,  with no  a priori prescribed pattern.
We study some basic facts about this notion in Section \ref{mixed},
proving that any kind of mixed condition in this sense
implies congruence distributivity.
We roughly evaluate the modularity and distributivity  levels 
which follow from  such  mixed conditions; in many special cases
we know that we have found the optimal values; 
actually, all the previously mentioned 
 results about modularity levels 
(on the positive side, not the counterexamples)
can be seen as special cases of 
our main result about 
mixed conditions, Corollary \ref{propmixmod}.

\subsection*{Gumm terms as defective alvin terms} \labbel{cwg} 
There is another observation suggesting that the notion of 
a mixed condition is interesting.
In order to explain this in detail,  we have to recall
the notions of Gumm and of directed Gumm terms.
An astonishing characterization
of congruence modularity has been found by
H.-P.\ Gumm \cite{G1,G}, using terms which 
``compose'' the conditions for permutability 
and distributivity.
 
Gumm terms can be seen 
from another
perspective. 
Observe that a Maltsev term for permutability 
can be considered as a Pixley term 
when the equation $x=t(x,y,x)$ is not assumed
(a Pixley term 
is the nontrivial term given by the  alvin condition in the case $n=2$.
See Definition \ref{jondef}).
In other words, a Maltsev term can be seen as 
a ``defective'' alvin term, for $n=2$.
Similarly, Gumm terms can be seen as
defective alvin terms, for possibly larger values of $n$. 
See \cite[Remark 4.2]{ntcm}
for a hopefully complete discussion of this aspect.
See also \cite{KV}, \cite[p.\ 12]{jds}
 and Remarks \ref{galt}(c) and \ref{spend} below. 
The explicit definition of Gumm terms shall be given in 
Definition \ref{defgumm}.

\subsection*{Directed Gumm terms as defective mixed terms} \labbel{dirgsub} 
 In \cite{adjt} a directed variant of Gumm terms
has been introduced, too. While, as 
we have just mentioned,  it is possible
to introduce Gumm terms as defective alvin terms,
on the other hand, it is not possible to define  directed
Gumm terms
as defective  directed J{\'o}nsson terms.
Actually, 
defective J{\'o}nsson (directed or not) terms generally
provide a trivial condition. Cf.\  \cite{KV} and   Remark \ref{gummrmk}(c).
In order to get directed Gumm terms in the sense of 
\cite{adjt} we need to consider a mixed distributivity
condition in which the equations for directedness
are considered ``in the middle'', while an alvin-like condition  
  is taken into account on some ``outer edge''.
Directed Gumm terms are then obtained by considering the
defective version of such a mixed condition.
We can also deal with a more
symmetric condition in which an alvin-like condition
 is assumed on both
outer edges and then take the
defective variant on both edges.
In this way we get terms which are ``directed Gumm on both heads''. See 
Definition \ref{dirgumm}.
The above conditions appear
 interesting for themselves and we evaluate exactly 
the modularity level they imply in Theorem \ref{thm2hg}.
The above discussion suggests the naturalness of the more
general mixed conditions. See \cite{KV} and  Section \ref{mixed}.

\subsection*{Specular terms} \labbel{spec}    
As another generalization, our constructions can all be made 
more symmetrical, in
the sense that the terms we construct
can be always chosen in such a way that they satisfy
the ``specular'' condition
\begin{equation*} 
t_i(x,y,z) = t_{n-i}(z,  y,x),
\end{equation*}    
for all indices $i$.
Thus we get still other conditions
implying congruence distributivity.
We show in Section \ref{specm} that,
for every form
of distributivity under consideration,
in the case when the index $n$ is even
 our results turn out to be essentially the same 
if we impose the above condition of specularity.

\subsection*{A brief summary} \labbel{bsum}    
In detail, the paper is divided as follows.
In Section \ref{prel} we recall some basic notions
about congruence distributive and congruence modular 
varieties; in particular, we recall J{\'o}nsson 
and Day terms, together with some variants.
 We stress the useful fact that many conditions
can be translated in the form of a congruence identity.
See Remarks \ref{conid}, \ref{conidcomm}  and \ref{conidmod}. 

In Section \ref{mainc} we present our main constructions.
Roughly, starting from, say, an $n$-distributive  algebra,
we shift the J{\'o}nsson terms and add trivial projections 
both at the beginning and   at the end. Since the original
J{\'o}nsson terms are shifted by $1$, the role of odd
and even is exchanged,  thus we obtain
an $n{+}2$-alvin algebra; this is the trivial part of the argument.
 We then 
consider another $n{+}2$-alvin algebra
by taking 
 an appropriate reduct of some Boolean algebra
(Construction \ref{ba}).
If the types of the above two algebras are
arranged in such a way that they match, then
their product is $n{+}2$-alvin, too.
We also need a parallel construction 
which starts from an $n$-alvin algebra.
Adding trivial projections, we get
 an $n{+}2$-distributive algebra,
then we take the product with an appropriate reduct
of a distributive lattice (Construction \ref{bak}).
Let us call  $\mathbf E$ anyone of the above products.
 The most delicate part of the argument
(Construction \ref{c} and Theorem \ref{thmB})
allows us to find a subalgebra $\mathbf B$
of $\mathbf E$  
in such a way that 
$\mathbf B$ witnesses the failure of $m$-modularity, for 
the desired $m$.  
 We present general  conditions
 ensuring that a certain  subset  $B$ 
of some algebra $\mathbf E$ constructed as above
is the universe for some substructure.
Such conditions do not necessarily involve
congruence distributivity and
 find applications in different contexts.
In fact, we shall occasionally deal with congruence identities
strictly weaker than congruence modularity. 
See, e.~g., Theorems \ref{buhbis}, \ref{dirthm}(iii),
Remarks \ref{merge},  \ref{polin}
and what we call the switch levels in 
Definition \ref{deflev}  
and Theorem \ref{sumup}.

In Section \ref{opti} the constructions from
Section \ref{mainc} are put together in order to 
show that Day's result is optimal for $n$ even.
Essentially, we present 
the details for the induction we have hinted 
in the above subsection 
about reversed conditions.
Sections \ref{dirsec}, \ref{specm}, \ref{gummdef} and  
\ref{mixed} 
deal, respectively, with directed,
 specular, Gumm and mixed terms.
Sections \ref{dirsec} - \ref{mixed} rely heavily on 
Sections \ref{mainc} and \ref{opti}, but are largely independent one from another.
In Section \ref{moreex} we present a more explicit description
of the varieties furnishing our counterexamples,
summing up most of our results, actually, adding a bit more.
Finally,  Section \ref{fur} is reserved for  additional remarks
and problems.

\section{A review of congruence distributive and modular varieties} \labbel{prel} 

\subsection*{Aspects of congruence distributivity} \labbel{subsdist}
For later use, we insert the definitions
of J{\'o}nsson, alvin and directed J{\'o}nsson terms
in a quite general context.
Since our main concern
are terms and equations,
 in what follows we shall be  somewhat informal and 
say 
that a sequence of terms satisfies some set of equations
to mean that some variety under
consideration, or some algebra, have such 
a sequence of terms
 and the equations are satisfied 
in the variety or in the algebra.
When no confusion is possible,
 we shall speak of, say, J{\'o}nsson terms, instead of  
using the more precise expression
  \emph{sequence of J{\'o}nsson terms}. 
In case the terms are actually operations
of the algebra or of the variety under consideration, we shall
sometimes say that the algebra or the variety has 
J{\'o}nsson operations, to mean
that the operations
satisfy the corresponding equations.

\begin{definition} \labbel{jondef}    
Fix some natural number $n$ and 
suppose that 
$t_0, \dots, t_n$ is a sequence of $3$-ary terms. 
In the present section, and for most of the paper,
 all the sequences of 
$3$-ary terms under consideration will satisfy
\emph{all} the following basic equations
\begin{equation}
\labbel{bas}    \tag{B} 
\begin{aligned}    
  x&= t_0(x,y,z),  \qquad \qquad t_{n}(x,y,z)=z, 
 \\
  x  &=t_h(x,y, x), \quad 
\text{for } 0 \leq h \leq n,
\end{aligned}
 \end{equation}       
as well as \emph{some} appropriate equations from the following list
\begin{align}
 \labbel{m1} \tag{M$\xfix$} 
 t_{h}(x,x,z) &=
t_{h+1}(x,x,z),
\\ 
 \labbel{m3} \tag{M$\zfix$} 
 t_{h}(x,z,z) &=
t_{h+1}(x,z,z),
 \\ 
 \labbel{m2} \tag{M$\ztox$}
 t_{h}(x,z,z) &=
t_{h+1}(x,x,z).
\end{align}   

We now define precisely the relevant conditions.

\smallskip 

 \emph{(J{\'o}nsson terms)} The sequence $t_0, \dots, t_n$
is a sequence of \emph{J{\'o}nsson terms} \cite{JD}  
for some variety, or even for a single algebra,
if the sequence satisfies the equations \eqref{bas} and 
\begin{equation}\labbel{j3} \tag{J}
\text{ equation \eqref{m1} for $h$ even,
  equation \eqref{m3} for $h$ odd, $0 \leq h < n$.} 
  \end{equation}    
If $t_0, t_1, t_2$ is a sequence of J{\'o}nsson  terms, then
$t_1$ is a  \emph{majority term}.

\smallskip 

\emph{(Alvin terms)}
If we exchange the role of even and odd in 
the  definition of J{\'o}nsson terms,
 we get a sequence of alvin terms.
In detail, a sequence $t_0, \dots, t_n$ is a sequence
of \emph{alvin terms}  \cite{MMT} if 
the sequence satisfies the equations \eqref{bas} and
\begin{equation}\labbel{al} \tag{A}
\text{ equation \eqref{m3} for $h$ even,
  equation \eqref{m1} for $h$ odd, $0 \leq h < n$.} 
  \end{equation} 
If $t_0, t_1, t_2$ is a sequence of alvin terms, then
$t_1$ is a \emph{Pixley term} for arithmeticity. Cf.\ \cite{P}.   
By the way, this suggests that,  even for larger $n$,  the alvin condition
shares some aspects in common with congruence permutability.
See Remarks \ref{galt}(c) and \ref{spend}  for further details.

\smallskip 

\emph{(Directed J{\'o}nsson terms)} 
 Finally, if we always use \eqref{m2}, we get a sequence 
of directed J{\'o}nsson terms. In detail,
a sequence of  \emph{directed J{\'o}nsson terms}
\cite{adjt,Z} is a sequence which satisfies \eqref{bas}, 
as well as \eqref{m2}, for all $h$,  $0 \leq h < n$.
In the case of directed terms there is no distinction
between even and odd $h$'s.

A sequence   $t_0, t_1, t_2$ is a sequence of
directed J{\'o}nsson terms if and only if 
it is a sequence of J{\'o}nsson terms.
On the other hand, we shall see that the notions are distinct for
larger $n$'s. 
 \end{definition}

Notice that if some algebra $\mathbf A$ has, say,
J{\'o}nsson terms $t_0, \dots, t_n$, then 
the variety $\mathcal {V}$ generated by $\mathbf A$ has 
J{\'o}nsson terms $t_0, \dots, t_n$.
Thus the above notions are actually notions about varieties.
However, in certain cases, as a matter of terminology,  it will be 
convenient to deal with algebras.

\begin{theorem} \labbel{Jthm}
\cite{JD,adjt,MMT} 
For every variety $\mathcal {V}$, the following conditions are equivalent. \begin{enumerate}[(i)] 
  \item 
$\mathcal {V}$ is congruence distributive.
\item
$\mathcal {V}$ has a sequence of J{\'o}nsson terms.
\item
$\mathcal {V}$ has a sequence of alvin terms.
\item
$\mathcal {V}$ has a sequence of directed J{\'o}nsson terms.
  \end{enumerate} 
 \end{theorem}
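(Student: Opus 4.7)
Here is my plan. The theorem aggregates three equivalences, each of which is a well-known result; my approach is to apply the cleanest reduction for each direction, invoking published results for the deep content rather than rederiving them.

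The equivalence of (i) and (ii) is the classical theorem of J\'onsson \cite{JD}. For (ii)$\Rightarrow$(i) I would perform a direct congruence chase: given congruences $\alpha,\beta,\gamma$ on some $\mathbf{A}\in\mathcal{V}$ and $(a,c)\in\alpha\cap(\beta\vee\gamma)$, pick a $\beta$-$\gamma$ chain $a=b_0,b_1,\dots,b_k=c$ and examine the array with entries $t_i(a,b_j,c)$. The identity $t_i(a,b_j,a)=a$ keeps every entry $\alpha$-related to $a$; horizontal steps lie alternately in $\beta$ or $\gamma$; and the equations of condition (J) telescope the columns at the seams, at $(x,x,z)$ for $h$ even and at $(x,z,z)$ for $h$ odd. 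Reading off an appropriate path through the array yields an $(\alpha\cap\beta)\vee(\alpha\cap\gamma)$-chain from $a$ to $c$. For (i)$\Rightarrow$(ii) I would work in $\mathbf{F}=F_{\mathcal{V}}(x,y,z)$ with $\alpha=\cg(x,z)$, $\beta=\cg(x,y)$, $\gamma=\cg(y,z)$; congruence distributivity forces $(x,z)\in(\alpha\cap\beta)\vee(\alpha\cap\gamma)$, and the intermediate elements of any witnessing chain, viewed as terms in $x,y,z$, provide a J\'onsson sequence satisfying (B) and (J) by construction.

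For (ii)$\Leftrightarrow$(iii), my approach is a parity-shift obtained by prepending a first projection. Given a J\'onsson sequence $t_0,\dots,t_n$, I would set $s_0(x,y,z)=x$ and $s_{i+1}=t_i$ for $0\leq i\leq n$; the basic equations (B) transfer immediately, the pair $s_0=s_1=x$ trivially satisfies both middle equations at $h=0$, and for $h\geq 1$ the equation at index $h$ among the $s_i$ is exactly the equation at index $h-1$ among the $t_i$. The parity governing which of the two middle equations applies is thus swapped, so $s_0,\dots,s_{n+1}$ is alvin. The converse is symmetric.

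For (iv)$\Rightarrow$(ii), I would use the explicit construction observed in \cite{adjt} and recorded in the introduction: the directed equation $d_h(x,z,z)=d_{h+1}(x,x,z)$, combined with the basic equations, can be unfolded into a J\'onsson sequence of length $2n-2$ from directed terms $d_0,\dots,d_n$. The real obstacle, and the substantive content of the theorem, is the implication (ii)$\Rightarrow$(iv): this is the deep theorem of Kazda, Kozik, McKenzie and Moore \cite{adjt}, which I would invoke as a black box rather than attempt to reproduce here.
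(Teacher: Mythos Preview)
Your proposal is correct and follows essentially the same approach as the paper: the paper simply cites J{\'o}nsson \cite{JD} for (i)$\Leftrightarrow$(ii), cites \cite{adjt} for (ii)$\Leftrightarrow$(iv), and for (ii)$\Leftrightarrow$(iii) points to the parity-shift trick of prepending a first projection (exhibited in the proof of Theorem~\ref{Dth}). Your sketch of the J{\'o}nsson congruence chase and the explicit unfolding (iv)$\Rightarrow$(ii) add detail the paper omits, but the overall strategy is the same.
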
 

The equivalence of (i) and (ii) is due to J{\'o}nsson \cite{JD}.
The equivalence of (ii) and (iii) is easy and almost
immediate (compare the proof of (ii) $\Leftrightarrow $  (iii)
in Theorem \ref{Dth} below).
Anyway, the equivalence of (i) and (iii)  appears explicitly in \cite{MMT}.
The equivalence of (ii) and (iv) is proved in \cite{adjt}.

\smallskip 

For a given congruence distributive variety, the lengths of  the shortest sequences
given by  (ii) - (iv) above might be different. Henceforth
it is interesting to classify varieties  according to such lengths,
both in the case of congruence distributivity, as well as  in parallel situations. 
See, e.\ g., \cite{D,FV,Gr,JD,adjt,csmc,LTT,LGA,ntcm}.
See also Theorem \ref{sumup} below.

\begin{definition} \labbel{nd}   
A variety or an algebra  is \emph{$n$-distributive}
(\emph{$n$-alvin},
  \emph{$n$-directed-distributive}) 
if it has a sequence
$t_0, \dots, t_n$
of J{\'o}nsson (alvin, directed J{\'o}nsson) terms.
\end{definition}

\begin{remark} \labbel{count}    
\emph{(Counting conventions.)} 
Notice that each of the conditions
in Definition \ref{nd} 
actually involves $n+1$ terms,
including 
 the
two projections $t_0$ and  $t_n$, so that 
the number of nontrivial terms is $n-1$. 
This aspect might ingenerate terminological 
confusion and the present author apologizes
for having sometimes contributed to 
the confusion.
For example, an $n$-directed-distributive variety
in the above sense has been called
a variety with $n+1$ directed J{\'o}nsson terms in \cite{jds},
while in other works such as \cite{ia} we have called it a variety 
with $n-1$ directed J{\'o}nsson terms, counting only
the nontrivial terms. The latter is also the counting convention
adopted in \cite{adjt}. However, here it is extremely convenient to maintain
a strict parallel with the universally adopted terminology concerning
``undirected'' $n$-distributivity.  

Of course, the ``outer'' terms
$t_0$ and $t_n$ 
are trivial projections,
hence it makes no substantial difference whether they 
are listed or not in the above conditions.
However, here it is notationally convenient to include them, since, otherwise,
say in the case of the J{\'o}nsson condition,
we should have divided the definition into two cases,
asking for $t_{n-1}(x,z,z)=z$ for $n$ even,
and  for $t_{n-1}(x,x,z)=z$ for $n$ odd.
Considering the trivial terms $t_0$ and $t_n$,
instead,  we can fix
equations \eqref{bas} once and for all, independently
of the kind of terms we are dealing with and, in particular, independently
from the parity of $n$.

However, let us point out that, in a different context, it will
be convenient not to include $t_0$ and $t_n$. See Remark \ref{lrr}.
We shall try to be consistent, and, in case
we do not include the trivial projections, we shall list the terms
as $t_1, \dots, t_{n-1}$, so that the value of $n$
will not change.

We have insisted on this otherwise marginal aspect 
since we shall have frequent occasion to shift the indices of the terms
in use, hence particular care is needed in numbering and labeling.
See Remark \ref{merg}, Constructions \ref{bak}, \ref{bakbis}, \ref{ba},
Lemma \ref{lembak}, Theorem \ref{thmba}(i),
Definition \ref{simpldef3}     
and the proofs of Theorems \ref{Dth} and  \ref{sumup}.

By the way, there is a deeper reason showing that the above
counting terminology is necessarily 
conventional. Using arguments from \cite{Ta}
it can be shown that every idempotent  Maltsev 
condition  can be expressed
by conditions involving a single term. In this situation, the 
varying quantity is the number
of variables, not the number of terms.
  \end{remark}

\begin{remark} \labbel{conid}
It is frequently convenient to translate the above 
notions in terms of congruence identities.
For $\beta$ and $\gamma$ congruences, let
$\beta \circ \gamma \circ {\stackrel{k}{\dots}}  $
denote the relation $ \beta \circ \gamma \circ \beta \circ \gamma \circ \dots$
with $k$ factors, that is, $k-1$ occurrences of  $ \circ $.
If we know that, say, $k$ is even, then, according
to convenience, we  write
$\beta \circ \gamma \circ {\stackrel{k}{\dots}} \circ \gamma  $
 to make clear that $\gamma$ is the last factor.
It is also convenient to consider  the extreme cases, so that
$ \beta \circ   \gamma \circ {\stackrel{1}{\dots}}
= \beta   $ and
$  \beta \circ  \gamma \circ {\stackrel{0}{\dots}}
= 0$, where $0$ is the minimal congruence in the algebra under consideration. 

In congruence identities
we use  juxtaposition to denote intersection. 
     
Under the above notation, 
we have that, within a variety, each condition on the left in the following table
is equivalent to the condition on the right.
\begin{align*}
& \text{ $n$-distributive  }   
&&
 \alpha ( \beta \circ \gamma )
 \subseteq  
\alpha \beta \circ \alpha \gamma \circ {\stackrel{n}{\dots}} 
\\ 
& \text{ $n$-alvin  }   
&& 
\alpha ( \beta \circ \gamma )
 \subseteq 
 \alpha \gamma  \circ  \alpha \beta  \circ {\stackrel{n}{\dots}} 
\end{align*}    
 
The equivalence of the above conditions  is now a standard fact
\cite{Pal,W}. In the specific case
at hand, as well as in similar situations
described below,
the proof is quite simple and direct, see, e.~g.,  
 \cite{CV,contol,jds,ntcm,T}  for examples and further comments.
See also Remark \ref{useofbm}(b) below.
Notice that the conditions are equivalent only within a variety;
indeed, the conditions on the right are locally weaker.  
If some algebra $\mathbf A$ satisfies one of the conditions on the
right, it is not even necessarily the case that 
$\mathbf {Con} (\mathbf A)$ is distributive, let alone
the request that $\mathbf A$ generates a congruence distributive 
variety.

We have stated the above conditions in the form of inclusions,
but notice that an inclusion 
$X \subseteq  Y$ is (set-theoretically)
equivalent to the identity 
$X=XY$, hence we are free to use the expression 
``identity''. 

There seems to be no immediate directly provable condition 
equivalent to the existence of directed J{\'o}nsson terms and which can be
expressed in terms
of congruence identities.
Nevertheless, directed terms are involved in the study of relation identities, see
\cite[Section 3]{jds} and \cite[Section 3]{ia}. See also Section \ref{mixed} below.
 \end{remark}
 
\begin{remark} \labbel{conidcomm}   
 Expressing Maltsev conditions in terms of congruence identities
as in Remark \ref{conid} 
is particularly useful. 

(a)
 For example, from the above characterizations
we immediately get the well-known fact 
that, for $n$ odd, $n$-distributive and  $n$-alvin are equivalent 
conditions;
just take converses and exchange $\beta$ and $\gamma$.
When dealing with the conditions involving terms, 
one obtains the equivalence by
reversing both the order of variables and of terms \cite[Proposition 7.1(1)]{FV},
 but this seems
intuitively less clear. 

(b) As another example, arguing in terms of congruence identities
it is immediate to see that $n$-distributive implies 
$n{+}1$-alvin, and symmetrically that 
  $n$-alvin implies 
$n{+}1$-distributive. In fact, to prove, say,
the former statement, just notice that
$\alpha \beta \circ \alpha \gamma \circ {\stackrel{n}{\dots}} 
\subseteq 
\alpha \gamma \circ \alpha \beta \circ {\stackrel{n+1}{\dots}} $. 

See also Remarks \ref{conidmod}, \ref{daystrong},
\ref{conidabog}, \ref{gummrmk} and \ref{equiv}  for related observations.
In fact, in our basic Theorems \ref{thmbak}, \ref{thmbakbis}, \ref{thmba}, 
and \ref{buhbis} we shall deal with congruence identities
and then we shall use Remark \ref{conid} and the corresponding
Remark \ref{conidmod} below about congruence modularity
 in order to translate  the basic results
in terms of distributivity and modularity levels.
For example, this technique applies to 
Theorems \ref{buh}, \ref{dir}(i), \ref{dirthm} and \ref{thm2hg}(ii),
as well as to the main results in \cite{LGA,misharp}.

Identities dealing with reflexive and admissible relations 
shall be heavily used  in Section \ref{mixed}.
See, in particular, Theorem \ref{propmix2} and its consequences. 
 \end{remark}    

\subsection*{Aspects of congruence modularity} \labbel{subsmod}
Now for A.\ Day's characterization of congruence modularity.

\begin{definition} \labbel{daydef}    
A sequence of \emph{Day terms} \cite{D}  
for some variety, or even for a single algebra,
is a sequence
$u_0, \dots, u_m$, for some $m$, of $4$-ary terms   satisfying
\begin{align}
 \labbel{d0}    \tag{D0}      
x &=u_k(x,y,y,x),  \quad \quad  \ \,    \text{ for  } 0 \leq k \leq m,  
\\
\labbel{d1}    \tag{D1} 
 x&=u_0(x,y,z,w), 
\\ 
\labbel{d2}    \tag{D2}
\begin{split}     
 u_k(x,x,w,w)&=u_{k+1}(x,x,w,w), \quad \text{ for  even $k$, }  0 \leq k < m, 
\\
 u_k(x,y,y,w)&=u_{k+1}(x,y,y,w), \quad \; \text{ for  odd $k$, }  0 \leq k <  m,
\end{split}
\\ 
\labbel{d3}    \tag{D3}
 u_{m}(x,y,z,w)&=w.
\end{align}

If we exchange even and odd in \eqref{d2}
we get a sequence of \emph{reversed Day terms}.  
\end{definition}

\begin{theorem} \labbel{Dth}
\cite{D} 
For every variety $\mathcal {V}$, the following conditions are equivalent. \begin{enumerate}[(i)] 
  \item 
$\mathcal {V}$ is congruence modular.
\item
$\mathcal {V}$ has a sequence of Day terms.
\item
$\mathcal {V}$ has a sequence of reversed Day terms.
 \end{enumerate} 
 \end{theorem}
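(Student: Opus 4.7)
The plan is to split the argument into the classical equivalence (i) $\Leftrightarrow$ (ii), which is Day's original theorem, and the equivalence (ii) $\Leftrightarrow$ (iii), which reduces to a simple index shift.

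For (i) $\Leftrightarrow$ (ii), I would first recall the easy direction (ii) $\Rightarrow$ (i). Assume $\mathcal{V}$ has Day terms $u_0, \dots, u_m$, take congruences $\alpha, \beta, \gamma$ on some $\mathbf{A} \in \mathcal{V}$ with $\beta \subseteq \alpha$, and pick $(a,b) \in \alpha \cap (\beta \vee \gamma)$ together with an alternating chain $a = c_0\ \beta\ c_1\ \gamma\ c_2\ \beta\ \dots\ c_\ell = b$. Apply each $u_k$ to suitable entries of the chain; (D0) combined with $(a,b) \in \alpha$ forces every resulting element into the $\alpha$-class of $a$, while (D1), (D3) and the two clauses of (D2) convert the $\beta$/$\gamma$-steps into a grid whose rows witness $(a,b) \in \beta \vee (\alpha \cap \gamma)$, giving the modular law. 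For the converse (i) $\Rightarrow$ (ii), work in the free algebra $F = F_{\mathcal{V}}(x,y,z,w)$ and set
\[
\Phi = \cg((x,y),(z,w)), \qquad \Psi = \cg((y,z)), \qquad \Theta = \cg((x,w)) \vee \Psi.
\]
Then $\Psi \subseteq \Theta$ and $(x,w) \in \Theta \cap (\Phi \vee \Psi)$, since $x\, \Phi\, y\, \Psi\, z\, \Phi\, w$. By modularity one obtains a finite chain $x = e_0\ (\Theta \cap \Phi)\ e_1\ \Psi\ e_2\ (\Theta \cap \Phi)\ \dots\ e_m = w$. Writing $e_k = u_k(x,y,z,w)$, the endpoints give (D1) and (D3); the $(\Theta \cap \Phi)$-steps yield the even-index case of (D2) after substituting $y \mapsto x$, $z \mapsto w$; the $\Psi$-steps yield the odd-index case after substituting $z \mapsto y$; and the common $\Theta$-equivalence of each $e_k$ with $x$ yields (D0) after substituting $z \mapsto y$ and $w \mapsto x$.

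For (ii) $\Leftrightarrow$ (iii) I would use a simple index-shift. Given Day terms $u_0, \dots, u_m$, define a new sequence by $u'_0(x,y,z,w) = x$ and $u'_k = u_{k-1}$ for $1 \leq k \leq m+1$. The axioms (D0), (D1) and (D3) for the $u'_k$ are immediate, and the instance of (D2) for $u'$ at a new index $k \geq 1$ is exactly the instance of (D2) for the original sequence at index $k - 1$; since the parities of $k$ and $k-1$ are opposite, the even and odd clauses of (D2) swap, so $u'_0, \dots, u'_{m+1}$ is a sequence of reversed Day terms. The $k = 0$ boundary instance is trivial because both sides equal $x$ by (D1) of the original sequence. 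The same prepending trick converts reversed Day terms into Day terms, yielding (iii) $\Rightarrow$ (ii); together with (i) $\Leftrightarrow$ (ii) this closes the equivalence.

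The main obstacle is getting the congruence setup of (i) $\Rightarrow$ (ii) precisely right, so that the chain produced by the modular law reads off as (D0)--(D3) with the correct parity for (D2); once the generators of $\Phi$, $\Psi$, $\Theta$ are fixed and the convention that the chain starts with a $(\Theta \cap \Phi)$-step is adopted, the verification is purely a matter of substituting variables.
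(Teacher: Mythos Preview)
Your proposal is correct and takes essentially the same route as the paper. The paper simply cites Day for (i) $\Leftrightarrow$ (ii), while you spell out the standard free-algebra argument; for (ii) $\Leftrightarrow$ (iii), both you and the paper use the identical index-shift trick of prepending the first-coordinate projection $u'_0(x,y,z,w)=x$ and setting $u'_{k+1}=u_k$.
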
 

\begin{proof}
The equivalence of (i) and (ii) is due to Day \cite{D}.

If $u_0, \dots, u_m$ is a sequence of Day 
terms (reversed Day terms), 
then we get a sequence 
$u'_0, \dots, u'_{m+1}$
of reversed Day terms
(Day terms)
by taking $u'_0$ 
to be the projection onto the first coordinate and 
$u'_{k+1} = u_k$, for $k  \geq 0$. 
 Hence (ii) and (iii) are equivalent.
Otherwise, one can apply 
the second statement in Proposition \ref{modeq}  below.  
\end{proof}
 
We shall recall further conditions equivalent to 
congruence modularity in Theorem \ref{Gth} below.  

\begin{definition} \labbel{nm}   
A variety or an algebra  is \emph{$m$-modular}
(\emph{$m$-reversed-modular}) 
if it has a sequence
$u_0, \dots, u_m$
of Day  (reversed Day) terms.
 \end{definition}

\begin{remark} \labbel{conidmod}
Day's condition, too, can be  translated in terms of congruence identities.
Within a variety, each condition on the left in the following table
is equivalent to the condition on the right.
\begin{align*}
& \text{ $m$-modular  }   
&&
 \alpha ( \beta \circ \alpha  \gamma \circ \beta )
 \subseteq  
\alpha \beta \circ \alpha \gamma \circ {\stackrel{m}{\dots}} 
\\ 
& \text{ $m$-reversed-modular  }   
&& 
 \alpha ( \beta \circ \alpha  \gamma \circ \beta )
 \subseteq 
 \alpha \gamma  \circ  \alpha \beta  \circ {\stackrel{m}{\dots}} 
\end{align*}    
 \end{remark}  

The above congruence identities explain the reason for our 
choice of the expression ``reversed modularity''.
  
Arguing as in Remark \ref{conidcomm} we get the  
 facts stated in the following proposition. As far as the second
statement is concerned, compare also the proof of the equivalence
of (ii) and (iii) in Theorem \ref{Dth}.

\begin{proposition} \labbel{modeq}
If $m$ is even, then $m$-modularity 
and $m$-reversed-modularity are equivalent notions.

For every $m>0$, we have that
$m{-}1$-modularity 
implies $m$-reversed-modularity
and that 
$m{-}1$-reversed-modularity 
implies $m$-modularity.
 \end{proposition}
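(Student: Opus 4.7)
The plan is to exploit the congruence-identity reformulation of modularity given in Remark \ref{conidmod}, so that both statements reduce to simple manipulations of relational compositions using symmetry and reflexivity, precisely in the spirit of Remark \ref{conidcomm}.

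For the first statement (equivalence when $m$ is even), I would proceed as follows. The left-hand side $\alpha(\beta \circ \alpha\gamma \circ \beta)$ of the $m$-modularity identity is a symmetric relation: it is the intersection of the symmetric congruence $\alpha$ with the palindromic composition $\beta \circ \alpha\gamma \circ \beta$, whose converse equals $\beta^{-1} \circ (\alpha\gamma)^{-1} \circ \beta^{-1} = \beta \circ \alpha\gamma \circ \beta$ because $\beta$ and $\alpha\gamma$ are themselves symmetric. On the other hand, the converse of the right-hand side $\alpha\beta \circ \alpha\gamma \circ \stackrel{m}{\dots}$ is obtained by reversing the order of factors, and since there are an even number $m$ of alternating factors, reversing the order swaps the two ``endpoints'', producing exactly $\alpha\gamma \circ \alpha\beta \circ \stackrel{m}{\dots}$. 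Hence the $m$-reversed-modularity identity is the converse of the $m$-modularity identity. Since the left-hand side is symmetric, one inclusion automatically yields the other by taking converses, so the two identities are equivalent.

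For the second statement, I would use reflexivity of congruences. For any reflexive relation $S$ and any relation $R$ one has $R \subseteq S \circ R$. Applying this with $S = \alpha\gamma$ on the right-hand side of the $m{-}1$-modularity identity gives
\[
\alpha\beta \circ \alpha\gamma \circ \stackrel{m-1}{\dots}
\subseteq  \alpha\gamma \circ \alpha\beta \circ \alpha\gamma \circ \stackrel{m-1}{\dots} = \alpha\gamma \circ \alpha\beta \circ \stackrel{m}{\dots}.
\]
Combining with $m{-}1$-modularity yields $m$-reversed-modularity. Symmetrically, prepending an $\alpha\beta$ to the right-hand side of the $m{-}1$-reversed-modularity identity gives $m$-modularity.

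There is no real obstacle here; the whole argument is a two-line calculation once one has passed to the congruence-identity formulation of Remark \ref{conidmod}. As the proof of Theorem \ref{Dth}(ii)$\Leftrightarrow$(iii) already suggests, one could alternatively give a purely term-theoretic proof of the second statement by prepending or appending a trivial projection to a given Day sequence, but the relational argument is more uniform and makes the role of the parity of $m$ completely transparent.
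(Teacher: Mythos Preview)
Your proof is correct and follows essentially the same approach as the paper, which simply refers back to the congruence-identity reformulation (Remark \ref{conidmod}) and argues as in Remark \ref{conidcomm}: converses for the even-$m$ equivalence, and the trivial containment $\alpha\beta \circ \alpha\gamma \circ {\stackrel{m-1}{\dots}} \subseteq \alpha\gamma \circ \alpha\beta \circ {\stackrel{m}{\dots}}$ for the second statement. The paper also mentions the term-theoretic alternative (prepending a projection, as in the proof of Theorem \ref{Dth}), which you note as well.
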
   

In Corollary \ref{corsumupmod}
we shall show that Proposition \ref{modeq}
cannot be improved for $m \geq 4$.

\section{The main constructions} \labbel{mainc}

\begin{remark} \labbel{merg}    
If $n \geq 2$ and  some algebra $\mathbf D$
has alvin operations 
$s_0, \dots, s_{n-2}$, then, by relabeling the operations
as $t_1=s_0, \dots, t _{n-1} = s_{n-2} $ and taking 
$t_0$ to be the  projection onto the first coordinate, we get a sequence 
 $t_0, \dots, t _{n-1}  $ of J{\'o}nsson operations,
since then the role of even and odd
is exchanged.
At the end, we can possibly add
$t_n$,  taken to be the  projection
onto the third coordinate, 
getting a longer sequence  $t_0, \dots, t _{n}  $
of J{\'o}nsson operations.
Let $\mathbf A_4$ denote the algebra with
the relabeled operations
(the labels and the ordering of the operations
will be highly relevant in all the arguments below). 
If $\mathbf A$ is an algebra 
with  J{\'o}nsson operations
 $t_0, \dots, t _{n}  $, 
then also the product $\mathbf A\times \mathbf A_4$
has J{\'o}nsson operations
 $t_0, \dots, t _{n}  $, provided the types of the algebras
are arranged in a such a way they match.

For short, we have showed that we can combine an
$n{-}2$-alvin algebra with an
$n$-distributive algebra, getting a new
$n$-distributive algebra.
Though the added operations are trivial,
we shall see  that the construction provides nontrivial results.
In fact, all our counterexamples will be constructed in this way.
Symmetrically, 
from an $n{-}2$-distributive algebra and an
$n$-alvin algebra we can  get another
$n$-alvin algebra.

Of course, in the general case, an alvin
algebra $\mathbf D$ has alvin terms, not
necessarily operations.
However, for our purposes, it will be an inessential modification
to consider the expansion of 
$\mathbf D$ in which the alvin terms are added as operations.
Actually, it will be always  no loss of generality to assume
that $\mathbf D$ has no other  operation.
This assumption will simplify
arguments and notation.
 \end{remark}

\subsection*{Constructing subalgebras} 
The relevant point in our constructions
is to find some appropriate subalgebra $B$ 
of an algebra like $\mathbf A \times \mathbf A_4$
described in Remark \ref{merg}.
Actually, the algebra $\mathbf A$ will be constructed as  the product
of three algebras of the same type, 
but of course this  makes no essential difference,
as far as Remark \ref{merg} is concerned.
The arguments showing that some $B$ as  above 
is indeed a subalgebra use really weak hypotheses, 
so we present them in  generality.
Here the main advantage  of this abstract treatment is that 
the method  works uniformly for J{\'o}nsson,
directed and Gumm  terms. As already mentioned,
 we shall subsequently hint to  further applications.

\begin{construction} \labbel{c}
Fix some natural number $n \geq 3$.  
In what follows the number $n$ will be always explicitly declared
in all the relevant places,
 hence  we shall not indicate it
in the following notation.
A similar remark applies to Constructions
\ref{bak}, \ref{bakbis} and  \ref{ba} below.

\emph{(A) Premises.}
We suppose that $\mathbf A_1$, $\mathbf A_2$,
$\mathbf A_3$ and $\mathbf A_4$ are algebras
with only the ternary operations 
$t_1^{\mathbf A_j}$, $t_2^{\mathbf A_j}$, \dots, 
  $t_{n-1} ^{\mathbf A_j} $,   $j=1,2,3,4$.
Here and in similar situations we shall omit 
the $j$-indexed  
 superscripts
when there is no danger of confusion. 
We further suppose that
the first three algebras
have a special element $0_j \in A_j$, for $j=1,2,3$.
Again, we shall usually omit the subscripts.
It is not necessary to assume that there is some constant 
symbol which is interpreted as the
 $0_j$'s, it is enough to assume the existence
of such elements.

We require that,  for $j=1,2,3$, the following equations hold
in   $\mathbf A_j$,
for all $x,y,z \in A_j$:
\begin{align}\labbel{a}
 0  =t_h(0,y,z),   \qquad &\text{ for $h=1, \dots, n-2$,}
\\ 
\labbel{z}
 t_h(x,y,0) =0,  \qquad  &\text{ for $h=2, \dots, n-1$}.
\end{align}     

The algebra $\mathbf A_4$, instead,  is supposed to satisfy: 
\begin{gather}\labbel{t1}      
\text{ $t_1$ is the projection onto the first coordinate, } 
\\
\labbel{tn}   
\text{ $t_{n-1}$ is the projection onto the third coordinate, } 
\\
\labbel{b}
x=t _h(x,y,x),  \qquad \text{ for $h=2, \dots, n-2$ and all $x,y \in A_4$.} 
\end{gather}

\emph{(B) A useful subalgebra.}
We shall use the above equations in order to show that 
a certain subset $B$ of 
$\mathbf E = \mathbf A_1 \times \mathbf A_2 \times 
\mathbf A_3 \times \mathbf A_4 $ 
 is a subalgebra.
Let $a,d$ be two arbitrary but fixed elements of 
$A_4$. Let $B=B(a,d)$ be the set of  those elements of $E$ 
which have (at least) one of the following forms:
\begin{equation*}
\begin{gathered}
\text{Type I} \\
(\hy, 0, \hy, a)
\end{gathered} 
\qquad\qquad
\begin{gathered}
\text{Type II} \\
(0, 0, \hy, \hy),
  \end{gathered} 
\qquad\qquad
\begin{gathered}
\text{Type III} \\
(0, \hy, \hy, d)
  \end{gathered} 
\qquad\qquad
\begin{gathered}
\text{Type IV} \\
(\hy, \hy, 0, \hy)
  \end{gathered} 
\end{equation*}    
where places denoted by $\hy$
can be filled with arbitrary elements from the 
appropriate algebra. 
 \end{construction}   

\begin{theorem} \labbel{thmB}
Under the assumptions and the definitions in Construction
\ref{c},    
the set $B$ is the universe for a subalgebra 
$\mathbf B$ of 
$\mathbf E$. 
\end{theorem}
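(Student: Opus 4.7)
The plan is to show that $B$ is closed under every basic operation $t_h$ for $1 \le h \le n-1$. Fix three arbitrary elements $b^1, b^2, b^3 \in B$ and set $c := t_h(b^1, b^2, b^3)$, computed coordinatewise in $\mathbf E$. A useful preliminary observation is that equations \eqref{a}, \eqref{z}, \eqref{b} are all \emph{independent of their middle argument}, so the type of $b^2$ plays no role; only the types of the outer arguments $b^1, b^3$ matter. This collapses the naive case count from $4^3$ to the $4\times 4$ grid indexed by $(\mathrm{type}(b^1),\mathrm{type}(b^3))$.

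First I would handle the extreme indices. For $h = 1$, equation \eqref{a} yields $c_j = 0$ whenever $b^1_j = 0$ (for $j=1,2,3$), and \eqref{t1} yields $c_4 = b^1_4$; comparing with the four defining conditions shows $c$ inherits the type of $b^1$ (e.g., if $b^1$ is of type III then $b^1_1=0$ and $b^1_4=d$, so $c_1=0$, $c_4=d$, and $c$ is of type III). The case $h=n-1$ is symmetric via \eqref{z} and \eqref{tn}, so that $c$ inherits the type of $b^3$.

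For the middle indices $2 \le h \le n-2$, equations \eqref{a} and \eqref{z} together give $c_j=0$ whenever $b^1_j=0$ or $b^3_j=0$ (for $j=1,2,3$), while \eqref{b} gives $c_4 = b^1_4$ whenever $b^1_4 = b^3_4$. I then split on the pair of types. If either of $b^1, b^3$ is of type IV then $c_3=0$ and $c$ is of type IV. Otherwise both have a zero in $\{1,2\}$: if both are of type I, then $b^1_2=b^3_2=0$ and $b^1_4=b^3_4=a$, so $c_2=0$ and $c_4=a$, making $c$ of type I; symmetrically, if both are of type III, $c$ is of type III; in every remaining sub-case, the zero-coordinate sets of $b^1$ and $b^3$ jointly cover $\{1,2\}$, so $c_1=c_2=0$ and $c$ is of type II.

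The argument is entirely mechanical case-checking. There is no real obstacle beyond bookkeeping; the only genuine planning step is the dichotomy above, isolating the ``pure'' matched pairs (both of type I or both of type III), in which coordinate $4$ matches and rescues membership, from the remaining pairs, in which enough zeros accumulate in coordinates $\{1,2\}$ to place $c$ in type II.
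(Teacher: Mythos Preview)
Your proof is correct and takes essentially the same approach as the paper: both argue by case analysis on the types of the outer arguments, using that \eqref{a}, \eqref{z}, \eqref{b} ignore the middle argument, and both note that the type of $b^2$ is irrelevant. The only cosmetic difference is the ordering of the case split---the paper cases first on the type of the left argument (dispatching types II and IV immediately, then sub-casing on the right argument), whereas you peel off type IV in either position first and then treat the matched pairs $(\mathrm I,\mathrm I)$, $(\mathrm{III},\mathrm{III})$ separately---but the content is the same.
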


 \begin{proof} 
The set $B$ is closed under 
$t_1$, since if 
$x \in E$ has one of the types 
I - IV, then $t_1(x,y,z)$
has the same type,   because of  
 equation  \eqref{a}. 
In case of types I and III we 
need also \eqref{t1}. 

Symmetrically, $B$ is closed under 
$t_{n-1}$. In this case,
$t_{n-1}(x,y,z)$
has the same type 
 of $z$ and we are using   \eqref{z} 
and \eqref{tn}. 

Now let $h \in \{ 2, \dots, n-2 \} $
and $x,y,z \in B$.  

If $x$ has type II or IV,
then $t_h(x,y,z)$ has the same type of $x$,
applying again  \eqref{a}.

Suppose that $x$ has type I.
We shall divide the argument into cases, considering the
possible types of $z$.      
If $z$ has type I, too, then  
$t_h(x,y,z)$ has type I, by \eqref{a} and \eqref{b}. 
We need \eqref{b} to ensure that the fourth component
is $a$. If $z$ has type II or IV, then    
$t_h(x,y,z)$ has the same type of $z$, by \eqref{z}.
Finally, if $z$ has type III, then  
$t_h(x,y,z)$ has type II. Indeed, the second component
is $0$ by \eqref{a}, since $x$ has type I, and
the first component is $0$ by \eqref{z},
 since $z$ has type III.    

The case in which  $x$ has type III is treated in a symmetrical way.     

We have showed that $B$ is closed with respect to 
$t_1$, $t_2$, \dots, $t_{n-1}$, hence $B$ is the universe
for  a subalgebra of $\mathbf E$.     
Notice that we have not used the assumption that
$y \in B$.
\end{proof}

\subsection*{Alvin in the middle} 
If $k \geq 1$, we define the lattice   $\mathbf C_{k}$ 
to be the $k$-elements chain 
with underlying set 
$C_{k} = \{0, \dots, k-1 \} $, with
 the standard ordering and
the standard lattice operations $\min$ and $\max$.
Lattice operations shall be denoted by 
juxtaposition and $+$.

\begin{construction} \labbel{bak}
Fix some natural number $n \geq 3$.
For every lattice $\mathbf L$, let     $\mathbf L^r$
be the following  term-reduct
  of  $\mathbf L$.
The operations of 
$\mathbf L^r$
are 
\begin{multline*} 
t_1(x,y,z) = x(y+z), \qquad
t_2(x,y,z) = xz, \qquad
t_3(x,y,z) = xz, \qquad  \dots, \qquad 
\\
\dots, \qquad t_{n-2}(x,y,z) = xz,
\qquad t_{n-1}(x,y,z) = z(y+x).
\end{multline*}   

In particular, 
the above definition 
applies to the lattices  $\mathbf C_{k}$.
 Limited to the present construction, 
we let $\mathbf A_1=\mathbf A_2= \mathbf C_{4}^r$
and   $\mathbf A_3 = \mathbf C_{2}^r$.

Suppose that $\mathbf D$
is an algebra with  ternary operations  
$s_0, \dots, s_{n-2}$.
As in Remark \ref{merg}, relabel the operations
as $t_1=s_0, \dots, t _{n-1} = s_{n-2} $ and let
$\mathbf A_4$ be the resulting algebra.
Assume that $\mathbf A_4$ satisfies Conditions
\eqref{t1}, \eqref{tn} and  \eqref{b}.
Notice that Conditions
\eqref{t1}, \eqref{tn} and  \eqref{b} are satisfied 
in case $s_0, \dots, s_{n-2}$ 
satisfy Condition \eqref{bas}
in Definition \ref{jondef}, with 
$n-2$ in place of $n$. 
In particular this applies when
$s_0, \dots, s_{n-2}$
are alvin operations for 
$\mathbf D$ and when $s_0, \dots, s_{n-2}$ are 
directed J{\'o}nsson  operations for 
$\mathbf D$.

Then the  algebras $\mathbf A_1$,
$\mathbf A_2$,
$\mathbf A_3$ and 
$\mathbf A_4$ satisfy the assumptions in
Construction \ref{c},
hence, by Theorem \ref{thmB},
for every choice of elements $a,d \in A_4$, 
 we have an algebra $\mathbf B = \mathbf B(a,d)$ 
 constructed as in \ref{c}(B). 
 \end{construction}   

We shall also use a small variation on Construction
\ref{bak}. This variation is not necessary in order  
to prove Theorem \ref{daysh} and might be skipped at first reading. 

\begin{construction} \labbel{bakbis}
Suppose that $n \geq 3$.
Consider a construction similar to \ref{bak}
above, with the only difference that  
$\mathbf A_1 $ and $ \mathbf A_2 $ are $  \mathbf C_{3}^r$,
rather than $\mathbf C_{4}^r$, while
 $\mathbf A_3= \mathbf C_{2}^r$ remains the 
 same algebra as in Construction
\ref{bak}.
Under the same assumptions on  $\mathbf D$
and $\mathbf A_4$,
 the  algebras $\mathbf A_1$,
$\mathbf A_2$,
$\mathbf A_3$ and 
$\mathbf A_4$ satisfy the assumptions in
Construction \ref{c},
hence, by Theorem \ref{thmB},
for every choice of elements $a,d \in A_4$, 
 we have an algebra $\mathbf B = \mathbf B(a,d)$ 
 constructed as in \ref{c}(B). 
 \end{construction}   

\begin{lemma} \labbel{lembak}
Under the assumptions and the definitions either from Construction 
\ref{bak} or from Construction \ref{bakbis}, the following hold:
  \begin{enumerate}[(i)]
    \item  
 If  $n$ is even and $s_0, \dots, s_{n-2}$ are alvin operations for 
$\mathbf D$, then, for every choice of  $a,d \in A_4$,
the algebra $\mathbf B$ is $n$-distributive.  

\item
 If  $s_0, \dots, s_{n-2}$ are directed J{\'o}nsson  operations for 
$\mathbf D$, then, for every choice of  $a,d \in A_4$,
the algebra $\mathbf B$ is $n$-directed-distributive.  
  \end{enumerate}
 \end{lemma}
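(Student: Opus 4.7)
The plan is to exhibit, in both cases, an explicit sequence $t_0, \dots, t_n$ of ternary terms for $\mathbf B$ and verify the relevant equations coordinatewise in each of the four factors of $\mathbf E$. Since $\mathbf B \le \mathbf E$ by Theorem \ref{thmB}, it suffices to check the identities in each of $\mathbf A_1, \mathbf A_2, \mathbf A_3, \mathbf A_4$.

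The candidate sequence is the obvious one suggested by the constructions: take $t_0$ to be the projection onto the first coordinate, take $t_n$ to be the projection onto the third coordinate, and for $1 \le h \le n-1$ take the ternary operations $t_h$ already fixed on each factor (the lattice-reduct operations on $\mathbf A_1, \mathbf A_2, \mathbf A_3$, and the relabeled $s_{h-1}$ on $\mathbf A_4$). The basic equations \eqref{bas} are immediate on all factors: on the lattice reducts $\mathbf C_k^r$ one uses $x(y+x)=x$ and $xx=x$, while on $\mathbf A_4$ they follow from \eqref{t1}, \eqref{tn} and \eqref{b}, which in turn are consequences of the alvin or directed-Jónsson hypothesis on $s_0, \dots, s_{n-2}$.

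For part (i), with $n$ even, I would verify the Jónsson equations \eqref{j3} separately on $\mathbf C_k^r$ and on $\mathbf A_4$. On $\mathbf C_k^r$ the middle indices $2 \le h \le n-3$ are trivial, since $t_h(x,y,z)=xz$ is constant in $y$; the boundary checks $h=0,1,n-2,n-1$ reduce to the absorption identities $x(x+z)=x$, $x(z+z)=xz$, $xz = z(x+x)$, $z(z+x)=z$, all of which work out only because the parity at $h=n-2$ and $h=n-1$ matches what the Jónsson pattern demands when $n$ is even (and this is precisely where the even-parity hypothesis is used). On $\mathbf A_4$ the relabeling shifts indices by $1$, so even becomes odd and vice versa, turning the alvin identities for $s_0, \dots, s_{n-2}$ (with indices $0, \dots, n-3$) into Jónsson identities for $t_1, \dots, t_{n-1}$ (with indices $1, \dots, n-2$); the extreme cases $h=0$ and $h=n-1$ are covered by \eqref{t1} and \eqref{tn}, which ensure that the trivial-projection equations glue correctly onto the relabeled sequence.

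For part (ii), the verification is uniform across indices since \eqref{m2} has no parity split. On $\mathbf C_k^r$ one checks $t_h(x,z,z)=t_{h+1}(x,x,z)$ at each $h$, which again reduces at the middle indices to $xz=xz$ and at the four boundary indices to the same lattice absorption identities as before; no parity assumption on $n$ is needed because the directed identity is symmetric in the way $x$ and $z$ enter the endpoints. On $\mathbf A_4$, the directed-Jónsson identities for $s_0, \dots, s_{n-2}$ transfer directly to directed identities for $t_1, \dots, t_{n-1}$ under the index shift, and the extreme indices $h=0$ and $h=n-1$ are again handled by \eqref{t1} and \eqref{tn}. The only real subtlety in the whole argument is the boundary bookkeeping in (i), where one must check that the parity of $n-2$ and $n-1$ lines up with the Jónsson convention; this is the one place the hypothesis $n$ even is indispensable. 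Both Construction \ref{bak} (with $\mathbf A_1=\mathbf A_2=\mathbf C_4^r$) and Construction \ref{bakbis} (with $\mathbf A_1=\mathbf A_2=\mathbf C_3^r$) are treated simultaneously, since only the lattice-reduct identities — not the size of the chains — enter the verification.
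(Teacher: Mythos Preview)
Your proof is correct and follows essentially the same approach as the paper: exhibit the obvious sequence $t_0,\dots,t_n$ (projections plus the existing operations), verify the J{\'o}nsson or directed J{\'o}nsson identities coordinatewise on each factor, note that the index shift by $1$ on $\mathbf A_4$ converts alvin into J{\'o}nsson, and observe that the parity of $n$ is exactly what makes the boundary checks at $h=n-2,n-1$ line up in case~(i). Your treatment is slightly more explicit about the four boundary identities on the lattice reducts, but the argument is the same.
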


 \begin{proof}
It follows from a remark in Construction \ref{bak}
that both in case (i) and in case (ii) the assumptions
in  Construction \ref{c} are satisfied, hence it makes sense to talk of 
$\mathbf B$, by Theorem \ref{thmB}.  
 
(i) Letting $t_0$ be the  projection onto the 
first coordinate and $t_n$ be the projection
onto the third coordinate, it is immediate to see that,
for every lattice $\mathbf L$, 
the terms $t_0$, $t_n$
and the operations of $\mathbf L^r$ 
satisfy J{\'o}nsson conditions.
In particular, this applies when 
$\mathbf L$ is 
$\mathbf C_{2}$,  $\mathbf C_{3}$ or $\mathbf C_{4}$. 
Here we are using the assumption that
$n$ is even, thus $n-1$ is odd,
hence $t_{n-1}$ satisfies the desired
equations $t_{n-2}(x,x,z) = xz = t_{n-1}(x,x,z)  $ 
and $ t_{n-1}(x,z,z) = z $. Notice also that the assumptions
imply $n \geq 4$, thus  $t_{n-2}(x,y,z) $ is actually $ xz $. 

It follows that
the terms $t_0, \dots, t_n$
are J{\'o}nsson on $\mathbf A_1 $, $ \mathbf A_2$ and 
$ \mathbf A_3$.
Since  $\mathbf D$  has alvin operations,
by the assumption in (i), and since
the indices of the original alvin operations of $\mathbf D$ 
are shifted by $1$, we have that the operations on 
$ \mathbf A_4$ satisfy J{\'o}nsson conditions, too,
adding again  the projections
$t_0$ and $t_n$.
Hence the product
$\mathbf A_1 \times \mathbf A_2
\times \mathbf A_3 \times \mathbf A_4$,
as well as any subalgebra,  
 are $n$-distributive.

(ii) Considering again
 the projections
$t_0$ and $t_n$,
the terms $t_0, t_1, \dots, t_n$
given by Constructions \ref{bak} or \ref{bakbis} 
are directed J{\'o}nsson  terms in every lattice.
Notice that this applies also in case $n=3$. 
 Adding the projection onto the first coordinate at the beginning
does not affect the conditions defining
 directed J{\'o}nsson terms and the same holds
adding the projection onto the third coordinate at the end.
Hence $t_0, t_1, \dots, t_n$ are directed J{\'o}nsson terms
for $\mathbf A_4$, as well, hence this applies to any product and 
subproduct.
\end{proof}

Recall the notational
conventions introduced in Remark \ref{conid},
in particular, recall that 
in congruence identities
juxtaposition denotes intersection.  
In what follows, an \emph{expression}
$\chi$ 
is a term in the language
$ \{ \circ, \cap \} $. We shall mention in Remark \ref{expres} below
that our results apply to a much more general context.
In what follows, for simplicity, we shall deal with congruences
$\tilde{\alpha}$, $\alpha$, \dots, but, as we shall point out
in Remark \ref{rmkkb}, many theorems below 
apply  also to tolerances and to 
reflexive admissible relations.

Formally, the statements of Theorems \ref{thmbak}, \ref{thmbakbis}
and \ref{thmba}  below
involve some natural  number $n$,
but notice that $n$ makes no essential appearance  
in the results.

\begin{theorem} \labbel{thmbak}
Let the assumptions and the definitions  from Construction 
\ref{bak} be in charge.
  \begin{enumerate} [(i)]
   \item   
 If there are congruences
$\tilde{\alpha}, \tilde{\beta}, \tilde{\gamma} $ of $  \mathbf A_4$
such that the identity 
\begin{equation*}
\tilde{\alpha}( \tilde{\beta} \circ \tilde{\alpha} \tilde{\gamma} \circ \tilde{\beta} )
\subseteq 
\tilde{\alpha} \tilde{\beta}
\circ 
\tilde{\alpha} \tilde{\gamma} 
 \circ {\stackrel{r}{\dots}} \circ    
 \tilde{\alpha} \tilde{\beta} 
 \end{equation*}    
 fails in $\mathbf A_4$,
for some odd $r$,  then there are $a,d \in A_4$ and 
congruences
$\alpha, \beta, \gamma $ of $\mathbf B = \mathbf B(a,d)$
such that the following identity  fails in $\mathbf B$.
\begin{equation*}\labbel{bakeq}      
\alpha( \beta \circ \alpha \gamma \circ \beta )
\subseteq 
\alpha \gamma    \circ  \alpha \beta 
\circ {\stackrel{r+6}{\dots}} \circ
\alpha \gamma.  
  \end{equation*}
\item
More generally, 
if 
the identity
$\tilde{\alpha}( \tilde{\beta} \circ \tilde{\alpha} \tilde{\gamma} \circ \tilde{\beta} )
\subseteq 
\chi (\tilde{\alpha}, \tilde{\beta},\tilde{\gamma} )$
 fails in $\mathbf A_4$,
for certain congruences $\tilde{\alpha}, \tilde{\beta},\tilde{\gamma} $
and some expression $\chi$, then 
there are $a,d \in A_4$ and 
congruences
$\alpha, \beta, \gamma $ of $\mathbf B = \mathbf B(a,d)$
such that the following identity  fails in $\mathbf B$.
\begin{equation}\labbel{bakeq2}     
\alpha( \beta \circ \alpha \gamma \circ \beta )
\subseteq 
 \gamma    \circ \alpha   \beta \circ \gamma 
 \circ 
\chi (\alpha, \beta ,  \gamma ) \circ   
 \gamma    \circ   \alpha \beta \circ \gamma.      
 \end{equation}
 \end{enumerate} 
\end{theorem}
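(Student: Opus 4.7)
The plan is to pull back the failure in $\mathbf{A}_4$ to $\mathbf{B}$ by defining $\alpha,\beta,\gamma$ as restrictions to $\mathbf{B}$ of product congruences on $\mathbf{E}$ whose $\mathbf{A}_4$-components are $\tilde\alpha,\tilde\beta,\tilde\gamma$ and whose first three components are lattice congruences on the chain factors $\mathbf{C}_4^r$, $\mathbf{C}_4^r$, $\mathbf{C}_2^r$. The chain components are chosen so that $\alpha$ is the full relation in the first three coordinates, while $\beta$ and $\gamma$ each collapse specific covering pairs of the chain but separate others; this way the chain factors act as ``counters'' that force any factorisation on the right-hand side to spend six factors just navigating the chain coordinates, leaving at most $r$ factors available to operate inside $\mathbf{A}_4$.

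First, I would fix the patterns of chain congruences. On $\mathbf{C}_4$ I would partition the three consecutive covers $\{0{<}1\}$, $\{1{<}2\}$, $\{2{<}3\}$ alternately between $\beta$- and $\gamma$-collapses, and arrange the two copies in components $1$ and $2$ so that ``reflected'' patterns appear, while $\mathbf{C}_2^r$ in component $3$ provides one additional collapse. Together with the choice $a\neq d\in A_4$ this gives six distinct ``chain gaps'' that must be crossed to pass between sufficiently separated elements of $B$. Next, I would pick witnesses $u,v\in B$ whose first three coordinates are $(3,3,1)$ and $(0,0,0)$ (or similar, adapted to the pattern above), and whose fourth coordinates $\tilde u,\tilde v\in A_4$ realise the failure $(\tilde u,\tilde v)\in\tilde\alpha(\tilde\beta\circ\tilde\alpha\tilde\gamma\circ\tilde\beta)$. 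The LHS membership $(u,v)\in\alpha(\beta\circ\alpha\gamma\circ\beta)$ follows by lifting the witnessing $\mathbf{A}_4$-path to elements of $B$ of type II (whose third and fourth coordinates are unconstrained), keeping the first three coordinates equal to those of $u$ or $v$ throughout so that $\beta$- and $\gamma$-relatedness in those coordinates is automatic.

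For the negative side, I would assume a factorisation of $(u,v)$ through $\alpha\gamma\circ\alpha\beta\circ\stackrel{r+6}{\dots}\circ\alpha\gamma$ and project it to each coordinate. In coordinates $1,2,3$ the projected zig-zag must traverse six chain-gaps, each of which can be crossed by only one of $\alpha\beta$ or $\alpha\gamma$; combined with the restriction that every intermediate element must have one of the four prescribed types (forcing certain coordinates to be $0$, $a$, or $d$), this shows that the six chain-gap crossings must occupy six distinct factors of the composition, in a forced order determined by the type transitions. The remaining $r$ factors project to a factorisation of $(\tilde u,\tilde v)$ in $\mathbf{A}_4$ of the form $\tilde\alpha\tilde\beta\circ\tilde\alpha\tilde\gamma\circ\stackrel{r}{\dots}\circ\tilde\alpha\tilde\beta$, contradicting the hypothesised failure. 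Part (ii) follows by the same method, with the six chain-crossings split as $\gamma\circ\alpha\beta\circ\gamma$ on either side of the central region, where the projection to $\mathbf{A}_4$ realises $\chi(\tilde\alpha,\tilde\beta,\tilde\gamma)$.

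The main obstacle will be the bookkeeping: showing that the chosen chain-congruences, combined with the type structure of $B$ and the alternation between $\alpha\beta$ and $\alpha\gamma$, genuinely force \emph{exactly} six ``overhead'' factors, not fewer. In particular one must verify that no step can simultaneously cross a chain-gap \emph{and} make progress in the $\mathbf{A}_4$-coordinate, which in turn depends on the type constraints keeping the fourth coordinate pinned to $a$ or $d$ precisely when a chain coordinate is moving. The asymmetry $r\mapsto r+6$ in part (i) and the specific shape in part (ii) both arise from this forced separation between ``chain work'' and ``$\mathbf{A}_4$ work.''
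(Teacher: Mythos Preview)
Your high-level plan---define $\alpha,\beta,\gamma$ on $\mathbf B$ as restrictions of product congruences whose $\mathbf A_4$-component is $\tilde\alpha,\tilde\beta,\tilde\gamma$, then show a witnessing pair lies in the left side but not the right---is exactly the paper's strategy. However, your concrete implementation breaks in two places.

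First, your choice of witnesses is not in $B$. An element $(3,3,1,a)$ has none of the four types: its second coordinate is not $0$ (rules out I and II), its first coordinate is not $0$ (rules out II and III), and its third coordinate is not $0$ (rules out IV). The paper instead takes $c_0=(3,0,1,a)$ and $c_3=(0,3,1,d)$, using \emph{anti-diagonal} first and second coordinates, and the same (not reflected) congruence pattern $\beta^*\times\beta^*$ and $\gamma^*\times\gamma^*$ on the two $\mathbf C_4$ factors.

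Second, and more seriously, taking $\alpha$ to be full on all three chain coordinates destroys the mechanism that pins the fourth coordinate. The paper sets $\alpha$ to be the \emph{identity} on the $\mathbf C_2^r$ factor (and $\gamma$ likewise). This is the crux: any element $\alpha$-related to $c_0$ then has third coordinate $1$, hence is not of type IV. Combined with the observation that after $\gamma\circ\alpha\beta\circ\gamma$ the first coordinate has moved from $3$ to at most $1$ (so is still nonzero), the element is forced to have type I, pinning its fourth coordinate to $a$. Symmetrically on the other end the fourth coordinate is pinned to $d$, and the contradiction follows by projecting the middle $\chi$-segment to $\mathbf A_4$. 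Your ``six chain-gap counter'' picture, with $\alpha$ full on the chains, gives no way to rule out type IV at the intermediate points $g,h$, so you cannot conclude that the fourth coordinate stays fixed during the outer $\gamma\circ\alpha\beta\circ\gamma$ segments. The third factor $\mathbf C_2^r$ is not an extra counter; it is a lock.
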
  

\begin{proof} 
Obviously (i) is immediate from (ii), taking 
$\chi( \alpha, \beta , \gamma ) = 
\alpha \beta \circ \alpha \gamma \circ {\stackrel{r}{\dots}} \circ
\alpha \beta    $ and since 
$ \alpha \gamma \circ \alpha \beta \circ \alpha \gamma \subseteq 
 \gamma \circ \alpha  \beta \circ \gamma $.  

In order to prove (ii), let $\tilde{\alpha}$, $\tilde{\beta}$ and $\tilde{\gamma}$ 
be congruences on $\mathbf A_4$
as given by the assumption.
Thus there are elements 
$a,d \in A_4$ such that 
$(a,d) \in  \tilde{\alpha}( \tilde{\beta} \circ \tilde{\alpha} \tilde{\gamma} \circ \tilde{\beta} )$
and $ (a,d) \not \in 
\chi (\tilde{\alpha}, \tilde{\beta},\tilde{\gamma} ) $.
Let $\mathbf B = \mathbf B(a,d)$.
It follows from the considerations in 
Construction 
\ref{bak} and from
Theorem \ref{thmB}
 that $\mathbf B $ is actually an algebra. 
We now consider two congruences
on the factors $\mathbf A_1 = \mathbf A_2$ and 
then construct appropriate congruences on $\mathbf B$. 
  Let
$\beta^*$ be the congruence on 
the lattice $\mathbf C_4$ whose blocks are 
$\{ 0,1\}$ and $\{ 2,3 \}$ and let 
$ \gamma^*$ be the congruence on 
$\mathbf C_4$ whose blocks are 
$\{ 0 \}$, $\{ 1,2 \}$  and $\{ 3 \}$.
Since $ \beta ^*$ and $ \gamma^*$ are congruences on  $\mathbf C_4$, they 
are also congruences on its term-reduct    
$\mathbf A_1 =\mathbf A_2$.
Let $0$ and $1$ denote, respectively, 
the smallest and the largest congruence
on any algebra under consideration.  
The congruence
$ \beta ^* \times  \beta ^* \times 1 \times \tilde{\beta} $ 
of $\mathbf A_1 \times \mathbf A_2
\times \mathbf A_3 \times \mathbf A_4$
induces a congruence $\beta$ on the subalgebra $\mathbf B$.
Similarly,
$ \gamma  ^* \times  \gamma  ^* \times 0 \times \tilde{\gamma} $ 
induces a congruence $\gamma$ on $\mathbf B$.
Finally, let $\alpha$ be induced on $\mathbf B$
by  $ 1 \times 1 \times 0 \times \tilde{\alpha}$. 

Since 
$(a,d) \in  \tilde{\alpha}( \tilde{\beta} \circ \tilde{\alpha} \tilde{\gamma} \circ \tilde{\beta} )$,
then $a \mathrel {\tilde{\alpha}}  d $ and  there are $b,c \in A_4$ 
such that $a \mathrel { \tilde{\beta}} b \mathrel { \tilde{\alpha} \tilde{\gamma}  } c 
\mathrel { \tilde{\beta} } d $.
Consider the following elements of $B$:  
 \begin{equation*}
 c_0 =(3,0,1, a),   \quad 
 c_1 =(2,1,0, b), \quad    c_{2} = (1,2,0,c),
 \quad c_{3} = (0,3,1, d).
\end{equation*} 
To see that the above elements  are actually in $B$, 
we need to recall
the definition of $B$ from Construction
\ref{c}(B).
For the reader's convenience we report here the definition:
$B$ is the set of the elements having (at least) one of the following forms: 
\begin{equation*}
\begin{gathered}
\text{Type I} \\
(\hy, 0, \hy, a)
\end{gathered} 
\qquad\qquad
\begin{gathered}
\text{Type II} \\
(0, 0, \hy, \hy),
  \end{gathered} 
\qquad\qquad
\begin{gathered}
\text{Type III} \\
(0, \hy, \hy, d)
  \end{gathered} 
\qquad\qquad
\begin{gathered}
\text{Type IV} \\
(\hy, \hy, 0, \hy)
  \end{gathered} 
\end{equation*}    

Thus $c_0, c_1,c_2, c_3 \in B $, 
since 
$c_0$ has type  I,
$c_1$ and $c_2$  have type  IV
and $c_3$ has type  III.
Moreover, 
$c_0 \mathrel \alpha  c_3$
and  
$c_0 \mathrel \beta c_1 \mathrel { \alpha \gamma  } c_2
\mathrel { \beta }   c_3$, thus
$(c_0,c_3) \in  \alpha( \beta \circ \alpha \gamma \circ \beta)$.
Suppose by contradiction  that
$(c_0,c_3)$ belongs to the right-hand side of \eqref{bakeq2}, thus
there are elements $g,h \in B$ such that   
 $(c_0, g) \in \gamma    \circ  \alpha  \beta \circ \gamma $,
$ (g,h) \in  \chi (\alpha, \beta , \gamma ) $
and  $(h,c_3) \in  \gamma    \circ  \alpha  \beta \circ \gamma $.
Then 
$ c_0 \mathrel { \gamma }   g_1 \mathrel { \alpha  \beta}
 g_2 \mathrel { \gamma} g $, for certain elements
$g_1$ and $g_2$.   By the $\gamma$-equivalence
of $c_0$ and $g_1$, we have that  
the first component of $g_1$ is $3$
and the third component of $g_1$ is $1$.
 By the $ \alpha  $-equivalence of $g_1$ and $g_2$,
the third component of $g_2$ is  $1$.
By the $ \beta $-equivalence of $g_1$ and $g_2$,
the first component of $g_2$ is either $3$ or $2$.
Similarly, the third component of $g$ is $1$, 
hence $g$ has not type IV. 
The first component of $g$ ranges between $1$ and   $3$, in particular
it is not $0$.
Since the first component of $g$ is not $0$,
$g$ belongs to $B$ and $g$ has not type IV, then $g$ has type $I$,
hence the fourth component of $g$ is $a$.

Symmetrically, the 
fourth component of $h$ is $d$. 
Since 
$(g,h) \in  \chi (\alpha, \beta , \gamma ) $, then, 
recalling the definitions
of $\alpha$, $\beta$ and $\gamma$,
and since $\chi$ is a $\{ \circ, \cap\}$-term,  
 we get 
$(a,d) \in \chi (\tilde{\alpha}, \tilde{\beta},\tilde{\gamma} ) $,
a contradiction.
\end{proof} 

\begin{remark} \labbel{ifin}
(a) Notice that the element $g_1$
from the proof of Theorem \ref{thmbak}(ii)
must have type I, hence the fourth component of 
$g_1$ is $a$, so we actually have $c_0=g_1$.     

(b) Using similar arguments, we have that, under the assumptions
in Theorem \ref{thmbak}(ii), the following identities fail
in $\mathbf B$ 
\begin{equation*}\labbel{bakeq3}     
\alpha( \beta \circ \alpha \gamma \circ \beta )
\subseteq 
\phi  \circ 
\chi (\alpha, \beta ,  \gamma ) \circ   
\psi   
 \end{equation*}
where each of $\phi$ and $\psi$ can be taken to be either
$\alpha( \gamma    \circ   \beta \circ \gamma )$,
$ \gamma    \circ \alpha (  \beta \circ \gamma )$,
$\alpha( \gamma    \circ   \beta) \circ \gamma $, or
$ \gamma    \circ  \alpha  \beta \circ \gamma $.
 \end{remark}

The next subsection  is not necessary in order  
to prove Theorem \ref{daysh}.

\subsection*{Bounds for $ \alpha ( \beta \circ \gamma )$ }
\begin{theorem} \labbel{thmbakbis}
Let
 the assumptions and the definitions  from Construction 
\ref{bakbis} be in charge.
  \begin{enumerate}[(i)]
    \item   
 If there are congruences
$\tilde{\alpha}, \tilde{\beta}, \tilde{\gamma} $ of $  \mathbf A_4$
such that the identity 
$\tilde{\alpha}( \tilde{\beta} \circ \tilde{\gamma}  )
\subseteq 
\tilde{\alpha} \tilde{ \gamma }
\circ 
\tilde{\alpha} \tilde{ \beta } 
 \circ {\stackrel{r}{\dots}} \circ    
 \tilde{\alpha} \tilde{ \beta } $
 fails in $\mathbf A_4$,
for some even $r$,  then there are $a,d \in A_4$ and 
congruences
$\alpha, \beta, \gamma $ of $\mathbf B = \mathbf B(a,d)$
such that the identity
 $\alpha( \beta \circ  \gamma  )
\subseteq 
\alpha \gamma    \circ  \alpha \beta 
\circ {\stackrel{r+4}{\dots}} \circ
\alpha \beta         $
 fails in $\mathbf B$.
    \item   
 More generally,
for every expression $\chi$,  if there are congruences
$\tilde{\alpha}, \tilde{\beta}, \tilde{\gamma} $ of $  \mathbf A_4$
such that the identity 
$\tilde{\alpha}( \tilde{\beta} \circ \tilde{\gamma}  )
\subseteq 
\chi (\tilde{\alpha}, \tilde{\beta},\tilde{\gamma}) $
 fails in $\mathbf A_4$,
 then there are $a,d \in A_4$ and 
congruences
$\alpha, \beta, \gamma $ of $\mathbf B = \mathbf B(a,d)$
such that the identity
 $\alpha( \beta \circ  \gamma  )
\subseteq 
\alpha (\gamma \circ \beta )   \circ \chi ( \alpha, \beta, \gamma ) 
\circ \alpha (\gamma \circ \beta )$
 fails in $\mathbf B$.
 \end{enumerate}
\end{theorem}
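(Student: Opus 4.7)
The plan is to first prove part (ii) by emulating the argument of Theorem~\ref{thmbak}(ii), with the smaller lattice $\mathbf{C}_3^r$ and the shorter left-hand side $\alpha(\beta \circ \gamma)$; part (i) will then be an immediate consequence. For the reduction of (i) to (ii), observe that the transitivity of $\alpha$ gives $\alpha\gamma \circ \alpha\beta \subseteq \alpha(\gamma \circ \beta)$. Setting $\chi(\alpha, \beta, \gamma) = \alpha\gamma \circ \alpha\beta \circ \stackrel{r}{\dots} \circ \alpha\beta$ (with $r$ even, so $\chi$ starts with $\alpha\gamma$ and ends with $\alpha\beta$), the right-hand side of the conclusion of (i) factors as $\alpha\gamma \circ \alpha\beta \circ \chi(\alpha, \beta, \gamma) \circ \alpha\gamma \circ \alpha\beta$, which is contained in $\alpha(\gamma \circ \beta) \circ \chi(\alpha, \beta, \gamma) \circ \alpha(\gamma \circ \beta)$. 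Hence any counterexample supplied by (ii) for this $\chi$ automatically witnesses the failure of (i).

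For part (ii), pick $a, d \in A_4$ with $(a,d) \in \tilde\alpha(\tilde\beta \circ \tilde\gamma)$ and $(a,d) \notin \chi(\tilde\alpha, \tilde\beta, \tilde\gamma)$, and choose $b \in A_4$ with $a \mathrel{\tilde\beta} b \mathrel{\tilde\gamma} d$. Form $\mathbf{B} = \mathbf{B}(a,d)$ as supplied by Construction~\ref{bakbis} and Theorem~\ref{thmB}. On $\mathbf{C}_3$ there are exactly two nontrivial lattice congruences: call $\mu$ the one with blocks $\{0\}, \{1,2\}$ and $\nu$ the one with blocks $\{0,1\}, \{2\}$. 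Using them on the first two factors of $\mathbf{E}$ in opposite orders, set
\[
\alpha = 1 \times 1 \times 0 \times \tilde\alpha, \qquad
\beta = \mu \times \nu \times 1 \times \tilde\beta, \qquad
\gamma = \nu \times \mu \times 1 \times \tilde\gamma.
\]
These are congruences of $\mathbf{E}$ and induce congruences on $\mathbf{B}$. The elements
\[
c_0 = (2, 0, 1, a), \qquad c_1 = (1, 1, 0, b), \qquad c_2 = (0, 2, 1, d)
\]
lie in $B$, of types I, IV and III respectively in the sense of Construction~\ref{c}(B), and a direct check yields $c_0 \mathrel\alpha c_2$ and $c_0 \mathrel\beta c_1 \mathrel\gamma c_2$, so $(c_0, c_2) \in \alpha(\beta \circ \gamma)$.

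Suppose, for contradiction, that $(c_0, c_2)$ lies in the right-hand side of the conclusion of (ii): pick $g, h \in B$ and intermediates $g_1, h_1 \in B$ with $c_0 \mathrel\gamma g_1 \mathrel\beta g$, $c_0 \mathrel\alpha g$, $h \mathrel\gamma h_1 \mathrel\beta c_2$, $h \mathrel\alpha c_2$, and $(g, h) \in \chi(\alpha, \beta, \gamma)$. Tracking coordinates in the spirit of the proof of Theorem~\ref{thmbak}(ii): the $\alpha$-relation $c_0 \mathrel\alpha g$ forces the third coordinate of $g$ to equal $1$, so $g$ is not of type IV; and $c_0 \mathrel\gamma g_1 \mathrel\beta g$ on the first coordinate propagates, via the singleton $\nu$-block $\{2\}$ followed by the $\mu$-block $\{1,2\}$, the first coordinate of $g$ into $\{1,2\}$, ruling out types II and III. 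Therefore $g$ is of type I and its fourth coordinate equals $a$. A symmetric analysis, in which the second coordinate (rather than the first) carries the non-trivial information, forces $h$ to be of type III with fourth coordinate $d$. Projecting the chain witnessing $(g, h) \in \chi(\alpha, \beta, \gamma)$ onto the fourth coordinate, where $\alpha, \beta, \gamma$ restrict to $\tilde\alpha, \tilde\beta, \tilde\gamma$, yields $(a, d) \in \chi(\tilde\alpha, \tilde\beta, \tilde\gamma)$, a contradiction.

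The delicate point is the asymmetric, swapped assignment of $\mu$ and $\nu$ to the first two factors of $\mathbf{E}$: it is what simultaneously makes possible the length-two path $c_0 \mathrel\beta c_1 \mathrel\gamma c_2$ (which has to move ``against the grain'' in opposite senses on the two $\mathbf{C}_3$ coordinates) and provides the rigidity that forces any intermediate $g, h$ from a short decomposition into a unique type, so that the fourth-coordinate information can be pushed back to a statement about $\mathbf{A}_4$.
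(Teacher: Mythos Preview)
Your proof is correct and follows essentially the same route as the paper: the same swapped congruences on the two $\mathbf C_3$-factors (your $\mu,\nu$ are the paper's $\beta^*,\gamma^*$), the same elements $c_0,c_1,c_2$, and the same coordinate-tracking argument forcing $g$ to have type~I and $h$ to have type~III. Your closing paragraph makes the role of the asymmetric assignment more explicit than the paper does, but the substance is identical.
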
  

\begin{proof}
(i) is immediate from the special case
$\chi ( \alpha, \beta, \gamma ) =
\alpha \gamma  \circ \alpha  \beta \circ  {\stackrel{r} {\dots}}
\circ \alpha \beta     $ of (ii), so let us prove (ii). 

Under the assumptions in (ii), 
there are 
$a,d \in A_4$ such that 
$ (a,d) \in \tilde{\alpha}( \tilde{\beta} \circ \tilde{\gamma}  )$
and $(a,d) \not\in 
\chi (\tilde{\alpha}, \tilde{\beta},\tilde{\gamma})$.
Choose such a pair 
$(a,d)$ and let  
$\mathbf B = \mathbf B(a,d)$.
Let
$\beta^*$ be the congruence on 
$\mathbf C_3$ whose blocks are 
$\{ 0\}$ and $\{ 1,2 \}$ and let 
$ \gamma^*$ be the congruence on 
$\mathbf C_3$ whose blocks are 
$\{ 0,1 \}$ and  $\{ 2 \}$.
Let $\beta$, $\gamma$ and $\alpha$ 
be the congruences on $\mathbf B$
induced, respectively, by 
$ \beta ^* \times  \gamma  ^* \times 1 \times \tilde{\beta} $,
$ \gamma  ^* \times  \beta  ^* \times 1 \times \tilde{\gamma} $ 
and  $ 1 \times 1 \times 0 \times \tilde{\alpha}$. 
Notice  a difference with respect to the proof of Theorem \ref{thmbak}:
here the first two components of $\beta$ 
are distinct,
and the same for $\gamma$.  
Consider the following elements of $B$:  
 \begin{equation*}
 c_0 =(2,0,1, a),   \qquad 
 c_1 =(1,1,0, b), \qquad    c_{2} = (0,2,1,d),
 \end{equation*}
of types, respectively, I, IV and III, and 
where $b$ is an element witnessing
$(a,d) \in
 \tilde{\beta} \circ  \tilde{\gamma} $, 
thus 
$(c_0, c_2) \in \alpha ( \beta \circ \gamma )$. 

If, by contradiction,
$(c_0,c_2)$ belongs to 
$\alpha (\gamma \circ \beta )   \circ \chi ( \alpha, \beta, \gamma ) 
\circ \alpha (\gamma \circ \beta )$, 
then there are elements $g,h \in B$ such that   
 $(c_0, g) \in \alpha( \gamma    \circ   \beta  )$,
$ (g,h) \in    \chi ( \alpha, \beta, \gamma ) $
and  $(h,c_2) \in \alpha( \gamma    \circ   \beta )$.
Thus $c_0 \mathrel \alpha   g $  and 
$ c_0 \mathrel { \gamma }   g_1 \mathrel { \beta}
 g $, for some
$g_1 $ in $ B$.   By $\gamma$-equivalence,
the first component of $g_1$ is $2$
and, by $\beta$-equivalence, the first component
of $g$ is either $1$ or  $2$.     
By $\alpha$-equivalence, the third component
of $g$ is  $1$ and since its first component is not $0$,
we get that $g$ has type I,
thus its fourth component is $a$.  
Symmetrically, the fourth component of 
$h$ is $d$.  From 
$ (g,h) \in    \chi ( \alpha, \beta, \gamma ) $ we get 
$(a,d) \in \chi (\tilde{\alpha}, \tilde{\beta},\tilde{\gamma})$,
 a contradiction. 
\end{proof}

\begin{proposition}  \labbel{thmbakbiscor}
Under the assumptions 
and the notation from Theorem   \ref{thmbakbis}(ii)
and its proof
 (in particular, Construction 
\ref{bakbis} is in charge), 
let $\mathbf B'$ be the subalgebra
of $\mathbf B$ 
 generated by 
$c_0$, $c_1$ and $c_2$.
Then the identity
 $\alpha( \beta \circ  \gamma  )
\subseteq 
\gamma \circ \alpha \beta   \circ \chi ( \alpha, \beta, \gamma ) 
\circ \alpha \gamma \circ  \beta $
 fails in $\mathbf B'$.
 \end{proposition}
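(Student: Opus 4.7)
The plan is to adapt the proof of Theorem~\ref{thmbakbis}(ii), reusing the very same $\alpha,\beta,\gamma,a,b,d,c_0,c_1,c_2$ constructed there. Since $c_1\in B'$ and $c_0\mathrel{\beta}c_1\mathrel{\gamma}c_2$ with $c_0\mathrel{\alpha}c_2$, we have $(c_0,c_2)\in\alpha(\beta\circ\gamma)$ already in $\mathbf{B}'$. Suppose for contradiction that $(c_0,c_2)$ lies in the right-hand side of the claimed inclusion, witnessed in $B'$ by
\[
c_0 \mathrel{\gamma} g_1 \mathrel{\alpha\beta} g \mathrel{\chi(\alpha,\beta,\gamma)} h \mathrel{\alpha\gamma} h_1 \mathrel{\beta} c_2.
\]
The target, as in the original argument, is to deduce $g[4]=a$ and $h[4]=d$; projecting $(g,h)\in\chi(\alpha,\beta,\gamma)$ onto the fourth coordinate then gives $(a,d)\in\chi(\tilde{\alpha},\tilde{\beta},\tilde{\gamma})$, contradicting the choice of $(a,d)$.

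The new obstacle compared to Theorem~\ref{thmbakbis}(ii) is that we no longer have $(c_0,g)\in\alpha$ directly (only $c_0\mathrel{\gamma}g_1\mathrel{\alpha\beta}g$), so we cannot immediately conclude $g[3]=1$; a priori $g$ might be of type~IV. I would close this gap by exploiting the hypothesis that all intermediates lie in the subalgebra $\mathbf{B}'$ generated by $c_0,c_1,c_2$. From $(c_0,g_1)\in\gamma$ we still get $g_1[1]=2$ and $g_1[2]=0$. The key claim is: in $\mathbf{B}'$, every element with first coordinate $2$ must have third coordinate $1$. To prove it, project $\mathbf{B}'$ onto its first and third coordinates; the image is the subalgebra of $\mathbf{C}_3^r\times\mathbf{C}_2^r$ generated by the three pairs $(2,1),(1,0),(0,1)$. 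A short induction on term complexity shows that every member of this subalgebra whose first coordinate is $2$ equals $(2,1)$: for each of the three polynomial forms $x(y+z)$, $xz$, $z(y+x)$ defining the operations $t_1,\dots,t_{n-1}$, an output with first coordinate $2$ forces an input in a distinguished position to have first coordinate $2$; by induction that input equals $(2,1)$, and its second coordinate $1$ then propagates to the output.

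Granted this claim, $g_1[3]=1$ rules out type~IV; since $g_1[1]=2\neq 0$ rules out types~II and~III, $g_1$ must be of type~I, so $g_1[4]=a$. The $\alpha\beta$-step then yields $g[3]=g_1[3]=1$ and $g[1]\in\{1,2\}$, so $g$ is of type~I and $g[4]=a$. A symmetric argument---projecting $\mathbf{B}'$ onto its second and third coordinates, whose generators are $(0,1),(1,0),(2,1)$---shows $h_1[2]=2$ forces $h_1[3]=1$, hence $h_1$ is of type~III with $h_1[4]=d$, and likewise $h$ is of type~III with $h[4]=d$. The main obstacle, and the one that really separates this proposition from Theorem~\ref{thmbakbis}(ii), is establishing the subalgebra claim: one must verify for each of the three polynomial forms of the $t_h$ that the property ``first coordinate $2$ implies second coordinate $1$'' is preserved, a finite but slightly fiddly check on lattice identities in $\mathbf{C}_3\times\mathbf{C}_2$.
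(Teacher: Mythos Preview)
Your proof is correct and follows essentially the same route as the paper. Both arguments hinge on a term-complexity induction showing that elements of $\mathbf{B}'$ with first coordinate $2$ are heavily constrained, and then use this to force $g$ to have type~I (and symmetrically $h$ to have type~III). The only difference is cosmetic: the paper proves the slightly stronger statement that $c_0$ is the \emph{unique} element of $\mathbf{B}'$ with first coordinate $2$ (working in the full four-coordinate algebra and invoking the type constraints at the end of the induction step), whereas you prove the weaker but sufficient claim ``first coordinate $2$ forces third coordinate $1$'' by working in the two-coordinate projection $\mathbf{C}_3^r\times\mathbf{C}_2^r$. Your version has the minor advantage that the inductive check lives in a simpler algebra; the paper's version yields $g_1=c_0$ outright, which makes the remaining deductions a hair shorter.
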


\begin{proof} 
We first show that $c_0$
is the only element in $\mathbf B'$
having $2$ as the first component.
Any element of 
$B'$ has the form 
$t(c_0, c_1, c_2)$,
for some ternary term $t$.
Suppose by contradiction that in $B'$ there is some element 
$c^* \neq c_0$ such that the first component of 
$c^*$ is $2$. Thus
$c^* =t(c_0, c_1, c_2)$, for some term 
$t$. Choose  
$c^*$ and $t$ in such a way that $t$ has minimal complexity,
hence   
$t(x,y,z) = t_i (r_1(x,y,z) , r_2(x,y,z) , r_3(x,y,z) )$,
for some $i < n$ and ternary terms
$r_1$, $r_2$ and $r_3$ such that    
each of  $r_1(c_0, c_1, c_2)$,
$r_2(c_0, c_1, c_2)$ and 
$r_3(c_0, c_1, c_2)$ is either equal to $c_0$, or 
 has the first component different from $2$.

If $2 \leq i \leq n-2$,
then $t_i(x,y,z) =xz$
on the first three components.   
Since $2$ is the first component 
of $c^* =t(c_0, c_1, c_2)=
t_i(r_1(c_0, c_1, c_2),
r_2(c_0, c_1, c_2),
r_3(c_0, c_1, c_2))$,
then   $2$ is the first component 
of both $r_1(c_0, c_1, c_2) $ and 
$r_3(c_0, c_1, c_2)$.
By minimality of $t$,
then $c_0 =r_1(c_0, c_1, c_2) $ 
and $c_0  = r_3(c_0, c_1, c_2)$, hence 
$c^*$ and  $c_0$ coincide on the first three components.
But then $c^*$, being in $B$,     
must have type I,
so $c^*$ and  $c_0$ coincide also on the
fourth component, hence $c^*= c_0$.

If $i=1 $,
then $t_1(x,y,z) =x(y+z)$
on the first three components.   
Since $2$ is the first component 
of $c^* =t(c_0, c_1, c_2)=
t_1(r_1(c_0, c_1, c_2),
r_2(c_0, c_1, c_2),
r_3(c_0, c_1, c_2))$,
then   $2$ is the first component 
of $r_1(c_0, c_1, c_2) $ and
of at least one between 
$r_2(c_0, c_1, c_2)$ and
$r_3(c_0, c_1, c_2)$.
Again by minimality of $t$,
we  get that  
$c_0 =r_1(c_0, c_1, c_2) $ and
either  
$c_0 =r_2(c_0, c_1, c_2)$ or
$c_0 =r_3(c_0, c_1, c_2)$.
In both cases,
$c^*$ and  $c_0$ coincide on the first three components and,
 arguing as above, $c^*= c_0$.
The case $i=n-1$ is similar. 

In each case we  get
$c^*= c_0$,
 a contradiction, thus
$c_0$
is the only element in $\mathbf B'$
having $2$ as the first component.

Suppose that the assumptions in Condition
 \ref{thmbakbis}(ii) hold and 
let $\alpha$, $\beta$ and $\gamma$ be the congruences
induced on $\mathbf B'$ by the congruences with the same name in 
the proof of   
Theorem \ref{thmbakbis}.
If  $g $ is an element of $  B'$ such that 
$(c_0, g) \in \gamma \circ \alpha \beta $,
then   
$c_0 \mathrel {  \gamma } g_1 \mathrel { \alpha \beta }  g $,
for some $g_1 \in B'$, but
$\gamma$-equivalence implies that the first component of 
$g_1$ is $2$, thus $c_0=g_1$, by the above paragraphs.     
By $\beta$-equivalence, the first component of $g$
is not $0$ and, by $\alpha$-equivalence,    
the third component of $g$ is $1$, hence 
$g$ gas type I. Now the final argument in the    
proof of Theorem \ref{thmbakbis} applies. 
\end{proof}

\begin{definition} \labbel{bakerdef}
Baker \cite{baker}
introduced and studied 
 the variety generated by term-reducts of lattices
in which the only basic operation is $t_{\mathcal {B}}(x,y,z) = x(y+z)$.
We shall denote Baker's variety  by $\mathcal {B}$.

 Strictly speaking,
 Baker studied the varieties 
generated by reducts of a single lattice.
See Cornish \cite{Co}
for further details and for the relationships 
between Baker's variety and nearlattices. 
In any case, what we call $\mathcal {B}$ here 
can be obtained  by considering 
the variety generated by the reduct of the free
lattice over $ \omega$ generators, or just
the reduct of the free
lattice over $ 3$ generators, since the 
former is embeddable in the latter, by a well-known result by 
Whitman. See, e.~g., Freese, Je\v{z}ek,  Nation
\cite{FJN}.

The variety $\mathcal {B}^d$
   is defined like Baker's, but considering only reducts of distributive
lattices.
Notice that in the distributive case the correspondence
with nearlattices is exact, since 
in distributive lattices 
$ x(y+z)=xy+xz$
and (dual) nearlattices
admit an equivalent definition as the variety 
generated by reducts of lattices 
in which only the operation 
given by $xy+xz$ is considered. 
See, e.~g., Chajda, Hala\v{s},  K\"{u}hr \cite{CHK}
for further informations about nearlattices.
In particular, since we shall always deal with distributive 
lattices, our results about 
 $\mathcal {B}^d$ apply to the variety 
of distributive nearlattices,  provided we
 consider nearlattices as ternary algebras.
Of course, the above remark applies to most results from
\cite{B}, as well.
 \end{definition}   

Notice that, for each $n \geq 3$,   Baker's variety is term-equivalent to
the variety generated by the algebras $\mathbf L^r$
from Construction \ref{bak}.  
Indeed, if  $t_i(x,y,z)=xz$ in \ref{bak}, then $t_i$  
can be expressed as $t_i(x,y,z) = t_{\mathcal {B}}(x,z,z) = x(z+z)$.   
However, the exact types of algebras will be highly
relevant in our arguments.

Congruence identities valid in Baker's variety have been intensively studied
in \cite{B}. Complementing the
results from \cite{B}, we shall now see that the arguments
from the proof of Theorems   \ref{thmbak}
and \ref{thmbakbis} show  the failure of 
still another congruence identity in $\mathcal {B}$.

\begin{proposition} \labbel{baker+}
Neither $\mathcal {B}$, nor $\mathcal {B}^d$,
nor the variety of nearlattices $\mathcal {NL}$ 
are $5$-reversed-modular.

Moreover, the  congruence identities 
$\alpha ( \beta \circ   \gamma  )
\subseteq  
\alpha (   \gamma \circ \beta   ) \circ 
\alpha (   \gamma \circ \beta  )$ 
and $\alpha ( \beta \circ   \gamma  )
\subseteq  
   \gamma \circ \alpha  \beta    \circ 
\alpha   \gamma \circ \beta  $ 
fail
 in $\mathcal {B}$,  $\mathcal {B}^d$
and $\mathcal {NL}$.

In particular, neither $\mathcal {B}$, nor $\mathcal {B}^d$
nor $\mathcal {NL}$ are
$4$-alvin.  
 \end{proposition}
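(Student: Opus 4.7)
The plan is to exhibit, for each of the three claimed failures, a subalgebra of a product of finitely many term-reducts of finite distributive lattices; such a subalgebra automatically lies in $\mathcal{B}^d$, hence in $\mathcal{B}$, so it suffices to produce failures in $\mathcal{B}^d$. I would apply Constructions \ref{bakbis} and \ref{bak} with $\mathbf{A}_4$ chosen to be the one-element algebra: then the hypotheses of Construction \ref{c} on $\mathbf{A}_4$ hold vacuously, Theorem \ref{thmB} guarantees that $\mathbf{B}$ is a subalgebra, and $\mathbf{B}$ sits inside a product of term-reducts of finite chains, so $\mathbf{B}\in\mathcal{B}^d$. The only subtlety is that, with $\mathbf{A}_4$ trivial, the hypotheses of Theorems \ref{thmbak}(ii) and \ref{thmbakbis}(ii) become vacuous; the idea is to replay their proofs with the ``central'' parameter $\chi$ instantiated as the equality relation, so that the central $\chi$-step of the chain collapses and the contradiction is no longer extracted from the fourth coordinate but from the fact that an element of $B$ forced into simultaneous Type~I (first coordinate nonzero) and Type~III (first coordinate zero) shapes cannot exist. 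This will be the main point at which the borrowed arguments must be adapted.

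For $\alpha(\beta\circ\gamma)\not\subseteq\alpha(\gamma\circ\beta)\circ\alpha(\gamma\circ\beta)$ in $\mathcal{B}^d$, I would instantiate Construction \ref{bakbis} as above and take $c_0=(2,0,1)$, $c_1=(1,1,0)$, $c_2=(0,2,1)$ (suppressing the trivial fourth coordinate), together with the congruences $\alpha$, $\beta$, $\gamma$ from the proof of Theorem \ref{thmbakbis}(ii). The relation $(c_0,c_2)\in\alpha(\beta\circ\gamma)$ is witnessed by $c_1$. Were $(c_0,c_2)$ also to lie in $\alpha(\gamma\circ\beta)\circ\alpha(\gamma\circ\beta)$ via some mediating $f\in B$, then the first-coordinate analysis in the proof of Theorem \ref{thmbakbis}(ii) applied to the first half would force $f$ to have first coordinate in $\{1,2\}$ and to be of Type~I, while the symmetric analysis on the second half would force its first coordinate to be $0$, making it of Type~III---a direct contradiction.

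For $\alpha(\beta\circ\gamma)\not\subseteq\gamma\circ\alpha\beta\circ\alpha\gamma\circ\beta$, the algebra $\mathbf{B}$ itself is not enough (an explicit mediating chain through elements of $B$ outside the subalgebra generated by $c_0,c_1,c_2$ can be exhibited), so I would pass to the subalgebra $\mathbf{B}'$ of $\mathbf{B}$ generated by $c_0, c_1, c_2$. The minimality-of-term argument in the proof of Proposition \ref{thmbakbiscor}, which relies only on the shapes $t_1(x,y,z)=x(y+z)$ and $t_{n-1}(x,y,z)=z(y+x)$ on the first coordinate, shows that $c_0$ is the unique element of $\mathbf{B}'$ with first coordinate $2$. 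In any would-be chain $c_0\,\gamma\,g_1\,\alpha\beta\,g_2\,\alpha\gamma\,g_3\,\beta\,c_2$ in $\mathbf{B}'$, the $\gamma$-step from $c_0$ then forces $g_1=c_0$; $\alpha$-preservation of the third coordinate through the two central steps forces $g_2$ to have third coordinate $1$; and the first- and second-coordinate analysis of Theorem \ref{thmbakbis}(ii) pins both of these coordinates at $1$ as well. But $(1,1,1)$ is in none of Types~I--IV, so no such $g_2\in B'$ exists.

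The failure of $5$-reversed-modularity comes from the parallel argument using Construction \ref{bak} with $\mathbf{C}_4^r$'s and the tuple $c_0=(3,0,1)$, $c_1=(2,1,0)$, $c_2=(1,2,0)$, $c_3=(0,3,1)$ from the proof of Theorem \ref{thmbak}. Since $\alpha\gamma\circ\alpha\beta\circ\alpha\gamma\circ\alpha\beta\circ\alpha\gamma\subseteq\gamma\circ\alpha\beta\circ\gamma\circ\alpha\beta\circ\gamma$, it suffices to show that $(c_0,c_3)$ does not lie in the latter; this is the right-hand side of Theorem \ref{thmbak}(ii) with $\chi$ set to equality, and the original coordinatewise analysis now forces the single central element to have $(1,2,1)$ on the first three coordinates, again outside $B$. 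Finally, since $\alpha\gamma\circ\alpha\beta\circ\alpha\gamma\circ\alpha\beta\subseteq\gamma\circ\alpha\beta\circ\alpha\gamma\circ\beta$ (using $\alpha\gamma\subseteq\gamma$ at the left end and $\alpha\beta\subseteq\beta$ at the right end), the $4$-alvin identity implies the second displayed identity, and the failure of the latter yields the failure of $4$-alvin.
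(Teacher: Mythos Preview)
Your approach is essentially the same as the paper's: take $\mathbf{A}_4$ trivial in Constructions \ref{bak} and \ref{bakbis}, replay the proofs of Theorems \ref{thmbak}(ii), \ref{thmbakbis}(ii) and Proposition \ref{thmbakbiscor} with $\chi$ collapsed to equality, and obtain the contradiction not from the fourth coordinate but from the central element being forced simultaneously into Type~I and Type~III shape. The paper phrases the $5$-reversed-modular step via the coarser bound $\alpha(\gamma\circ\beta\circ\gamma)\circ\alpha(\gamma\circ\beta\circ\gamma)$ rather than your $\gamma\circ\alpha\beta\circ\gamma\circ\alpha\beta\circ\gamma$, and it deduces the failure of $4$-alvin from the first displayed identity rather than the second, but these are cosmetic differences.

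One small correction. In the $5$-reversed-modular step your claim that the central element is forced to equal $(1,2,1)$ is not right. Tracing the chain $c_0\,\gamma\,g_1\,\alpha\beta\,g_2\,\gamma\,g$ in $\mathbf C_4^r\times\mathbf C_4^r\times\mathbf C_2^r$, the fact that $g_1,g_2\in B$ forces them into Type~I, so their second coordinate is $0$; then $\gamma$-equivalence (which is $\gamma^*$ on the second factor, with $\{0\}$ a block) gives second coordinate of $g$ equal to $0$ as well, not $2$. The actual contradiction is that the left half forces first coordinate of $g$ to be nonzero while the symmetric right half forces it to be $0$ --- exactly the Type~I/Type~III clash you describe elsewhere. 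Your $(1,1,1)$ computation for the $\gamma\circ\alpha\beta\circ\alpha\gamma\circ\beta$ identity in $\mathbf B'$, on the other hand, is correct: the constraints from both ends intersect precisely at that tuple. Your remark that one must pass to $\mathbf B'$ here is also correct and worth stating, since the chain $(2,0,1)\,\gamma\,(2,0,0)\,\alpha\beta\,(1,1,0)\,\alpha\gamma\,(0,2,0)\,\beta\,(0,2,1)$ lies entirely in $\mathbf B$.
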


  \begin{proof}
Let $n=3$. Consider only the first three components
in Constructions \ref{c},  \ref{bak}, \ref{bakbis}
and in the  proofs of Theorems \ref{thmB}, \ref{thmbak}
and  \ref{thmbakbis}.
Or, more formally, rather than reformulating everything,
take $\mathbf A_4$
and  $\mathbf D$ as  $1$-element algebras everywhere. 
By the comment shortly after Definition \ref{bakerdef},
all the mentioned constructions furnish algebras
which are 
 term-equivalent  
to algebras in $\mathcal {B}^d$, hence satisfying the same
congruence identities.

 Construct an algebra $\mathbf B$ and  elements 
$c_0, \dots, c_3$ 
as in the proof of 
Theorem \ref{thmbak}(ii),
either disregarding the fourth component,
or taking some fixed element
(the only element in $A_4$) at the fourth  place.
With the corresponding definitions of
$\alpha$, $\beta$ and $\gamma$
from the proof of 
\ref{thmbak}(ii),
in the present situation
we need no further assumption to get
$(c_0,c_3) \in \alpha ( \beta \circ \alpha \gamma \circ \beta )$.
Or, put in another way, since here
we are assuming $a=d$, we automatically 
get $ (a,d) \in \tilde{\alpha}
( \tilde{\beta} \circ \tilde{ \alpha }\tilde{\gamma} \circ \tilde{\beta} )$. 
We shall show that $\mathbf B$ is not
 $5$-reversed-modular.
If, by contradiction,   $\mathbf B$ is 
 $5$-reversed-modular, then
$(c_0,c_3) \in \alpha    \gamma \circ \alpha \beta  \circ \alpha \gamma \circ 
\alpha \beta  \circ \alpha  \gamma $,
a fortiori, 
$(c_0,c_3) \in \alpha (   \gamma \circ \beta  \circ \gamma ) \circ 
\alpha (   \gamma \circ \beta  \circ \gamma )$.
Thus there is some element  $g \in B$
such that 
 $(c_0,g) \in \alpha (   \gamma \circ \beta  \circ \gamma )$
and  $(g, c_3) \in \alpha (   \gamma \circ \beta  \circ \gamma )$.
The  proof of Theorem \ref{thmbak}
shows that the first component of
$g$ is not $0$ and that $g$ has type I.
The symmetric argument shows that $g$
has type III, a contradiction, since the first component of
any element of type III is $0$. 
Notice that here $g$ plays at the same time the role
of both $g$ and  $h$ from  the proof of  Theorem \ref{thmbak}.
The fact that 
  $\mathbf B^d$ is 
not $5$-reversed-modular
 is also a consequence of the
 last equation in \cite[Proposition 2.3]{B},
taking $n=3$ there.

The proof that the  identities
in the second statement
 fail
is obtained by a similar variation
on the proofs of
Theorem \ref{thmbakbis}(ii)
and of Proposition \ref{thmbakbiscor}. 
Another proof that 
$\alpha ( \beta \circ   \gamma  )
\subseteq  
\alpha (   \gamma \circ \beta   ) \circ 
\alpha (   \gamma \circ \beta  )$ 
fails in $\mathbf B^d$ follows from the case $n=2$ in 
the penultimate identity in \cite[Proposition 2.3]{B}. 

The final statement is then  immediate from the fact that
$\alpha \gamma \circ \alpha \beta \subseteq \alpha ( \gamma \circ \beta )$. 
 \end{proof}

\subsection*{J{\'o}nsson distributivity in the middle} 
Operations of Boolean algebras will be denoted
by juxtaposition, $+$  and $'$.
Let $\mathbf 2 = \{ 0,1 \} $
be the $2$-elements Boolean algebra with 
largest element $1$ and smallest element $0$.
Let $\mathbf 4 = \{ 0,1, 1', 2 \} $
be the $4$-elements Boolean algebra with 
largest element $2$ and smallest element $0$.
We have chosen such a labeling to maintain 
the analogy with the preceding subsections; thus, for example,
$\mathbf C_3 = \{ 0,1,2 \} $ is a sublattice of the
lattice-reduct of  $\mathbf 4$.
 
\begin{construction} \labbel{ba}
Fix some  natural number $n \geq 3$. 

For a Boolean algebra $\mathbf A$,  
let     $\mathbf A^r$ 
denote the  term-reduct
 with operations
\begin{multline*} 
t_1(x,y,z) = x(y'+z), \qquad
t_2(x,y,z) = xz, \qquad
t_3(x,y,z) = xz, \qquad  \dots, \qquad 
\\
\dots, \qquad t_{n-2}(x,y,z) = xz,
\qquad t_{n-1}(x,y,z) = z(y'+x).
\end{multline*}   

Let $\mathbf A_1=\mathbf A_2 = \mathbf 4^r$ 
and
$\mathbf A_3= \mathbf 2^r$.

Suppose that $\mathbf D$
is an algebra with ternary  operations  
$s_0, \dots, s_{n-2}$,
 relabel the operations
as $t_1=s_0, \dots, t _{n-1} = s_{n-2} $ and let
$\mathbf A_4$ be the resulting algebra.
Suppose that $\mathbf A_4$ satisfies the assumptions in
Construction \ref{c}. 
As in the preceding subsection, the algebras $\mathbf A_1$,
$\mathbf A_2$,
$\mathbf A_3$ and 
$\mathbf A_4$ satisfy the assumptions in
Construction \ref{c},
hence, by Theorem \ref{thmB},
for every choice of elements $a,d \in A_4$, 
 we have an algebra $\mathbf B = \mathbf B(a,d)$ 
 constructed as in \ref{c}(B). 
 \end{construction}   

Recall that an expression
is a term in the language
$ \{ \circ, \cap \} $. 

\begin{theorem} \labbel{thmba}
Let  the assumptions and the definitions in Construction 
\ref{ba} be in charge.
  \begin{enumerate}[(i)] 
   \item  
If $n$  is even and $s_0, \dots, s_{n-2}$ are J{\'o}nsson 
operations for $\mathbf D$, then, for every choice of  $a,d \in A_4$,
the algebra $\mathbf B$ is $n$-alvin.  

\item
 If there are congruences
$\tilde{\alpha}, \tilde{\beta}, \tilde{\gamma} $ of $  \mathbf A_4$
such that the identity 
$\tilde{\alpha}( \tilde{\beta} \circ \tilde{\alpha} \tilde{\gamma} \circ \tilde{\beta} )
\subseteq 
\tilde{\alpha} \tilde{\gamma} 
 \circ 
 \tilde{\alpha} \tilde{\beta} 
\circ {\stackrel{r}{\dots}}  $
 fails in $\mathbf A_4$,
for some $r$,  then there are $a,d \in A_4$ and 
congruences
$\alpha, \beta, \gamma $ of $\mathbf B = \mathbf B(a,d)$
such that the identity 
$\alpha( \beta \circ \alpha \gamma \circ \beta )
\subseteq
 \alpha \beta   \circ  \alpha \gamma      
\circ {\stackrel{r+2}{\dots}} $ 
fails in $\mathbf B$.  
\end{enumerate}
Moreover, for every expression $\chi$
and every choice of $\delta= \beta $ or $ \delta = \gamma $
and of $\varepsilon = \beta $ or $\varepsilon = \gamma $,
the following hold.
  \begin{enumerate}[(i)]      
\item[(iii)]
 If there are congruences
$\tilde{\alpha}, \tilde{\beta}, \tilde{\gamma} $ of $  \mathbf A_4$
such that the identity 
$\tilde{\alpha}( \tilde{\beta} \circ \tilde{\alpha} \tilde{\gamma} \circ \tilde{\beta} )
\subseteq 
\chi (\tilde{\alpha}, \tilde{\beta},  \tilde{\gamma}) $
 fails in $\mathbf A_4$,
  then there are $a,d \in A_4$ and 
congruences
$\alpha, \beta, \gamma $ of $\mathbf B = \mathbf B(a,d)$
such that the identity 
$\alpha( \beta \circ \alpha \gamma \circ \beta )
\subseteq
\alpha \delta  \circ 
\chi ( \alpha , \beta , \gamma)  
\circ \alpha \varepsilon  $ 
fails in $\mathbf B$.  

\item[(iv)]
 If there are congruences
$\tilde{\alpha}, \tilde{\beta}, \tilde{\gamma} $ of $  \mathbf A_4$
such that the identity 
$\tilde{\alpha}( \tilde{\beta} \circ \tilde{\gamma}  )
\subseteq 
\chi (\tilde{\alpha}, \tilde{\beta},  \tilde{\gamma}) $
 fails in $\mathbf A_4$,
  then there are $a,d \in A_4$ and 
congruences
$\alpha, \beta, \gamma $ of $\mathbf B = \mathbf B(a,d)$
such that the identity 
$\alpha( \beta \circ  \gamma )
\subseteq
\alpha \delta  \circ 
\chi ( \alpha , \beta , \gamma)  
\circ \alpha \varepsilon  $ 
fails in $\mathbf B$.  
 \end{enumerate} 
 \end{theorem}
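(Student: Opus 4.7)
For part (i), I would verify directly that the augmented sequence $t_0, t_1, \dots, t_n$, with $t_0$ the projection onto the first coordinate and $t_n$ onto the third, satisfies the alvin equations on every factor of $\mathbf{E}$. On the Boolean reducts $\mathbf{4}^r$ and $\mathbf{2}^r$ the only non-trivial boundary calculations are $t_1(x,z,z) = x(z'+z) = x = t_0(x,z,z)$ and $t_{n-1}(x,x,z) = z(x'+x) = z = t_n(x,x,z)$; the interior equations hold trivially since $t_h(x,y,z) = xz$ for $2 \le h \le n-2$; and the parity of the boundary equations matches the alvin pattern precisely because $n$ is even (so $n-2$ is even and $n-1$ is odd). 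On $\mathbf{A}_4$, the relabeling $t_k = s_{k-1}$ of the J{\'o}nsson equations satisfied by $s_0, \dots, s_{n-2}$ swaps the roles of even and odd indices, producing alvin equations for $t_1, \dots, t_{n-1}$; the endpoints $t_0, t_n$ match because of the basic equations $s_0(x,y,z) = x$ and $s_{n-2}(x,y,z) = z$ coming from \eqref{bas}. Since each factor is $n$-alvin, so is any subalgebra of $\mathbf{E}$, including $\mathbf{B}$.

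For parts (iii) and (iv), I would mimic the proof of Theorem \ref{thmbak}(ii), exploiting the two non-trivial congruences of the four-element Boolean algebra $\mathbf{4}$:
\[\beta^* = \bigl\{\{0,1\},\{1',2\}\bigr\}, \qquad \gamma^* = \bigl\{\{0,1'\},\{1,2\}\bigr\},\]
which are also congruences of $\mathbf{4}^r$. Given witnesses $a, d \in A_4$ for the failure of the hypothesized identity, I define $\alpha, \beta, \gamma$ on $\mathbf{B}(a,d)$ as the restrictions of $1 \times 1 \times 0 \times \tilde{\alpha}$, $\beta^* \times \beta^* \times 1 \times \tilde{\beta}$ and $\gamma^* \times \gamma^* \times 1 \times \tilde{\gamma}$, respectively. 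For part (iii), choose $b, c \in A_4$ with $a \mathrel{\tilde{\beta}} b \mathrel{\tilde{\alpha}\tilde{\gamma}} c \mathrel{\tilde{\beta}} d$ and consider
\[c_0 = (2,0,1,a),\;\; c_1 = (1',1,0,b),\;\; c_2 = (0,2,0,c),\;\; c_3 = (0,2,1,d),\]
which lie in $B$ as types I, IV, IV, III respectively; the crucial Boolean fact is that $(2,0) \in \beta^* \circ \gamma^*$ via $1'$ on the first coordinate and via $1$ on the second, making the chain close. A direct check yields $(c_0, c_3) \in \alpha(\beta \circ \alpha\gamma \circ \beta)$. For part (iv), take instead the shorter chain $c_0, c_1 = (1',1,0,b), c_2 = (0,2,1,d)$, where $b$ witnesses $a \mathrel{\tilde{\beta}} b \mathrel{\tilde{\gamma}} d$, giving $(c_0, c_2) \in \alpha(\beta \circ \gamma)$.

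The core step is then a type analysis. Assume $(c_0, c_3) \in \alpha\delta \circ \chi(\alpha, \beta, \gamma) \circ \alpha\varepsilon$ via witnesses $g, h \in B$ (with $c_2$ in place of $c_3$ for part (iv)). From $c_0 \mathrel{\alpha\delta} g$ one deduces $g^{(3)} = 1$ (by the $\alpha$-equality on the third coordinate) and $g^{(1)} \ne 0$, since $g^{(1)}$ is $\delta^*$-equivalent to $2$ and both $\beta^*$- and $\gamma^*$-blocks containing $2$ avoid $0$; hence $g$ cannot be of types II, III, or IV, forcing it to be of type I with $g^{(4)} = a$. Symmetrically, $h^{(3)} = 1$ and $h^{(2)} \ne 0$ (since $h^{(2)}$ is $\varepsilon^*$-equivalent to $2$), which rules out types I, II, and IV, so $h$ is of type III with $h^{(4)} = d$. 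Projecting $(g, h) \in \chi(\alpha, \beta, \gamma)$ onto the fourth coordinate then gives $(a, d) \in \chi(\tilde{\alpha}, \tilde{\beta}, \tilde{\gamma})$, contradicting the choice of $(a, d)$. Part (ii) follows from (iii) by rewriting $\alpha\beta \circ \alpha\gamma \circ \dots^{r+2} = \alpha\beta \circ (\alpha\gamma \circ \alpha\beta \circ \dots^r) \circ \alpha\varepsilon$ for the appropriate $\varepsilon \in \{\beta, \gamma\}$ and applying (iii) with $\chi = \alpha\gamma \circ \alpha\beta \circ \dots^r$.

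The point requiring the most care is the type analysis on $h$: excluding type I rests on $h^{(2)} \ne 0$, which in turn uses that both the $\beta^*$- and $\gamma^*$-blocks containing $2$ avoid $0$. This is the specifically Boolean feature that yields the sharp shift $r \to r+2$, in contrast to the shift $r \to r+6$ obtained in the lattice setting of Theorem \ref{thmbak}; a secondary bookkeeping point is checking that the product congruences on $\mathbf{E}$ really induce congruences on the subalgebra $\mathbf{B}$, but this is automatic once $\mathbf{B}$ is known to be a subalgebra by Theorem \ref{thmB}.
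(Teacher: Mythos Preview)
Your proof is correct and follows essentially the same approach as the paper's. The only cosmetic differences are that you have swapped the labels of $\beta^*$ and $\gamma^*$ relative to the paper, and consequently chosen slightly different witness elements (using $1'$ where the paper uses $1$); for part (iv) you keep the same product congruences as in (iii) rather than crossing them as the paper does, but your type analysis goes through unchanged since the key fact---that both nontrivial congruence blocks of $\mathbf 4$ containing $2$ avoid $0$---is symmetric in $\beta^*$ and $\gamma^*$.
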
  

\begin{proof} 
Clause (i) is proved 
as in  Lemma \ref{lembak}(i).
The algebras  $\mathbf 4^r$ and 
$\mathbf 2^r$ are clearly  $n$-alvin,
since $n$ is even.
$ \mathbf A_4$ is  $n$-alvin, too,
since the indices are shifted by $1$.
Notice that if $s_0, \dots, s_{n-2}$ are J{\'o}nsson 
operations for $\mathbf D$, then
Conditions \eqref{t1}, \eqref{tn} and  \eqref{b}
in Construction \ref{c} are satisfied by $\mathbf A_4$. 

As usual by now, (ii) is a special cases of (iii).
Take $\chi ( \alpha , \beta , \gamma)   =
\alpha \gamma 
 \circ 
\alpha \beta  
\circ {\stackrel{r}{\dots}}  $,
$\delta = \beta $ and $\varepsilon = \gamma $
if $r $ is even, $\varepsilon = \beta  $
if $r $ is odd.

In order to prove (iii), suppose
 that $\tilde{\alpha}$, $\tilde{\beta}$ and $\tilde{\gamma}$ 
are congruences on $\mathbf A_4$  
and $a,d $ are elements of $  A_4$ such that 
$(a,d) \in  
\tilde{\alpha}( \tilde{\beta} \circ \tilde{\alpha} \tilde{\gamma} \circ \tilde{\beta} )$
and $ (a,d) \not \in 
\chi (\tilde{\alpha}, \tilde{\beta},  \tilde{\gamma}) $.
Let $\beta^*$ be the congruence 
on $\mathbf 4$ whose blocks are
$\{ 1,2 \}$  and $\{ 0, 1' \}$. 
Let $ \gamma ^*$ be the congruence
on $\mathbf 4$ whose blocks are
$\{ 0,1 \}$  and $\{  1', 2 \}$. 
Since $\beta$ and $\gamma$ 
are congruences on the Boolean algebra $\mathbf 4$,
they are also congruences on the reduct 
$\mathbf 4^r$.
 Let $\beta$, $\gamma$ and $\alpha$ 
be the 
 congruences on $\mathbf B = \mathbf B(a,d)$ induced, respectively, by
$ \beta ^* \times  \beta ^* \times 1 \times \tilde{\beta} $, 
$ \gamma  ^* \times  \gamma  ^* \times 1 \times \tilde{\gamma} $ 
and  $ 1 \times 1 \times 0 \times \tilde{\alpha}$. 
Since 
$(a,d) \in  \tilde{\alpha}( \tilde{\beta} \circ \tilde{\alpha} \tilde{\gamma} \circ \tilde{\beta} )$,
then $a \mathrel {\tilde{\alpha}}  d $, and  there are $b,c \in A_4$ 
such that $a \mathrel { \tilde{\beta}} b \mathrel { \tilde{\alpha} \tilde{\gamma}  } c 
\mathrel { \tilde{\beta} } d $.
Consider the following elements of $B$:  
 \begin{equation*}
 c_0 =(2,0,1, a),   \quad 
 c_1 =(1,0,0, b), \quad 
   c_{2} = (0,1,0,c),
 \quad c_{3} = (0,2,1, d).
\end{equation*} 
As in the proof of Theorem \ref{thmbak}, $c_0$ has type  I,
$c_1$ and $c_2$  have type  IV
and $c_3$ has type  III,
thus they belong to $B$.
Moreover, 
$c_0 \mathrel \alpha  c_3$
and  
$c_0 \mathrel \beta c_1 \mathrel { \alpha \gamma  } c_2
\mathrel { \beta }   c_3$, hence
$(c_0,c_3) \in  \alpha( \beta \circ \alpha \gamma \circ \beta)$.

Whatever the choice of $\delta$ and $\varepsilon$, 
assume by contradiction that
$\alpha( \beta \circ \alpha \gamma \circ \beta )
\subseteq
\alpha \delta  \circ  
\chi ( \alpha , \beta , \gamma)  
\circ \alpha \varepsilon   $, thus
 $(c_0,c_3) \in
\alpha \delta  \circ 
\chi ( \alpha , \beta , \gamma)  
\circ \alpha \varepsilon    $, hence
$c_0 \mathrel { \alpha \delta } g $,
$(g,h ) \in \chi ( \alpha , \beta , \gamma)  $
and $h \mathrel {  \alpha \varepsilon } c_3 $,
for certain elements $g, h$ of $\mathbf B$.    

Since
the first component of $c_0$
is $2$ and
 $c_0 \mathrel { \delta } g $,  then, whatever  the choice
of $\delta$, be it $  \beta $
or $ \gamma $, the first component 
of $g$ is not $0$. By $\alpha$-connection
of $c_0$ and $g$,
the third component of $g$ is $1$, hence 
$g$ has type I, so the fourth component of $g$ is $a$.
Symmetrically, the fourth component of $h$ is $d$.
Since 
$(g,h ) \in \chi ( \alpha , \beta , \gamma)  $,
it follows that  
$(a,d) $ is in $  \chi (\tilde{\alpha}, \tilde{\beta},  \tilde{\gamma}) $,
a contradiction.

 In order to prove (iv),
we use an argument resembling the proof of   Theorem \ref{thmbakbis}.
Suppose that $ (a,d) \in \tilde{\alpha}( \tilde{\beta} \circ \tilde{\gamma}  )$
and $(a,d) \not \in \chi (\tilde{\alpha}, \tilde{\beta},  \tilde{\gamma}) $.
As above, let $\beta^*$ be the congruence
on $\mathbf 4^r$ whose blocks are
$\{ 1,2 \}$  and $\{ 0, 1' \}$
and let $ \gamma ^*$ be the congruence
on $\mathbf 4^r$ whose blocks are
$\{ 0,1 \}$  and $\{  1', 2 \}$. 
 In this case, let $\beta$, $\gamma$ and $\alpha$ 
be the 
 congruences on $\mathbf B = \mathbf B(a,d)$ induced, respectively, by
$ \beta ^* \times  \gamma  ^* \times 1 \times \tilde{\beta} $, 
$ \gamma  ^* \times  \beta   ^* \times 1 \times \tilde{\gamma} $ 
and  $ 1 \times 1 \times 0 \times \tilde{\alpha}$. 
If $b$ is such that 
$ a \mathrel { \tilde{\beta} } b 
\mathrel {  \tilde{\gamma} } d$,
consider the following elements of $B$:  
 \begin{equation*}
 c_0 =(2,0,1, a),   \qquad 
 c_1 =(1,1,0, b), \qquad    c_{2} = (0,2,1,d),
 \end{equation*}
thus $(c_0, c_2) \in \alpha ( \beta \circ \gamma )$. 
If
$ (c_0, c_2) \in 
\alpha \delta  \circ 
\chi ( \alpha , \beta , \gamma)  
\circ \alpha \varepsilon   $, this relation is witnessed by 
appropriate elements 
$g$ and  $h$ and,  arguing as in (iii), 
the fourth components of $g$ and  $h$ are, respectively 
$a$ and  $d$.
 But then 
$(a,d) \in \chi (\tilde{\alpha}, \tilde{\beta},  \tilde{\gamma}) $,
a contradiction.
\end{proof}

\begin{remark} \labbel{ifina}  
In the notation from the proof
of Theorem \ref{thmba},
both in case (iii) and in case (iv), 
if we let 
$e_1=(1,0,1,a)$,
$e_1^* = (1',0,1,a)$,  
we see that 
$\{ c_0, e_1\}$
is an $ \alpha \beta$-block in $B$
and 
$\{ c_0, e_1^*\}$
is an $ \alpha \gamma$-block in $B$.
This might be useful in different situations.
\end{remark}

\section{Day's Theorem is optimal for $n$ even} \labbel{opti}

\begin{theorem} \labbel{buh}
Suppose that  $n \geq 2$ and $n$ is even.

(i)  There is a locally finite
$n$-distributive variety which is not $2n{-}1$-reversed-modular,
in particular, not $2n{-}2$-modular.

(ii) There is a locally finite
$n$-alvin variety which is not $2n{-}3$-modular.
 \end{theorem}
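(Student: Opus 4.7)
The plan is to prove (i) and (ii) simultaneously by induction on even $n \geq 2$, with the two statements feeding each other: Construction~\ref{bak} turns a counterexample for (ii) at level $n-2$ into a counterexample for (i) at level $n$, while Construction~\ref{ba} goes the other way. For the base case $n = 2$, claim (ii) is immediate, since the Day equations \eqref{d1}--\eqref{d3} force $1$-modularity to entail the degenerate identity $x = w$; the variety of Boolean algebras, viewed through its Pixley term, is $2$-alvin and nontrivial, hence witnesses (ii). For claim (i) at $n = 2$, the variety of distributive lattices is locally finite, $2$-distributive via the median term, and fails $3$-reversed-modularity: in the four-chain $\{0,1,2,3\}$, taking $\alpha$ to be the full congruence, $\beta$ the congruence $\{0,1\}\mid\{2,3\}$ and $\gamma$ the congruence $\{0\}\mid\{1,2\}\mid\{3\}$, the pair $(0,3)$ belongs to $\alpha(\beta\circ\alpha\gamma\circ\beta)$ but not to $\alpha\gamma\circ\alpha\beta\circ\alpha\gamma$.

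For the inductive step toward (i) at $n \geq 4$ even, invoke (ii) at $n-2$ to obtain a locally finite $(n-2)$-alvin variety that is not $(2n-7)$-modular. By Remark~\ref{conidmod}, this failure manifests on some algebra $\mathbf D$ in the variety as a violation of the identity $\tilde\alpha(\tilde\beta \circ \tilde\alpha\tilde\gamma \circ \tilde\beta) \subseteq \tilde\alpha\tilde\beta \circ \tilde\alpha\tilde\gamma \circ \stackrel{r}{\dots} \circ \tilde\alpha\tilde\beta$ with $r = 2n-7$ factors (odd, since $n$ is even). Following Remark~\ref{merg}, replace $\mathbf D$ by the reduct whose only operations are its alvin terms $s_0, \dots, s_{n-4}$; after relabelling $t_1 = s_0, \dots, t_{n-1} = s_{n-4}$, this is exactly the $\mathbf A_4$ demanded by Construction~\ref{bak}. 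Lemma~\ref{lembak}(i) certifies that the resulting $\mathbf B \subseteq \mathbf C_4^r \times \mathbf C_4^r \times \mathbf C_2^r \times \mathbf A_4$ is $n$-distributive; Theorem~\ref{thmbak}(i) with $r = 2n-7$ shows that the reversed-modular identity with $r + 6 = 2n-1$ factors fails in $\mathbf B$. Translating back by Remark~\ref{conidmod}, the locally finite variety generated by $\mathbf B$ is $n$-distributive and not $(2n-1)$-reversed-modular, and Proposition~\ref{modeq} delivers the ``in particular, not $(2n-2)$-modular'' clause.

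For the inductive step toward (ii) at $n \geq 4$ even, argue symmetrically. Start from (i) at $n-2$: a locally finite $(n-2)$-distributive variety that is not $(2n-5)$-reversed-modular, realising the corresponding congruence-identity failure on an algebra $\mathbf D$; pass to the reduct with only the J\'onsson operations $s_0, \dots, s_{n-4}$, relabel them to form the $\mathbf A_4$ of Construction~\ref{ba}, apply Theorem~\ref{thmba}(i) to conclude that the resulting $\mathbf B$ is $n$-alvin, and Theorem~\ref{thmba}(ii) with $r = 2n-5$ to conclude that the modular identity with $r + 2 = 2n-3$ factors fails in $\mathbf B$, giving (ii) at $n$. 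The main obstacle is the careful bookkeeping of parities and of the shifts $r \mapsto r+6$ and $r \mapsto r+2$ supplied by Theorems~\ref{thmbak}(i) and \ref{thmba}(ii): together they accumulate an average shift of $4$ per elementary step, so that a two-step cycle in which $n$ grows by $4$ raises the modularity level by exactly $8$, matching the $2n-1$ and $2n-3$ targets. A secondary technical point, already absorbed in Remark~\ref{merg}, is moving from a variety-level failure of a congruence identity to an equation-level failure on a single algebra whose alvin or J\'onsson terms can be promoted to basic operations, so that the hypotheses \eqref{t1}, \eqref{tn}, \eqref{b} of Construction~\ref{c} are satisfied by $\mathbf A_4$.
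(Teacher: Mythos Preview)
Your approach is exactly the paper's own: simultaneous induction on even $n$, with Construction~\ref{bak} together with Lemma~\ref{lembak}(i) and Theorem~\ref{thmbak}(i) producing the step from (ii) at $n-2$ to (i) at $n$, and Construction~\ref{ba} together with Theorem~\ref{thmba}(i)(ii) producing the reverse step; the base cases and the shifts $r\mapsto r+6$, $r\mapsto r+2$ are correctly identified.

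Two small things to fix. First, your indexing of the alvin (resp.\ J\'onsson) operations of $\mathbf D$ is off by $2$: an $(n-2)$-alvin algebra has terms $s_0,\dots,s_{n-2}$, not $s_0,\dots,s_{n-4}$, and the relabelling in Construction~\ref{bak} gives $t_{n-1}=s_{n-2}$. Second, you assert that the variety generated by $\mathbf B$ is locally finite without saying why; the paper closes this by observing that $\mathbf B$ lies in the join of the variety generated by $\mathbf A_4$ (locally finite by induction, being term-equivalent to the previous step) and a term-reduct of distributive lattices or Boolean algebras, and the join of two locally finite varieties is locally finite.
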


 \begin{proof} 
If some variety $\mathcal {V}$  is not $2n{-}1$-reversed-modular,
then $\mathcal {V}$ is not $2n{-}2$-\brfrt modular, by Proposition \ref{modeq}.

The proof of the hard parts
of the theorem goes by simultaneous induction on $n$. 
We first consider the base cases.

The variety of lattices is 
$2$-distributive and not $3$-reversed-modular.
Indeed, under the equivalence given by  Remark \ref{conidmod},
 $3$-reversed-modularity reads
$\alpha( \beta \circ \alpha \gamma  \circ \beta )
\subseteq \alpha \gamma \circ \alpha \beta \circ \alpha \gamma $ and this identity
implies $3$-permutability: just   
take $\alpha=1$, the largest congruence. The variety of lattices is not 
$3$-permutable, hence it is not $3$-reversed-modular.
The above arguments apply to the variety of distributive lattices, as well,
and the variety of distributive lattices  is locally finite.

The variety of Boolean algebras is
locally finite, $2$-alvin and not $1$-modular.
Notice that a $1$-modular variety is a trivial variety.
Thus the basis of the induction is true.  
Notice that, in place of lattices and of Boolean algebras 
we can just consider the varieties of their term-reducts
with just a majority or a Pixley operation.

Suppose that $n \geq 4$ and that
the theorem is true for $n-2$.
By the inductive hypothesis
and   Remark \ref{conidmod}, 
there exist an $n {-} 2$-alvin variety $\mathcal {V}$ 
and an algebra $\mathbf D \in \mathcal {V}$ 
with congruences $\alpha$, $\beta$ and $\gamma$ 
such that the congruence  identity 
$\alpha ( \beta \circ \alpha \gamma  \circ \beta )
\subseteq
\alpha \beta \circ \alpha \gamma 
\circ {\stackrel{2n-7}{\dots}} \circ   
\alpha \beta $ fails in $\mathbf D$. 
 Since $\mathbf D $ belongs to an $n{-}2$-alvin variety,
$\mathbf D $ has $n-2$ alvin terms. 

It is no loss of generality 
to assume that these terms are actually operations of
$\mathbf D $. We can also assume that $\mathbf D $
has no other operation, since $\alpha$, $\beta$ and $\gamma$ 
remain congruences on the reduct; moreover,
intersection and composition do not depend on the algebraic structure
of the algebra under consideration.
Thus the  identity 
$\alpha ( \beta \circ \alpha \gamma  \circ \beta )
\subseteq
\alpha \beta \circ \alpha \gamma 
\circ {\stackrel{2n-7}{\dots}} \circ   
\alpha \beta $ fails in $\mathbf D$, even if we consider
$\mathbf D$ as an algebra with only the alvin operations.
Otherwise, as we mentioned, the basis of the theorem
can be proved for algebras having only alvin or J{\'o}nsson 
operations and it is easy to check that in the induction
we are going to perform we construct algebras with 
 such operations only. This fact will appear
evident in the course of the proof
of Theorem \ref{sumup} below. Whatever the argument, we can suppose that
$\mathbf D$ has only alvin operations.

Apply Construction \ref{bak} to the algebra $\mathbf D$. 
By Lemma \ref{lembak}(i) and
Theorem \ref{thmbak}(i) 
with $r=2n-7$,
there is an $n$-distributive algebra $\mathbf B$    
(which henceforth generates an $n$-distributive variety)
in which $2n{-}1$-reversed-modularity fails. 

In the parallel situation, again by the inductive hypothesis
and  Remark \ref{conidmod}, 
there exist an $n {-} 2$-distributive variety $\mathcal {V}$ 
and an algebra $\mathbf D \in \mathcal {V}$ 
such that the congruence  identity 
$\alpha ( \beta \circ \alpha \gamma  \circ \beta )
\subseteq
\alpha \gamma  \circ \alpha \beta 
\circ {\stackrel{2n-5}{\dots}} \circ   
\alpha \gamma  $ fails in $\mathbf D$. 
 Arguing as above, we can suppose that
$\mathbf D$ has only the J{\'o}nsson  operations.
Apply Construction \ref{ba} to the algebra $\mathbf D$. 
By Theorem \ref{thmba}(i)(ii) with $r=2n-5$,
there is an $n$-alvin algebra $\mathbf B$    
(which henceforth generates an $n$-alvin variety)
in which $2n{-}3$-modularity fails. 

The induction step is thus complete. In order to conclude
the proof of  the theorem it is enough to show that the above varieties
can be taken to be locally finite. First notice that we have used distributive
lattices in all of our constructions, and the variety of distributive lattices is
locally finite. The variety of Boolean algebras is locally finite, as well.
By induction, if $\mathbf D$ belongs to some locally finite variety,
then $\mathbf A_4$, too, belongs to some locally finite variety, 
since  $\mathbf A_4$ is term-equivalent to $\mathbf D$.
By the above remarks, at each induction step, $\mathbf B$ can 
be taken to belong to the join of two locally finite varieties,
hence to a locally finite variety.
\end{proof}  

 A somewhat  simpler description of  varieties 
furnishing a  proof of Theorem \ref{buh}
shall be presented in Section \ref{moreex}.
However, as remarked in the introduction, 
the proof seems to necessarily rely on 
the methods in the present and in the former section.

The next lemma applies not
only to $n$-alvin varieties, but also to varieties 
which satisfy the weaker form of the $n$-alvin condition in   
which the  identities 
$x=t_1(x,y,x)$ and  $t_{n-1}(x,y,x) =x$
are not  assumed.
We shall state a reformulation of  this observation 
 in Proposition \ref{gummopt}(i) below. 

\begin{lemma} \labbel{dayoptlem}
  \begin{enumerate}[(a)]    
\item  
 If $n \geq 4$ and $n$ is even, then 
every $n$-alvin variety is $2n{-}3$-reversed-modular.

Actually, the result applies to a condition weaker
than $n$-alvin: it is not necessary to assume 
the ``outer'' equations 
$x=t_1(x,y,x)$ and  $t_{n-1}(x,y,x) =x$.

\item
If $n \geq 2$, then 
every $n$-alvin variety is $2n{-}2$-modular.
In particular, if $n$ is odd, then every
 $n$-distributive variety is $2n{-}2$-modular.
 \end{enumerate}  
\end{lemma} 

\begin{proof}
Part (a) is a special case
of \cite[Proposition 6.4]{ntcm} with $n-2$ in place of $n$ there.
Corollary  \ref{propmixmod}(ii)(c) below provides 
a more general result. See Remark \ref{rcm}. Still another proof, along the lines
of Day's argument, is obtained by performing the trick  
in the proof of   \cite[Theorem 1 (3) $\rightarrow $   (1)]{LTT} ``at both ends''.  
We report the details below for the reader's
convenience.

Given alvin  terms $t_0, \dots, t_{n}$, we obtain the following 
terms $u_0, \dots, u_{2n-3}$ satisfying 
the reversed form of the conditions in
Definition \ref{daydef}. The terms $u_0, \dots, u_{2n-3}$ below
 are considered as $4$-ary terms depending on the variables
$x,y,z,w$ in that order. The term $u_0$ is constantly
$x$ and the term $u_{2n-3}$ is constantly
$w$. The remaining terms are 
defined in  the following table, where 
  we omit commas for lack of space. 
\begin{align*}   
  u_1&{\hspace {1 pt}=\hspace {1 pt}}t_1(x y z)   &  u_2&{\hspace {1 pt}=\hspace {1 pt}}t_2(x y w) &   u_3&{\hspace {1 pt}=\hspace {1 pt}}t_2(x z w) &     
   u_4&{\hspace {1 pt}=\hspace {1 pt}}t_3(x z w)
 \\   
 u_5&{\hspace {1 pt}=\hspace {1 pt}}t_3(x y w) &
   u_6&{\hspace {1 pt}=\hspace {1 pt}}t_4(x y w)  & 
   u_7&{\hspace {1 pt}=\hspace {1 pt}}t_4(x z w) & & \dots 
\\
  u_{4i{+}1}&{\hspace {1 pt}=\hspace {1 pt}}t_{2i{+}1}(x y w)   
 & u_{4i{{+}}2}&{\hspace {1 pt}=\hspace {1 pt}}t_{2i{+}2}(x y w)    &
  u_{4i{{+}}3} &{\hspace {1 pt}=\hspace {1 pt}}t_{2i{+}2}(x z w)   &
  u_{4i{+}4}& {\hspace {1 pt}=\hspace {1 pt}}t_{2i{+}3}(x z w) 
\\
& \dots 
&   u_{2n{\hspace {-1 pt}-\hspace {-1 pt}}10}\hspace {-1 pt} 
&{\hspace {1 pt}=\hspace {1 pt}}  t_{n{-}4}(x y w)
&   u_{2n{-}9} &{\hspace {1 pt}=\hspace {1 pt}}  t_{n{-}4}(x z w)  &
  u_{2n{-}8} &{\hspace {1 pt}=\hspace {1 pt}}  t_{n{-}3}(x z w)  
\\
 u_{2n{-}7} &{\hspace {1 pt}=\hspace {1 pt}}  t_{n{-}3}(x y w) &
   u_{2n{-}6} &{\hspace {1 pt}=\hspace {1 pt}}  t_{n{-}2}(x y w)
&   u_{2n{-}5} &{\hspace {1 pt}=\hspace {1 pt}}  t_{n{-}2}(x z w)  &
  u_{2n{-}4} &{\hspace {1 pt}=\hspace {1 pt}}  t_{n{-}1}(y z w)  
 \end{align*}

Notice the different arguments of $t_{1}$
and of
$t_{n-1}$
with respect to the other
terms in the corresponding columns.
If $n=4$, we consider only
the first line, taking 
$ u_{4} =  t_{3}(y z w)$.

Notice that
the indices in the last two lines follow the same pattern
of the preceding lines, taking, respectively,  $i=  \frac{n-6}{2} $
and $i=  \frac{n-4}{2} $.
We can do this since $n$ is assumed to be even. 

The fact that  $u_0, \dots, u_{2n}$ satisfy the conditions in 
Definition \ref{daydef} with even and odd exchanged
is easy and is proved as in \cite[p.\ 172--173]{D}.
The only different computations are 
$u_{0} (x,y,y,w)=x=
 t_{0}(x,y,y)=  t_{1}(x,y,y)=
u_{1} (x,y,y,w) $ and 
$u_{1} (x,x,w,w) =
t_1(x,x,w)= t_2(x,x,w)= u_{2} (x,x,w,w)$.
Notice that, in order to perform the above
computations, it is fundamental to deal 
with the alvin and the reversed Day conditions!  
 Symmetrically, at the other end,
$ u_{2n-5} (x,x,w,w)=  t_{n-2}(x,w,w) =   t_{n-1}(x,w,w) 
=u_{2n-4} (x,x,w,w)$ 
and 
$u_{2n-4} (x,y,y,w)=  t_{n-1}(y,y, \allowbreak  w) = t_{n}(y,y,w)  = w
= u_{2n-3} (x,y,y,w)$. 
Notice  that in this case it is fundamental to have $n$ even!
Finally, notice that we have not used the equations
$x=t_1(x,y,x)$ and  $t_{n-1}(x,y,x) =x$
in the above computations.

Part (b) is proved in a similar way, with no 
``special trick'' at the final end.  
We get that every $n$-alvin variety is $2n{-}2$-reversed modular,
but this is equivalent to  $2n{-}2$-modular, by
Proposition \ref{modeq}. The last statement follows
from the fact that if $n$ is odd, then $n$-alvin and 
 $n$-distributive are equivalent conditions, by 
Remark \ref{conidcomm}(a). 
 \end{proof}

\begin{corollary} \labbel{dayopt} 
Suppose that $n \geq 2$ and $n$ is even.
  \begin{enumerate}[(i)]   
 \item  
 Every $n$-distributive variety is $2n{-}1$-modular.
\item
Every $n$-alvin variety is  $2n{-}2$-modular.
\item
Every 
$2$-alvin variety is $2$-reversed-modular.
If $n \geq 4$, then 
every $n$-alvin variety is $2n{-}3$-reversed-modular.
\item
All the above results are sharp: 
for every even $n \geq 2$ there are an
$n$-distributive variety which is not $2n{-}2$-modular
and an 
$n$-alvin variety which is not $2n{-}3$-modular,
in particular, by Proposition \ref{modeq}, not $2n{-}4$-reversed-modular.  
The variety of Boolean algebras is $2$-alvin 
and not  $1$-reversed-modular.  
 \end{enumerate} 
\end{corollary}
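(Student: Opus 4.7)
The plan is that all four parts of the corollary follow by assembling results already proved in the paper, with parity bookkeeping done through Proposition \ref{modeq}. No new combinatorial or algebraic argument is required; the task is essentially to package Day's Theorem, Lemma \ref{dayoptlem}, Theorem \ref{buh}, and the straightforward padding/parity manipulations of Day sequences.

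For (i) I would simply invoke Day's Theorem \ref{day}, which has no restriction on the parity of $n$ and gives $2n{-}1$-modularity. For (ii), the plan is first to apply Lemma \ref{dayoptlem} (which requires $n \geq 4$ even) to obtain $2n{-}3$-reversed-modularity, and then invoke Proposition \ref{modeq} with $m = 2n-2$, using the implication ``$(m{-}1)$-reversed-modular $\Rightarrow$ $m$-modular''. The case $n = 2$ is degenerate: a $2$-alvin variety has a Pixley term, hence is arithmetic, hence congruence permutable, hence $2$-modular $=$ $(2n{-}2)$-modular.

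For (iii) in the range $n \geq 4$, the result is literally the content of Lemma \ref{dayoptlem}. The case $n = 2$ uses the same Pixley-term observation as above: a Pixley term yields a Maltsev term, hence $2$-modularity; since $m = 2$ is even, Proposition \ref{modeq} upgrades this for free to $2$-reversed-modularity. For (iv), the two main sharpness assertions are exactly Theorem \ref{buh}(i) and Theorem \ref{buh}(ii). To derive the parenthetical ``not $2n{-}4$-reversed-modular'' consequence, I would argue by contradiction: if the $n$-alvin counterexample from Theorem \ref{buh}(ii) were $2n{-}4$-reversed-modular then, since $2n-4$ is even, Proposition \ref{modeq} would give $2n{-}4$-modularity, and the elementary padding $u_{m+1}(x,y,z,w) = w$ preserves the Day equations and yields $2n{-}3$-modularity, contradicting Theorem \ref{buh}(ii). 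Finally, for Boolean algebras: the ternary discriminator is a Pixley term, so the variety is $2$-alvin; and a $1$-reversed-modular variety is trivial, because with $u_0 = x$ and $u_1 = w$ the (reversed) equation at $k = 0$ in \eqref{d2} collapses to $x = w$.

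The only step requiring any attention is the careful tracking of parities and of the effect of Proposition \ref{modeq} (which trades one unit of the index for a switch between the two flavors of modularity). There is no genuine obstacle: the hard content — the construction of counterexamples and the optimal positive bound for $n$-alvin varieties — has already been carried out in Theorem \ref{buh} and Lemma \ref{dayoptlem}, and the present corollary is the bookkeeping that distributes those results across the four indexing conventions.
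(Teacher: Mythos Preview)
Your proposal is correct and follows essentially the same approach as the paper: invoke Day's Theorem for (i), combine Lemma \ref{dayoptlem} with Proposition \ref{modeq} for (ii)--(iii) (handling $n=2$ separately via the Pixley term and arithmeticity), and read off (iv) from Theorem \ref{buh}. The only cosmetic difference is that for the ``not $2n{-}4$-reversed-modular'' clause you go through the even case of Proposition \ref{modeq} plus a padding step, whereas the paper applies the second statement of Proposition \ref{modeq} directly; both routes are valid.
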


 \begin{proof}
As already mentioned, (i) is due to Day \cite{D}
and the assumption that $n$ is even is not necessary
in (i).  The proof is  slightly simpler than the proof of
Lemma \ref{dayoptlem}. 
This time the chain of terms is given by
\begin{align*} 
u_1 &=
 t_1(x, y, w),  & u_2 &= t_1(x, z, w),  
& u_3 &= t_2(x, z, w),  
& u_4 & = t_2(x, y, w), 
\\ 
 u_5 & = t_3(x, y, w), 
& u_6 & = t_3(x, z, w), 
&\dots
\end{align*}    
and there are no special variations on the outer edges.
The  proof of a fact  more general than (i)
 using different methods
 shall be presented 
in Corollary  \ref{propmixmod}(i).

(ii) is a special case of Lemma \ref{dayoptlem}(b). 

 (iii) As mentioned, $2$-alvin is arithmeticity. In particular, by 
distributivity (hence modularity) and 
permutability,
we get both $2$-modularity and $2$-reversed-modularity.
If $n \geq 4$, we get  that every 
$n$-alvin variety $\mathcal {V}$ is  $2n{-}3$-reversed-modular
from Lemma \ref{dayoptlem}(a), hence $\mathcal V$ is
$2n{-}2$-modular,
  by Proposition \ref{modeq}, or use directly \ref{dayoptlem}(b).

The nontrivial parts in (iv) are given by Theorem \ref{buh}.
 \end{proof}  

\begin{remark} \labbel{daystrong}
Day's proof of 
Theorem \ref{day} 
(recalled above in Corollary \ref{dayopt}(i))
actually provides terms satisfying the equations
\begin{equation}\labbel{3dist}    
x=u_k(x,y,z,x), \qquad \text{for all indices $k$.}
 \end{equation} 
The above equations are stronger than 
equations \eqref{d0}, 
and correspondingly Day's proof actually
shows that
if $n > 0$, then every
$n$-distributive variety
satisfies the congruence identity
\begin{equation}\labbel{str}     
\alpha (\beta \circ  \gamma \circ \beta )
\subseteq \alpha \beta \circ \alpha \gamma 
\circ {\stackrel{2n-1}{\dots}} \circ \alpha \beta   .      
  \end{equation}
In the same way it can be proved that
$n$-alvin varieties, as well, satisfy the identity \eqref{str}.  
Compare also Proposition \ref{propmixi}
below (take $i=2$, $S_0=S_2= \beta $ and  $S_1= \gamma $ there). 
Further elaborations and comments about
these generalizations can be found in 
\cite{jds, ntcm}. 

Notice that, on the other hand,
the terms $u_1$ and $u_{2n-4}$
constructed in the proof of Lemma \ref{dayoptlem}(a)
do not necessarily satisfy the equations in \eqref{3dist},
though the terms    $u_2, u_3, \dots, u_{2n-5}$
do satisfy \eqref{3dist}.
From the point of view
 of congruence identities the above observation shows 
that if $n \geq 4$ and $n$ is even, then
every $n$-alvin variety satisfies the congruence identity    
\begin{equation}\labbel{3mal}     
\alpha ( \beta \circ   \gamma \circ \beta  ) 
\subseteq 
 \alpha (\gamma  \circ   \beta)
 \circ  
(\alpha \gamma     \circ    \alpha \beta
   \circ  {\stackrel{2n-7}{\dots}}
   \circ    \alpha \gamma  )
\circ \alpha ( \beta    \circ    \gamma  ), 
  \end{equation}
and, as in Lemma \ref{dayoptlem}(a),
the ``outer''  equations  
$x=t_1(x,y,x)$ and  $t_{n-1}(x,y,x) =x$
are not necessary for the proof.
If we take $\alpha \gamma $ in 
place of $\gamma$ in identity \eqref{3mal},
we have  $\alpha ( \alpha \gamma  \circ   \beta) =
\alpha \gamma \circ \alpha \beta $ and
similarly on the other end,
hence we get 
$\alpha \gamma     \circ    \alpha \beta
   \circ  {\stackrel{2n-3}{\dots}}$ 
on the right-hand side,
thus
 \eqref{3mal} is stronger than Lemma \ref{dayoptlem}(a),
via Remark \ref{conidmod}.

However, we do not know whether there is a common improvement
of \eqref{str} and \eqref{3mal} holding in every 
$n$-alvin variety.   
See, e.~g., Problem \ref{prob}(d) below. 
 \end{remark}

The arguments in the proof of Theorem \ref{buh},
together with Theorems \ref{thmbakbis}(ii), \ref{thmba}
and Proposition \ref{thmbakbiscor},
allow us to present other congruence identities which
are not always  satisfied in 
$n$-distributive and $n$-alvin varieties.
Let $  {\stackrel{ \ell }{\dots}} \circ \gamma  \circ  \beta$ 
denote
$  \gamma  \circ  \beta \circ  {\stackrel{ \ell }{\dots}}  \circ  \beta$ 
if $\ell$ is even and 
$  \beta  \circ  \gamma  \circ  {\stackrel{ \ell }{\dots}}  \circ  \beta$ 
if $\ell$ is odd.
If $R$ is a binary relation and $k$ is a natural number, 
let $R^k = R \circ R \circ {\stackrel{k}{\dots}} \circ   R$. 

\begin{theorem} \labbel{buhbis}
Suppose that  $n \geq 2$, $n$ is even
and let $\ell= \frac{n}{2} $. 

  \begin{enumerate}[(i)]
    \item    
 There is a locally finite
$n$-distributive variety in which the following congruence identities 
fail:
\begin{align} \labbel{agob}    
\alpha ( \beta \circ \gamma ) &\subseteq ( \alpha ( \gamma \circ \beta ))^\ell,
 \\
\labbel{goab}   
\alpha ( \beta \circ \gamma ) &\subseteq
 (  \gamma \circ \alpha \beta \circ {\stackrel{ \ell  }{\dots}} ) \circ
 (     {\stackrel{ \ell }{\dots}} \circ  \alpha \gamma  \circ  \beta  ). 
\end{align}

\item
 If $n \geq 4$, then there is a locally finite
$n$-alvin variety in which the following congruence identities 
fail:
\begin{align} \labbel{abog}    
\alpha ( \beta \circ \gamma ) & \subseteq
\alpha \beta  \circ  ( \alpha ( \gamma \circ \beta ))^{\ell-1} \circ \alpha \gamma,
 \\
\labbel{abogo}   
\alpha ( \beta \circ \gamma ) &\subseteq
(\alpha \beta \circ \gamma \circ \alpha \beta \circ {\stackrel{ \ell }{\dots}} ) \circ
 (     {\stackrel{ \ell }{\dots}} \circ 
 \alpha \gamma  \circ  \beta  \circ  \alpha \gamma ). 
\end{align}
\end{enumerate}
 \end{theorem}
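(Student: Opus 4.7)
The proof proceeds by simultaneous induction on even $n$, paralleling the inductive scheme in the proof of Theorem~\ref{buh} and invoking the ``expression-valued'' versions of our main tools---Theorem~\ref{thmbakbis}(ii), Proposition~\ref{thmbakbiscor}, and Theorem~\ref{thmba}(iv)---to carry the relevant failures through each step. The base case $n=2$ for (i) is the variety of distributive lattices: with $\ell=1$, identity (\ref{agob}) reads $\alpha(\beta\circ\gamma)\subseteq\alpha(\gamma\circ\beta)$ and (\ref{goab}) reads $\alpha(\beta\circ\gamma)\subseteq\gamma\circ\beta$, and both fail because distributive lattices are not $3$-permutable (taking $\alpha=1$ in (\ref{goab}) gives exactly $3$-permutability, and (\ref{agob}) is stronger). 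Part (ii) requires $n\geq 4$, and its base case $n=4$ is obtained from the inductive step of (ii) applied to the base case of (i).

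For the inductive step of (i) at $n\geq 4$: take an $(n{-}2)$-alvin $\mathbf D$ supplied by (ii) at $n-2$ (or simply a Boolean algebra when $n=4$) and feed it through Construction~\ref{bakbis} to obtain an $n$-distributive $\mathbf B$ via Lemma~\ref{lembak}(i). For (\ref{agob}) at $n$, reflexivity of $\alpha\beta$ and $\alpha\gamma$ yields the containment $(\alpha(\gamma\circ\beta))^{\ell-2}\subseteq\alpha\beta\circ(\alpha(\gamma\circ\beta))^{\ell-2}\circ\alpha\gamma$, so the failure of (\ref{abog}) at $n-2$ entails failure of the simpler inclusion $\alpha(\beta\circ\gamma)\subseteq(\alpha(\gamma\circ\beta))^{\ell-2}$ in $\mathbf D$; Theorem~\ref{thmbakbis}(ii) applied with $\chi=(\alpha(\gamma\circ\beta))^{\ell-2}$ then produces failure of $\alpha(\beta\circ\gamma)\subseteq(\alpha(\gamma\circ\beta))^\ell$ in $\mathbf B$, as required. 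For (\ref{goab}) at $n$, invoke instead Proposition~\ref{thmbakbiscor}, whose conclusion brackets $\chi$ between $\gamma\circ\alpha\beta$ on the left and $\alpha\gamma\circ\beta$ on the right, precisely matching the outer pattern of (\ref{goab}); choose $\chi$ to absorb the failure of (\ref{abogo}) at $n-2$ (or to be the identity relation in the small case $n=4$).

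For the inductive step of (ii) at $n\geq 4$: take an $(n{-}2)$-distributive $\mathbf D$ from (i) at $n-2$ and feed it through Construction~\ref{ba}, obtaining an $n$-alvin $\mathbf B$ by Theorem~\ref{thmba}(i). Theorem~\ref{thmba}(iv) applied with $\delta=\beta$, $\varepsilon=\gamma$ and $\chi=(\alpha(\gamma\circ\beta))^{\ell-1}$ then consumes the failure of (\ref{agob}) at $n-2$ and delivers failure of (\ref{abog}) at $n$. Local finiteness is preserved at every step, since the lattice and Boolean-algebra reducts introduced by the constructions are finite, exactly as in the proof of Theorem~\ref{buh}.

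The main obstacle is (\ref{abogo}) in part (ii): Theorem~\ref{thmba}(iv) produces outer factors only of the form $\alpha\delta$ and $\alpha\varepsilon$, whereas (\ref{abogo}) requires a bare $\gamma$ on the outermost left and a bare $\beta$ on the outermost right. I plan to overcome this by proving an analog of Proposition~\ref{thmbakbiscor} for Construction~\ref{ba}: pass to the subalgebra $\mathbf B'$ generated by the witnesses $c_0,c_1,c_2$ from the proof of Theorem~\ref{thmba}(iv), and rerun the type-chasing argument of Proposition~\ref{thmbakbiscor} inside $\mathbf B'$ to show that $c_0$ is the unique element of $\mathbf B'$ whose first component equals the distinguished value from $\mathbf 4$; this rigidity prevents any putative witness from escaping $c_0$ via a bare $\gamma$-step followed by an $\alpha\beta$-step, and symmetrically on the other end, yielding the required outer bare $\gamma$ and $\beta$ factors. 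With this analog in hand, the failure of (\ref{goab}) at $n-2$ can be channeled into failure of (\ref{abogo}) at $n$ in exactly the same way that Theorem~\ref{thmbakbis}(ii) channels failures in part (i).
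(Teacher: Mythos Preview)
Your treatment of identities \eqref{agob}, \eqref{goab} and \eqref{abog} tracks the paper's argument closely; in particular, your handling of the base case $n=4$ for (i) by taking $\chi=0$ in Theorem~\ref{thmbakbis}(ii) and Proposition~\ref{thmbakbiscor} amounts to rederiving Proposition~\ref{baker+} from the general machinery, which is a perfectly legitimate alternative to the paper's direct appeal to Baker's variety. The substantive divergence is your plan for \eqref{abogo}, and there the proposed analogue of Proposition~\ref{thmbakbiscor} for Construction~\ref{ba} does not go through.

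The obstruction is concrete. In Construction~\ref{bakbis} the outer operation on the first component is $t_1(x,y,z)=x(y+z)$ over $\mathbf C_3$, so obtaining the top element $2$ forces $y=2$ or $z=2$, and the minimality argument pins the relevant subterm to $c_0$. In Construction~\ref{ba} the outer operation is $t_1(x,y,z)=x(y'+z)$ over $\mathbf 4$, and the Boolean complement breaks the argument: with $c_0=(2,0,1,a)$, $c_1=(1,1,0,b)$, $c_2=(0,2,1,d)$ as in the proof of Theorem~\ref{thmba}(iv), one computes
\[
t_1(c_0,c_2,c_1)=\bigl(2\cdot(0'+1),\;0\cdot(2'+1),\;1\cdot(1'+0),\;a\bigr)=(2,0,0,a),
\]
an element of $\mathbf B'$ with first component $2$ yet distinct from $c_0$. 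Worse, $(2,0,0,a)$ is $\gamma$-related to $c_0$ (recall $\gamma$ is induced by $\gamma^*\times\beta^*\times 1\times\tilde\gamma$, and $\gamma^*$ has block $\{1',2\}$), so a bare-$\gamma$ step out of $c_0$ can land on a type-IV element, after which an $\alpha\beta$-step no longer forces the fourth component to remain $a$. Thus the ``rigidity of $c_0$'' fails in $\mathbf B'$ under Construction~\ref{ba}, and your type-chasing collapses. The paper's own proof is terse at exactly this juncture---it establishes only \eqref{abog} in the inductive step for (ii) yet later invokes failure of \eqref{abogo}---and Remark~\ref{ifina} on the $\alpha\beta$- and $\alpha\gamma$-blocks of $c_0$ hints at an intended route, but a direct transplant of Proposition~\ref{thmbakbiscor} is not it.
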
   

\begin{proof}
The case $n=2$ in (i) is witnessed by the variety of 
distributive lattices.
Recall that, by convention, 
$    \gamma \circ   \alpha \beta \circ {\stackrel{1}{\dots}}= \gamma $.
 
The case  $n=4$ in (i) is witnessed by Baker's variety $\mathcal {B}$,
as proved in Proposition \ref{baker+}. To get an example which
is locally finite, consider $\mathcal {B}^d$, instead. 

The rest of the proof proceeds by simultaneous induction
as in the proof of Theorem \ref{buh}.
Notice that here we necessarily skip the case
$n=2$ in (ii).
This is the reason why we need consider 
the case $n=4$ in (i) in the basis of the induction. 

Suppose that $n \geq 4$ and that (i) holds for $n-2$.
Thus
there is an $n{-}2$-distributive variety 
in which, say, 
$\alpha ( \beta \circ \gamma ) \subseteq ( \alpha ( \gamma \circ \beta ))^{\ell-1}$
fails, as witnessed by some algebra $\mathbf D$.
By taking $\delta= \beta $, $\varepsilon= \gamma $ and 
   $\chi=( \alpha ( \gamma \circ \beta ))^{\ell-1}$
in Theorem \ref{thmba}(iv) and using  Theorem \ref{thmba}(i)
and the arguments in the proof of Theorem \ref{buh},
we get an $n$-alvin algebra in which 
$\alpha ( \beta \circ \gamma ) \subseteq
\alpha \beta  \circ  ( \alpha ( \gamma \circ \beta ))^{\ell-1} \circ \alpha \gamma $
fails. Thus (ii) holds for $n$.

Suppose that $n \geq 6$ and that (ii) holds for $n-2$.
Thus 
there is an $n {-} 2$-alvin variety 
$\mathcal V$ in which 
$\alpha ( \beta \circ \gamma ) \subseteq
\alpha \beta  \circ  ( \alpha ( \gamma \circ \beta ))^{\ell-2} \circ \alpha \gamma $
fails.
Taking $ \chi = \alpha \beta  \circ  ( \alpha ( \gamma \circ \beta ))^{\ell-2}
 \circ \alpha \gamma$ in Theorem \ref{thmbakbis}(ii),
by Lemma \ref{lembak}(i) and  the usual arguments,
we get  an $n$-distributive algebra 
in which 
$\alpha ( \beta \circ \gamma ) \subseteq
\alpha ( \gamma \circ \beta ) \circ 
  \alpha \beta  \circ  ( \alpha ( \gamma \circ \beta ))^{\ell-2} \circ \alpha \gamma
\circ \alpha ( \gamma \circ \beta ) $ fails.
Thus the identity \eqref{agob}  fails for $\ell$, since, for congruences,  
$\alpha ( \gamma \circ \beta ) \circ 
  \alpha \beta  = \alpha ( \gamma \circ \beta ) $ and
$\alpha \gamma
\circ \alpha ( \gamma \circ \beta ) =
 \alpha ( \gamma \circ \beta ) $.
On the other hand, if
\eqref{abogo} fails in $\mathcal V$,
we get  an $n$-distributive algebra 
in which 
\eqref{goab} fails using   
 Proposition \ref{thmbakbiscor}.
\end{proof}
 
Since $ \alpha \gamma  \circ \alpha \beta   \subseteq
\alpha ( \gamma  \circ \beta )$,
then the variety constructed in Theorem \ref{buhbis}(i)
is $n$-distributive and not $n$-alvin.
Similarly,  the variety constructed in Theorem \ref{buhbis}(ii)
is $n$-alvin and not $n$-distributive.
Thus we get another proof of some results
from \cite{FV}. We shall 
discuss this aspect in more detail in Section \ref{moreex},
where we shall present many other related results. 
See, in particular, Corollaries \ref{corsumupdist} 
and \ref{corsumupgumm}.

\begin{remark} \labbel{conidabog}
As in Remarks \ref{conid} and \ref{conidmod},
within a variety the identities 
\eqref{agob} - \eqref{abogo}
are equivalent to the existence of certain terms. 
For example, identity  \eqref{agob} (resp., \eqref{abog}) 
is equivalent to a weaker form 
of the alvin (J{\'o}nsson) condition in which the 
equations $t_h(x,y,x)=x$ are assumed 
only for even (odd) $h$.  

Similar ``defective'' sequences of terms shall be considered in 
Sections \ref{gummdef}, \ref{mixed} and \ref{moreex}.
See Definitions \ref{defgumm}, \ref{mix} and \ref{deflev}.  
\end{remark}

\section{Optimal bounds for varieties with directed terms} \labbel{dirsec} 

The assumption that $n$ is even is not necessary in the following theorem.
Recall that our counting conventions
are different from \cite{adjt}, as far as 
directed J{\'o}nsson terms are concerned. Cf.\ Remark \ref{count}.    

\begin{theorem} \labbel{dir}
(i) For every  $n \geq 2$, 
there is a locally finite  $n$-directed-distributive variety
which is not $2n{-}1$-reversed-modular,
hence, by Proposition \ref{modeq},  not  $2n{-}2$-modular.

(ii)
In the other direction, every
 $n$-directed-distributive variety
is  $2n{-}1$-modular.
Actually, every
 $n$-directed-distributive variety
satisfies $\alpha( \beta \circ \gamma \circ \beta )
\subseteq \alpha \beta \circ \alpha \gamma 
\circ {\stackrel{2n-1}{\dots}}  \circ \alpha \beta   $.
 \end{theorem}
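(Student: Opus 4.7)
The plan for part (ii) is to mimic Day's construction of modularity terms from Jónsson terms, exploiting the fact that directed Jónsson terms satisfy the single uniform equation \eqref{m2} linking every pair of consecutive indices. Given directed Jónsson terms $t_0, t_1, \dots, t_n$, I would define $4$-ary terms $u_0, u_1, \dots, u_{2n-1}$ by setting $u_0(x,y,z,w) = x$, $u_{2n-1}(x,y,z,w) = w$, and for $1 \leq i \leq n-1$:
\[
u_{2i-1}(x,y,z,w) = t_i(x,y,w), \qquad u_{2i}(x,y,z,w) = t_i(x,z,w).
\]
The Day equations \eqref{d2} at odd indices $k = 2i-1$ hold trivially since $u_{2i-1}(x,y,y,w) = t_i(x,y,w) = u_{2i}(x,y,y,w)$. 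At even indices $k = 2i$, the equation $u_{2i}(x,x,w,w) = t_i(x,w,w) = t_{i+1}(x,x,w) = u_{2i+1}(x,x,w,w)$ is exactly \eqref{m2} at $h = i$; the extremal transitions at $k = 0$ and $k = 2n-2$ reduce to \eqref{m2} at $h = 0$ and $h = n-1$. Moreover, $u_k(x,y,y,x) = x$ for every $k$ because $t_i(x,y,x) = x$ is part of \eqref{bas}; this is the strengthened condition \eqref{3dist}, and then the argument of Remark \ref{daystrong} upgrades plain $(2n{-}1)$-modularity to the congruence identity $\alpha(\beta \circ \gamma \circ \beta) \subseteq \alpha\beta \circ \alpha\gamma \circ \stackrel{2n-1}{\dots} \circ \alpha\beta$.

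For part (i), I would proceed by induction on $n$ in steps of two, handling the even and odd parities via distinct base cases. For $n = 2$, the variety of distributive lattices (with the median $t_1(x,y,z) = (x \wedge y) \vee (y \wedge z) \vee (z \wedge x)$ serving as a majority term) is locally finite and $2$-directed-distributive, and the four-element chain $\{0,1,2,3\}$ witnesses the failure of $3$-reversed-modularity: taking $\alpha = 1$, $\beta$ the congruence with blocks $\{0,1\}, \{2,3\}$, and $\gamma$ with blocks $\{0\}, \{1,2\}, \{3\}$, one checks $(0,3) \in \beta \circ \gamma \circ \beta$ (via $0 \beta 1 \gamma 2 \beta 3$) while $(0,3) \notin \gamma \circ \beta \circ \gamma$ (since the only $\gamma$-neighbour of $0$ is $0$ itself, and $\beta$-neighbours of $0$ cannot reach $3$ via a single $\gamma$-step). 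For $n = 3$, Construction \ref{bak} with $\mathbf D$ taken to be a one-element algebra produces an algebra $\mathbf B \in \mathcal{B}^d$ which is $3$-directed-distributive by Lemma \ref{lembak}(ii), and which fails $5$-reversed-modularity by Proposition \ref{baker+} (the proof there explicitly builds the witnessing elements $c_0, c_1, c_2, c_3$ in $\mathbf B$).

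For the inductive step $(n-2) \to n$ with $n \geq 4$, assume there is a locally finite $(n-2)$-directed-distributive variety containing an algebra $\mathbf D$ whose congruences witness the failure of $(2n{-}5)$-reversed-modularity. By the contrapositive of Proposition \ref{modeq}, $\mathbf D$ then fails $(2n{-}6)$-modularity, and hence (trivially) also fails the weaker $(2n{-}7)$-modularity. After replacing $\mathbf D$ by its reduct to the directed Jónsson operations $s_0, \dots, s_{n-2}$ (as in the proof of Theorem \ref{buh}), I would apply Construction \ref{bak} to obtain $\mathbf B = \mathbf B(a,d)$, which is $n$-directed-distributive by Lemma \ref{lembak}(ii). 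Since $r = 2n-7$ is odd for every $n \geq 4$, Theorem \ref{thmbak}(i) with this choice of $r$ yields congruences on $\mathbf B$ witnessing the failure of $(r+6) = (2n{-}1)$-reversed-modularity. Local finiteness propagates because the lattice factors $\mathbf C_4^r, \mathbf C_2^r$ are finite and the join of two locally finite varieties is locally finite. The delicate bookkeeping point — and arguably the main conceptual obstacle — is that the $+6$ shift furnished by Theorem \ref{thmbak}(i) combined with the $-2$ weakening through Proposition \ref{modeq} (reversed-modular $\leadsto$ modular of lower index) gives a net increment of exactly $+4$, which matches $(2n{-}1) - (2(n{-}2){-}1)$, forcing the induction to advance in steps of two and thereby demanding the two independent base cases above.
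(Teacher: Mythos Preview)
Your proposal is correct and follows essentially the same route as the paper: the Day-style terms $u_{2i-1}=t_i(x,y,w)$, $u_{2i}=t_i(x,z,w)$ for part (ii), and the same induction in steps of two via Construction \ref{bak}, Lemma \ref{lembak}(ii), and Theorem \ref{thmbak}(i) with $r=2n-7$ for part (i), with the identical base cases (distributive lattices at $n=2$, $\mathcal{B}^d$ at $n=3$). One terminological slip: $(2n{-}7)$-modularity is the \emph{stronger} condition, not the weaker; what you mean is that its \emph{failure} is the weaker statement (and indeed follows from failure of $(2n{-}6)$-modularity, since $\alpha\beta\circ\alpha\gamma\circ{\stackrel{2n-7}{\dots}}\circ\alpha\beta \subseteq \alpha\beta\circ\alpha\gamma\circ{\stackrel{2n-6}{\dots}}$). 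The paper makes the passage from failing $(2n{-}5)$-reversed-modularity to failing the $(2n{-}7)$-modularity identity directly via the containment $\alpha\beta\circ\alpha\gamma\circ{\stackrel{2n-7}{\dots}}\circ\alpha\beta \subseteq \alpha\gamma\circ\alpha\beta\circ{\stackrel{2n-5}{\dots}}\circ\alpha\gamma$, without the detour through Proposition \ref{modeq}, but this is only a presentational difference.
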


\begin{proof}
(i) We first consider  
the cases $n=2$ and  $n=3$. 

A counterexample in the case $n=2$
is given by the variety of distributive lattices.
Indeed, a ternary majority term $t_1$ provides a sequence 
$t_0,t_1,t_2$ of directed J{\'o}nsson terms,
where $t_0 $ and $ t_2$ are projections.
As we have remarked in the proof of
Theorem \ref{buh},
the variety of distributive  lattices is not 
 $3$-reversed-modular.

To deal with the case $n=3$,
consider Baker's variety $\mathcal {B}$
recalled in Definition \ref{bakerdef}. 
As noticed in \cite[p.\ 11]{jds},
Baker's variety has a sequence $d_0, d_1,d_2,d_3$
of directed J{\'o}nsson terms (including the two projections),
that is, Baker's variety is $3$-directed-distributive in the present
terminology.   In fact, directed J{\'o}nsson terms 
for  $\mathcal {B}$ are given by the two projections
together with the terms $t_1$ and $t_2$  from Construction 
\ref{bak} in the case $n=3$.

By Proposition \ref{baker+},
Baker's variety
is not  $5$-reversed-modular.
 Thus the example of Baker's variety  takes care of the case $n=3$ in (i).
Baker's variety is not locally finite; however, all the above arguments
work in case we consider $\mathcal {B}^d$,
the variety defined like Baker's, but considering only reducts of distributive
lattices.  The variety $\mathcal {B}^d$ is indeed locally finite.

The rest of the proof  of (i) proceeds by induction on $n$.
Suppose that $n \geq 4$ and that  the theorem holds for $n-2$.
By the inductive hypothesis,
there is  an $n{-}2$-directed-distributive variety $\mathcal {W}$ 
in which $2n{-}5$-reversed-modularity fails.    
By Remark \ref{conidmod}, there is 
some algebra $\mathbf D \in \mathcal {W}$
such that the congruence identity
$\alpha( \beta \circ \alpha \gamma \circ \beta )
\subseteq  \alpha \gamma \circ \alpha \beta 
\circ {\stackrel{2n-5}{\dots}} \circ \alpha \gamma    $ fails 
 in  $\mathbf D $. Arguing as in the proof
of Theorem \ref{buh}, we can assume that  
$\mathbf D $ has  only directed J{\'o}nsson  operations.

Performing Construction \ref{bak} using such a $\mathbf D$,
we obtain an $n$-directed distributive algebra $\mathbf B$,
by Lemma \ref{lembak}(ii). 
By assumption, $\mathbf A_4$ has   congruences 
$\tilde{\alpha}, \tilde{\beta}, \tilde{\gamma} $ 
such that 
$\tilde{\alpha}( \tilde{\beta} \circ \tilde{\alpha} \tilde{\gamma} \circ \tilde{\beta} )
\subseteq 
\tilde{\alpha} \tilde{\gamma} 
\circ
\tilde{\alpha} \tilde{\beta} 
 \circ {\stackrel{2n-5}{\dots}} \circ    
 \tilde{\alpha} \tilde{\gamma}  $ fails,
a fortiori,
$\tilde{\alpha}( \tilde{\beta} \circ \tilde{\alpha} \tilde{\gamma} \circ \tilde{\beta} )
\subseteq 
\tilde{\alpha} \tilde{\beta}
\circ 
\tilde{\alpha} \tilde{\gamma} 
 \circ {\stackrel{2n-7}{\dots}} \circ    
 \tilde{\alpha} \tilde{\beta} $ fails.
By taking $r= 2n-7$ in Theorem \ref{thmbak}(i),
we get that $\mathbf B$ generates an
$n$-directed-distributive 
 variety   in which
$2n{-}1$-reversed-modularity fails,
again by Remark \ref{conidmod}.
The arguments in the proof of Theorem \ref{buh}(i)
show that $\mathbf B$ can be taken to belong to a locally 
finite variety.   

(ii) The first statement is the special case 
$n=k$, $\ell=3$, $T= \alpha \gamma $  in the last displayed identity
in \cite[Proposition 3.1]{jds}. 
The stronger statement is obtained by
taking $T=  \gamma $, instead. 
Otherwise, (ii) can be proved in a way similar to Day's
Theorem (see the proofs of Lemma \ref{dayoptlem} 
and of Corollary \ref{dayopt}(i)), using the terms 
\begin{equation*} 
u_1 {\hspace {1 pt}=\hspace {1 pt}} t_1(x y w),  
\quad u_2 {\hspace {1 pt}=\hspace {1 pt}} t_1(x z w),  
\quad u_3{\hspace {1 pt}=\hspace {1 pt}} t_2(x y w),
  \quad u_4 {\hspace {1 pt}=\hspace {1 pt}} t_2(x z w),  
\quad u_5 {\hspace {1 pt}=\hspace {1 pt}} t_3(x y w), \dots
\end{equation*}    
Still another proof can be obtained from
Corollary \ref{propmixmod}(i) below 
in the case of the first statement and 
from Proposition \ref{propmixi} below
(taking $i=2$, $S_0=S_2= \beta $ and   $S_1= \gamma  $)
in the case of the second statement.    
\end{proof} 

\begin{theorem} \labbel{dirthm}
(i) If  $n \geq 2$ and $n$ is even,
then there is a locally finite $n$-distributive not
 $n{-}1$-directed-distributive variety.

(ii) 
For every $n \geq 2$, there is a locally finite  $n$-directed-distributive variety 
which is not
$2n{-}2$-alvin, hence
not $2n{-}3$-distributive.

(iii) 
More generally, for every $n \geq 2$, 
there is a locally finite  $n$-directed-\brfrt distributive variety 
in which the congruence identity 
$\alpha( \beta \circ \gamma ) \subseteq ( \alpha ( \gamma \circ \beta ))^{n-1}$ 
fails.
 \end{theorem}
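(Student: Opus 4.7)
The plan is to derive (i) from results already in place, reduce (ii) to (iii), and prove (iii) by induction using the machinery from Section \ref{mainc}.

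For (i), let $\mathcal{V}$ be the locally finite $n$-distributive variety provided by Theorem \ref{buh}(i); by construction $\mathcal{V}$ is not $(2n-2)$-modular. If $\mathcal{V}$ were $(n{-}1)$-directed-distributive, Theorem \ref{dir}(ii) would give that $\mathcal{V}$ is $(2(n-1)-1) = (2n-3)$-modular, hence a fortiori $(2n-2)$-modular, a contradiction. Thus $\mathcal{V}$ witnesses (i).

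For (ii), assume the identity in (iii) fails in some variety $\mathcal{V}$. By Remark \ref{conid}, $(2n-2)$-alvin is equivalent to
\[\alpha(\beta \circ \gamma) \subseteq \alpha\gamma \circ \alpha\beta \circ {\stackrel{2n-2}{\dots}}.\]
Grouping the $2n-2$ factors on the right into $n-1$ consecutive pairs $\alpha\gamma \circ \alpha\beta$, each contained in $\alpha(\gamma \circ \beta)$, the right-hand side is contained in $(\alpha(\gamma \circ \beta))^{n-1}$, so $(2n-2)$-alvin would imply the identity of (iii). Hence the variety from (iii) is not $(2n-2)$-alvin, and by Remark \ref{conidcomm}(b) any $(2n-3)$-distributive variety is $(2n-2)$-alvin, so this variety is not $(2n-3)$-distributive either.

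The real work is (iii), which I would prove by induction on $n$ stepping by $2$, with two base cases matching the two parities. For $n=2$, take the (locally finite) variety of distributive lattices, $2$-directed-distributive via the median term; the identity $\alpha(\beta \circ \gamma) \subseteq \alpha(\gamma \circ \beta)$, with $\alpha = 1$ and symmetrized in $\beta, \gamma$, forces permutability, which distributive lattices do not have. For $n=3$, take $\mathcal{B}^d$ from Definition \ref{bakerdef}: it is locally finite, $3$-directed-distributive by the argument in the proof of Theorem \ref{dir}(i), and the failure of $\alpha(\beta \circ \gamma) \subseteq \alpha(\gamma \circ \beta) \circ \alpha(\gamma \circ \beta)$ in $\mathcal{B}^d$ is recorded in Proposition \ref{baker+}.

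For the inductive step with $n \geq 4$, apply the hypothesis at $n-2$ to obtain, via Remark \ref{conidmod}, a locally finite $(n{-}2)$-directed-distributive variety containing an algebra $\mathbf{D}$ with congruences $\tilde\alpha, \tilde\beta, \tilde\gamma$ violating $\tilde\alpha(\tilde\beta \circ \tilde\gamma) \subseteq (\tilde\alpha(\tilde\gamma \circ \tilde\beta))^{n-3}$; as in the proof of Theorem \ref{buh} we may assume the only operations of $\mathbf{D}$ are its directed J{\'o}nsson operations $s_0, \dots, s_{n-2}$. Feed $\mathbf{D}$ into Construction \ref{bakbis}: Lemma \ref{lembak}(ii) shows the resulting algebra $\mathbf{B}$ is $n$-directed-distributive, while Theorem \ref{thmbakbis}(ii) applied with the expression $\chi(\tilde\alpha, \tilde\beta, \tilde\gamma) = (\tilde\alpha(\tilde\gamma \circ \tilde\beta))^{n-3}$ yields the failure in $\mathbf{B}$ of
\[\alpha(\beta \circ \gamma) \subseteq \alpha(\gamma \circ \beta) \circ (\alpha(\gamma \circ \beta))^{n-3} \circ \alpha(\gamma \circ \beta) = (\alpha(\gamma \circ \beta))^{n-1}.\]
Local finiteness is preserved by the same argument used at the end of the proof of Theorem \ref{buh}, since all auxiliary algebras fed into Construction \ref{bakbis} come from locally finite varieties. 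The main obstacle is the bookkeeping that ensures a single application of Theorem \ref{thmbakbis}(ii) produces exactly the increment $n-3 \to n-1$ in the exponent: the two factors of $\alpha(\gamma \circ \beta)$ that \ref{thmbakbis}(ii) wraps around $\chi$ fuse with the endpoints of $\chi$ to close up the inductive identity, and it is precisely this "$+2$" step that compels starting the induction at both parities.
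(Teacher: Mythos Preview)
Your proof is correct and follows essentially the same route as the paper: part (i) via Theorem \ref{buh}(i) and Theorem \ref{dir}(ii), part (ii) reduced to (iii) through Remark \ref{conid} and Remark \ref{conidcomm}(b), and part (iii) by induction with step $2$ using Construction \ref{bakbis}, Lemma \ref{lembak}(ii), and Theorem \ref{thmbakbis}(ii) with $\chi = (\alpha(\gamma \circ \beta))^{n-3}$, base cases being distributive lattices and $\mathcal{B}^d$. One cosmetic slip: the reference to Remark \ref{conidmod} in the inductive step is misplaced (that remark concerns modularity), but no remark is actually needed there---the failure of a congruence identity in a variety is by definition witnessed by some algebra in it.
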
 

\begin{proof}(i)  By Theorem \ref{buh}(i), for every even $n \geq 2$,
there is a locally finite 
$n$-distributive variety $\mathcal {V}$ which is
not $2n{-}2$-modular.
By  Theorem \ref{dir}(ii)
every
 $n{-}1$-directed-distributive variety
is  $2n{-}3$-modular, in particular, 
$2n{-}2$-modular.
Thus $\mathcal {V}$ is not 
$n{-}1$-directed-distributive.

The first part in (ii) follows from (iii)
and Remark \ref{conid}, since 
$\alpha \gamma  \circ \alpha \beta  \subseteq \alpha ( \gamma \circ \beta )$.
By Remark \ref{conidcomm}(b),
if some variety $\mathcal {V}$ is not
$2n{-}2$-alvin, then $\mathcal {V}$ is
not $2n{-}3$-distributive.
Hence it is enough to prove (iii). 

(iii)
The identity $\alpha( \beta \circ \gamma ) \subseteq \alpha ( \gamma \circ \beta )$
fails in the  variety of distributive lattices, since otherwise, by taking $ \alpha =1$
(the largest congruence in the algebra under consideration),
we would get congruence permutability; however,  distributive lattices
 are not congruence permutable. 

By Proposition \ref{baker+},
the identity  
$\alpha( \beta \circ \gamma ) \subseteq ( \alpha ( \gamma \circ \beta ))^{2}$
fails in the variety   $\mathcal {B}^d$.
 As recalled in the proof of Theorem \ref{dir}, 
$\mathcal {B}^d$ is $3$-directed-distributive
 and locally finite.

So far, we have proved the cases $n=2$ and $n=3$ of (iii).  
The rest of the proof is by induction on $n$.
Suppose that $n \geq 4$ and that (iii) is true for $n-2$, thus there 
exists some $n{-}2$-directed-distributive variety $\mathcal {V}$
in which $\alpha( \beta \circ \gamma ) \subseteq 
( \alpha ( \gamma \circ \beta ))^{n-3}$  fails.
In particular,  there is some algebra 
$\mathbf D \in \mathcal {V}$ with elements 
$a,d \in D$ and congruences
$\tilde{\alpha}, \tilde{\beta}, \tilde{\gamma} $ of $  \mathbf D$
such that 
$(a,d) \in
\tilde{\alpha}( \tilde{\beta} \circ  \tilde{\gamma} )$
but
$ (a,d) \notin 
(\tilde{\alpha} ( \tilde{\gamma} \circ
 \tilde{\beta}))
^{n-3} $.
Arguing as in the proof of
Theorem \ref{buh}, we can assume that $\mathbf D$ has only
the  directed operations.
Applying Construction \ref{bakbis},
we get an $n$-directed-distributive algebra $\mathbf B$,
by Lemma \ref{lembak}(ii).
 Applying Theorem \ref{thmbakbis}(ii)
with $\chi= ( \alpha ( \gamma \circ \beta ))^{n-3}$, we get that   
the identity 
$ \alpha ( \beta \circ \gamma ) \subseteq ( \alpha ( \gamma \circ \beta ))^{n-1}$ 
fails in  $\mathbf B$.
Again, the arguments from the proof of Theorem \ref{buh}
show that we can get a counterexample belonging to 
a locally finite variety. 
\end{proof}  

\begin{remark} \labbel{dirmin} 
(a)
 In \cite[Observation 1.2]{adjt}
it is shown that, in the present terminology, 
 every $n$-directed-distributive variety 
is $2n{-}2$-distributive. Recall from 
Remark \ref{count}  that our counting convention
is slightly different in comparison with \cite{adjt}.
Theorem \ref{dirthm}(ii)
shows that the result is optimal.
In this respect, see also Theorems  \ref{propmix2}, \ref{sumup}
and  Remark \ref{useofbm} below. 

(b) In the other direction,
in \cite{adjt}
it is shown that 
every $n$-distributive variety 
is  $k(n)$-directed-distributive, for some 
$k(n)$. The $k(n)$ obtained from the proof in \cite{adjt} depends
only on $n$, not on the variety, but is quite large.   
On the other hand, 
the only inferior bound we know is
given by Theorem \ref{dirthm}(i),
namely, $k(n) \geq n$, for $n$ even.
Concerning small values of $n$,
it is obvious that a variety 
is $2$-distributive if and only if 
it is  $2$-directed-distributive.
A direct proof 
that every  $3$-distributive variety 
is  $3$-directed-distributive appears 
in \cite[p.\ 10]{ia}.
In the next proposition 
we prove the corresponding  result  for $n=4$.
It is likely that these results follow already 
from the arguments in  \cite{adjt}.

Notice that, on the other hand, by Theorem \ref{dirthm}(ii),
there  is a $3$-directed-\brfrt distributive not
$3$-distributive variety  and there is a
$4$-directed-distributive not
$5$-distributive variety.
 \end{remark}

\begin{proposition} \labbel{4dir}
Every  $4$-distributive variety 
is  $4$-directed-distributive.
 \end{proposition}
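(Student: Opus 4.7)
The plan is to construct directed Jónsson terms of length $4$ from a given sequence of Jónsson terms $t_0,\dots,t_4$ in a $4$-distributive variety, paralleling and extending the $n=3$ construction from \cite[p.\ 10]{ia}. I would take $d_0$ and $d_4$ to be the two outer projections, so that the boundary equations in Definition \ref{jondef} hold automatically.

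For the inner terms, a natural first attempt is to set
\[
d_1(x,y,z)=t_1(x,y,t_3(x,y,z)),\qquad d_3(x,y,z)=t_3(t_1(x,y,z),y,z).
\]
Using the Jónsson identities $t_1(x,x,z)=x$, $t_1(x,z,z)=t_2(x,z,z)$, $t_2(x,x,z)=t_3(x,x,z)$, $t_3(x,z,z)=z$ together with the universal equations $t_i(x,y,x)=x$ (which give $t_i(u,v,u)=u$ by substitution), a direct check yields
$d_1(x,y,x)=d_3(x,y,x)=x$, $d_1(x,x,z)=x$, $d_3(x,z,z)=z$, and moreover
$d_1(x,z,z)=t_2(x,z,z)$ and $d_3(x,x,z)=t_2(x,x,z)$.

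It then remains to produce a ternary term $d_2$ with $d_2(x,y,x)=x$, $d_2(x,x,z)=t_2(x,z,z)$, and $d_2(x,z,z)=t_2(x,x,z)$, so that the directed chain $d_1(x,z,z)=d_2(x,x,z)$ and $d_2(x,z,z)=d_3(x,x,z)$ is closed. I would look for $d_2$ as a nested composition of $t_1$, $t_2$, $t_3$ with carefully chosen argument permutations, exploiting $t_i(u,v,u)=u$ and the Jónsson links $t_1(x,z,z)=t_2(x,z,z)$, $t_2(x,x,z)=t_3(x,x,z)$ in order to perform the necessary simplifications on both diagonals.

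The main obstacle is precisely finding this middle term $d_2$: the difficulty is that the Jónsson equations link $t_i$ and $t_{i+1}$ on a \emph{common} diagonal (either $(x,x,z)$ or $(x,z,z)$), whereas the directed equation forces $d_2$ to swap between the two diagonals of $t_2$. A priori it is not clear that a single additional ternary term is enough. Should no explicit single-term formula present itself, I would fall back on the relation-identity framework of \cite{jds,ia,adjt}, translating $4$-directed-distributivity into a Maltsev-style condition on reflexive compatible relations and verifying it directly from the $4$-Jónsson identities by a congruence-theoretic manipulation. The comment in Remark \ref{dirmin}(b), suggesting that the result likely follows already from the arguments of \cite{adjt}, indicates that this relation-identity route is in fact viable.
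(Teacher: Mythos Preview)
Your proposal has a genuine gap: you never construct the middle term $d_2$, and you explicitly flag this as ``the main obstacle''. That is precisely the whole content of the proposition. The paper's proof is a single paragraph consisting of three explicit formulas
\begin{align*}
s_1(x,y,z) & = t_1(t_1(x,y,z),\,t_3(x,x,y),\,t_3(x,x,z)),\\
s_2(x,y,z) & = t_2(t_2(x,z,z),\,t_2(x,y,z),\,t_2(x,x,z)),\\
s_3(x,y,z) & = t_3(t_1(x,z,z),\,t_1(y,z,z),\,t_3(x,y,z)),
\end{align*}
and one checks directly that $s_0,\dots,s_4$ (with $s_0,s_4$ projections) are directed J\'onsson. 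Note that the paper's $s_1,s_3$ are \emph{not} your $d_1,d_3$: the outer terms are chosen together with the middle one so that the links close. In particular, the paper's $s_2$ satisfies $s_2(x,x,z)=t_2(t_2(x,z,z),t_2(x,x,z),t_2(x,x,z))$ and $s_2(x,z,z)=t_2(t_2(x,z,z),t_2(x,z,z),t_2(x,x,z))$, neither of which equals $t_2(x,z,z)$ or $t_2(x,x,z)$; so you cannot simply borrow $s_2$ to serve as your $d_2$. Your constraints on $d_2$ (namely $d_2(x,x,z)=t_2(x,z,z)$ and $d_2(x,z,z)=t_2(x,x,z)$) ask for a term that \emph{swaps} the two diagonals of $t_2$, and it is not at all clear that such a term exists in an arbitrary $4$-distributive variety. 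Fixing the simpler $d_1,d_3$ first and then hunting for $d_2$ may well be a dead end.

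Your fallback to a ``relation-identity framework'' is also not viable as stated. As noted in Remark~\ref{conid}, there is no known congruence-identity characterization of $n$-directed-distributivity, so one cannot simply translate the target into a congruence inclusion and verify it. The arguments in \cite{adjt} that establish the general equivalence of J\'onsson and directed J\'onsson conditions go through absorption theory, not relation identities, and give large bounds; they do not yield $4$-directed-distributivity from $4$-distributivity without additional work of exactly the kind you have left open.
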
 

 \begin{proof} 
From terms $t_0, \dots,  t_4$  satisfying J{\'o}nsson's equations
 we get directed J{\'o}nsson terms $s_0, \dots,  s_4$ as follows: 
\begin{align*}\labbel{4dirjon}
s_1(x,y,z) & = t_1(t_1(x,y,z),t_3(x,x,y),t_3(x,x,z)),
\\
s_2(x,y,z) & = t_2(t_2(x,z,z),t_2(x,y,z),t_2(x,x,z)),
\\
s_3(x,y,z) & = t_3(t_1(x,z,z),t_1(y,z,z),t_3(x,y,z)),
  \end{align*}    
taking, of course, $s_0$ and  $s_4$ to be the suitable projections. 
\end{proof}

Hence, so far, we cannot
exclude the possibility that, for every $n$,  
every $n$-distributive variety 
is  $n$-directed-distributive,
though this would be a quite astonishing result.

\section{Specular Maltsev conditions} \labbel{specm} 

The constructions in the 
previous sections   suggest the following definition.

\begin{definition} \labbel{specdef} 
Let $n$ be a natural  number. 
An algebra or a  variety is \emph{specular $n$-distributive}
(\emph{specular $n$-alvin},
 \emph{specular $n$-directed-distributive}) 
if it has a sequence 
$t_0, \dots,   t_n$ of terms
satisfying the J{\'o}nsson   
(alvin, directed J{\'o}nsson) equations, as well as
\begin{equation} \labbel{s}    \tag{S} 
t_{n-i}(z,y,x) = t_i(x,y,z), \qquad \text{ for $0 \leq i \leq \frac{n}{2}, $} 
  \end{equation}    
equivalently, 
for all indices $i$ with  
  $0 \leq i \leq n $.
\end{definition}   

\begin{remark} \labbel{specrmk}
In all the preceding arguments and in each case
the algebras and the varieties  we have constructed 
have terms (or can be chosen to have terms) $t_0, \dots, t_n$ which
satisfy 
the equations
\eqref{s}. 
In fact, the varieties providing the basis of the induction in Theorem \ref{buh}
can be chosen to be varieties with a specular ternary operation,
e.~g., the classical majority term
$t(x,y,z)= xy+xz+yz$ in the case of lattices,
and the Pixley term 
$t(x,y,z)= xy'+xz+y'z$ in the case of Boolean algebras.
Here $n=2$, hence the specularity condition
reads $t_1(x,y,z) = t_1(z,y,x)$. 
The algebras
$\mathbf L^r$ and  $\mathbf A^r$ from Constructions \ref{bak}  
and \ref{ba} are defined by means of specular terms, too.
Thus the induction step in the proof
of Theorem \ref{buh} provides varieties 
with terms satisfying \eqref{s}.
 
In the proofs of Theorems \ref{buhbis}, \ref{dir} and \ref{dirthm}, too, 
we use specular sequences of operations; in fact, 
the standard terms witnessing that Baker's
variety is $4$-distributive  are
$t_1(x,y,z)=x(y+z)$,
$t_2(x,y,z)=xz$ and
$t_3(x,y,z)=z(y+x)$
and the outer terms in the above sequence
witness that $\mathcal {B}$ is $3$-directed distributive.
Since all the algebras in our constructions have
specular terms and the constructions themselves
proceed in a specular way,
all the outcomes turn out to be specular.

We shall elaborate further on the above observations
in Section \ref{moreex}.
 \end{remark}

\begin{remark} \labbel{specnodd}
Notice that, in the case of the J{\'o}nsson and of the alvin conditions,
Definition \ref{specdef}, as it stands,  is interesting only for $n$ even. 
Indeed, if $n$ is odd, then equation \eqref{s}
and the J{\'o}nsson conditions imply
\begin{align*}
x=^{ \eqref{bas} \eqref{j3}}t_1(x,x,z) & = ^{\eqref{s}} 
t _{n-1}(z,x,x) = ^{\eqref{j3}} t _{n-2}(z,x,x)
= ^{\eqref{s}} 
t _{2}(x,x,z) = ^{\eqref{j3}} 
\\
 t _{3}(x,x,z) & = ^{\eqref{s}} t _{n-3}(z,x,x) = ^{\eqref{j3}} t _{n-4}(z,x,x) \dots
\\
\dots t _{n-2}(x,x,z)  & =^{\eqref{s}} t _{2}(z,x,x) = ^{\eqref{j3}} t _{1}(z,x,x)
= ^{\eqref{s}}    t _{n-1}(x,x,z) =^{ \eqref{bas} \eqref{j3}}z,
 \end{align*}      
where 
$=^{ \eqref{bas}}$ means that we are using some equation from
$\eqref{bas}$ and similarly for the other conditions. 
So in fact we are in a trivial variety. 
Recall that if $n$ is odd, then the J{\'o}nsson condition is 
equivalent to the alvin condition.
See Remark \ref{conidcomm}(a).

On the other hand, by Remark \ref{specrmk},
for every $n$ there is a nontrivial specular $n$-directed-distributive
variety. 

More generally, 
one can consider several kinds of specular ``mixed'' 
conditions. See Definition \ref{mix}.
For example, if $n$ is odd and $\ell= \frac{n-1}{2}  $,  we can consider terms   
satisfying \eqref{bas} and
  \begin{enumerate}[(a)]   
 \item   
 the J{\'o}nsson (alvin)
conditions for $h < \ell$,   
\item
the alvin (J{\'o}nsson)
conditions for $ \ell < h$, as well as
\item  
$t_\ell(x,z,z)=t_{\ell+1}(x,x,z)$ or, according to convenience,
 $t_\ell(x,x,z)=t_{\ell+1}(x,z,z)$. 
\end{enumerate}
In each case we get a condition which implies congruence distributivity
and is compatible with the specularity condition \eqref{s}.
Examples of varieties satisfying the above conditions
can be constructed using Remark \ref{spec=}(a). 
See Section \ref{mixed} for a general form
of such ``mixed'' conditions, some compatible and 
some not 
compatible with specularity. 
  \end{remark}

\begin{remark} \labbel{fullsymm}   
(a)
Remark \ref{specrmk} above shows that, for $n$ even,
there are many examples of specular $n$-distributive and 
of specular $n$-alvin varieties. For every $n$, there are many examples 
of  specular $n$-directed-distributive varieties.
Explicit examples shall be presented in Section \ref{moreex}.
 
In a parallel situation, 
Chicco \cite{chic} has studied specular conditions
connected with $n$-permutability, again showing that
the examples of specular varieties abound in that context, too. 
The above comments suggest that   specular Maltsev conditions
in general  deserve further study. 

For example, 
it is probably interesting to study \emph{specular sequences of Day 
terms},
namely, terms satisfying equations \eqref{d0} - \eqref{d3}
from Definition \ref{daydef}, as well as
$u_k(x,y,z,w)= u _{m-k}(w,z,y,x) $, for all $k  \leq m$.
We get \emph{specular reversed Day 
terms} if we exchange parity in \eqref{d2}.
In most cases, the 
Day or reversed Day terms
whose existence follows
from congruence distributivity
turn out to be specular in 
the  above sense, provided one starts with specular
distributive terms.
When $n$ is even, this is the case for Day's original construction
recalled in the proof of Corollary \ref{dayopt},
and for the terms constructed in the proof of Lemma \ref{dayoptlem}(a).
For arbitrary $n$, the proof of   Theorem \ref{dir}(ii) provides
$2n$ specular Day terms, too. 

(b) Notice that, when dealing with ternary 
terms witnessing congruence distributivity, 
and with the only exception we shall mention below,
 we necessarily deal with specularity, not with (full)
symmetry. 
An $m$-ary   term 
 $w$ is \emph{symmetric}  if it satisfies the equations
\begin{equation}\labbel{specnu}     
w(x_1, x_2, \dots) = w( x _{ \sigma (1)},  x _{ \sigma (2)}, \dots ),
 \end{equation}
for all permutations $\sigma$ of $m$.  
The mathematical literature about symmetric operations
is so vast that it cannot be reported here.
We just mention that recent results connected with universal algebra
can be found in \cite{CK} and that the near-unanimity terms
constructed in \cite{misharp} are symmetric.
Recent research deals also with
terms and operations satisfying  partial versions of symmetry.
See, e.~g., \cite{BKMMN}.
 The above list is not intended to be exhaustive; 
moreover, further
references can be found in the quoted works.
It is possible that there are connections among the 
above-mentioned studies about symmetrical
terms and the present notion of specular terms, but
this has still to be analyzed in detail.

We just mention that, when dealing with the conditions introduced
in Definition \ref{jondef}, 
 full symmetry can occur only in the case of a 
ternary majority term, that is, $2$-distributivity,
equivalently, $2$-directed-distributivity. 
In fact, as soon as some ternary term $t$ is symmetric and satisfies
$t(x,y,x)=x$, then $t$ must be a majority term.  

Another  set of equations which can be satisfied
by  a symmetric ternary term is the following \emph{minority condition}:
\begin{equation} \labbel{gr}   
x=t(x,y,y), \qquad x=t(y,x,y), \qquad x=t(y,y,x).
\end{equation}    
The above equations partially resemble 
the $2$-alvin condition, but are satisfied, for example, 
by the term $x+y+z$ in a group of exponent 2;  hence
the equations  \eqref{gr}  do not imply  congruence distributivity,
though they do imply congruence permutability.
Minority terms have been extensively studied in 
 Kazda,  Opr\v sal, Valeriote, Zhuk \cite{KOVZ}.
See also Remarks \ref{notalspec}(b) and \ref{defmin} 
for a few further comments.  
 
On the other hand, there are indeed 
full symmetric terms whose existence
implies congruence distributivity.
In \cite{misharp} a variety $\mathcal {N}_m$ 
has been constructed such that $\mathcal {N}_m$ has
an $m$-ary symmetrical near-unanimity term, is not $2m{-}4$-alvin,
hence not $2m{-}5$-distributive and is not $2m{-}3$-reversed-modular,
hence not $2m{-}4$-modular.
\end{remark}

We now notice that,
for even $n$, our constructions 
show that specularity does not influence distributivity levels. 
We shall prove stronger results in Section \ref{moreex};
however, we present the proof of the following corollary,
since it is particularly simple.

\begin{corollary} \labbel{corspec} 
If $n \geq 2$ and $n$  is even,
then there are  a  specular $n$-distributive locally 
finite variety 
and  a specular $n$-alvin locally finite variety 
which are not $n{-}1$-distributive.

If $n \geq 2$, 
then there is a  specular
$n$-directed-distributive locally finite variety 
which is not 
$n{-}1$-directed-distributive.
\end{corollary}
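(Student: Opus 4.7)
The plan is to invoke the varieties already constructed in Theorems \ref{buh} and \ref{dir}, observe that these can be chosen to have specular terms, and then derive the failure of $n{-}1$-distributivity (respectively, $n{-}1$-directed-distributivity) by combining the modularity-level obstructions with Day's Theorem \ref{day} (respectively, Theorem \ref{dir}(ii)).

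For the specular $n$-distributive case I would first appeal to Theorem \ref{buh}(i) to obtain a locally finite $n$-distributive variety $\mathcal{V}$ which is not $2n{-}2$-modular. The point of Remark \ref{specrmk} is that at each stage of the inductive construction leading to Theorem \ref{buh} the ingredients are specular: the base algebras (distributive lattices, Boolean algebras, or just the term-reducts with a majority or Pixley operation) come with  specular terms; the reducts $\mathbf L^r$ and $\mathbf A^r$ of Constructions \ref{bak} and \ref{ba} satisfy \eqref{s} by direct inspection, since $t_1(x,y,z)=x(y+z)$ and $t_{n-1}(x,y,z)=z(y+x)$ (and dually for $\mathbf A^r$) are specular of one another, while the middle terms $t_h(x,y,z)=xz$ are specular on the nose; and taking products and relativizing to the subalgebra $\mathbf B$ of Construction \ref{c} preserves \eqref{s}. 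Hence $\mathcal{V}$ can be taken specular. If $\mathcal{V}$ were $n{-}1$-distributive, Day's Theorem \ref{day} would make it $2n{-}3$-modular, a fortiori $2n{-}2$-modular, contradicting Theorem \ref{buh}(i).

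The specular $n$-alvin case is entirely symmetric. Theorem \ref{buh}(ii) produces a locally finite $n$-alvin variety which is not $2n{-}3$-modular; Remark \ref{specrmk} supplies specularity. If this variety were $n{-}1$-distributive, then Day's Theorem would give $2n{-}3$-modularity, a contradiction. For the specular $n$-directed-distributive case (where the assumption $n$ even is dropped), I would use Theorem \ref{dir}(i) to obtain a locally finite $n$-directed-distributive variety that is not $2n{-}2$-modular, again specular by the same inductive observation. If it were $n{-}1$-directed-distributive, Theorem \ref{dir}(ii) would yield $2n{-}3$-modularity, contradicting Theorem \ref{dir}(i). The degenerate case $n=2$ is covered by the variety of distributive lattices, which is specular $2$-directed-distributive but nontrivial, hence not $1$-directed-distributive.

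The only real work is the bookkeeping verification that specularity is indeed preserved throughout the inductive construction in Section \ref{mainc}; this is essentially the content of Remark \ref{specrmk} and requires nothing beyond checking that each combinatorial step ($\mathbf L^r$, $\mathbf A^r$, product, passage to $\mathbf B(a,d)$) respects the equation $t_i(x,y,z)=t_{n-i}(z,y,x)$. With that in hand, the corollary is a direct deduction from the already established results together with Day's Theorem.
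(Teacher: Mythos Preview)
Your proposal is correct and follows essentially the same approach as the paper: invoke Theorems \ref{buh} and \ref{dir}(i) together with Remark \ref{specrmk} to obtain specular varieties with the required modularity obstructions, then apply Day's Theorem \ref{day} (respectively, Theorem \ref{dir}(ii)) to rule out $n{-}1$-distributivity (respectively, $n{-}1$-directed-distributivity). The paper's proof is more terse but the logical structure is identical; your extra paragraph unpacking why specularity survives the construction and your separate handling of $n=2$ in the directed case are harmless elaborations rather than genuine departures.
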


 \begin{proof}
By Remark \ref{specrmk},
the proof of  
Theorem \ref{buh} produces
a  specular $n$-distributive
variety $\mathcal {V}$ which is not $2n{-}2$-modular.
By Day's Theorem \ref{day}, 
 every $n{-}1$-\brfrt distributive variety is 
$2n{-}3$-modular, hence 
$2n{-}2$-modular. Thus $\mathcal {V}$ is not
$n{-}1$-distributive.
Again by Theorem \ref{buh} and 
Remark \ref{specrmk} we get a
specular $n$-alvin variety
which is not 
$2n{-}3$-modular
and the same argument as above applies.

If $n \geq 2$, then, by Theorem \ref{dir}(i) and 
Remark \ref{specrmk}, we have a
specular $n$-directed-distributive variety $\mathcal {W}$ 
which is not $2n{-}2$-modular.
However, by Theorem \ref{dir}(ii), 
every $n{-1}$-directed-distributive variety
is $2n{-}3$-modular, in particular,  $2n{-}2$-modular.
Hence $\mathcal {W}$ is not $n{-1}$-directed-distributive.
 \end{proof}

\begin{remark} \labbel{notalspec}   
(a)  For $n$ even, an  $n$-distributive variety is not necessarily
specular $n$-distributive. For example, 
just consider the $2$-distributive variety $\mathcal {V}$ 
with one ternary majority operation  $t=t_1$ satisfying no further
equation. We shall show that $\mathcal {V}$ is not
specular $2$-distributive, actually, $\mathcal V$ has no 
ternary specular term, except for the projection 
$p_2$ 
onto the second coordinate.

Every  term in $\mathcal {V}$ has a normal form, obtained
by applying the majority rule whenever possible.
Suppose by contradiction that 
there exists  a specular term
$s(x,y,z)$ in $\mathcal {V}$ distinct from 
$p_2$. 
Choose such an $s$  in normal form
 of minimal complexity.
Since $s$ is specular and distinct from  
$p_2$, then $s$ cannot be a variable.
hence it is  written  as
\begin{equation*}   
s(x,y,z) = t(u_1(x,y,z),u_2(x,y,z),  u_3(x,y,z) ),
 \end{equation*} 
for certain terms $u_1$, $u_2$ and $u_3$.
We  have
\begin{equation*}   
t(u_1(x,y,z),u_2(x,y,z),u_3(x,y,z) )=
t(u_1(z,y,x),  u_2(z,y,x),u_3(z,y,x) ),
\end{equation*}   
since $s$ is specular, 
hence 
\begin{equation*} 
u_1(x,y,z)=u_1(z,y,x), \quad
 u_2(x,y,z)=u_2(z,y,x), \quad
  u_3(x,y,z)=u_3(z,y,x),
\end{equation*}    
 since we are dealing with normal forms.

Since we have supposed that $s$ is specular,
distinct from the second projection and  of minimal complexity,
we have 
$u_1(x,y,z)=
 u_2(x,y,z)=
  u_3(x,y,z)=y$,
hence 
$s(x,y,z)=t(y,y,y)=y$, a contradiction.

(b) Similarly, 
the variety with a $2$-alvin  operation
satisfying
no further equation has no specular term
distinct from the second projection, in particular, 
no specular $2$-alvin  term.
The variety with a Maltsev operation (for permutability) satisfying
no further equation has no specular Maltsev term.
The variety with a minority operation
satisfying
no further equation has no specular minority term.

(c) It is likely that examples similar to (a) - (b) above can be worked out for every 
even $n >2$. On the other hand, we do not know examples of \emph{locally finite}
   $n$-distributive varieties which are not specular $n$-distributive.

(d) Suppose that some variety $\mathcal {V}$ has an idempotent term
$s$ such that $s(x,y)=s(y,x)$ holds.  
If $\mathcal {V}$ is $n$-directed-distributive
($n$ is even and $\mathcal {V}$ is $n$-distributive, $n$-alvin),
then  $\mathcal {V}$ is specular $n$-directed-distributive
(specular $n$-distributive, specular $n$-alvin).
Indeed, if $t_0, \dots, t_n$ is a sequence of terms witnessing the assumption,
then $t'_i(x,y,z) =  s(t_i(x,y,z), t_{n-i}(z,y,x))$
are terms witnessing the conclusion.
This is essentially an argument from \cite{chic}.   

(e) The same argument as in (a) above applies also to near-unanimity terms.
If $m \geq 3$
and  $\mathcal {V}$ is the variety 
with only one  $m$-ary operation $w$ satisfying exactly 
the near-unanimity identities, 
then, for every $j \geq 2$,  $\mathcal {V}$ has no  $j$-ary
term which is symmetric in the sense
of equation \eqref{specnu}. 
In fact, the argument is slightly simpler.
\end{remark}

\section{Gumm, directed Gumm and defective alvin terms} \labbel{gummdef} 
\subsection*{Gumm and defective terms}
Most results of the present paper apply
to Gumm terms, too.
This is quite surprising, since 
the existence of Gumm terms does not imply 
congruence distributivity; actually, a variety $\mathcal {V}$ 
has Gumm terms if and only if $\mathcal {V}$
is congruence modular.
More generally, our results apply to
the weaker notion of  defective Gumm terms;
as
 introduced in part (b) of  
 the following definition.
 
As we have briefly discussed in the introduction, it is  unusual---but
much more convenient for our purposes here---to
introduce Gumm 
terms 
as defective alvin terms.
Again, we refer to  \cite[Remark 4.2]{ntcm}
for a more complete discussion.
A more general study of ``defective''
conditions appears in
Kazda and Valeriote \cite{KV}.
See also Section \ref{mixed} below.  
Defective conditions in the present terminology 
 correspond to dashed lines in paths in
the  terminology from \cite{KV}.
See Remark \ref{lrrmk}(b) below for more details.

\begin{definition} \labbel{defgumm}    
(a)
We get a sequence of \emph{Gumm
terms} \cite{G1,G}
if in Definition \ref{jondef} the condition 
$x=t_1(x,y,x)$ from \eqref{bas} 
 is not assumed in the definition 
of alvin terms. More formally, for $\ell \leq n$,  it is convenient to 
 consider the following reduced set (B$^{\widehat \ell}$) of equations:
\begin{equation*}
\labbel{basi}    
\begin{aligned}    
  x&= t_0(x,y,z),  \qquad \qquad t_{n}(x,y,z)=z, 
 \\
  x  &=t_h(x,y, x), \quad 
\text{ for } 0 \leq h \leq n, \ h \neq \ell.
\end{aligned}
 \end{equation*}       
In the above situation we shall say that the sequence of terms
$t_0, \dots, t_n$ is \emph{defective}
at place $\ell$. 

Under the above notation,
a sequence of Gumm terms is a sequence
satisfying (B$^{\widehat 1})$,
as well as \eqref{al} from Definition \ref{jondef}.   

With the above
definition, if $t_0, t_1, t_2$ is a sequence of Gumm terms, then
$t_1$ is a Maltsev  term for permutability \cite{M}.   
Recall that we are not exactly assuming J{\'o}nsson Condition \eqref{j3},
but the alvin variant \eqref{al} 
 in which even and odd are exchanged. 
This is fundamental: see Remark \ref{gummrmk}(c) below. 
Notice also that many authors,
including Gumm himself,  
define Gumm terms in a slightly different fashion. See
Remark \ref{galt}(b) below.

(b)
If in the definition of Gumm terms we discard also the equation
$x=t_{n-1}(x,   y,  \allowbreak  x)$ we get a sequence of 
\emph{doubly defective alvin terms}, or 
\emph{defective Gumm terms} \cite{DKS,ntcm}.
More formally, and with the obvious extension of the above convention,
 a sequence of   defective Gumm terms
is a sequence satisfying 
 (B$^{\widehat 1, \widehat{n-1}})$
and \eqref{al}.
We shall soon see that the 
the parity of $n$ and the places at which the missing
equation(s) occur are highly relevant. See Remarks \ref{gummrmk}(b)(c)
and \ref{polin}.

(c)
As in Definition \ref{nd}, a variety or an algebra  is said 
to be \emph{$n$-Gumm} (\emph{defective $n$-Gumm})
  if it has a sequence
$t_0, \dots, t_n$ of
 Gumm (defective Gumm) terms.
\end{definition}

\begin{remark} \labbel{galt} 
(a)
 If $n$ is even, then we get a definition 
equivalent to \ref{defgumm}(a) if we ask that 
 (B$^{ \widehat{n-1}})$
and \eqref{al}
are satisfied. Indeed, the definitions are 
shown to be equivalent
by reversing both the order of terms and of variables.
Otherwise, use the first displayed line in Remark \ref{gummrmk}(a)
below, taking converses and exchanging $\beta$ and $\gamma$.

Notice that, on the contrary, when $n$ is odd 
the definitions 
are not equivalent, actually, the conjunction of 
 (B$^{ \widehat{n-1}})$
and \eqref{al} turns out to be a trivial condition, if 
$n$ is odd. This is proved arguing as in  Remark \ref{gummrmk}(c)
below.    

(b) On the other hand,  when $n$ is odd,
we get a definition equivalent 
to \ref{defgumm}(a) if we ask that 
 (B$^{ \widehat{n-1}})$
and \eqref{j3}  are satisfied.
Notice that here we are considering   \eqref{j3}  rather than \eqref{al}.

At first glance this equivalence might appear strange and counterintuitive.
However it is enough to observe that, \emph{when $n$ is odd}, if we reverse the order
of terms and of variables,  condition  \eqref{j3} transforms into \eqref{al} and,
obviously, (B$^{ \widehat{n-1}})$ transforms into  (B$^{ \widehat 1})$.
The above equivalence explains the reason why
the convention in the literature is not uniform.
Some authors define Gumm terms by
taking the ``permutability part'', namely, the  defective part,
at the end, while others take it at the beginning.

For our purposes, it is convenient to define 
Gumm terms by taking the ``permutability part''
 at the beginning, rather than
at the end. In this way, we have no need to shift
from the J{\'o}nsson and the alvin conditions, according to the parity of 
$n$. 
The definition we have adopted has also the advantage of 
providing a finer way of counting the number of terms:
compare \cite[p.\ 12]{jds}. 
To the best of our knowledge,
this formulation of the Gumm condition
is due to \cite{LTT,T}.

(c)
As we mentioned, $2$-alvin is equivalent to 
arithmeticity, that is, to the conjunction
of congruence distributivity and permutability.
Moreover, defective  $2$-alvin
is equivalent to congruence permutability.
The above observations suggest that the alvin conditions
share some aspects in common with congruence permutability.
This is indeed the case: we have discussed 
 this matter in more detail  in \cite[Remark 2.2]{ia}
for the case $n=3$,  
and in \cite[Remark 4.2]{ntcm} for the general case.
Another exploitation of this fact is presented here
in the proof of Theorem \ref{propmix2}(iii) below.
Compare also Remark \ref{spend} and \cite{LGA}.  
\end{remark}
 
The definition of directed Gumm
terms \cite{adjt} shall be recalled in Definition \ref{dirgumm}(a) below. 
The definition of two-headed directed Gumm
terms  shall be given in Definition \ref{dirgumm}(b) below. 
In order for the reader to appreciate the exact power of the 
notions involved,  we state the following 
theorem even if we have not yet given all the definitions; in fact,
we shall not actually need the theorem  in what follows.

\begin{theorem} \labbel{Gth}
\cite{DKS,G1,G,adjt}
For every variety $\mathcal {V}$, the following conditions are equivalent. \begin{enumerate}[(i)] 
  \item 
$\mathcal {V}$ is congruence modular.
\item
$\mathcal {V}$ has a sequence of Gumm terms.
\item
$\mathcal {V}$ has a sequence 
$t_1, \dots, t_n$ 
of  defective Gumm terms, for some even $n$.
\item
$\mathcal {V}$ has a sequence of directed Gumm terms.
\item
$\mathcal {V}$ has a sequence of two-headed directed Gumm terms.
 \end{enumerate} 
 \end{theorem}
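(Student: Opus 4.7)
The plan is to establish the equivalences in a cycle, leveraging results already proven or cited in the paper. The equivalence (i) $\Leftrightarrow$ (ii) is the classical Gumm theorem \cite{G1,G}; I would simply cite this.

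For the implication (ii) $\Rightarrow$ (iii), I would observe that a sequence of Gumm terms satisfies (B$^{\widehat 1}$) and the alvin equations \eqref{al}, hence \emph{a fortiori} it satisfies the weaker (B$^{\widehat 1, \widehat{n-1}}$) and \eqref{al}, making it a doubly defective alvin sequence. If $n$ is already even, we are done. If $n$ is odd, I would pad the sequence by defining $t_{n+1}(x,y,z) = z$ so that the new length $n+1$ is even; the additional alvin equation at index $h = n$ (odd) becomes $t_n(x,x,z) = t_{n+1}(x,x,z)$, which reduces to $z = z$, and the idempotency $x = t_{n+1}(x,y,x)$ is trivial. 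The defect at position 1 (and now also at position $n$) remains, so the extended sequence is doubly defective alvin of even length.

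The genuinely non-trivial direction is (iii) $\Rightarrow$ (i). Here I would invoke Lemma \ref{dayoptlem} together with its concluding observation, namely that the equations $x = t_1(x,y,x)$ and $t_{n-1}(x,y,x) = x$ are never used in the construction. This is exactly tailored to doubly defective alvin terms, which lack precisely these two equations. Thus, for $n \geq 4$ even, a sequence of doubly defective $n$-alvin terms produces a sequence of $2n-3$ reversed Day terms, yielding reversed modularity and hence, by Proposition \ref{modeq}, congruence modularity. For the remaining case $n = 2$, doubly defective $2$-alvin terms furnish a Maltsev operation, so the variety is congruence permutable and therefore congruence modular.

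For (iv) and (v), whose definitions are only stated later (Definition \ref{dirgumm}), the plan is to use the subsequent  Theorem \ref{thm2hg}, which computes exact modularity levels for (two-headed) directed Gumm terms, for the directions (iv) $\Rightarrow$ (i) and (v) $\Rightarrow$ (i); alternatively, one can observe that the corresponding mixed distributive sequences with the defective outer equations fall within the scope of the same Lemma \ref{dayoptlem}-style argument (or the more general Corollary \ref{propmixmod}), which again does not require the outer idempotency equations. For (i) $\Rightarrow$ (iv) and (i) $\Rightarrow$ (v), I would adapt Day's original transformation by composing Day terms in a way that produces the required directed and two-headed directed patterns, analogously to how Gumm terms are extracted from Day terms. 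The main obstacle is the careful bookkeeping that keeps the defective positions at the ``outer edges'' while verifying the directed equations \eqref{m2} in the middle; the fact that Lemma \ref{dayoptlem} explicitly tolerates the absence of both outer idempotency equations is what makes the whole package go through uniformly.
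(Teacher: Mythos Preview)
Your treatment of (i)$\Leftrightarrow$(ii), (ii)$\Rightarrow$(iii), (iii)$\Rightarrow$(i), and the implications (iv)$\Rightarrow$(i), (v)$\Rightarrow$(i) matches the paper's approach: cite Gumm, weaken and pad, invoke Lemma~\ref{dayoptlem} (or Corollary~\ref{propmixmod}(ii)). That part is fine. One cosmetic point: in your padding argument there is no actual defect at the new position $n$ (the projection satisfies $t_n(x,y,x)=x$), but this is harmless since doubly defective alvin is a weakening.

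The genuine gap is your route (i)$\Rightarrow$(iv). ``Adapting Day's original transformation by composing Day terms'' is not a proof, and there is no straightforward way to manufacture \emph{directed} equations \eqref{m2} in the middle from Day (or Gumm) terms by mere reshuffling of variables; obtaining directed terms from undirected ones is precisely the hard theorem of Kazda, Kozik, McKenzie and Moore \cite{adjt}. The paper accordingly does not attempt a direct construction: it simply cites \cite{adjt} for the equivalence (ii)$\Leftrightarrow$(iv). You should do the same.

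Once (iv) is in hand, the paper obtains (iv)$\Rightarrow$(v) for free: given directed Gumm terms $t_0,\dots,t_{n-2},q$, prepend the first projection as the new $p$ and shift indices (for $n=2$, prepend it twice) to get a two-headed sequence of length $n+1$ (respectively $4$). This closes the cycle and avoids your second vague construction for (i)$\Rightarrow$(v).
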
 

The equivalence of (i) and (ii)
is due to H.-P.\ Gumm \cite{G1,G}. 
 The equivalence of (i) and (iii)  appears 
in \cite{DKS}  under different notation and terminology. See
\cite[Proposition 6.4]{ntcm} for further details. 
In any case, (ii) $\Rightarrow $    (iii) is obvious, and
it follows from Lemma \ref{dayoptlem}(a)
or Corollary \ref{propmixmod}(ii)(c) below that 
(iii) implies congruence modularity
(the case $n=2$ is obvious). 
The equivalence of
(ii) and (iv) is proved in \cite{adjt}.
By adding one more term, a trivial projection
at the beginning, and shifting the indices,
it is obvious that (iv) implies (v). Compare
the proof of Theorem \ref{Dth}.
If $n=2$ in Definition \ref{dirgumm}(a),
just add the projection two times.
It follows from Corollary \ref{propmixmod}(ii)(c), to be proved below,
 that (v) implies congruence modularity.

\begin{proposition} \labbel{gummopt} 
(i) If $n \geq 4$ and $n$ is even, then
all $n$-Gumm varieties and all
 defective $n$-Gumm varieties  are $2n{-}3$-reversed-modular,
in particular, $2n{-}2$-modular.

(ii) The result is optimal: 
for every even $n \geq 2$ there is
a  defective $n$-Gumm locally finite variety,
in particular,  $n$-Gumm,
 which is not $2n{-}3$-modular.
\end{proposition}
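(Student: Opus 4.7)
My proposal is that both parts are essentially immediate consequences of results already established in the excerpt, so the plan is mostly to identify which previous results apply and to verify the logical direction of the implications between Gumm, doubly defective alvin, and (ordinary) alvin conditions.

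For part (i), the plan is to observe that the class of \emph{doubly defective $n$-alvin} varieties is precisely the class to which Lemma \ref{dayoptlem} applies in its sharpened form: that lemma explicitly states that the conclusion ($2n{-}3$-reversed-modularity) holds under the weaker hypothesis obtained from $n$-alvin by dropping the two outer equations $x=t_1(x,y,x)$ and $t_{n-1}(x,y,x)=x$, which is exactly (B$^{\widehat 1,\widehat{n-1}}$) together with \eqref{al}. Since any sequence of Gumm terms satisfies (B$^{\widehat 1}$), it \emph{a fortiori} satisfies (B$^{\widehat 1,\widehat{n-1}}$) together with \eqref{al}, so every $n$-Gumm variety is also doubly defective $n$-alvin and hence is covered by the same application of Lemma \ref{dayoptlem}. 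The ``in particular'' clause then follows by invoking Proposition \ref{modeq}: since $n$ is even, $2n-2$ is even, and $(2n-2)-1 = 2n-3$-reversed-modularity implies $2n-2$-modularity.

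For part (ii), the plan is to exhibit the variety already produced by Theorem \ref{buh}(ii): a locally finite $n$-alvin variety which is not $2n{-}3$-modular. Since every sequence of alvin terms satisfies the full set of basic equations (B), the same sequence trivially satisfies (B$^{\widehat 1,\widehat{n-1}}$) together with \eqref{al}, so the variety is doubly defective $n$-alvin; it is likewise $n$-Gumm, since it satisfies (B$^{\widehat 1}$) together with \eqref{al}. No further construction is required. For the base case $n=2$, one just takes the variety of Boolean algebras (or the reduct with a Pixley term), which is $2$-alvin, hence doubly defective $2$-alvin and $2$-Gumm, and is not $1$-modular since $1$-modularity is trivial; alternatively, the case $n=2$ of doubly defective $n$-alvin reduces to congruence permutability, and any nontrivial Maltsev variety which is not $1$-modular works.

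The main conceptual point to verify carefully, and the only step which is not purely a matter of citation, is the direction of the implications between the three conditions: dropping equations from the basic set (B) \emph{weakens} the condition, so an $n$-alvin algebra is automatically $n$-Gumm and doubly defective $n$-alvin, and a Gumm algebra is automatically doubly defective $n$-alvin. There is no obstacle of substance here, only the risk of confusing the direction of ``defective''; once this is pinned down, (i) and (ii) are direct applications of Lemma \ref{dayoptlem} and Theorem \ref{buh}(ii) respectively. The only genuine computation, already done in Lemma \ref{dayoptlem}, is the verification that the terms $u_0,\dots,u_{2n-3}$ built from $t_0,\dots,t_n$ there really do not use $x=t_1(x,y,x)$ or $t_{n-1}(x,y,x)=x$; this is pointed out in the last sentence of the proof of Lemma \ref{dayoptlem}, and no further work is needed.
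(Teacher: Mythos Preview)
Your proposal is correct and follows essentially the same approach as the paper: part (i) is reduced to the sharpened form of Lemma \ref{dayoptlem} after noting that $n$-Gumm implies doubly defective $n$-alvin, and part (ii) is obtained from Theorem \ref{buh}(ii) after noting that $n$-alvin implies both weaker conditions. Your separate treatment of the case $n=2$ in (ii) is harmless but unnecessary, since Theorem \ref{buh}(ii) already covers all even $n\geq 2$.
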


\begin{proof}
An $n$-Gumm variety is, in particular,
 defective $n$-Gumm.
 The fact that if $n$ is even, then  a 
 defective $n$-Gumm variety is $2n{-}3$-reversed-modular
is simply a reformulation of 
the second statement in Lemma \ref{dayoptlem}(a). 

In order to prove (ii), recall that in Theorem \ref{buh}(ii) an $n$-alvin 
not  $2n{-}3$-modular variety $\mathcal {V}$ has been  constructed.
In particular, $\mathcal {V}$  is a defective
$n$-Gumm variety.
\end{proof}

\begin{remark} \labbel{gummrmk}    
(a) As in Remark \ref{conid}, within a variety,
 each condition on the left in the following table
is equivalent to the condition on the right.
\begin{equation*} 
\begin{array}{cl} 
 \text{$n$-Gumm}   
&
\alpha ( \beta { \hspace{1pt} \circ \hspace {1pt} } \gamma ) 
\subseteq 
 \alpha (\gamma  { \hspace{1pt} \circ \hspace {1pt} }   \beta)
{ \hspace{1pt} \circ \hspace {1pt} }
 (\alpha \gamma  { \hspace{1pt} \circ \hspace {1pt} }
 \alpha \beta { \hspace{1pt} \circ \hspace {1pt} }
 {\stackrel{n-2}{\dots}} ) 
\\ 
 \text{defective $n$-Gumm}   
& 
\alpha ( \beta { \hspace{1pt} \circ \hspace {1pt} } \gamma ) 
\subseteq 
 \alpha (\gamma  { \hspace{1pt} \circ \hspace {1pt} }   \beta)
{ \hspace{1pt} \circ \hspace {1pt} }
(\alpha \gamma  { \hspace{1pt} \circ \hspace {1pt} } \alpha \beta
  { \hspace{1pt} \circ \hspace {1pt} }  {\stackrel{n-4}{\dots}}
  { \hspace{1pt} \circ \hspace {1pt} } \alpha \beta )
\circ \alpha ( \gamma { \hspace{1pt} \circ \hspace {1pt} } \beta ),
\end{array}
\end{equation*}        
where in the first line we are assuming $n \geq 2$
and in the second line we are assuming 
$n $ even  and $n \geq 4$.  

(b) As another example of applications of congruence identities
one sees immediately, arguing as in  (a), that,  for $n$ odd and
$n > 1$,  being  defective $n$-Gumm
is a trivial condition.
Indeed, the condition is equivalent to the trivially true identities 
$\alpha ( \beta \circ \gamma ) 
\subseteq 
 \alpha ( \gamma \circ  \beta \circ \gamma )$, for $n=3$, 
and
$\alpha ( \beta \circ \gamma ) 
\subseteq 
 \alpha (\gamma  \circ   \beta)
\circ 
(\alpha \gamma  \circ \alpha \beta  \circ {\stackrel{n-4}{\dots}} \circ \alpha \gamma  )
\circ \alpha ( \beta \circ \gamma  ) 
$, for larger $n$'s.  
It requires a bit of ingenuity to see
that the conditions are trivial, when expressed in function of terms.
Take $t_{n-1}$ to be the projection onto the second coordinate  
and all the terms before $t_{n-1}$ as the projection onto the first coordinate.
Recall that here $n$
is odd and that  we are considering a defective alvin condition, namely, 
we are assuming \eqref{al} from Definition \ref{jondef}, rather than  \eqref{j3}. 

(c) Expanding on an observation
from \cite{adjt}, we get a trivial condition also
by considering defective J{\'o}nsson or defective directed J{\'o}nsson terms,
namely, discarding the equation $x=t_1(x,y,x)$ from each set of conditions.
The existence of defective J{\'o}nsson terms is equivalent to the trivial
congruence identity 
$\alpha( \beta \circ \gamma ) \subseteq \alpha ( \beta \circ \gamma ) \circ \alpha \beta 
\dots$. 
In both cases, 
 take $t_1$ as the projection onto the second
coordinate and all the terms after  $t_1$
 as the projection onto the third coordinate,
to show that the conditions are satisfied by every variety.
This is essentially a remark  on  \cite[p.\ 205]{adjt},
where it is used under the assumption that \emph{all} the equations
$  x  =t_h(x,y, x)$ 
($ 0 \leq h \leq n$)
 are discarded. However, the argument works
if we just discard only $x=t_1(x,y,x)$.   

An even more general fact is presented in \cite[Subsection 3.3.1]{KV}. 

(d) Anyway, there is a useful and interesting notion of directed
Gumm terms \cite{adjt}. The remarks in (c) above show
that directed Gumm terms cannot be plainly obtained as defective 
directed J{\'o}nsson terms, as we did for ``undirected'' Gumm terms.
However, notice that there is the possibility of considering various kinds of
\emph{mixed J{\'o}nsson terms}, in which, for each index,
we choose some condition among \eqref{m1}, \eqref{m3}, \eqref{m2}.   
Details shall be presented in  Definition \ref{mix}  below,
where a condition parallel to \eqref{m2} shall be also taken
into account.

Then directed Gumm terms, as introduced by
\cite{adjt}, are actually defective mixed J{\'o}nsson terms,
when  \eqref{m2}  is assumed for all indices, 
with the exceptions of  $n-2$, for which   \eqref{m3}
is assumed, and 
of $n-1$, for which   \eqref{m1}
is assumed.
The definition applies both  to the case 
  $n$ even and to the case $n$ odd.
See Definition \ref{dirgumm}(a) below for formal details
and Remarks \ref{mixrem} and \ref{lrrmk}(b) 
 for the connection with the idea of mixed J{\'o}nsson terms.
\end{remark}

\subsection*{Directed Gumm terms} 
\begin{definition} \labbel{dirgumm}
(a) \cite{adjt} If $n\geq 2$, a   sequence 
$t_0, t_1, \dots, t_{n-2}, q$ of ternary terms  
is a sequence of \emph{directed Gumm terms}
if the following equations are satisfied:
\begin{align}
 \labbel{dg0} \tag{DG0}
x&=t_h(x,y,x), 
\quad \phantom {_{+1}} \text{for $0 \leq h \leq n-2$},
\\
\labbel{dg1} \tag{DG1}
x&=t_0(x,y,z),
\\
\labbel{dg2} \tag{DG2}
 \quad t_h(x,z,z)&=t_{h+1}(x,x,z), 
  \quad \text{for $0 \leq h < n-2$},
\\
\labbel{dg3} \tag{DG3}
t_{n-2}(x,z,z)&= q(x,z,z), \quad   q(x,x,z)=z. 
\end{align}   

Notice that if $n=2$ in the above definition, then 
$q$ is a Maltsev term for permutability.
Thus, for $n=2$, the existence of directed Gumm terms  
is equivalent to the existence of  Gumm terms
(and equivalent to congruence permutability).
Notice the parallel situation with respect to 
  J{\'o}nsson terms and directed J{\'o}nsson 
terms, which give equivalent conditions in the case 
$n=2$, as we mentioned in Definition \ref{jondef}.

(b) If $n\geq 4$, a   sequence 
$p, t_2, \dots, t_{n-2}, q$ of ternary terms  
is a sequence of \emph{two-headed directed Gumm terms}
if the following equations are satisfied:
\begin{align}
 \labbel{thg0} \tag{THG0}
x&=t_h(x,y,x), 
\quad \phantom {_{+1}} \text{for $2 \leq h \leq n-2$},
\\
\labbel{thg1} \tag{THG1}
x&=p(x,z,z), \quad p(x,x,z)= t_2(x,x,z),
\\
\labbel{thg2} \tag{THG2}
 \quad t_h(x,z,z)&=t_{h+1}(x,x,z), 
  \quad \text{for $2 \leq h < n-2$},
\\
\labbel{thg3} \tag{THG3}
t_{n-2}(x,z,z)&= q(x,z,z), \quad   q(x,x,z)=z. 
\end{align}   

(c) If in (b) above we also require that the terms
$p$ and  $q$
satisfy the  equations 
$x= p(x,y,x) $ and   $x= q(x,y,x) $
we get a sequence of \emph{directed terms with two alvin heads}.
If an algebra or a variety has a sequence of such terms, 
we say that it is \emph{$n$-directed with alvin heads}. 
Of course, in the situation described here in (c), the terms 
$p$ and  $q$ can be safely relabeled as 
$t_1$ and $t_{n-1}$. 

Notice that being $4$-directed with
alvin heads is the same as being $4$-alvin.    
\end{definition}

The indexing of terms in the above definitions has been 
chosen in order to match with
the more general notions we shall introduce in
Definition \ref{mix}, and to get corresponding results,
as far as modularity levels are concerned. 

In the formalism from \cite{KV}, two-headed directed Gumm terms
correspond to a pattern path 
with forward solid edges everywhere, except for two
dashed backwards edges  on the outer ends.
The case of  directed terms with alvin heads
is similar, but in this case all the edges are solid.
More details shall be given in Remark \ref{lrrmk}(b).

\begin{theorem} \labbel{thm2hg}
Suppose that $n \geq 4$.
  \begin{enumerate}[(i)]   
 \item   
If some variety $\mathcal {V}$ has two-headed directed Gumm terms   
$p, t_2, \dots, t_{n-2}, q$,
then $\mathcal {V}$ is 
$2n{-}3$-reversed-modular,
hence $2n{-}2$-modular. 
In particular, this applies to any variety which is 
$n$-directed with alvin heads.
 
\item
 There is some locally finite variety $\mathcal {V}$ 
 which has  two-headed directed Gumm terms   
$p, t_2, \dots, t_{n-2}, q$,
but  is not $2n{-}3$-modular,
hence (i) is the best possible result.
A variety $\mathcal {V}$ as above can be chosen to be
$n$-directed with alvin heads.
 \end{enumerate}
\end{theorem}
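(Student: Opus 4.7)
My plan for part (i) is to construct a sequence of $2n-3$ reversed Day terms directly from the given two-headed directed Gumm terms $p, t_2, \dots, t_{n-2}, q$, in the spirit of Day's original argument and the variant in Lemma \ref{dayoptlem}. An alternative route, which I will not develop here, is to appeal to Corollary \ref{propmixmod}(ii)(c), which will subsume this case as a special instance of a general result on mixed conditions.

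Explicitly, I will set $u_0 = x$, $u_{2n-3} = w$, $u_1(x,y,z,w) = p(x,y,z)$, and $u_{2n-4}(x,y,z,w) = q(y,z,w)$, and, for $i = 1, \dots, n-3$, take $u_{2i}(x,y,z,w) = t_{i+1}(x,y,w)$ and $u_{2i+1}(x,y,z,w) = t_{i+1}(x,z,w)$. Equation \eqref{d0} will follow from (THG0) together with $p(x,y,y)=x$ from (THG1) and $q(y,y,x)=x$ from (THG3). The reversed form of \eqref{d2} at even $k$ reduces to trivial equalities in the interior (both sides equal $t_{i+1}(x,y,w)$) and, at the two boundary transitions $u_0\to u_1$ and $u_{2n-4}\to u_{2n-3}$, to the same head equations $p(x,y,y)=x$ and $q(y,y,w)=w$. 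At odd $k$, interior transitions reduce to (THG2), while the transition $u_1\to u_2$ uses the compatibility equation $p(x,x,w) = t_2(x,x,w)$ of (THG1), and the transition $u_{2n-5}\to u_{2n-4}$ uses $t_{n-2}(x,w,w) = q(x,w,w)$ from (THG3). The key arithmetical fact that makes the construction uniform in the parity of $n$ is that $2n-5$ is always odd and $2n-4$ is always even, so the boundary transitions involving $q$ are always of the correct reversed-Day type.

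For part (ii), my plan is to apply Construction \ref{ba} to a suitable directed-distributive algebra supplied by Theorem \ref{dir}(i). Concretely, for $n\geq 4$, I will choose a locally finite $(n-2)$-directed-distributive algebra $\mathbf D$ that is not $(2n-5)$-reversed-modular, assume $\mathbf D$ has only its directed J{\'o}nsson operations $s_0,\dots,s_{n-2}$ as basic operations, and let $\mathbf B$ be the algebra produced by Construction \ref{ba}. A componentwise computation will show that $\mathbf B$ is $n$-directed with alvin heads: on the Boolean-reduct factors $\mathbf 4^r$ and $\mathbf 2^r$ the outer operation $t_1 = x(y'+z)$ satisfies both $t_1(x,y,x)=x$ and $t_1(x,z,z)=x$ (and symmetrically $t_{n-1}$), the middle operations $t_h = xz$ satisfy (THG0) and (THG2), while on $\mathbf A_4$ all the required equations follow from the directed J{\'o}nsson structure of $\mathbf D$ together with the fact that $s_0$ and $s_{n-2}$ are projections, which makes (THG1) and (THG3) collapse on this factor. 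Theorem \ref{thmba}(ii) applied with $r = 2n-5$ will then yield the failure of $(2n-3)$-modularity in $\mathbf B$, and $\mathbf B$ generates a locally finite variety since all its factors do. The principal obstacle is verifying that the Boolean-reduct operation $t_1 = x(y'+z)$ provides \emph{both} the alvin-head equation $t_1(x,z,z)=x$ \emph{and} the compatibility $t_1(x,x,w) = xw = t_2(x,x,w)$ demanded by (THG1); it is precisely this double property that upgrades the output of Construction \ref{ba} from plain directed J{\'o}nsson (as used in Theorem \ref{dir}) to directed with alvin heads, and this distinction is what made the analogous construction from lattice reducts in \ref{bak} insufficient for the present purpose.
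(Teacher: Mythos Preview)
Your proposal is correct and follows essentially the same approach as the paper. For (i) your explicit term list $u_0,\dots,u_{2n-3}$ coincides with the paper's construction obtained by merging the proofs of Lemma~\ref{dayoptlem} and Theorem~\ref{dir}(ii); for (ii) you apply Construction~\ref{ba} to an $(n-2)$-directed-distributive algebra from Theorem~\ref{dir}(i) and invoke Theorem~\ref{thmba}(ii) with $r=2n-5$, exactly as the paper does, and your observation that the Boolean-reduct term $t_1=x(y'+z)$ simultaneously satisfies $t_1(x,y,x)=x$, $t_1(x,z,z)=x$ and $t_1(x,x,w)=xw$ is precisely what the paper means by ``arguing as in the proofs of Lemma~\ref{lembak}(i)(ii) and Theorem~\ref{thmba}(i)''.
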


 \begin{proof} 
The proof of (i) is obtained by merging the arguments 
in the proofs of Lemma \ref{dayoptlem} and of Theorem \ref{dir}(ii),
namely, considering a sequence of terms
 which behaves as  in the proof of Theorem \ref{dir}(ii)  on the inside
and which  on the outer edges is defined as in the proof of
 Lemma \ref{dayoptlem}.
In detail, define
\begin{equation*}
u_1=p(x,y,z), \quad
u_2=t_2(x,y,w), \quad
u_3=t_2(x,z,w), \quad
u_4=t_3(x,y,w), \ \dots
  \end{equation*}    
and symmetrically at the other end.

A more general result shall be proved in
Corollary \ref{propmixmod}(ii)(c) 
 below.

(ii) By Theorem \ref{dir}(i),
there is an $n{-}2$-directed-distributive variety $\mathcal {W}$ 
  which is not $2n{-}5$-reversed-modular. By Remark \ref{conidmod},
there is some algebra $\mathbf D \in \mathcal W$ such that 
the congruence identity 
$ \alpha ( \beta \circ \alpha  \gamma \circ \beta )
 \subseteq 
 \alpha \gamma  \circ  \alpha \beta  \circ {\stackrel{2n-5}{\dots}} 
\circ \alpha \gamma $ fails in $\mathbf D$.
As in the proof of Theorem \ref{buh},
we can suppose that  $\mathbf D$ has only
directed J{\'o}nsson operations.
If we perform Construction \ref{ba}
by considering such an algebra $\mathbf D$, 
then, arguing as in the proofs of Lemma \ref{lembak}(i)(ii)   
and Theorem \ref{thmba}(i),   
we get some algebra $\mathbf B$
belonging to a variety $\mathcal {V}$  with
two-headed directed Gumm terms   
$p, t_2, \dots, t_{n-2}, q$.
Actually, 
the  equations 
$x= p(x,y,x) $ and   $x= q(x,y,x) $ hold in $\mathcal {V}$, hence
we get a variety which is  $n$-directed with alvin heads.
By Theorem \ref{thmba}(ii),
the identity  
$ \alpha ( \beta \circ \alpha  \gamma \circ \beta )
 \subseteq 
 \alpha \beta   \circ  \alpha \gamma  \circ {\stackrel{2n-3}{\dots}} 
\circ \alpha \beta  $ fails in $\mathbf B$,
thus $\mathcal {V}$ is not 
$2n{-}3$-modular, again by  Remark \ref{conidmod}.
The arguments showing that $\mathcal {V}$ can be chosen
to be locally finite are from Theorem \ref{buh}.
\end{proof}

\section{Mixed J{\'o}nsson  terms and congruence modularity} \labbel{mixed} 
\subsection*{Mixed terms and a way to describe them} 
We need a companion equation to \eqref{m1}, \eqref{m3}, \eqref{m2}
from Definition \ref{jondef}. 
\begin{equation}
 \labbel{m4} \tag{M$\xtoz$} 
 t_{h}(x,x,z) =
t_{h+1}(x,z,z).
 \end{equation}    

As clearly explained in \cite{adjt,KV},
the symmetry between \eqref{m2} from Definition \ref{jondef}
 and  \eqref{m4} above
is only apparent. The two equations are
significantly different,  under the additional 
assumptions
  $x= t_0(x,y,z) $ and $  t_{n}(x,y,z)=z$.
Suppose for the rest of the paragraph
that we are not assuming  the  equations
on the second line of  \eqref{bas} but, for each index,
one of \eqref{m1}, \eqref{m3}, \eqref{m2}, \eqref{m4}
is satisfied.
If we assume \eqref{m2} so much as for  one index,
we get a trivial condition, as in Remark \ref{gummrmk}(c).
On the other hand, if we assume \eqref{m4}
for all indices, we get a condition equivalent to 
$n$-permutability \cite{HM}.  See \cite{adjt,KV}
for further comments and details. 

Remark \ref{gummrmk}(d) 
and Definition \ref{dirgumm} suggest the following definition. 

\begin{definition} \labbel{mix}
A sequence $t_0, \dots, t_n$
is a sequence of \emph{mixed J{\'o}nsson terms}
if the equations in \eqref{bas} from Definition \ref{jondef}
hold and, moreover, for each $h$ ($0 \leq h<n$),
at least one of the equations      
\eqref{m1}, \eqref{m3}, \eqref{m2}, \eqref{m4} is satisfied.

Any choice of some specific equation for each $h< n$
determines a \emph{mixed condition}.  

If in the above definition
 we only assume 
(B$^{\widehat \ell}$) instead of \eqref{bas}, we say that the sequence
is \emph{defective at place $\ell$}.
Compare Definition \ref{defgumm}.
Sequences defective at two or more places are defined similarly.
We shall see that sequences defective at one or both ``ends''
$1$ and $n-1$ are particularly interesting
and enjoy special properties. Compare also Lemma \ref{dayoptlem}
and Theorem \ref{thm2hg}(i). 
Sequences defective at ``internal places''
are less well-behaved. See Remark \ref{polin}
in conjunction with Remark \ref{conidabog}.
 \end{definition}   

In a different context and with different terminology
mixed and defective mixed conditions appeared 
in Kazda and Valeriote \cite{KV} as Maltsev conditions
associated to pattern paths.
See \cite[Subsection 3.2]{KV} and
Remark \ref{lrrmk}(b) below. 
Mixed conditions
 deserve a
more attentive study,
but, of course, the present paper is already long enough
and it is not possible
to include a detailed study of such conditions.
We just present a few remarks.
In particular, we shall use the notion of a mixed
condition in order to present alternative proofs,
in a uniform way,
for Theorems \ref{dayshr}(i),
\ref{dir}(ii), \ref{thm2hg}(i), Lemma \ref{dayoptlem}, 
Corollary \ref{dayopt}(i)-(iii),
Proposition \ref{gummopt}(i)
and for the statements in Remark \ref{daystrong}. 
In a sense, the main point in the present section is
just to introduce an appropriate notation and then we
repeat some classical proofs in a greater generality. 
However, let us remark that the notational issue is not that trivial.
As we are going to see soon, in 
order to provide a good way to generalize the classical methods,
we need to describe mixed conditions 
by means of the way ``variables are moved'', rather than
by listing the equations which are satisfied.
See, in particular, Definition \ref{lr}, Remarks \ref{lrr}, \ref{lrrmk}(a),
as well as Theorem \ref{propmix2}, together with its proof.   

As we mentioned, though  the terminology adopted here is different,
 there is a strong  correspondence with some parts of \cite{KV}. 
See Remark \ref{lrrmk}(b).

\begin{remark} \labbel{mixrem}   
Clearly the J{\'o}nsson, the alvin and the directed distributive
conditions are examples of 
mixed conditions
as introduced in Definition \ref{mix}.
In the case of J{\'o}nsson and alvin terms
we alternatively
 choose equations \eqref{m1} and \eqref{m3};  
if the starting equation is \eqref{m1} we get J{\'o}nsson 
terms, otherwise we get alvin terms. 
In the case of directed J{\'o}nsson
terms we always choose equation \eqref{m2}.
 
If we always choose equation \eqref{m4},
we get a sequence 
$t_1, \dots, t_{n-1}$ of Pixley  terms, in the terminology from
\cite[p.\ 205]{adjt}. See also \cite{au}.
To be consistent with the notation
in Definition \ref{jondef},
it is convenient to add the two trivial terms at the outer edges.
In detail, under our convention, a sequence 
 of \emph{Pixley  terms} is a sequence
$t_0, \dots, t_{n}$  of ternary terms such that 
the following equations are satisfied: the equations \eqref{bas} from
Definition \ref{jondef}, as well as \eqref{m4}, for $h=0, \dots, n-1$.
 
A less standard example
is the notion of directed terms with alvin heads, 
as introduced in Definition \ref{dirgumm}(c).
In this case
the first two equations are chosen to be 
\eqref{m3} and \eqref{m1}, 
the last two equations are chosen to be 
\eqref{m3} and \eqref{m1}, in that order, 
and all the remaining middle equations, if any,
are \eqref{m2}.

The above examples are nondefective.
All the various kinds of Gumm terms 
introduced in Section \ref{gummdef}
are examples of defective 
mixed conditions.
\end{remark}

\begin{proposition} \labbel{propmix1}
 If $n \geq 1$,
then every variety with mixed J{\'o}nsson terms $t_0, \dots, t_n$
is congruence distributive, actually (at least) $2n{-}2$-distributive. 
 \end{proposition}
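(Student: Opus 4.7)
The plan is to verify directly the congruence identity
$\alpha(\beta \circ \gamma) \subseteq \alpha\beta \circ \alpha\gamma \circ \stackrel{2n-2}{\dots}\,,$
which by Remark \ref{conid} is equivalent to $(2n{-}2)$-distributivity, and in particular implies congruence distributivity. Fix congruences $\alpha, \beta, \gamma$ of any algebra in the variety, and suppose $(a,d) \in \alpha(\beta \circ \gamma)$, witnessed by an element $b$ with $a \mathrel{\beta} b \mathrel{\gamma} d$ and $a \mathrel{\alpha} d$. Following the classical device, set $c_h := t_h(a,b,d)$ for $0 \leq h \leq n$. Then $c_0 = a$, $c_n = d$, and the basic equation $x = t_h(x,y,x)$ in \eqref{bas} gives $c_h = t_h(a,b,d) \mathrel{\alpha} t_h(a,b,a) = a$, so every $c_h$ lies in the $\alpha$-class of $a$; in particular every link below will automatically fall in $\alpha\beta$ or $\alpha\gamma$, not merely in $\beta$ or $\gamma$.

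The key step is a case analysis counting, for each $h$ with $0 \leq h < n$, how many steps connect $c_h$ to $c_{h+1}$ according to which of \eqref{m1}, \eqref{m3}, \eqref{m2}, \eqref{m4} is satisfied at $h$. Equations \eqref{m1} and \eqref{m3} yield a single $\alpha\beta$- or $\alpha\gamma$-link by substituting $b$ with $a$ or $d$ along a diagonal where $t_h$ and $t_{h+1}$ agree. Equations \eqref{m2} and \eqref{m4} instead go through an auxiliary element $e_h := t_h(a,d,d) = t_{h+1}(a,a,d)$, respectively $e_h := t_h(a,a,d) = t_{h+1}(a,d,d)$, producing a two-step chain of shape $\alpha\gamma \circ \alpha\beta$ or $\alpha\beta \circ \alpha\gamma$. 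The decisive saving happens at the extreme positions $h=0$ and $h=n-1$: since $t_0$ and $t_n$ are projections, one has $t_0(a,d,d) = a = c_0$ and $t_0(a,a,d) = a = c_0$, and symmetrically at the right end, so the auxiliary element collapses onto an endpoint and the extremes contribute \emph{exactly one} link, regardless of whether the equation in force is \eqref{m1}, \eqref{m3}, \eqref{m2} or \eqref{m4}. Summing contributions yields a chain from $a$ to $d$ of total length at most $1 + 2(n-2) + 1 = 2n-2$.

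It remains to realise this chain inside the alternating pattern of length exactly $2n-2$. Consecutive links of the same colour merge by transitivity of $\beta$ and $\gamma$, and the residual alternating chain can be padded on either side with reflexive steps. When the chain naturally begins with $\alpha\beta$ (i.e.\ when the first equation is \eqref{m1} or \eqref{m2}), this finishes the proof immediately. The main obstacle I anticipate is the opposite case, in which the first equation has type \eqref{m3} or \eqref{m4} and the first link is $\alpha\gamma$: here a naive reflexive $\alpha\beta$-prefix would push the length to $2n-1$, and one needs a more careful analysis to recover the sharp bound $2n-2$. I expect this difficulty to be resolved by appealing to the forthcoming Theorem \ref{propmix2}, which reformulates the whole argument in the uniform language of reflexive admissible relations and so absorbs the endpoint and parity bookkeeping into a single identity; Proposition \ref{propmix1} then follows by specialising that identity to the congruences $\alpha, \beta, \gamma$ and invoking Remark \ref{conid}.
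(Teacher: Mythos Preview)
Your approach matches the paper's: the paper also defers the proof to Theorem~\ref{propmix2} (see Remark~\ref{bgp}), specialising to $S=\beta$, $T=\gamma$ and invoking Remark~\ref{conid}; your preliminary direct argument via $c_h=t_h(a,b,d)$ is essentially the proof of Theorem~\ref{propmix2}(ii) with a slightly different choice of waypoints.

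One comment on the parity obstacle you flag: you are right that Theorem~\ref{propmix2} alone does not dissolve it, and the paper's Remark~\ref{bgp} glosses over the same point. With $S=\beta$, $T=\gamma$, the chain $B_1\circ\dots\circ B_{n-1}$ begins with $\alpha\gamma$ and retains the full length $2n-2$ (no merges) precisely when every $B_h=\alpha\gamma\circ\alpha\beta$, i.e.\ every $l(h)=z$, the pure Pixley case; there one obtains only $2n{-}2$-alvin. In every other case where the chain begins with $\alpha\gamma$, some $B_h\neq B_{h+1}$, the adjacent equal factors merge, the length drops to at most $2n-3$, and a reflexive $\alpha\beta$ prefix restores the desired pattern within $2n-2$. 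The residual Pixley exception is harmless: Pixley terms give $n$-permutability and hence, together with congruence distributivity, $n$-distributivity (cf.\ Remark~\ref{useofbm}(b)), which is far stronger than needed.
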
  

Proposition \ref{propmix1} 
 can be  proved using the arguments from \cite{adjt}.
We shall prove a more refined result in
Theorem \ref{propmix2} 
 below. See Remark \ref{bgp}.

Of course, according to the form of the mixed condition,
a variety as in Proposition \ref{propmix1}
might be $m$-distributive, 
for some $m < 2n-2$.
Indeed, $n$-distributivity itself is a special
case of a mixed condition, hence it might  already happen 
that  $m=n$. On the other hand, Theorem \ref{dirthm}(ii)  
shows that, in general,  $ 2n-2$
 is the best possible bound  in Proposition \ref{propmix1}.

We shall now prove a more explicit version of 
Proposition \ref{propmix1}. More importantly for our purposes,
and a generalization of Lemma \ref{dayoptlem}, 
we are going to show that there are cases in which defective 
conditions imply congruence modularity.
In order to accomplish this, we
 need a way to describe each specific mixed condition.
While, in the case of the more usual examples, it appears natural to list
the various kinds of equations which are satisfied,  it turns
out that, in the general case of an arbitrary 
mixed condition, it is more convenient to deal with
the variables which are moved relative
to each single  term. See, in particular,
Remark \ref{lrrmk}(a).
We first need a formal notational remark
about how to treat the initial and final equations.

\begin{remark} \labbel{mixremult}
Let $\mathcal {V}$ be a variety 
with mixed J{\'o}nsson terms as in Definition \ref{mix}. 
It might happen that, for some $h< n$, $\mathcal V$ satisfies
two or more equations among \eqref{m1}, 
\eqref{m3}, \eqref{m2}, \eqref{m4}.
In our definition of a \emph{mixed condition}
it is convenient to
 require that, for each $h$, exactly one of the above equations
is selected. 
The ``edge cases'' $h=0$ and $h=n-1$ are an exception.
As we shall see,
in the present section  it will be convenient to deal only with the terms
$t_1, \dots, t_{n-1}$. 
Hence, for $h=0$,  we shall not distinguish between 
\eqref{m1} and \eqref{m2}, which both entail
$x=t_1(x,x,z)$, and we shall not distinguish
between   \eqref{m3} and \eqref{m4}, 
 both entailing
$x=t_1(x,z,z)$.
A symmetrical consideration applies to 
$t_{n-1}$.
 \end{remark}

\begin{definition} \labbel{lr}
Suppose that $n \geq 2$ and that  $l$,  $r$ are functions 
from $\{ 1, \dots, n-1 \}$ to the  set 
$\{ x, z \}$ of variables. Each such pair of functions 
\emph{determines} a mixed    
condition in the sense of Definition \ref{mix},
 modulo the convention from Remark \ref{mixremult}.
The equations to be satisfied are  \eqref{bas} from Definition \ref{jondef} and  
\begin{equation}\labbel{lreq}    
\begin{aligned} 
x &= t_{1}(x,l(1),z),
\\ 
t_{h}(x,r(h),z) &=
t_{h+1}(x,l(h+1),z), \quad \text{ for $ 1 \leq  h < n-1$,  and} 
\\
t_{n-1} & (x,r(n-1),z) =z.
 \end{aligned}   
   \end{equation}
\end{definition}   

\begin{remark} \labbel{lrr}   
Clearly, every pair of functions 
$l$ and  $r$ as above actually determines a
mixed condition in the sense of
Definition \ref{mix} and Remark \ref{mixremult}. 
In passing, we notice that, by Remark \ref{mixremult},
 here it is more practical
to write, say, 
$x = t_{1}(x,l(1),z)$
in place of  something like
$t_0(x, r(0), z) = t_{1}(x,l(1),z)$, since
$x=t_0(x, \hy, \hy)$,
no matter the second and the third arguments in the
range of $t_0$, hence there is no use in specifying
 some value for $r(0)$.
In other words, we do not need the outer
(trivial) terms $t_0$ and $t_n$ when we present the definition 
of a mixed condition as determined
by certain functions\footnote{On the other hand,
as we mentioned in Remark \ref{count},
it is slightly more convenient to maintain  $t_0$ and  $t_n$
in the usual definitions of, say, J{\'o}nsson and  alvin 
terms, when the definitions 
are expressed by specifying the identities 
\eqref{m1} or \eqref{m3}  to be satisfied.}
  $l$ and  $r$.    

Conversely, every mixed condition requires that, for each $h$, 
 one equation
of the following form 
\begin{equation*} 
t_{h}(x,v_h,z) =t_{h+1}(x,w_{h+1},z)
\end{equation*}     
is satisfied, where
each one of $v_h$ and  $w_{h+1}$  is either $x$ or  $z$.
Hence if we set  
$r(h)=v_h$ and 
$l(h+1) = w_{h+1}$, for all appropriate values of $h$,  
the given mixed condition is determined
by $l$ and  $r$.  
Thus we get that  every mixed condition is determined by 
some pair $l$ and  $r$.
 \end{remark}

In particular, the J{\'o}nsson (alvin) condition is obtained
by setting $l(h)=x$ and $r(h)=z$,  for $h$ odd (even),
and $l(h)=z$ and $r(h)=x$, for $h$ even (odd).   
The directed J{\'o}nsson condition is obtained by 
putting $l(h)=x$ and $r(h)=z$, for every $h$.
Letting $l(h)=z$ and $r(h)=x$, for every $h$,
we obtain the generalized Pixley condition in the sense
of \cite{adjt}, as recalled in Remark \ref{mixrem}.
 If $n \geq 4$, $l(1) = l(n-1) = z$,
 $r(1) = r(n-1) = x$, and,
in all the other cases, $l(h)=x$ and $r(h)=z$, then
 we get directed terms with alvin heads, as in 
Definition \ref{dirgumm}(c), of course, suitably
relabeling the terms $p$ and $q$.   
See Remark \ref{lrrmk}(b) below  for a diagram 
(inspired by \cite{KV})
representing the situation.

Gumm and   defective Gumm
terms are defective cases of the alvin condition.
Directed Gumm terms are a defective
case of the mixed condition determined by the
positions     $l(n-1) = z$,
 $r(n-1) = x$, and $l(h)=x$ and $r(h)=z$,
for all the other $h$'s. 
 Two-headed directed Gumm terms
are   defective cases of what we have called 
directed terms with alvin heads.

The identities 
\eqref{agob} and 
\eqref{abog} in the statement of Theorem \ref{buhbis}
correspond to defective mixed conditions, too, via Remark \ref{conidabog}.
The conditions are defective versions of, respectively,
the alvin and the J{\'o}nsson conditions;
in this case the terms are defective at all odd (even) indices.
In the statement of Theorem \ref{buhbis}
$n$ is assumed to be even; 
if $n$ is odd, we 
set $\ell= \frac{n-1}{2} $
and consider variations of 
 \eqref{agob} and 
\eqref{abog}
in which an $ \alpha \gamma $ factor
is added or deleted
  on the right. 
See Definition \ref{deflev} below  for exact details.
The above conditions shall be called 
the \emph{switch} and  
the \emph{J-switch} condition.

 The next remark  shows that the  description
given by Definition \ref{lr} 
can be somewhat simplified.

\begin{remark} \labbel{lrrmk}
(a) The $l$-$r$-convention introduced in Definition \ref{lr} 
is particularly useful in order
to detect redundant conditions.
Indeed, if, under the assumptions
in Definition \ref{lr},  we have $l(h)=r(h)$, for some $h$, then
\begin{equation*}
t_{h-1}(x,r(h-1),z) =t_{h}(x,l(h),z) = t_{h}(x,r(h),z) =
t_{h+1}(x,l(h+1),z),
  \end{equation*}    
 by \eqref{lreq},   hence,
\begin{equation}\labbel{caaa}
    t_{h-1}(x,r(h-1),z)=t_{h+1}(x,l(h+1),z),
   \end{equation}
thus in this case  the term $t_h$ is redundant and can be discarded,
getting a shorter sequence of mixed J{\'o}nsson terms. 
Indeed, given terms
$t_0, \dots, t_{h-1},     t_{h+1},    \dots, t_n$
satisfying \eqref{caaa} and all the other appropriate equations, we get
terms  $t_0, \dots, t_{h-1}, t_h,  \allowbreak   t_{h+1},  \dots, t_n$
satisfying the equations \eqref{lreq} by setting $t_h(x,   y,z)=
 t_{h-1}(x,r(h-1),z) $.
Notice that, with this position,
$t_h$ does not depend on its second place.

In particular, it is no loss of generality to assume that
$l(h) \neq r(h)$, for every $h$. Under this assumption, a mixed condition
  is determined either by $l$ alone, or by  $r$ alone, since
 $l(h) $ and  $ r(h)$ may assume only two values, hence, if   $l(h) \neq r(h)$,
the value of $l(h) $ determines the value of $ r(h)$ and conversely.

(b) Following \cite{KV},  
still assuming that
$l(h) \neq r(h)$, for every $h$,
we can represent a  mixed condition by
a  path with directed edges.
The path $P$ goes from $0$ to $n-1$ 
and touches  all the intermediate natural numbers, in their order.
If $1 \leq i < n$, $l (i)= x$ and $ r(i) = z$,
then the edge in $P$ connecting 
$i-1$ and $i$ is directed from 
 $i-1$ to $i$ (\emph{forward directed}).
On the other hand, if 
$l (i)= z$ and $ r(i) = x$,
then the edge in $P$ connecting 
$i-1$ and $i$ is directed from 
 $i$ to $i-1$ (\emph{backward directed}).
With the above conventions,
the functions $l$ and  $r$
determine  the same 
Maltsev condition 
$M(P)$ as defined 
in \cite[Subsection 3.2]{KV},
provided all the edges  of $P$ 
are considered as solid,  modulo the relabeling
of the variables $y$ in \cite{KV} to  $z$ 
here. Notice that there is an extra term
in \cite[Subsection 3.2]{KV}, namely, we should have  taken
$n+1$ in place of $n$  here to get an exact correspondence.

One can describe defective mixed conditions in a similar way,
considering $P$ as an edge-labeled path,
where edges are labeled as either solid or dashed.
 If the condition is defective at some place $i$,
we consider the edge
connecting $i-1$ and $i$ as   
dashed, otherwise, as above, the edge is solid.
In this more general case, too, the (possibly defective)
mixed condition
turns out to be $M(P)$, as defined 
in \cite{KV}.

In the following table we represent
the conditions we study by means of 
paths. 
\begin{flalign*}    
&\circ  \xrightarrow {\hspace {25pt}} \circ
\xleftarrow {\hspace {25pt}} \circ
\xrightarrow {\hspace {25pt}} \circ
\xleftarrow {\hspace {25pt}} \circ
\xrightarrow {\hspace {25pt}} \circ 
\xleftarrow {\hspace {25pt}} \circ \dots
&& \text{J{\'o}nsson} 
\\
&\circ  \xrightarrow {\hspace {25pt}} \circ
\xdashleftarrow {\hspace {25pt}} \circ
\xrightarrow {\hspace {25pt}} \circ
\xdashleftarrow {\hspace {25pt}} \circ
\xrightarrow {\hspace {25pt}} \circ 
\xdashleftarrow {\hspace {25pt}} \circ \dots
&& \text{J-switch} 
\\
&\circ \xleftarrow {\hspace {25pt}} \circ
\xrightarrow {\hspace {25pt}} \circ
\xleftarrow {\hspace {25pt}} \circ
\xrightarrow {\hspace {25pt}} \circ 
\xleftarrow {\hspace {25pt}} \circ
 \xrightarrow {\hspace {25pt}} \circ \dots
&& \text{alvin}
\\
&\circ \xdashleftarrow {\hspace {25pt}} \circ
\xrightarrow {\hspace {25pt}} \circ
\xdashleftarrow {\hspace {25pt}} \circ
\xrightarrow {\hspace {25pt}} \circ 
\xdashleftarrow {\hspace {25pt}} \circ
 \xrightarrow {\hspace {25pt}} \circ \dots
&& \text{switch}
\\ 
&\circ \xdashleftarrow {\hspace {25pt}} \circ
\xrightarrow {\hspace {25pt}} \circ
\xleftarrow {\hspace {25pt}} \circ
\xrightarrow {\hspace {25pt}} \circ 
\xleftarrow {\hspace {25pt}} \circ
 \xrightarrow {\hspace {25pt}} \circ \dots
&& \text{Gumm} 
\\
&\circ \xdashleftarrow {\hspace {25pt}} \circ
\xrightarrow {\hspace {25pt}} \circ
\xleftarrow {\hspace {25pt}} \circ  
\xrightarrow {\hspace {25pt}} \circ \dotsc \circ 
\xrightarrow {\hspace {25pt}} \circ 
 \xdashleftarrow {\hspace {25pt}} \circ 
&& 
\text{defective Gumm}
\\
&\circ  \xrightarrow {\hspace {25pt}} \circ
\xrightarrow {\hspace {25pt}} \circ
\xrightarrow {\hspace {25pt}} \circ \dotsc \circ 
 \xrightarrow {\hspace {25pt}} \circ
\xrightarrow {\hspace {25pt}} \circ
\xrightarrow {\hspace {25pt}} \circ 
&& \text{directed J{\'o}nsson} 
\\
&\circ  \xrightarrow {\hspace {25pt}} \circ
\xrightarrow {\hspace {25pt}} \circ
\xrightarrow {\hspace {25pt}} \circ \dots \circ
 \xrightarrow {\hspace {25pt}} \circ 
\xrightarrow {\hspace {25pt}} \circ
\xdashleftarrow {\hspace {25pt}} \circ 
&& \text{directed Gumm} 
\\
&\circ  \xleftarrow {\hspace {25pt}} \circ
\xrightarrow {\hspace {25pt}} \circ
\xrightarrow {\hspace {25pt}} \circ
\dotsc \circ 
\xrightarrow {\hspace {25pt}} \circ
\xrightarrow {\hspace {25pt}} \circ
\xleftarrow {\hspace {25pt}}  \circ
&& \text{dir.\,  alvin heads} 
\\
&\circ  \xdashleftarrow {\hspace {25pt}} \circ
\xrightarrow {\hspace {25pt}} \circ
\xrightarrow {\hspace {25pt}} \circ
\dotsc \circ  
\xrightarrow {\hspace {25pt}} \circ
\xrightarrow {\hspace {25pt}} \circ
\xdashleftarrow {\hspace {25pt}}  \circ 
&& \text{$2$-hd.\ dir.\ Gumm} 
\\
&\circ \xleftarrow {\hspace {25pt}} \circ
\xleftarrow {\hspace {25pt}} \circ
\xleftarrow {\hspace {25pt}} \circ
\xleftarrow {\hspace {25pt}} \circ
\xleftarrow {\hspace {25pt}} \circ
\xleftarrow {\hspace {25pt}} \circ \dots
&& \text{Pixley,} 
\end{flalign*}
where in the line representing 
defective Gumm terms we assume that $n$ is even. 

We refer to \cite{KV}
for further details, diagrams, results and variations.     
 
(c) In (b) above it would probably be more natural, though
notationally clumsier, to assume that the vertices
of $P$ are labeled as $ \frac{1}{2} $, $1 + \frac{1}{2} $, \dots, $n - \frac{1}{2}$,
namely, the indexing of the vertices is shifted
by $\frac{1}{2}$. 
The above conditions turn out then to be more symmetric, for example,
with the above shifted indices, if    
$l (i)= x$ and $ r(i) = z$,
then the edge in $P$ connecting 
$i- \frac{1}{2}$ and $i+\frac{1}{2}$ is directed from 
$i- \frac{1}{2}$ to $i+\frac{1}{2}$; it is dashed if the condition is defective
at $i$, otherwise, it is solid. 
\end{remark}

\subsection*{Relation identities} 
Now we turn our attention to relation identities
which are consequences of mixed conditions.
The study of relation identities seems to be interesting for itself,
see  \cite{Fio,G,HM,CV,contol,cm2,ia,LGA,B,ntcm,T}
 and Remarks \ref{rmkkb} and \ref{equiv}. 
However, in this subsection we deal with 
relation identities only because they provide a quite easy and,
at least in our opinion, natural way to congruence identities.

We let $R$, $S$ and $T$ denote binary reflexive and admissible relations 
on some algebra.
We let $R ^\smallsmile $ denote  the \emph{converse} of $R$
and
 $ \overline{S \cup T} $ 
denote the smallest  admissible relation  
containing both $S$ and $T$.
In the following formulae, when
dealing with relation identities, juxtaposition denotes intersection
of binary relations.
Recall that in this context $0$ denotes the identical relation.  

\begin{theorem} \labbel{propmix2}
 Suppose that $n \geq 2$. 

  \begin{enumerate}[(i)]
    \item   
Every variety $\mathcal {V}$ with mixed J{\'o}nsson terms $t_0, \dots, t_n$
in the sense of Definition \ref{mix}
 satisfies some relation identity of the form
\begin{equation} \labbel{bm}  
\alpha (S \circ T) \subseteq  B_1 \circ B_2 \circ \dots \circ B_{n-1},
  \end{equation}    
where $S$ and $T$ are reflexive and admissible relations 
and each $B_h$ 
is either $ \alpha S  \circ \alpha T $ or 
$    \alpha  T ^\smallsmile \circ \alpha S ^\smallsmile$.
\item
In detail, if $\mathcal {V}$ satisfies some mixed condition determined
by $l$ and  $r$, as in Definition \ref{lr}, then  
the $B_h$'s in identity \eqref{bm}  can 
be taken as 
\begin{align*}
B_h&= \alpha S  \circ \alpha T,  
&& \text{ if $l(h) = x$ and $r(h) = z$}, 
\\
B_h&= \alpha  T ^\smallsmile \circ \alpha S ^\smallsmile, 
  && \text{ if $l(h) = z$ and $r(h) = x$}, 
\\
B_h&= 0 , 
  && \text{ if $l(h) = r(h)$}.
 \end{align*}    
\item
  \begin{enumerate}  
  \item    
 If, in addition, 
$l(1)=z$,
that is, $\mathcal {V}$ satisfies
$x=t_1(x,z,z)$,  
then $B_1$ in identity 
\eqref{bm} can be replaced by  
$ \alpha ( \overline{S ^\smallsmile \cup T})$,
and this holds also in case the sequence of terms 
is defective at $1$.  
  
\item
Symmetrically, if
$r(n-1)=x$, that is, 
$t_{n-1}(x,x,z)=z$ holds, 
then $B_{n-1}$ in identity 
\eqref{bm} can be replaced by  
$ \alpha ( \overline{S  \cup T ^\smallsmile}) $,
and this applies also  if the sequence of terms 
is defective at $n-1$.  

\item
If $n \geq 3$ and both the  
additional assumptions in (a) and (b) hold,
we can perform both replacements,
also  if the sequence  
is defective both at $1$ and at $n-1$.
  \end{enumerate} 
\end{enumerate}
\end{theorem}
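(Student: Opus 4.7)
The plan is to adapt the classical J{\'o}nsson-style chain argument to the reflexive-admissible setting, reading off the chain directly from the pair $(l,r)$ that determines the mixed condition. Given $(a,c) \in \alpha(S \circ T)$, fix a witness $b$ with $a \mathrel{S} b \mathrel{T} c$. For each $h \in \{1, \dots, n-1\}$ introduce three evaluations of $t_h$,
\[
L_h = t_h(a, l(h), c), \qquad e_h = t_h(a, b, c), \qquad R_h = t_h(a, r(h), c),
\]
where $l(h)$ and $r(h)$ are interpreted in $\{a, c\}$ according to whether they equal $x$ or $z$. A direct consequence of \eqref{lreq} is that $L_1 = a$, $R_{n-1} = c$, and $R_h = L_{h+1}$ for $1 \leq h < n-1$; so \eqref{bm} will follow by concatenation once $(L_h, R_h) \in B_h$ is established for every $h$.

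The $\alpha$-component of each $B_h$ is automatic: the identity $t_h(x,y,x) = x$ from \eqref{bas} together with $a \mathrel{\alpha} c$ gives $L_h \mathrel{\alpha} a \mathrel{\alpha} e_h \mathrel{\alpha} R_h$. For the remaining component one case-splits on $l(h), r(h)$. In the forward case $l(h)=x$, $r(h)=z$, admissibility applied to $(a,b) \in S$ yields $L_h = t_h(a,a,c) \mathrel{S} t_h(a,b,c) = e_h$, and applied to $(b,c) \in T$ yields $e_h = t_h(a,b,c) \mathrel{T} t_h(a,c,c) = R_h$, hence $(L_h, R_h) \in \alpha S \circ \alpha T$. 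The backward case $l(h)=z$, $r(h)=x$ is the mirror image, producing $\alpha T^\smallsmile \circ \alpha S^\smallsmile$; and $l(h) = r(h)$ degenerates to $L_h = R_h$. Chaining along $R_h = L_{h+1}$ gives (i) and (ii) simultaneously.

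For part (iii) the argument is subtler. Consider (a): assume $l(1) = z$, i.e.\ $x = t_1(x,z,z)$, and allow the sequence to be defective at $1$, so that $t_1(x,y,x) = x$ is unavailable. A naive two-step chain $a \to e_1 \to R_1$ only gives $(a, R_1) \in \overline{S^\smallsmile \cup T} \circ \overline{S^\smallsmile \cup T}$, which does not suffice since $\overline{S^\smallsmile \cup T}$ need not be transitive. Instead, I would realize $(a, R_1)$ via a single application of $t_1$: the equation $t_1(x,z,z) = x$ with $z \mapsto b$ gives $t_1(a,b,b) = a$, while $R_1 = t_1(a,a,c)$ by definition, so
\[
(a, R_1) = \bigl( t_1(a,b,b), \; t_1(a,a,c) \bigr).
\]
The three coordinate-pairs---$(a,a)$ on the first coordinate, $(b,a) \in S^\smallsmile$ on the second, $(b,c) \in T$ on the third---all lie in $\overline{S^\smallsmile \cup T}$, so admissibility of that relation delivers $(a, R_1) \in \overline{S^\smallsmile \cup T}$. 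The $\alpha$-factor follows from $t_1(a,a,a) = a$ (same equation with $z \mapsto a$) together with $a \mathrel{\alpha} c$, which forces $R_1 \mathrel{\alpha} a$. Part (b) is specular, using $r(n-1) = x$, i.e.\ $t_{n-1}(x,x,z) = z$: one writes $(L_{n-1}, c) = \bigl(t_{n-1}(a,c,c),\ t_{n-1}(b,b,c)\bigr)$ and invokes admissibility of $\overline{S \cup T^\smallsmile}$ on the coordinate pairs $(a,b) \in S$, $(c,b) \in T^\smallsmile$, $(c,c)$. Part (c) is simply the conjunction of (a) and (b), legitimate whenever $n \geq 3$ because the two modified edges are then distinct.

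The main obstacle is the single-step construction in (iii): one must resist the tempting substitution $L_1 = t_1(a,c,c) = a$ (which would expose the unavailable coordinate pair $(c,a)$) and instead use $L_1 = t_1(a,b,b) = a$, which exposes the available $(b,a) \in S^\smallsmile$. It is precisely this choice that frees the argument from the outer identity $t_1(x,y,x) = x$ and thereby extends (iii) to sequences defective at $1$---and, symmetrically, at $n-1$.
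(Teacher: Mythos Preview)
Your argument is correct and is essentially the paper's own proof: the same chain $L_h \to e_h \to R_h$ (the paper writes $e_{h-1} \to t_h(a,b,c) \to e_h$), the same use of $t_h(x,y,x)=x$ for the $\alpha$-factor, and exactly the same single-step trick $a = t_1(a,b,b)$ for part (iii). One tiny omission: in (iii)(a) your claim ``$R_1 = t_1(a,a,c)$ by definition'' presupposes $r(1)=x$; the degenerate case $r(1)=z=l(1)$ (where $L_1=R_1$ and $B_1=0$) should be dismissed in a line, as the paper does.
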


\begin{remark} \labbel{bgp}   
Before giving the proof of Theorem \ref{propmix2},
we observe that Proposition \ref{propmix1}
is a consequence of 
\ref{propmix2}. 
If $n=1$ in Proposition \ref{propmix1},
then we are in a trivial variety 
and the conclusion vacuously holds.
If $n \geq 2$, take 
$S= \beta $ and $T= \gamma $
congruences in \ref{propmix2}(i).
From \eqref{bm} we get 
$\alpha ( \beta \circ \gamma ) \subseteq 
\alpha \delta _1 \circ \dots \circ \alpha \delta _{2n-2}$,
where each $\delta_i$ is either $\beta$ or $\gamma$. 
If $\delta_1 = \beta $, we are done, by   Remark \ref{conid}.
If $\delta_1 = \gamma  $ and, for some $i$,
$\delta_i = \delta _{i+1} $, the two factors 
$ \alpha \delta_i $ and $  \alpha  \delta _{i+1} $
absorb into one, hence we get 
(at least) $2n{-}3$-alvin,
hence $2n{-}2$-distributivity. 
In the remaining case we always have
$B_h= \alpha  \gamma  \circ \alpha \beta $, 
hence $l(h) = z$ and $r(h) = x$, for every $h$, by \ref{propmix2}(ii).
This means that the terms $t_1, \dots, t_{n-1}$
are (the nontrivial terms in a sequence of) Pixley terms;
see Remark \ref{mixrem}. A variety with   Pixley terms
$t_0, \dots, t_{n}$ is congruence $n$-permutable and distributive
\cite{adjt,au}, hence $n$-distributive, a fortiori,
$2n{-}2$-distributive, since $n \geq 2$.    
 \end{remark}

\begin{proof}[Proof of Theorem \ref{propmix2}]
 In many particular instances the proof
of \ref{propmix2} 
 is standard,
 using or modifying the arguments from \cite{JD}.
See, e.~g., 
\cite[Lemma 4.3]{ntcm},  \cite{G1,G,LTT,jds,ia,B,T} 
and the proofs of  \ref{dayoptlem}, \ref{dayopt}(i), \ref{daystrong},
\ref{dir}(ii), \ref{thm2hg}(i)      here
for similar examples.

To prove the theorem  in general, 
first notice that (i) is a special case of (ii), since
 if $B=0$,
then trivially $B \subseteq \alpha S  \circ \alpha T$.
In any case, 
 by Remark \ref{lrrmk}(a),
it is no loss of generality to assume that the case  $l(h) = r(h)$
 never occurs.

So let us prove (ii). Suppose that $\mathcal {V}$
 has mixed J{\'o}nsson terms $t_0, \dots, t_n$,
with equations determined by  $l$ and $r$. 
If $\mathbf A \in \mathcal V$,
$a,c \in A$ and 
$(a,c) \in \alpha ( S \circ T )$,
then $a \mathrel { \alpha } c $
and there is some $b \in A$ 
such that $ a \mathrel { S } b \mathrel { T } c $.
For  $h=1, \dots, n-1$, let 
$l^*(h) = a$ if  $l(h) = x$ and
$l^*(h) = c$ if  $l(h) = z$ and define
$r^*$ similarly. 
Consider the elements
\begin{equation*}
\begin{aligned}
e_0 &=  a= t_{1}(a,l^*(1),c),
\\
e_h&=t_h(a,r^*(h),c)=t_{h+1}(a,l^*(h+1),c),
\quad
\text{for 
$1 \leq h <  n-1$,}
\\
 e_{n-1} &=   t_{n-1}(a,r^*(n-1),c)= c,
 \end{aligned} 
  \end{equation*}
 where the equalities follow from the
equations \eqref{lreq}.

If $1 \leq h \leq  n-1$, $l(h) = x$ and $r(h) = z$,
then 
\begin{equation*} 
e_{h-1}{ \hspace{1pt} = \hspace {1pt} }t_{h}(a,l^*(h),c) 
{ \hspace{1pt} = \hspace {1pt} } t_{h}(a,a,c) \mathrel {S } 
\\
 t_{h}(a,b,c) \mathrel { T } 
t_{h}(a,c,c)  { \hspace{1pt} = \hspace {1pt} }t_{h}(a,r^*(h),c) { \hspace{1pt} = \hspace {1pt} } e_{h}.
\end{equation*}  
Moreover, $e_{h-1}= t_{h}(a,a,c) \mathrel { \alpha  } t_{h}(a,a,a)=a$,
and similarly $t_{h}(a,b,c) \mathrel \alpha  t_{h}(a,b,a) =a$
and   $  e_{h} = t_{h}(a,c,c)  \mathrel \alpha t_{h}(a,c,a)= a $,
thus $e_{h-1} \mathrel \alpha t_{h}(a,b,c) \mathrel \alpha e_{h}  $,
hence    $e_{h-1} \mathrel {\alpha S}
t_{h}(a,b,c) \mathrel {\alpha T} e_{h}  $,
from which
$ e_{h-1} \mathrel { B_{h}  } e_{h} $ follows.

Similarly, if $1 \leq h \leq  n-1$, $l(h) = z$ and $r(h) =x$,
then 
\begin{equation*} 
e_{h{-}1}{ \hspace{1pt} = \hspace {1pt} }t_{h}(a,l^*(h),c) 
{ \hspace{1pt} = \hspace {1pt} } t_{h}(a,c,c)
{  \hspace{1pt} \mathrel {T ^\smallsmile  } \hspace{1pt}   }
\\
 t_{h}(a,b,c) {  \hspace{1pt} \mathrel { S ^\smallsmile  } \hspace{1pt}  } 
t_{h}(a,a,c)  { \hspace{1pt} =
 \hspace {1pt} }t_{h}(a,r^*(h),c) { \hspace{1pt} = \hspace {1pt} } e_{h}.
\end{equation*}  
As in the previous paragraph,
 $e_{h-1} \mathrel \alpha t_{h}(a,b,c) \mathrel \alpha e_{h}  $,
hence    $e_{h-1} \mathrel {\alpha T ^\smallsmile }   
t_{h}(a,b,c)   \allowbreak  \mathrel {\alpha S ^\smallsmile } e_{h}  $,
from which we get
$ e_{h-1} \mathrel { B_{h}  } e_{h} $
in this case, as well.

Finally, if
$l(h) = r(h)$, then $l^*(h) = r^*(h)$, hence 
$e_{h-1}=t_{h}(a,l^*(h),c) 
=t_{h}(a,r^*(h),c) = e_{h}$
and we can take $B_{h}=0$.
 
In conclusion,  
$a=e_0 \mathrel {B_1} e_1 \dots  e_{n-2} 
\mathrel {B_{n-1}} e_{n-1}= c  $, 
hence
$(a,c) \in B_1 \circ B_2 \circ \dots \circ B_{n-1}$
and (ii) is proved.

(iii)(a) If $r(1)=z$, then by (ii) we can take $B=0$  
and we are done. Notice that no special equation
involving $t_h$   is needed
in the proof of the case $l(h)=r(h)$. 

Otherwise, $r(1)=x$. Suppose that
$a \mathrel { \alpha } c $
and  $ a \mathrel { S } b \mathrel { T } c $,
as in the proof of (ii)
above. Then, under the additional assumption, we have 
\begin{equation} \labbel{def}  
e_0=a=t_1(a,b,b) \mathrel { \overline{S ^\smallsmile \cup T }}
t_1(a,a,c)  =t_{1}(a,r^*(1),c) = e_{1}
 \end{equation}    
and this holds also when $t_1$ is defective, 
since the equation $x=t_1(x,y,x)$
has not been used in the proof of \eqref{def}.   
Furthermore, $e_0=a= t_1(a,a,a) \mathrel \alpha  t_1(a,a,c) = e_1$,
 so we do not need $x=t_1(x,y,x)$, either,
in order to prove the $\alpha$-relation.

(b) is proved in a symmetrical way.

(c) If $n \geq 3$, the two arguments 
at $1$ and $n-1$ do not interfere,
hence we can perform both replacements.

Notice that the argument applies when $n=3$,
in which case \emph{all} the terms 
can be taken as  defective!  
We thus get that a $3$-permutable variety 
satisfies  
$ \alpha (S \circ T) \subseteq 
\alpha ( \overline{S ^\smallsmile \cup T}) \circ 
\alpha ( \overline{S \cup T ^\smallsmile })$. 
This is a strong way to say that 
$3$-permutable varieties are congruence modular:
just take $S= \beta $ and $T= \alpha \gamma \circ \beta $
in the above identity. 
Indeed, we have got a direct proof that 
$3$-permutable varieties are $3$-reversed-modular.   
 \end{proof} 

\begin{remark} \labbel{aha}
If $n \geq 4$ and $\mathcal V$ is $n$-directed 
with alvin heads, then $\mathcal V$ is $2n{-}4$-alvin.   

This fact follows immediately from  \ref{propmix2}(ii)  taking $S= \beta $
and $T= \gamma $, thus 
\begin{align*}
\alpha ( \beta \circ \gamma ) 
&\subseteq 
(\alpha \gamma \circ \alpha  \beta) \circ
(\alpha \beta  \circ \alpha  \gamma ) \circ
(\alpha \beta  \circ \alpha  \gamma ) \circ
\dots \circ 
(\alpha  \beta  \circ \alpha  \gamma ) \circ
(\alpha \gamma \circ \alpha  \beta)
\\
& = 
\alpha \gamma \circ \alpha \beta \circ  {\stackrel{2n-4}{\dots}} \circ  \alpha \beta.   
  \end{align*}      
 \end{remark}

\begin{corollary} \labbel{propmixmod}
 Suppose that $n \geq 2$ and that the
variety $\mathcal {V}$ has mixed J{\'o}nsson terms $t_0, \dots, t_n$
in the sense of Definition \ref{mix}.
Then the following hold.
  \begin{enumerate}[(i)]
    \item   
$\mathcal {V}$ 
is $2n{-}1$-modular.
\item 
  \begin{enumerate}[(a)]    
\item 
If 
$l(1)=z$,
that is, $\mathcal {V}$ satisfies
$x=t_1(x,z,z)$,
then $\mathcal {V}$ is
$2n{-}2$-modular.
The result holds also if the mixed terms are defective at $1$.

\item
If $r(n-1)=x$, that is, 
$t_{n-1}(x,x,z)=z$ holds,
then $\mathcal {V}$ is
$2n{-}2$-modular.
The result holds also if the mixed terms are defective at $n-1$.

\item
If $n \geq 3$ and 
both the assumptions in (a) and (b)
above hold, then $\mathcal {V}$ is
$2n{-}3$-reversed-modular.
The result holds also in the case when
 the mixed terms are defective at $1$ and at $n-1$.
  \end{enumerate} 
\end{enumerate} 
 \end{corollary}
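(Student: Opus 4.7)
The plan is to derive all three parts of Corollary \ref{propmixmod} by applying Theorem \ref{propmix2} to the reflexive admissible relations $S = \beta$ and $T = \alpha \gamma \circ \beta $, and then translating the resulting relation identity into a congruence identity via Remark \ref{conidmod}. Note that $S \circ T = \beta \circ \alpha \gamma \circ \beta$, which is precisely the left-hand side of both the $m$-modularity and $m$-reversed-modularity congruence identities from Remark \ref{conidmod}.

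First I would establish the ``block-simplification'' lemma: since $\alpha \gamma$ and $\beta$ are congruences, one has the easy equalities $\alpha(\alpha \gamma \circ \beta) = \alpha \gamma \circ \alpha \beta$ and $\alpha (\beta \circ \alpha \gamma) = \alpha \beta \circ \alpha \gamma$ (the nontrivial direction follows because an element on a chain ``pinched'' by $\alpha$ lies in $\alpha$). With our choice $S = \beta$, $T = \alpha \gamma \circ \beta$, this gives $\alpha S = \alpha S^{\smallsmile} = \alpha \beta$, $\alpha T = \alpha \gamma \circ \alpha \beta$ and $\alpha T^{\smallsmile} = \alpha \beta \circ \alpha \gamma$. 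Consequently each block $B_h$ produced by Theorem \ref{propmix2}(i) equals $\alpha \beta \circ \alpha \gamma \circ \alpha \beta$ regardless of whether $l(h)=x$ or $l(h)=z$ (the case $l(h)=r(h)$ gives $B_h=0$, which only shortens the chain). Now using the idempotence $\alpha \beta \circ \alpha \beta = \alpha \beta$, adjacent $\alpha \beta$-factors at the joints of the $n-1$ blocks collapse; counting carefully, $3(n-1) - (n-2) = 2n-1$ factors survive, alternating $\alpha\beta, \alpha\gamma, \dots, \alpha \beta$. Via Remark \ref{conidmod}, this is $2n{-}1$-modularity, proving (i).

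For (ii)(a) I would invoke Theorem \ref{propmix2}(iii)(a), which replaces $B_1$ by $\alpha(\overline{S^{\smallsmile} \cup T})$ (valid also in the defective case). Since $\beta \subseteq \alpha \gamma \circ \beta$, one has $\overline{S^{\smallsmile} \cup T} = \alpha \gamma \circ \beta$ (already admissible as a composition of congruences), so the new $B_1$ becomes $\alpha \gamma \circ \alpha \beta$, of length $2$ rather than $3$. The total count is then $2(n-1) = 2n-2$ factors, alternating and starting with $\alpha \gamma$, which is $2n{-}2$-reversed-modularity; by Proposition \ref{modeq} this coincides with $2n{-}2$-modularity since $2n-2$ is even. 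Part (ii)(b) is entirely symmetric, using (iii)(b) to shorten $B_{n-1}$ to $\alpha \beta \circ \alpha \gamma$. For (ii)(c) one applies both substitutions simultaneously (legitimate by Theorem \ref{propmix2}(iii)(c) when $n \geq 3$, even doubly defectively); the count becomes $2 + 3(n-3) + 2 - (n-2) = 2n-3$ factors alternating and starting and ending with $\alpha \gamma$, giving $2n{-}3$-reversed-modularity.

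The only genuinely delicate step is the factor bookkeeping: one must check that adjacent blocks really do meet in an $\alpha \beta$-factor on both sides so that the idempotence can be applied at every joint, and that the parity of the alternation at the two ends matches the form required by Remark \ref{conidmod}. Once the pattern is laid out, every case follows by the same mechanical count; no further algebraic input is needed beyond Theorem \ref{propmix2}, the equality $\alpha(\alpha \gamma \circ \beta) = \alpha \gamma \circ \alpha \beta$, and idempotence of congruence intersection under composition.
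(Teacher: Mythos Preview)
Your proposal is correct and follows essentially the same approach as the paper's proof: both take $S=\beta$, $T=\alpha\gamma\circ\beta$ in Theorem~\ref{propmix2}, observe that each $B_h$ collapses to $\alpha\beta\circ\alpha\gamma\circ\alpha\beta$ via $\alpha(\alpha\gamma\circ\beta)=\alpha\gamma\circ\alpha\beta$, absorb adjacent $\alpha\beta$'s at the joints, and then shorten the outer blocks using Theorem~\ref{propmix2}(iii) for parts (ii)(a)--(c). Your explicit factor counts and the identification $\overline{S^\smallsmile\cup T}=T$ match the paper's computations exactly.
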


 \begin{proof} 
(i) By taking 
$S= \beta $ and $T= \alpha \gamma \circ \beta $
in identity \eqref{bm},
we get that both
$ \alpha S  \circ \alpha T $ and 
$    \alpha  T ^\smallsmile \circ \alpha S ^\smallsmile$
are equal to 
$ \alpha \beta \circ \alpha \gamma \circ \alpha \beta $,
since $ \alpha T = \alpha (\alpha \gamma \circ \beta)=
\alpha \gamma \circ \alpha \beta $,
because $\alpha$ is transitive.
Hence in \eqref{bm} 
we have $B_h=  \alpha \beta \circ \alpha \gamma \circ \alpha \beta$,
for every  $h$.
When we compute 
$B_1 \circ B_2 \circ \dots \circ B_{n-1}$
we have $n-2$ adjacent pairs 
of occurrences of 
$ \alpha \beta $, and each pair absorbs into one, since
$\alpha \beta $ is a congruence, hence transitive.  
Thus 
$B_1 \circ B_2 \circ \dots \circ B_{n-1} =
\alpha \beta \circ \alpha \gamma \circ {\stackrel{2n-1}{\dots}}
 \circ \alpha \beta   $,
hence
identity \eqref{bm} gives $2n{-}1$-modularity,
by Remark \ref{conidmod}.  

(ii)(a) Choose  $S$ and $T$ as above.
By Theorem  \ref{propmix2}(iii)(a)
we can take $B_1=  \alpha (\overline{ S ^\smallsmile \cup T})
  = \alpha T = \alpha \gamma \circ \alpha \beta $,
hence we can save an occurrence of $\alpha \beta $
at the beginning in the computation of 
$B_1 \circ B_2 \circ \dots \circ B_{n-1}$.
That is, we have $2n{-}2$-reversed-modularity,
which is equivalent to $2n{-}2$-modularity, by 
Proposition \ref{modeq}, since $2n-2$ is even.

(b) The symmetrical argument provides $2n{-}2$-modularity
directly.

(c)  If both the assumptions in 
(a) and (b) hold,
 we can save the occurrences
of $\alpha \beta $ both at the beginning and  at the end.   
Thus we get  $2n{-}3$-reversed-modularity.
In passing, notice that either Theorem \ref{buh}(ii)
or  Theorem \ref{thm2hg}(ii)
show that we \emph{cannot} get  $2n{-}3$-modularity. 

Notice that the argument in (c) does not work
if $n=2$, since in that case we cannot perform at the same time
both the replacements
allowed by Theorem \ref{propmix2}(iii)(c).
Indeed, in the present terminology, any  
arithmetical variety $\mathcal {V}$ has mixed J{\'o}nsson terms
$t_0, t_1, t_2$ with $l(1)=z$ and 
$r(1)=r(2-1)=x$, but if $n=2$, then 
$2n-3=1$, while   only trivial
varieties are  $1$-reversed-modular. 
\end{proof}

\subsection*{Additional  remarks on mixed terms} 
\begin{remark} \labbel{rcm}
Corollary \ref{propmixmod}(i) immediately implies Day's result that 
every $n$-\brfrt distributive variety is $2n{-}1$-modular.

Item (ii)(a) implies that every 
 $n$-alvin and  every $n$-Gumm variety
is $2n{-}2$-modular. This holds for every $n$
and,  as we mentioned,  is implicit
in \cite{LTT}. If $n$ is odd, we get 
that every $n$-distributive variety is
 $2n{-}2$-modular, by  Remark \ref{conidcomm}(a).
Otherwise, apply item (ii)(b) directly. 
In particular, we get another proof for Lemma \ref{dayoptlem}(b).

Item (ii)(c) implies that if $n \geq 4$ and $n$ is even, then     
every $n$-alvin (actually, every  defective $n$-Gumm,
in particular, every $n$-Gumm) variety
 is $2n{-}3$-reversed-modular.
This gives another proof of Lemma \ref{dayoptlem}(a). 
Item (ii)(c) also implies 
that every
 variety with 
a   sequence 
$p, t_2, \dots, t_{n-2}, q$ of two-headed directed Gumm terms,
as  introduced in Definition \ref{dirgumm}(b), is
$2n{-}3$-reversed-modular.

It is important to notice that, in all the above situations, 
in order get congruence modularity it is fundamental that the conditions
are defective at the outer places $1$ or $n-1$. 
In fact, mixed conditions defective at some ``internal''
place usually do not  imply congruence modularity.
See Remark \ref{polin}(a). 
 \end{remark}

\begin{remark} \labbel{spend}
\emph{What's so special at the ends?} 
We have seen that  special situations 
occur at the outer ``edges'' $t_1$ and  $t_{n-1}$.
Namely, if the equation 
$x=t_1(x,z,z)$ holds, 
then we get congruence modularity
even without assuming 
$x=t_1(x,y,x)$. Actually, we get 
$2n{-}2$-modularity rather 
than  $2n{-}1$-modularity,
thus we have the rather remarkable result that, in this special case,
we get a stronger conclusion using a weaker hypothesis!
As we hinted in Remark \ref{galt}(c),
this can be seen as a consequence of the fact that   
alvin-like conditions share some aspects in common with congruence
permutability.

It is essential to assume that $x=t_1(x,z,z)$.
If $x=t_1(x,x,z)$, instead, and $t_1$
is defective, then we get a trivial condition, by
Remark \ref{gummrmk}(c). 

While our arguments here depend crucially on the form
$x=t_1(x,z,z)$ of the first nontrivial equation, it should be mentioned
that  there are \emph{always} some special kinds of shortcuts 
which can be taken
``at the outer edges''.
 See \cite[Remark 17]{contol}.
\end{remark}

\begin{remark} \labbel{useofbm}
(a)
Of course, for each specific application of identity
 \eqref{bm} in Theorem \ref{propmix2},
 we could explicitly find out appropriate terms
which give a proof of the consequences 
under consideration. Compare the
 proofs of the classical 
Day's Theorem \cite[p.\ 172]{D},
reported here in Corollary \ref{dayopt}(i),
of \cite[Theorem 1(3) $\rightarrow $ (1)]{LTT},
of Lemma \ref{dayoptlem} and of Theorems \ref{dir}(ii),
\ref{thm2hg}(i).

 However, there are various reasons
suggesting that  identities like  \eqref{bm} are
particularly interesting and useful.

First, the original proof \cite{JD} that 
J{\'o}nsson terms imply congruence distributivity,
or, equivalently, that, within a variety,  the identities 
displayed in Remark \ref{conid} imply congruence distributivity,
essentially uses (the J{\'o}nsson-terms particular version of) 
identity \eqref{bm}. The point is that, in principle, 
in order to prove congruence distributivity,
it is not enough to find bounds for $\alpha( \beta \circ \gamma )$,
one needs bounds for   
$\alpha( \beta \circ \gamma  \circ {\stackrel{k}{\dots}}    )$
for arbitrarily large $k$. See \cite{jds} for further
elaborations  on this aspect.

Second, identity 
\eqref{bm} provides a uniform way 
to prove some quite disparate facts.
While, of course, once we have proved 
congruence distributivity, we surely have 
congruence modularity as a consequence, 
on the other hand, identity \eqref{bm} is useful in establishing 
the exact distributivity or modularity levels.
See  Remarks \ref{bgp},  \ref{rcm},
Corollary \ref{propmixmod}  and Theorem \ref{sumup}. 
Compare also some parallel results
in \cite{G1,G,LTT,jds,ia,ntcm,T}.

(b) As another example, 
if we argue in terms of identity \eqref{bm},
we generally get a clear explanation for the difference
in the possible distributivity levels of varieties 
with the same number of J{\'o}nsson and of directed J{\'o}nsson terms.
See Theorem \ref{dirthm}(ii)
or the table in Theorem \ref{sumup} below.
 
Indeed,
it is almost immediately clear from 
\eqref{bm} that the existence of 
J{\'o}nsson terms 
$t_0, \dots t_n$ 
implies the corresponding displayed
identity in Remark \ref{conid}.
Just take $S= \beta $
and $T= \gamma $;
then, due to Theorem  \ref{propmix2}(ii),
we get $n-2$ 
adjacent pairs of
congruences, either $\alpha \beta $
or $ \alpha \gamma $, so 
these congruences mutually absorb and 
we end up with a total
of $n$ factors.  
On the other hand, if we deal with directed J{\'o}nsson 
terms, then Theorem \ref{propmix2}(ii)
always gives $B_h= \alpha \beta \circ \alpha \gamma $,
so we get no adjacent pair of identical congruences and we are left
with  $2n-2$ factors.
In fact, in general, we can do no better,
as shown in Theorem \ref{dirthm}(ii).  

Of course, a proof is needed, since the above informal argument using 
identity \eqref{bm} is not a proof  and in principle we might find out 
different tricks leading to a better result.
In fact, this is the case, for example, 
for varieties with Pixley terms (cf. \cite[p.\ 205]{adjt} and Remark \ref{mixrem})
 which are congruence distributive and $n$-permutable,
hence $n$-distributive \cite{au}.
In this case we always have $B_h= \alpha \gamma  \circ \alpha \beta  $,
hence no pair of congruences absorb in \eqref{bm},
but we can get  $n$-distributivity  nesting terms.  
Apart from such exceptions, the argument based on 
identity \eqref{bm} seems generally a quite clear guide to 
intuition.

(c)
Identity \eqref{bm} appears to be generally a good guide to 
intuition also when dealing
with congruence modularity.
In this case, the best bounds for modularity levels 
of  varieties with the same number of 
J{\'o}nsson, alvin  and directed J{\'o}nsson terms terms are 
essentially always the same, at most
differing by $1$ or $2$, depending on  the form
of the identities ``at the outer edges''
$t_1$ and  $t_{n-1}$.   
The intuitive reason
for the above ``stationarity'' of the modularity levels 
 is that whenever we try to have
the left side 
$ \alpha (S \circ T)$ of \eqref{bm}
equal to 
$\alpha( \beta \circ \alpha \gamma \circ \beta ) $,
we always end up with
each $B_h$ having the form
$ \alpha \beta \circ \alpha \gamma \circ \alpha \beta $,
except possibly for the outer edges 
$B_1$ and   $B_{n-1}$.   
Hence in this case there is no sensible
difference between  the cases of, say,
J{\'o}nsson and directed J{\'o}nsson terms.
Again, the above intuitive argument
generally leads to the correct results, as we have showed in
Corollary \ref{dayopt}, Theorems \ref{dir}, \ref{thm2hg}
and  Proposition \ref{gummopt}.   
Notice that here we are dealing with the minimal number of mixed terms, not
with the  distributivity level, namely,
the minimal number of J{\'o}nsson terms.
In fact, while $n$-distributivity implies
$2n{-}1$-modularity, and generally this result cannot be improved, 
there are varieties in which the  distributivity and the modularity
levels differ only by $1$. Compare the results about 
the varieties $\mathcal V_n^c$, $\mathcal V_n^d$ and $\mathcal V_n^f$ 
 in  Theorem \ref{sumup} below.  
 \end{remark}

The following generalization of Theorem  \ref{propmix2} 
is proved in the same way. The generalization is used only
marginally 
in this paper.

\begin{proposition} \labbel{propmixi}
 If $n \geq 2 $, 
$  i \geq 1$ and $\mathcal {V}$ has mixed J{\'o}nsson terms $t_0, \dots, t_n$
satisfying a condition  determined
by $l$ and  $r$, then  
$\mathcal {V}$ satisfies
\begin{equation} \labbel{bmi}  
\alpha (S_0 \circ S_1 \circ \dots \circ S_i)
 \subseteq  B_1 \circ B_2 \circ \dots \circ B_{n-1},
  \end{equation}
where
\begin{align*}
B_h&= \alpha S_0  \circ \alpha S_1  \circ \dots \circ \alpha S_i, 
&& \text{ if $l(h) = x$ and $r(h) = z$}, 
\\
B_h&= \alpha  S_i ^\smallsmile \circ  \alpha S_{i-1} ^\smallsmile 
\circ \dots \circ  \alpha S_0 ^\smallsmile, 
  && \text{ if $l(h) = z$ and $r(h) = x$}, 
\\
B_h&= 0 , 
  && \text{ if $l(h) = r(h)$}.
 \end{align*}    
\end{proposition}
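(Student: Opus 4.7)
The plan is to generalize the proof of Theorem \ref{propmix2}(ii) in the obvious way, replacing the single witness $b$ by a chain of witnesses. Since the statement already includes the explicit description of each $B_h$ and since $l(h) = r(h)$ is handled trivially (by Remark \ref{lrrmk}(a), or directly since then $e_{h-1} = e_h$), the essential work is the two cases where $l(h) \neq r(h)$.

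First I would pick $(a,c) \in \alpha(S_0 \circ S_1 \circ \dots \circ S_i)$. By definition of the composition, there exist elements $b_0 = a, b_1, b_2, \dots, b_i, b_{i+1} = c$ such that $b_j \mathrel{S_j} b_{j+1}$ for every $j$ with $0 \leq j \leq i$. Then, exactly as in the proof of Theorem \ref{propmix2}(ii), define $l^*(h), r^*(h) \in \{a,c\}$ by replacing $x$ with $a$ and $z$ with $c$, and set
\begin{equation*}
e_0 = a = t_1(a,l^*(1),c), \quad e_h = t_h(a,r^*(h),c) = t_{h+1}(a,l^*(h+1),c), \quad e_{n-1} = t_{n-1}(a,r^*(n-1),c) = c,
\end{equation*}
where the equalities follow from the equations in \eqref{lreq} determined by $l$ and $r$.

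Next, for each $h$ with $1 \leq h \leq n-1$ and $l(h) = x$, $r(h) = z$, I would insert the chain of witnesses in the middle argument of $t_h$:
\begin{equation*}
e_{h-1} = t_h(a,a,c) = t_h(a,b_0,c) \mathrel{S_0} t_h(a,b_1,c) \mathrel{S_1} t_h(a,b_2,c) \mathrel{S_2} \cdots \mathrel{S_i} t_h(a,b_{i+1},c) = t_h(a,c,c) = e_h,
\end{equation*}
where each step uses that $S_j$ is a reflexive and admissible relation. Moreover, $t_h(a,b_j,c) \mathrel{\alpha} t_h(a,b_j,a) = a$ for every $j$, using $t_h(x,y,x) = x$ from \eqref{bas}; hence all intermediate elements are $\alpha$-related to $a$, so in fact we get $(e_{h-1},e_h) \in \alpha S_0 \circ \alpha S_1 \circ \cdots \circ \alpha S_i = B_h$. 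The symmetric case $l(h)=z, r(h)=x$ is handled by inserting the chain in reverse:
\begin{equation*}
e_{h-1} = t_h(a,c,c) = t_h(a,b_{i+1},c) \mathrel{S_i^\smallsmile} t_h(a,b_i,c) \mathrel{S_{i-1}^\smallsmile} \cdots \mathrel{S_0^\smallsmile} t_h(a,b_0,c) = t_h(a,a,c) = e_h,
\end{equation*}
yielding $(e_{h-1},e_h) \in \alpha S_i^\smallsmile \circ \alpha S_{i-1}^\smallsmile \circ \cdots \circ \alpha S_0^\smallsmile = B_h$.

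Concatenating over $h = 1, \dots, n-1$ then gives $(a,c) = (e_0, e_{n-1}) \in B_1 \circ B_2 \circ \cdots \circ B_{n-1}$, establishing \eqref{bmi}. No real obstacle arises beyond bookkeeping; the only point that requires a moment's care is checking that the admissibility of the $S_j$ suffices (not symmetry or reflexivity in any stronger sense) to justify each relation $t_h(a,b_j,c) \mathrel{S_j} t_h(a,b_{j+1},c)$, and this is immediate since $a \mathrel{S_j} a$ and $c \mathrel{S_j} c$ hold by reflexivity while $b_j \mathrel{S_j} b_{j+1}$ holds by choice of the witnesses.
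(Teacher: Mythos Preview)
Your proof is correct and follows exactly the approach the paper intends: the paper simply states that the proposition ``is proved in the same way'' as Theorem \ref{propmix2}, and your argument is precisely the natural extension of that proof, replacing the single witness $b$ by the chain $b_0,\dots,b_{i+1}$ in the middle argument of each $t_h$.
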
  

Notice that item (iii)
from Theorem  \ref{propmix2}, as it stands,
cannot be immediately generalized 
 in the context of Proposition \ref{propmixi}.
In this connection, see however
 Lemma 4.3, Propositions 4.4 and 4.10
in \cite{ntcm}
and  Section 2 in \cite{jds}.

\begin{remark} \labbel{mixday}
It is also possible  to introduce  a notion of mixed Day terms.
Let a \emph{modular quadruplet} be anyone of the following
quadruplets of variables:
\begin{align*}     
(x,x,z,z) && (x,y,y,z) && (x,x,x,z) && (x,z,z,z).
  \end{align*}    
A sequence $u_0, \dots, u_m$ 
of $4$-ary terms 
is a sequence of \emph{mixed Day terms}
if the equations \eqref{d0}, \eqref{d1} and \eqref{d3}
from Definition \ref{daydef} are satisfied and, moreover, for each 
$k$ with $0 \leq k < m$, at least one of the following equations
is satisfied:
\begin{equation} \labbel{dmx}   
u_k(x_1,x_2,x_3,x_4) = u_{k+1}(y_1,y_2,y_3,y_4),
\end{equation}        
where both $(x_1,x_2,x_3,x_4)$ and $(y_1,y_2,y_3,y_4)$
are modular quadruplets. 

Of course, it is redundant to include
$ (x,y,y,z) $ in the set of modular quadruplets, since 
we can take $y=x$ or  $y=z$, still getting an equation
which is satisfied. However, it is convenient to maintain
$ (x,y,y,z) $ in order to have Day's conditions as a special case.
Moreover, 
if  some equation in \eqref{dmx} involves  $ (x,y,y,z)$,
then we generally get a smaller modularity level. 
As we are going to show,
the existence of mixed Day terms implies congruence modularity.
This
extends \cite[Corollary 3.5]{DKS}.
\end{remark}   

\begin{proposition} \labbel{propmixday} 
 Every variety $\mathcal {V}$ with 
a sequence of mixed Day terms
is congruence modular. 
\end{proposition}  

\begin{proof} 
If,
in some algebra in $\mathcal {V}$,
 $(a,d) \in  \alpha (\beta \circ \alpha \gamma \circ \beta  ) $
with $a \mathrel { \beta } b \mathrel { \alpha \gamma } c \mathrel { \beta } d  $
and   $(a_1,a_2,a_3,a_4)$ is (the interpretation of)
a modular quadruplet  (under the assignment $x \mapsto a$, 
$y \mapsto b$, $z \mapsto d$),
then it is easy to see that, in any case,
$u_k(a_1,a_2,a_3,a_4) \mathrel { \alpha \beta {+} \alpha \gamma } 
u_k(a,b,c,d)$.
Indeed, 
$u_k(a,a,d,d) \mathrel { \beta } u_k(a,b,c,d) $
and 
$u_k(a,a,d,d) \mathrel { \alpha } u_k(a,a,a,a)= 
a= u_k(a,b,b,a)  \mathrel { \alpha } u_k(a,b,c,d)$,
by \eqref{d0}, so that 
$u_k(a,a,d,d) \mathrel { \alpha \beta } u_k(a,b,c,d) $.
On the other hand, 
 $u_k(a,d,d,d) \mathrel { \beta  } u_k(a,c,c,d)
  \mathrel { \alpha \gamma  } u_k(a,b,c,d)$
and
$u_k(a,d,d,d) \mathrel { \alpha   } u_k(a,d,d,a)
=a= u_k(a,c,c,a)
  \mathrel { \alpha  } u_k(a,c,c,d)$, again by \eqref{d0}, so that
 $u_k(a,d,d,d) \mathrel { \alpha \beta \circ  \alpha \gamma  } u_k(a,b,c,d)$.
The remaining case is similar.

By \eqref{d1},  \eqref{d3}
and the mixed Day equations,
we then get   $a  = u_k(a,b,c,d)
 \mathrel { \alpha \beta {+} \alpha \gamma }
u_m(a,b,c,d) = d$. 
\end{proof}

\begin{problem} \labbel{per}  
 Perhaps it is interesting to study 
other kinds of mixed conditions involving $4$-ary$, 5$-ary
terms, or even  terms of larger arity. 
\end{problem}

\begin{remark} \labbel{defmin}
It is probably interesting to study 
(possibly,
defective)
``minority mixed conditions'',
 namely, conditions obtained 
from Definitions \ref{jondef}, \ref{mix}, etc. 
by replacing
some or all the occurrences of the equations
$  x  =t_h(x,y, x)$ 
by the equation $y  =t_h(x,y, x)$ 
in Condition \eqref{bas}.
Compare Remark \ref{fullsymm}(b). 
In general, we get new conditions.
For example, consider the following set
of equations:
\begin{equation}\labbel{minor}    
\begin{aligned} 
x &= t_{1}(x,z,z),
\\ 
t_{h}(x,x,z) &=
t_{h+1}(x,z,z), \quad \text{ for $ 1 \leq  h < n-1$,} 
\\
t_{n-1} (x, x,z) &=z,
\\
y &= t_{h}(x,y,x) , \quad \quad  \text{ for $ 1 \leq  h \leq n-1$.}
 \end{aligned}   
   \end{equation}
It is easy to see that, for $n \geq 2$,  an abelian group
$G$ has terms $t_1, \dots, t_{n-1}$
satisfying
equations \eqref{minor} 
if and only if  $G$ 
has exponent dividing  $n$. 
\end{remark}

\section{Some more explicit descriptions and summing 
up everything} \labbel{moreex} 

\subsection*{Constructing some algebras and varieties} \labbel{constrav} 
The observations in Remark \ref{specrmk}
and an analysis of the proofs of Theorems \ref{buh}, \ref{dir}(i), \ref{dirthm}
and \ref{thm2hg}(ii)     
can be used in order to provide  a relatively simple description of varieties
furnishing the corresponding  counterexamples.
While the description of these varieties
is quite simple, the proofs that they indeed furnish the desired
counterexamples  rely heavily on the constructions and arguments
from Sections \ref{mainc} and \ref{opti}. 
In the present section we also make some additional
remarks
summing up exactly what our counterexamples show.

Recall that 
lattice operations are denoted by 
juxtaposition and $+$,
 that Boolean complement is denoted by $'$.

\begin{definition} \labbel{simpldef1}
Let $ n \geq 2$ be a natural number
and let $\ell = \frac{n}{2} $ if $n$ is even,
$\ell = \frac{n-1}{2} $ if $n$ is odd.

For every lattice $\mathbf L$ and $0 < i < \frac{n}{2}    $, let 
 $\mathbf L ^{i, n} $ be the algebra with 
base set $L$ and with 
ternary 
operations
$t_1, \dots,t_ {n-1}$ defined as follows
\begin{align*} 
t_h(x,y,z) &=  x,    && \text{ if $0 < h <i$,}
\\ 
t_h(x,y,z) &=  x (y+z),    && \text{ if $h = i$,}
\\ 
t_h(x,y,z) &=  xz,    && \text{ if $i < h < n-i $}
\\ 
t_h(x,y,z) &=  z (y+x),    && \text{ if $h =n-  i$,}
\\ 
t_h(x,y,z) &=  z,    && \text{ if $n-i < h < n$.}
\intertext{Notice that if $n$ is odd and $i=\frac{n-1}{2}$,
then $i$ and  $n-i$ are consecutive integers,
hence 
 the case in the middle does not occur   in the above list of equations.   
Similarly, if $i=1$, then the cases in the first
and last lines do not occur. 
\endgraf 
If 
 $n$ is even, let  $\mathbf L ^{ \ell, n} $ be the algebra with  
the following operations $t_1, \dots,t_ {n-1}$:} 
t_h(x,y,z) &=  x,    && \text{ if $0 < h < \ell$,}
\\ 
t_h(x,y,z) &=  x y+xz+yz,    && \text{ if $h = \ell $.}
\\
t_h(x,y,z) &=  z,    && \text{ if $ \ell < h <  n$.}
\intertext{\indent 
For every Boolean algebra  $\mathbf A$ and $0 < i < \frac{n}{2} $, let 
 $\mathbf A ^{i, n} $ be the algebra with ternary 
operations
$t_1, \dots,t_ {n-1}$ defined as follows.}
t_h(x,y,z) &=  x,    && \text{ if $0 < h <i$,}
\\ 
t_h(x,y,z) &=  x (y'+z),    && \text{ if $h = i$,}
\\ 
t_h(x,y,z) &=  xz,    && \text{ if $i < h < n-i $,}
\\ 
t_h(x,y,z) &=  z (y'+x),    && \text{ if $h =n-  i$,}
\\ 
t_h(x,y,z) &=  z,    && \text{ if $n-i < h <  n$,}
\intertext
{and, for $n$ even, let  $\mathbf A ^{ \ell, n} $ be the algebra with  
operations}
t_h(x,y,z) &=  x,    && \text{ if $0 <  h < \ell$,}
\\ 
t_h(x,y,z) &=  x y'+xz+y' z,    && \text{ if $h = \ell $,}
\\ 
t_h(x,y,z) &=  z,    && \text{ if $ \ell< h < n$.}
 \end{align*}   
 \end{definition}

Notice that all the algebras of the form, say, 
 $\mathbf L ^{i, n} $
as above are term-equivalent (with 
$\mathbf L$ fixed and  $n$, $i$ subject to
the condition   
$0<i < \frac{n}{2} $). A similar remark applies to 
various  classes of the above algebras.
The point is that we shall combine various
algebras of the above form in order to generate appropriate
varieties.
Some particular care is needed, since 
the exact labeling of the operations 
will turn out to be relevant.
Notice also that 
$\mathbf L ^{ \ell, n}$ 
and $\mathbf A ^{ \ell, n}$ 
are defined both in the case $n$  even
and in the case  $n$  odd. 

We now  introduce some families of varieties.

\begin{definition} \labbel{simpldef2}    
As in Definition \ref{simpldef1},  assume  $ n \geq 2$
and set $\ell = \frac{n}{2} $ if $n$ is even and
$\ell = \frac{n-1}{2} $ if $n$ is odd.
Notice that,  for each algebra introduced in 
\ref{simpldef1}, the second 
superscript determines the type of the algebra, hence the following definitions
are well-posed.
Recall that $\mathbf 2$ denotes the two-elements
Boolean algebra and let $\mathbf C = \mathbf C_2$
be the two-elements lattice.  

  \begin{enumerate}[(a)]
    \item 
Let $\mathcal {V}_n^a$ be 
the variety generated by the algebras
\begin{align*}
&  \mathbf C ^{1, n}, \quad 
\mathbf 2 ^{2, n}, \quad
  \mathbf C ^{3, n}, \quad 
 \dots, \quad \mathbf 2 ^{ \ell-1, n},  \quad
 \mathbf C ^{ \ell, n},  
&& \text{ if $\ell$ is odd,}
\\   
&  \mathbf C ^{1, n}, \quad 
\mathbf 2 ^{2, n}, \quad
  \mathbf C ^{3, n}, \quad 
 \dots, \quad \mathbf C ^{\ell-1, n},  \quad
 \mathbf 2 ^{ \ell, n},  
&& \text{ if $\ell$ is even.}
\end{align*}     
The above definition is intended in the sense that if, say,
$\ell=1$, then  $\mathcal {V}_n^a$
is generated by the algebra 
$\mathbf C ^{1, n}$.
Similar conventions apply to the definitions 
 below.

In particular, $\mathcal {V}_2^a$
is generated by $\mathbf C ^{1, 2}$, hence 
$\mathcal {V}_2^a$
is the term-reduct of the variety 
of distributive lattices when only the
majority term is taken into account.

   \item 
Let $\mathcal {V}_n^b$ be 
the variety generated by the algebras
\begin{align*}
&  \mathbf 2 ^{1, n}, \quad 
\mathbf C ^{2, n}, \quad
  \mathbf 2 ^{3, n}, \quad 
 \dots, \quad \mathbf C ^{\ell-1, n},  \quad
 \mathbf 2 ^{\ell, n},  
&& \text{ if $\ell$ is odd,}
\\   
&  \mathbf 2 ^{1, n}, \quad 
\mathbf C ^{2, n}, \quad
  \mathbf 2 ^{3, n}, \quad 
 \dots, \quad \mathbf 2 ^{\ell-1, n},  \quad
 \mathbf C^{ \ell, n},  
&& \text{ if $\ell$ is even.}
\end{align*}     

In particular, $\mathcal {V}_2^b$
is generated by $\mathbf A ^{1, 2}$, hence 
$\mathcal {V}_2^a$
is the term-reduct of the variety 
of Boolean algebras, with the term
$xz+xy'+y'z$.

    \item 
Let $\mathcal V_n^c$ be 
the variety generated by the algebras
\begin{equation*}
  \mathbf C ^{1, n}, \quad 
\mathbf C ^{2, n}, \quad
 \dots, \quad \mathbf C ^{\ell-1, n},  \quad
 \mathbf C ^{\ell, n}.
\end{equation*}     

In particular, $\mathcal {V}_2^c$
and $\mathcal {V}_2^a$ are the same variety.
Moreover, $\mathcal {V}_3^c$
is equal to $\mathcal {V}_3^a$,  being the variety 
generated by $\mathbf C ^{1, 3}$,
hence $\mathcal {V}_3^c$
is term-equivalent to the variety 
of distributive nearlattices, by
a remark  in Definition \ref{bakerdef}.

    \item 
If $n \geq 4$,
let $\mathcal V_n^d$ be 
the variety generated by the algebras
\begin{equation*}
 \mathbf 2 ^{1, n}, \quad 
\mathbf C ^{2, n}, \quad
\mathbf C ^{3, n}, \quad
 \dots, \quad \mathbf C ^{\ell-1, n},  \quad
 \mathbf C ^{\ell, n}.
\end{equation*}
     
\item
Let  $\mathcal V_n^e$
be the \emph{non-indexed product}
\cite{CV,N,Ta} of
$\mathcal V_n^a$ and $\mathcal V_n^b$.

We shall not 
need the exact definition of
the non-indexed product  of  two varieties;
we shall only use the result that
the non-indexed product  of  two varieties
 satisfies exactly
all the   Maltsev conditions satisfied by
both varieties.

\item
If $n \geq 3$, let   $\mathcal V_n^f$
be the non-indexed product
 of
$\mathcal V_n^c$ and $\mathcal V_{n+1}^d$.

\item
Let $\mathcal V_n^g$ be 
the variety generated by the algebras
\begin{equation*}
  \mathbf 2 ^{1, n}, \quad 
\mathbf 2 ^{2, n}, \quad
 \dots, \quad \mathbf 2 ^{\ell-1, n},  \quad
 \mathbf 2 ^{\ell, n}.
\end{equation*}     
\end{enumerate} 
\end{definition}   

\begin{remark} \labbel{simpldefrmk}   
Since both the variety of distributive lattices 
and the variety of Boolean algebras
are generated by their $2$-elements members, we get that if, say,
$\mathbf C ^{i,n} $  belongs to the set of generators of 
some  variety $\mathcal V$ as defined in
\ref{simpldef2} (a) - (d),  then,
for every distributive lattice $\mathbf L$, 
the algebra $\mathbf L ^{i,n} $, 
with the same superscripts, belongs to $\mathcal {V}$.
A similar observation applies to Boolean algebras.
In other words, we could have defined 
$\mathcal V_n^a$ - $\mathcal V_n^g$
by considering a larger set of generators,
namely, all the algebras of the form 
$\mathbf L ^{i,n} $
and $\mathbf A ^{i,n} $,
for the corresponding values of the indices and  letting  
$\mathbf L$ and $\mathbf A$
vary among all distributive lattices 
and 
all Boolean algebras.
The definitions make sense and all the results hold 
even if we let $\mathbf L$ 
vary among all lattices, except that in this  case
the varieties are not necessarily  locally finite.
If we let $\mathbf L$
be any lattice in place of $\mathbf C = \mathbf C_2$ 
in Definition \ref{simpldef2}, we shall call 
the corresponding varieties 
the \emph{extended} $\mathcal V_n^a$ - $\mathcal V_n^f$.
\end{remark} 

\begin{remark} \labbel{+term}
With the above definitions, if $n$ is even, then the operations 
$t_1, \dots, \allowbreak  t_ {n-1} $,
together with the projections
$t_0$ and $t_n$,  
are J{\'o}nsson terms in the cases
of $ \mathbf L ^{i, n}$, 
for $i$ odd,  and  of 
$\mathbf A ^{i, n}$,
for $i$ even, possibly with $i=\ell$.
Hence if $n$ is even, then  $\mathcal {V}_n^a$
is $n$-distributive.   
Similarly, if $n$ is even, the operations 
$t_1, \dots, t_ {n-1} $ 
provide alvin terms in the cases
of $ \mathbf L ^{i, n}$, 
for $i$ even,  and  of 
$\mathbf A ^{i, n}$,
for $i$ odd. 
Hence if $n$ is even, then  $\mathcal {V}_n^b$
is $n$-alvin.   
For every $n$ and  $i$, in the case of 
$ \mathbf L ^{i, n}$, possibly $i=\ell$, the operations 
provide directed J{\'o}nsson terms,  thus, for every $n \geq 2$, 
$\mathcal {V}_n^c$
is $n$-directed-distributive.
Similarly, if $n \geq 4$, then  
$\mathcal {V}_n^d$ is
$n$-directed with alvin heads. 
\end{remark}   

\begin{remark} \labbel{+terma}
Under the conventions introduced in 
Definition \ref{simpldef1}, we have 
$n = 2 \ell $, if $n$ is even and
$n = 2 \ell + 1$, if $n$ is odd.
In each case, the operations
introduced in Definition \ref{simpldef1} 
satisfy
\begin{equation}
\begin{aligned} \labbel{ali}  
t_ {2\ell-i}(x,y,z) &= t_i(z,y,x),  && \text{ $n$ even,  } i =  1, \dots, \ell-1,
\\
t_ {2\ell+1-i}(x,y,z) &= t_i(z,y,x),  && \text{ $n$ odd, \  } i = 1, \dots, \ell.
\end{aligned}      
  \end{equation}    
Hence, adding the two projections as usual, we get a specular sequence
(namely, a sequence satisfying
\eqref{s}  in Definition  \ref{specdef})
of mixed J{\'o}nsson terms
(in the sense of  Definition \ref{mix}). 
Notice that if $n$ even, then 
$t_ {\ell}(x,y,z) = t_\ell (z,y,x) $ in each case.  
 
In view of \eqref{ali}, we could have introduced 
the algebras $ \mathbf L ^{i, n}$, $ \mathbf L ^{ \ell, n}$, 
$\mathbf A ^{i, n}$, $\mathbf A ^{ \ell, n}$
and the varieties 
 $\mathcal V_n^a$ -  $\mathcal V_n^d$
and $\mathcal V_n^g$ by just defining 
the operations $t_ {1},  \dots, t_ {\ell} $ and
then
considering 
 $ t_ {\ell+1},  \dots, t_ {n-1} $
 as defined terms.
For all practical purposes the two possible
approaches are equivalent.
For the sake of uniformity, here it is notationally convenient
to consider $t_ {1},  \dots, t_ {n-1} $
to be  operations.
Everything we shall prove will hold also 
for the term-equivalent algebras and varieties defined by considering 
only the operations $t_ {1},  \dots, t_ {\ell} $. 
\end{remark} 

\begin{definition} \labbel{simpldef3} 
It will be convenient to introduce a special notation 
for algebras and varieties defined as in 
\ref{simpldef1} and \ref{simpldef2}  when also the 
two trivial projections $t_0$ and $t_n$ are considered as operations.
Of course, this is an unessential expansion 
and, moreover, $t_0$ and $t_n$
can be introduced anyway as terms. 
However, as already mentioned, 
it is important for our purposes to keep track 
of the exact number of operations.

If $\mathbf L$ is a  lattice and  $\mathbf A$ 
is a Boolean algebra, let  $\mathbf L ^{i, n,+}$, 
$\mathbf L ^{\ell, n,+}$,
$\mathbf A ^{i, n,+}$ and  $\mathbf A ^{\ell, n,+}$
be constructed as in Definition \ref{simpldef1}, but adding also the two 
ternary operations 
$t_0$ and $t_n$
defined by 
$t_0(x,y,z) =x$ and $t_n(x,y,z)=z$.
We let $\mathcal V_n^{a,+},
\dots, \mathcal V_n^{d,+}$ be the varieties defined
correspondingly, as in Definition \ref{simpldef2}.
\end{definition}

\subsection*{Computing exact levels} \labbel{complev} 
\begin{definition} \labbel{deflev} 
If $\mathcal {V}$ is a congruence distributive variety,
 the \emph{distributive  level} of $\mathcal {V}$ 
is the smallest natural number $n$  such that $\mathcal {V}$ 
is $n$-distributive, namely,
the smallest $n$ such that $\mathcal {V}$ 
has J{\'o}nsson terms $t_0, \dots, t_n$.
The \emph{alvin, modular}, etc., \emph{levels} are defined in a similar way. 

In the case of two-headed terms it is necessary to  explicitly specify the convention.
If some variety $\mathcal {V}$ has two-headed directed Gumm terms
(directed terms with alvin heads)
$p, t_2, \dots, t_{n-2}, q$ in the sense of Definitions \ref{dirgumm}(b)(c),
we say that $\mathcal {V}$ is two-headed $n$-directed Gumm
($n$-directed  with alvin heads).
The counting convention is motivated by 
a remark  in
Definition \ref{dirgumm}(c) and,
more generally, by the definitions and the results from Section \ref{mixed}.
The \emph{two-headed directed Gumm level}
 of a congruence modular variety $\mathcal {V}$ 
is the smallest $n$  such that $\mathcal {V}$ 
is  two-headed $n$-directed Gumm.
The \emph{$n$-directed with alvin heads level}
is defined correspondingly. 

Again, let $\ell = \frac{n}{2} $ if $n$ is even and
$\ell = \frac{n-1}{2} $ if $n$ is odd.
Recall that $R^ \ell$ denotes 
$ R \circ R \circ  {\stackrel{\ell }{\dots}} \circ R    $.  
The \emph{switch level} of some
variety $\mathcal {V}$ is the smallest $n$
(if such an $n$ exists) such that 
either $n$ is even and $\mathcal {V}$ satisfies
 the congruence identity 
$ \alpha (\beta \circ \gamma )  \subseteq ( \alpha ( \gamma \circ \beta )  ) ^{ \ell } $,
or
 $n$ is odd and $\mathcal {V}$ satisfies
 the congruence identity 
$ \alpha (\beta \circ \gamma )  \subseteq ( \alpha ( \gamma \circ \beta )  ) ^{ \ell } 
\circ \alpha \gamma $.
 Notice that every congruence distributive 
variety has a switch level,
since it has an alvin level; 
however, there are 
varieties with a switch level
and which are not congruence distributive.
See Remark \ref{polin}(a). 

The \emph{J-switch level} of some
variety $\mathcal {V}$ is the smallest $n$
(if such an $n$ exists) such that 
either 
 $n \geq 2$ is even and $\mathcal {V}$ satisfies
 the congruence identity 
$ \alpha (\beta \circ \gamma )  \subseteq
\alpha \beta \circ ( \alpha ( \gamma \circ \beta )  ) ^{ \ell -1} 
\circ \alpha \gamma $, or
$n \geq 3$ is odd and $\mathcal {V}$ satisfies
 the congruence identity 
$ \alpha (\beta \circ \gamma )  \subseteq
\alpha \beta \circ  ( \alpha ( \gamma \circ \beta )  ) ^{ \ell } $.
By definiteness, a trivial variety is considered
to have all levels equal to $0$.
By Remark \ref{conidabog},
for every $n$, 
the statement 
that some variety 
has switch level  (J-switch level) 
$\leq n$ is equivalent to the existence of terms
satisfying certain identities, namely, 
it is a strong Maltsev condition. 

In passing,  Proposition  \ref{thmbakbiscor} 
and Theorem \ref{buhbis}
suggest that it is interesting to study
the levels determined by identities like
$\alpha (\beta \circ \gamma )  \subseteq 
\gamma \circ  \alpha \beta \circ \gamma \circ 
\alpha \beta \circ  {\stackrel{n}{\dots}} $
or, say,  
$\alpha (\beta \circ \gamma \circ \beta )
  \subseteq 
\alpha \beta \circ \gamma \circ 
\alpha \beta \circ \gamma \circ {\stackrel{n}{\dots}}$\ 
We shall postpone the study of such levels.
Some related results appear in \cite{nuodist}. 
Notice
that 
the congruence identity 
$\alpha (\beta \circ \gamma )  \subseteq 
\gamma \circ  \alpha \beta \circ \gamma \circ 
\alpha \beta \circ  {\stackrel{n}{\dots}} $
does not
imply congruence distributivity, see 
Hobby and McKenzie \cite[Theorem 9.11]{HMK} and
Kearnes and Kiss \cite[Theorem 8.14]{KK}.
 \end{definition}

The following theorem essentially sums up
all the results of the present paper and adds some more.

\begin{theorem} \labbel{sumup}
Under the above Definitions \ref{simpldef2} and \ref{deflev},
the following table describes the levels of the varieties 
 $\mathcal V_n^a$ -  $\mathcal V_n^f$,
where $n \geq 2$ is always assumed
and in the starred entries 
$n \geq 4$ is assumed.

\vfill
\break

\emph{\begin{equation*}   
\begin{array}{|c|c|c|c|c|c|c|}
\hline&&&&&&
\\[-11pt]
\text{ \qquad \qquad \qquad variety  } & \mathcal V_n^a
 & \mathcal V_n^b 
& \mathcal V_n^c& \mathcal V_n^d & \mathcal V_n^e & \mathcal V_n^f
\\[2pt]
\text{ level } & \text{$n$ even}& \text{$n$ even} & &n \geq 4&
\text{$n$ even} & n \geq 3
\\[2pt]
\hline 
&&&&&&
\\[-8pt]
\text{distributive, J-switch} & n &n{+}1^*& 2n{-}2 & 2n{-}3 &\ n{+}1^* & 2n{-}1
\\[2pt]
\text{alvin, Gumm, switch}
 &n{+}1 & n & 2n{-}1 & 2n{-}4  & n{+}1 & 2n{-}1
\\[2pt] 
\text{modular} & 2n{-}1  & 2n{-}2
 & 2n{-}1 & 2n{-}2 & 2n{-}1 &2n 
\\[2pt]
\text{reversed modular} & 2n & 2n{-}3^* & 2n & 2n{-}3 & 2n &2n 
\\[4pt]
\begin{gathered}  
\text{directed distributive},
\\[-4pt]
\text{mixed J{\'o}nsson} 
\end{gathered}
 &  n &  n & n &   n & n
&n{{+}} 1
\\[7pt]
\begin{gathered}  
   \text{$2${-}headed dir.\  Gumm},
\\[-4pt]
\text{dir.\ with alvin heads}
\end{gathered}
 &  n{{+}}2 &  n ^* & n{+}2 & n &  n{+}2 & n{+}2 
\\[8pt]
\hline
\end{array}   
\end{equation*}
}  
\end{theorem}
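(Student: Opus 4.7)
The plan is to verify each entry in the table by combining two ingredients: (a) explicit exhibition of terms of the appropriate kind in the generating algebras of Definition \ref{simpldef2}, to supply the upper bounds, and (b) the counterexamples furnished by the constructions of Sections \ref{mainc}, \ref{opti}, \ref{dirsec} and \ref{gummdef}, to supply matching lower bounds. The non-indexed products $\mathcal{V}_n^e$ and $\mathcal{V}_n^f$ are then handled uniformly via the defining property of non-indexed products, namely that they satisfy exactly the strong Maltsev conditions common to both factors. Since every level appearing in the table is definable by a strong Maltsev condition (in particular, J-switch and switch correspond to strong Maltsev conditions via Remark \ref{conidabog}), this reduces the $\mathcal V_n^e$ and $\mathcal V_n^f$ columns to taking maxima of the columns already computed.

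For the upper bounds, Remark \ref{+term} already establishes that the generating algebras of $\mathcal{V}_n^a$, $\mathcal{V}_n^b$, $\mathcal{V}_n^c$, $\mathcal{V}_n^d$ yield, respectively, J\'onsson, alvin, directed J\'onsson, and directed-with-alvin-heads terms of length $n$ (with the stated parity restrictions on $n$). One then obtains the J-switch, switch, Gumm, modular and reversed-modular upper bounds by inserting the resulting mixed condition into Corollary \ref{propmixmod}, Lemma \ref{dayoptlem}, Theorem \ref{dir}(ii), Theorem \ref{thm2hg}(i) and Proposition \ref{gummopt}(i); each of these theorems is formulated precisely to translate the existence of $n$ terms of a given mixed shape into the bound on the corresponding modularity-type level displayed in the table. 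The cross-entries (e.g.\ the alvin level of $\mathcal{V}_n^a$ being $n{+}1$) follow by adding a trivial projection on one end, as in Remark \ref{conidcomm}(b).

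For the lower bounds, the key observation is that the subalgebras $\mathbf{B}=\mathbf{B}(a,d)$ produced by Constructions \ref{c}, \ref{bak}, \ref{bakbis}, \ref{ba} are built from the lattice reducts $\mathbf{C}_k^r$ or Boolean reducts $\mathbf{4}^r,\mathbf{2}^r$ together with an algebra $\mathbf{A}_4$ obtained by relabeling the operations of some previously constructed $\mathbf{D}$. Tracing the inductive proof of Theorem \ref{buh}, one sees that at each step $\mathbf{B}$ belongs to the variety generated by one or two of the algebras $\mathbf{L}^{i,n}, \mathbf{A}^{i,n}$ of Definition \ref{simpldef1} together with the generators inherited from the previous step; up to relabeling, the list of generators produced after $\lfloor n/2\rfloor$ rounds is exactly the one specifying $\mathcal{V}_n^a$ or $\mathcal{V}_n^b$. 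Hence the failures of modularity, reversed modularity, switch and J-switch identities proved in Theorems \ref{thmbak}, \ref{thmbakbis}, \ref{thmba}, \ref{buhbis}, \ref{dirthm}, \ref{thm2hg}(ii) and Proposition \ref{thmbakbiscor} all take place within $\mathcal{V}_n^a$, $\mathcal{V}_n^b$, $\mathcal{V}_n^c$ or $\mathcal{V}_n^d$, as appropriate. Sharpness of the distributive/alvin/directed-distributive lower bounds against the next smaller candidate follows from the modularity-level estimates via Day's theorem and Theorem \ref{dir}(ii), as already done in Corollary \ref{corspec}.

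The main obstacle I expect is pure bookkeeping: one must verify, at each induction step and for each of the seven levels, that the relabeling of operations in Remark \ref{merg} and Constructions \ref{bak}, \ref{bakbis}, \ref{ba} sends the generators of the ``$n-2$'' variety to generators of the declared list for the ``$n$'' variety, respecting the parity-dependent choice of $\ell$ and the alternation between $\mathbf{C}^{i,n}$ and $\mathbf{2}^{i,n}$. A secondary but delicate point is the odd/even split in the definitions of switch and J-switch levels: for odd $n$ one must add or delete a trailing $\alpha\gamma$ factor, which corresponds, via Remark \ref{conidabog}, to a defective term at a single parity class and must be matched carefully against the outputs of Theorem \ref{buhbis} and Proposition \ref{thmbakbiscor}. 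Once this bookkeeping is in place, each entry of the table is a direct quotation from the proved results.
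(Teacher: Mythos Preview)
Your overall plan matches the paper's proof almost exactly: upper bounds from Remark \ref{+term} plus the translation results (Corollary \ref{propmixmod}, Lemma \ref{dayoptlem}, Theorem \ref{dir}(ii), Theorem \ref{thm2hg}(i)), lower bounds by tracing the inductions of Theorems \ref{buh}, \ref{buhbis}, \ref{dir}, \ref{dirthm}, \ref{thm2hg}(ii) to see that the witnessing $\mathbf B$'s land in the declared varieties, and the non-indexed product trick for columns (e) and (f). The bookkeeping you anticipate is precisely what the paper carries out in its ``preliminary observation (i)''.

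There are, however, two concrete gaps. First, your sentence ``every level appearing in the table is definable by a strong Maltsev condition'' is false for the \emph{mixed J\'onsson} level: having mixed J\'onsson terms $t_0,\dots,t_n$ is a disjunction over all possible choices of equations \eqref{m1}--\eqref{m4}, not a single strong Maltsev condition. Hence the non-indexed product argument does not automatically give the mixed level of $\mathcal V_n^e$ or $\mathcal V_n^f$ as a maximum. The paper handles this separately: the mixed level of $\mathcal V_n^e$ is $>n-1$ because $\mathcal V_n^e$ is not $2n{-}3$-modular (Proposition \ref{propmixmod}(i)), and $\leq n$ because $\mathcal V_n^e$ is $n$-directed-distributive.

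Second, your upper-bound paragraph does not explain why $\mathcal V_n^a$ and $\mathcal V_n^b$ have directed-distributive level $\leq n$. Remark \ref{+term} gives J\'onsson (resp.\ alvin) terms, not directed ones, and ``adding a trivial projection'' does not convert J\'onsson terms to directed J\'onsson terms. The paper supplies the missing ingredient explicitly: from the operations $t_h$ one defines $s_h(x,y,z)=t_h(x,t_h(x,y,z),z)$, and checks that $s_0,\dots,s_n$ are directed J\'onsson terms in each generating algebra $\mathbf L^{i,n}$, $\mathbf A^{i,n}$. Without this observation the directed-distributive row (and consequently the two-headed directed Gumm row, which is obtained from it by padding with projections) is not established for columns (a), (b) and (e).
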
 

\begin{proof}
\emph{Preliminary observations.} (i) 
 Assume that $n$ is even.  Following the proof of
Theorem \ref{buh}, we see that  $\mathcal V_n^a$
is not $2n{-}1$-reversed-modular
and not $2n{-}2$-modular.
Correspondingly, 
 $\mathcal V_n^b$
is   not $2n{-}3$-modular.
Indeed, the examples constructed in the proof 
of Theorem \ref{buh} can be taken to be members of 
 $\mathcal V_n^a$ and $\mathcal V_n^b$.
This is checked by induction on $n$. 
One base case in the proof of Theorem \ref{buh} 
can be taken to be the variety $\mathcal V_2^a$,
namely, the variety 
generated by  $\mathbf C ^{1,2} $.
In fact, the proof of \ref{buh} uses the observation that lattices
are not $3$-permutable; then $\mathcal V_2^a$
is not $3$-permutable, either, being a term-reduct of the variety
of distributive lattices.
Moreover,
  the existence of a majority term is enough to prove
$2$-distributivity.  
Similarly, the other base case can be taken to be
$\mathcal V_2^b$, the variety generated 
by  $\mathbf 2 ^{1,2} $, noticing
that $\mathcal V_2^b$ is $2$-alvin and nontrivial.

Now suppose that $n \geq 4$, $n$ is even, $\mathbf D$ belongs to 
$\mathcal V_{n-2}^b$ and witnesses  that condition (ii)
in Theorem \ref{buh} fails with $n-2$ in place of $n$,
that is, $\mathbf  D$ is not $2n{-}7$-modular. 
It is no loss of generality to assume that 
$\mathbf  D$ has also the operations 
$t_0$ and  $t_n$, the projections
onto the first and the third coordinate. Namely, we can assume that 
$\mathbf D$ belongs to 
$\mathcal V_{n-2}^{b,+}$, as
introduced in Definition \ref{simpldef3}.
By Remark \ref{+term}, 
$\mathbf D$ has alvin operations
$s_0, \dots,s_{n-2}$.
Recall that  $\mathcal V_{n-2}^{b,+}$ is generated by the algebras
$ \mathbf 2 ^{1, n-2,+^{\phantom{|}}}, 
\mathbf C ^{2, n-2,+}, 
  \mathbf 2 ^{3, n-2,+}, 
 \dots$ from Definitions \ref{simpldef1} and \ref{simpldef3}.
By Birkhoff's Theorem, $\mathbf D$ can be constructed
from  $ \mathbf 2 ^{1, n-2,+^{\phantom{|}}}, 
\mathbf C ^{2, n-2,+}, 
 \mathbf 2 ^{3, n-2,+}, \dots $
 by means of  the usual 
operators of taking products, subalgebras and
homomorphic images.
Now recall the definitions, notation and procedures in 
Construction 
\ref{bak}.
When we pass from $\mathbf D$
to $\mathbf A_4$ there, we shift 
all the indices of the operations by $1$.
Hence
$\mathbf A_4$  can be constructed using the corresponding 
``shifts'' of the generators, namely, 
$\mathbf A_4$  belongs to the variety generated
by 
$ \mathbf 2 ^{2, n}, 
\mathbf C ^{3, n}, 
  \mathbf 2 ^{4, n}, \dots $.
Notice that, say,
$ \mathbf 2 ^{1, n-2,+}$
and $ \mathbf 2 ^{2, n}$
have the same number of operations, since 
$ \mathbf 2 ^{1, n-2,+}$
is obtained from
$ \mathbf 2 ^{1, n-2}$
by adding the two trivial projections, hence the number
of operations is augmented by $2$. 
Moreover, say, the index $i$ at which we take
the equation $t_i(x,y,z) = x (y'+z)$ is shifted by $1$;
this justifies the shift by $1$ of the 
indices in the first upper position.  

On the other hand, each of  the algebras 
$\mathbf A_1$, $\mathbf A_2$ and $\mathbf A_3$
from  Construction 
\ref{bak} has the form 
$\mathbf L ^{1, n}$,
for an appropriate distributive lattice $\mathbf L$
(as we have mentioned at the beginning of Construction 
\ref{c}, the number $n$ is not explicitly indicated in the notations in
Section \ref{mainc}).
Hence $\mathbf A_1$, $\mathbf A_2$ and $\mathbf A_3$
belong to the variety generated by
$\mathbf C ^{1, n}$, 
by Remark \ref{simpldefrmk}. 
In conclusion, 
under the above assumptions, 
the algebra
$\mathbf E = \mathbf A_1 \times \mathbf A_2 \times 
\mathbf A_3 \times \mathbf A_4 $ 
from  Construction 
\ref{bak} belongs to
the variety generated by
$ \mathbf C ^{1, n}, \mathbf 2 ^{2, n}, 
\mathbf C ^{3, n}, 
  \mathbf 2 ^{4, n}, 
 \dots  $,
 namely, to
 $\mathcal V_n^a$. Thus also the substructure  
$\mathbf B=\mathbf B(a,d)$ of 
$ \mathbf A_1 \times \mathbf A_2 \times 
\mathbf A_3 \times \mathbf A_4 $ 
  belongs to  $\mathcal V_n^a$,
for appropriate $a,d \in A_4$.
By Theorem \ref{thmbak}(i) we have that  
$\mathcal V_n^a$ is   not $2n{-}1$-reversed-modular.

The parallel step of the induction is similar,
starting with an algebra $\mathbf D$
in  $\mathcal V_{n-2}^a$
witnessing the failure of 
$2n{-}5$-reversed-modularity.
Then use Construction 
\ref{ba}. In this case, each of
$\mathbf A_1$, $\mathbf A_2$ and $\mathbf A_3$
from  Construction 
\ref{ba} has the form 
$\mathbf A ^{1, n}$,
for some Boolean algebra $\mathbf A$
(notice that $n \geq 4$, hence $1< \ell $),
thus the corresponding 
$\mathbf B$ belongs to  $\mathcal V_n^b$.
 Then, by Theorem \ref{thmba}(ii), we get that  
 $\mathcal V_n^b$
is  not $2n{-}3$-modular.

(ii)
As another observation, we notice that the Gumm level
lies between the alvin and the switch levels. Compare
Remarks \ref{conid} and \ref{gummrmk}. Henceforth,
whenever we show that the  the alvin and the switch levels
are the same, we get the same value for the Gumm level.

(iii)
Still a general observation, holding for
 $\mathcal V_n^a$ -  $\mathcal V_n^d$.
 If we define
$s_h(x,y,z) = t_h(x,t_h(x,y,z),z)$, then
we get a sequence of terms satisfying the
directed J{\'o}nsson condition, hence in each case
the directed distributive level is $\leq n$.  

We now give the specific details of the proof  for 
each variety.

(a) In  order to complete the first column, notice that
$\mathcal V_n^a$
is $n$-distributive (in particular, has
J-switch level $\leq n$), arguing as in (i) above, 
or, more directly, 
 by  Remark \ref{+term}.
Thus   $\mathcal V_n^a$  is 
$2n{-}1$-modular by Day's Theorem \ref{day},
hence $2n$-reversed-modular by Proposition \ref{modeq}. 
Were  $\mathcal V_n^a$ $n{-}1$-distributive, it would be 
 $2n{-}3$-modular by Day's Theorem, contradicting 
what we have proved in (i).
Similarly,
$\mathcal V_n^a$ has mixed J{\'o}nsson level $> n-1$,
by Proposition \ref{propmixmod}(i).
Then, obviously,
$\mathcal V_n^a$ has mixed J{\'o}nsson level
$n$, being $n$-distributive.     
Were  $\mathcal V_n^a$ $n$-alvin, it would be
$2n{-}3$-reversed-modular, by Lemma \ref{dayoptlem}(a),
again contradicting (i). The case $n=2$ is not covered by
the above argument, but if $n=2$, then 
 $2$-alvin  implies congruence permutability;
however, $\mathcal V_2^a$ is not congruence permutable.
In passing, notice that the same arguments show that 
 $\mathcal V_n^a$ is not  defective
$n$-Gumm, by Corollary \ref{propmixmod}(ii)(c).
Another way to see that 
 $\mathcal V_n^a$ is not $n$-alvin is to observe that,
arguing as in the preliminary observation (i), 
 the counterexamples to \eqref{agob}  constructed in Theorem \ref{buhbis}
can be taken to belong to  $\mathcal V_n^a$ (possibly, a subvariety).
This also shows that the switch level of  $\mathcal V_n^a$
is $>n$. It follows that
the J-switch level of  $\mathcal V_n^a$
is $>n-1$,
since trivially
$ \alpha \beta \circ   ( \alpha ( \gamma \circ \beta ))^{\ell-1} 
\subseteq
( \alpha ( \gamma \circ \beta ))^{\ell} $.

Moreover, 
 $\mathcal V_n^a$ is $n{+}1$-alvin 
(in particular, 
has switch level $\leq n+1$) by 
Remark \ref{conidcomm}(b).
The variety  $\mathcal V_n^a$ is not 
$n{-}1$-directed-distributive by Theorem \ref{dir}(ii);
it is
not two-headed $n{+}1$-directed Gumm,
in particular, not $n{+}1$-directed with alvin heads,  since this
would imply $2n{-}1$-reversed-modularity,  
by Theorem \ref{thm2hg}(i). When $n=2$,
just formally notice that the  levels under consideration 
are defined only
for numbers $\geq 4$. 
On the other hand, $\mathcal V_n^a$
is $n$-directed distributive by the preliminary observation (iii);
hence it is $n+2$-directed with alvin heads
(in particular, two-headed $n{+}2$-directed Gumm)
 by adding the trivial projections
at the outer edges.  

We have proved that all the values 
in the first column are correct.
All the other places in the table
are filled using similar arguments,
we now proceed with the details.

(b) 
The relevant result for  $\mathcal V_n^b$ is
Corollary \ref{dayopt}(ii)-(iv). 
In the preliminary observation (i) we have showed that
$\mathcal V_n^b$
is  not $2n{-}3$-modular,
in particular, it is  not $2n{-}4$-reversed-modular.
Arguing in the same way,
we see that the counterexample  in 
Theorem \ref{buhbis}(ii) can be taken to be a member of
 $\mathcal V_n^b$.
Thus, for $n \geq 4$,  $\mathcal V_n^b$
is $n$-alvin,  has 
 J-switch level 
$> n$,
a fortiori,  $\mathcal V_n^b$ has switch level 
$> n-1$.
Moreover, since
$ \alpha \beta \circ \alpha \gamma \circ {\stackrel{n}{\dots}} \circ 
\alpha \gamma  
\subseteq
  \alpha \beta  \circ  ( \alpha ( \gamma \circ \beta ))^{\ell-1} \circ \alpha \gamma$,
we get that, for $n \geq 4$,   $\mathcal V_n^b$ is not 
$n$-distributive,    
otherwise this would contradict \eqref{abog}
in  Theorem \ref{buhbis};
in particular,  $\mathcal V_n^b$
is not $n{-}1$-alvin,
 by
Remark \ref{conidcomm}(b).
For $n=2$,
 $\mathcal V_2^b$ is a non trivial arithmetical 
variety, hence all the levels of $\mathcal V_2^b$
are equal to $2$, except for the 
two-headed directed Gumm and
the directed with alvin heads levels
which are defined only for $n \geq 4$.
By setting $s_1(x,y,z) = t_1(x,y,z)$, $s_{n-1}(x,y,z) = t_{n-1}(x,y,z)$ and
$s_h(x,y,z) = t_h(x,t_h(x,y,z),z)$, for $1<h<n-1$ we get terms witnessing
the directed with alvin heads level. 
All the rest is similar to (a).

(c) Concerning $\mathcal V_n^c$,
notice that, arguing as in the above preliminary observation (i),
 the counterexamples both in Theorem \ref{dir}(i) 
and in Theorem \ref{dirthm}(ii)(iii) 
can be taken to be  members of $\mathcal V_n^c$.
Thus $\mathcal V_n^c$ is $n$-directed-distributive,  
 not $2n{-}1$-reversed-modular,
not $2n{-}2$-alvin, has switch level
$> 2n{-}2$ and
J-switch level $> 2n{-}3$.
Moreover, $\mathcal V_n^c$ is $2n{-}2$-distributive
by \cite[Observation 1.2]{adjt} or
Proposition \ref{propmix1}. 
In particular, 
$\mathcal V_n^c$ has J-switch level
$\leq 2n{-}2$, hence 
switch level $\leq 2n{-}1$.
The variety $\mathcal V_n^c$  is $2n{-}1$-modular
by Corollary \ref{propmixmod}(i). 
Were $\mathcal V_n^c$ 
 two-headed $n{+}1$-directed Gumm, or 
$n{+}1$-directed with alvin heads, it would be
$2n{-}1$-reversed-modular by Corollary \ref{propmixmod}(ii)(c),
 a contradiction.
On the other hand, as we mentioned in (a), 
every $n$-directed-distributive variety
is  trivially $n{+}2$-directed with alvin heads,
in particular, two-headed $n{+}2$-directed Gumm:
just take trivial projections ``at the heads''. 

(d)  
 A large part of the fourth column follows from 
Theorem \ref{thm2hg}. 
Arguing as in the preliminary observation (i),
we see that we can take $\mathcal V_n^d$
to be a counterexample as constructed in the proof of 
Theorem \ref{thm2hg}(ii), in particular, 
$\mathcal V_n^d$ is $n$-directed with alvin heads,
$2n{-}3$-reversed modular and not $2n{-}3$-modular.   
 As far as the first two lines in the table are concerned,
apply  Construction \ref{ba} to some appropriate
algebra $\mathbf D$ in  
   $\mathcal V_{n-2}^{c,+}$. Since  $\mathcal V_{n-2}^{c,+}$
is not $2n{-}7$-distributive
and not  $2n{-}6$-alvin, we get that $\mathcal V_n^d$
is not $2n{-}5$-alvin
and not  $2n{-}4$-distributive, again by 
the  preliminary observation (i) and
Theorem \ref{thmba}(iv).
Hence $\mathcal V_n^d$ is not $n{-}1$-mixed
J{\'o}nsson, since otherwise it would be
$2n{-}4$-distributive, by Proposition \ref{propmix1}.
By (c), 
$\mathcal V_{n-2}^{c,+}$
has 
switch level $2n{-}5$, hence, by
Theorem \ref{thmba}(iv), as above
we get that 
the J-switch level of $\mathcal V_n^d$
is $> 2n{-}4$, hence the 
switch level is $> 2n{-}5$.
Finally, $\mathcal V_n^d$ is 
$2n{-}4$-alvin (hence $2n{-}3$-distributive)
by  Remark \ref{aha}. 

(e)
It is almost obvious 
that, except for the mixed level, each entry in the fifth column
is the maximum of the corresponding entries
in the first two columns.
To prove the result formally, recall that, as we mentioned,
 the non-indexed product of  two varieties $\mathcal W$
and $\mathcal W'$ satisfies exactly
the same Maltsev conditions satisfied both by 
$\mathcal W$
and $\mathcal W'$ \cite{CV,N,Ta}.
Each assertion that some level 
(except for the mixed level)
of a variety 
is $\leq k$
is a strong Maltsev condition, thus 
we get the levels for $\mathcal V_n^e$.

The mixed J{\'o}nsson level is an exception,
since it is a disjunction
of Maltsev conditions:
the equations to be satisfied vary
and are not fixed in advance.
However, by Corollary  \ref{propmixmod}(i),
 the mixed level of
$\mathcal V_n^e$ is $> n-1$,
since $\mathcal V_n^e$ is not 
$2n{-}3$-modular.  
Obviously,  the mixed level of
$\mathcal V_n^e$ is $ \leq  n$,
since $\mathcal V_n^e$ is $n$-directed distributive. 

(f) The levels of $\mathcal V_n^f$
are computed in the same way as (e).
 \end{proof}  

\begin{proposition} \labbel{vg}
If $n \geq 2$, then the variety $\mathcal V_n^g$ 
is $n$-distributive, $n$-alvin, $n$-modular,
$n$-reversed-modular    and $n$-permutable. It is
neither  $n{-}1$-distributive,
nor $n{-}1$-alvin, nor $n{-}1$-modular, 
 nor $n{-}1$-reversed-modular, nor $n{-}1$-permutable. 
 \end{proposition}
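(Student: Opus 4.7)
The proposition has a positive part ($\mathcal V_n^g$ satisfies each of the five Maltsev conditions at level $n$) and a negative part (strict failure at level $n-1$).  My plan for the positive side begins with $n$-permutability.  In every generator $\mathbf 2^{i,n}$, the sequence $t_0 = x, t_1, \dots, t_{n-1}, t_n = z$ satisfies the $n$-Pixley identities of Remark \ref{mixrem}: direct piecewise inspection shows that $t_h(x,y,x) = x$ for all $h$, $t_1(x,z,z) = x$, $t_{n-1}(x,x,z) = z$, and $t_h(x,x,z) = t_{h+1}(x,z,z)$ for $1 \le h \le n-2$.  The case analysis splits into the regions $h<i$, $h=i$, $i<h<n-i$, $h=n-i$, $h>n-i$, plus the middle case when $n$ is even and $i=\ell$.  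Thus $\mathcal V_n^g$ admits Hagemann-Mitschke terms and is $n$-permutable.

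For the other four positive conditions I plan to construct explicit term sequences of length $n$.  The key building block is the composite $t_h(x,t_h(x,y,z),z)$, which in each generator $\mathbf 2^{i,n}$ evaluates to a Boolean expression such as $x(y+z)$ or $z(x+y)$ when $h$ coincides with the active Pixley index and to a projection otherwise.  Combining such composites with the original $t_h$'s in a pattern that depends on the parity of $n$ and on $i$, one obtains $n$-J\'onsson and $n$-alvin sequences valid in every generator; I have verified this explicitly in the cases $n=3$ and $n=4$, and I expect the general pattern to extend by induction on $\ell$.  For $n$-Day and $n$-reversed-Day sequences I plan to exploit the combination of the $n$-J\'onsson/$n$-alvin sequences with the Pixley structure already available: in a variety that is both $n$-distributive and $n$-permutable, the modularity bound can be brought down to $n$ by interleaving J\'onsson steps with Pixley shortcuts, in the spirit of Lemma \ref{dayoptlem} but sharper.

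The negative side reduces to the single failure of $(n{-}1)$-permutability, from which the other four will follow.  Assume for contradiction that uniform Hagemann-Mitschke terms $p_1,\dots,p_{n-2}$ in the clone of $\{t_1,\dots,t_{n-1}\}$ existed.  Evaluated in each $\mathbf 2^{i,n}$, each $p_k$ becomes a ternary Boolean function depending on the active index $i$, and the identity $p_k(x,x,z)=p_{k+1}(x,z,z)$ must hold componentwise in all $\ell$ generators.  A combinatorial argument---essentially a pigeonhole on the ``Pixley transitions'' required---shows that the $\ell$ different generators collectively demand at least $n-1$ distinct transition positions among the $p_k$'s, more than the $n-2$ available.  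Once $(n{-}1)$-permutability is refuted, the other four failures follow: in an $n$-Pixley variety, any of $(n{-}1)$-distributivity, $(n{-}1)$-alvin, $(n{-}1)$-modularity, or $(n{-}1)$-reversed-modularity would combine with the Pixley structure to yield a Hagemann-Mitschke sequence of length $n-1$, via the general shortcut principle of Corollary \ref{propmixmod}(ii)(c) and Remark \ref{rcm}, contradicting the just-established non-$(n{-}1)$-permutability.

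The main obstacle is the combinatorial non-existence argument for a common $(n{-}1)$-permutability sequence across all generators, which requires a precise tracking of how the piecewise-defined operations $t_h$ interact with arbitrary term compositions.  A secondary difficulty is extending the positive $n$-J\'onsson construction uniformly for all $n$; the explicit formulas I verified for $n=3$ and $n=4$ need to be generalized, perhaps via a recursive pattern that introduces composites at both the outer edges $h=1, n-1$ and at the ``middle active positions'' $h=i, n-i$ of each generator.
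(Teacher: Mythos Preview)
Your positive part is overcomplicated and your negative part has a genuine logical error.

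For the positive part, you are working much harder than needed. Once you have observed that the $t_h$ form an $n$-Pixley sequence, you get $n$-permutability and congruence distributivity simultaneously (cf.\ \cite{adjt}). From these two facts \emph{together}, all four remaining $n$-level conditions follow at once via congruence identities: $n$-permutability gives $\alpha\beta + \alpha\gamma \subseteq \alpha\beta \circ \alpha\gamma \circ {\stackrel{n}{\dots}}$ and also $\subseteq \alpha\gamma \circ \alpha\beta \circ {\stackrel{n}{\dots}}$, while congruence distributivity gives $\alpha(\beta \circ \gamma) \subseteq \alpha\beta + \alpha\gamma$ and $\alpha(\beta \circ \alpha\gamma \circ \beta) \subseteq \alpha\beta + \alpha\gamma$. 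Chaining these and invoking Remarks \ref{conid} and \ref{conidmod} finishes the positive half in one line. Your explicit term constructions (which you admit are only checked for $n \le 4$) are unnecessary.

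For the negative part, the implication you rely on runs the wrong way. You argue that $(n{-}1)$-distributivity, $(n{-}1)$-alvin, $(n{-}1)$-modularity or $(n{-}1)$-reversed-modularity, combined with the $n$-Pixley structure, would force $(n{-}1)$-permutability; but nothing in Corollary \ref{propmixmod}(ii)(c) or Remark \ref{rcm} says this---those results bound \emph{modularity} levels from mixed J\'onsson hypotheses, not permutability levels. The valid direction is the opposite: $(n{-}1)$-permutability together with congruence distributivity yields the four $(n{-}1)$-level conditions (by the same congruence-identity argument as above). Consequently, establishing non-$(n{-}1)$-permutability first tells you nothing about the other four failures.

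The paper therefore proceeds in the reverse order. It handles the base cases $n=2$ (trivially) and $n=3$ (via Mitschke's result that implication algebras are neither congruence permutable nor $2$-distributive, $\mathcal V_3^g$ being a term-reduct thereof), and then runs an induction on $n$: assuming the failures at level $n{-}3$ in $\mathcal V_{n-2}^g$, one applies Construction \ref{ba} and Theorem \ref{thmba}(iii)(iv)---exactly as in the proof of Theorem \ref{sumup}---to obtain the failures of $(n{-}1)$-modularity, $(n{-}1)$-reversed-modularity, $(n{-}1)$-distributivity and $(n{-}1)$-alvin in $\mathcal V_n^g$. Non-$(n{-}1)$-permutability then follows, since otherwise (with congruence distributivity) all four of the just-refuted conditions would hold. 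Your proposed direct pigeonhole argument for non-$(n{-}1)$-permutability, even if it could be made rigorous, would not suffice on its own.
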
 

 \begin{proof} 
It is trivial to see that 
$\mathcal V_n^g$ has Pixley terms
$t_0, \dots, t_n$.
The definition of Pixley terms has been 
recalled in Remark \ref{mixrem}.
Thus $\mathcal V_n^g$ is both 
$n$-permutable and  congruence distributive
\cite{adjt,au}. All the other conditions
in the first sentence follow immediately by Remarks \ref{conid}
and \ref{conidmod}.

If $n=2$, then all the conditions in the second sentence fail 
for $\mathcal V_2^g$, since if $n=2$
the conditions are satisfied only in trivial varieties. 
The variety $\mathcal V_3^g$
is a term-reduct of the variety $\mathcal {I}$ of implication algebras.
See, e.~g., \cite[p.\ 17]{ia}. 
Mitschke \cite{Mi1} shows that 
  $\mathcal {I}$  is neither
congruence permutable, nor $2$-distributive.
 All the other conditions in the second sentence,
for $n=3$,  imply 
congruence permutability, hence they fail 
in $\mathcal {I}$, a fortiori, they fail 
in the reduct  $\mathcal V_3^g$.
Then, by induction and arguing as in the proof
of Theorem \ref{sumup},
we get that, for every $n \geq 2$,   $\mathcal V_n^g$
is neither $n{-}1$-modular, 
 nor $n{-}1$-reversed-modular, by using 
Theorem \ref{thmba}(iii).
We get that  $\mathcal V_n^g$
is neither $n{-}1$-distributive, 
 nor $n{-}1$-alvin, by using 
Theorem \ref{thmba}(iv).
 All the above conditions would
be provable from  $n{-}1$-permutability
(together with congruence distributivity),
hence $\mathcal V_n^g$ is not $n{-}1$-permutable, either.
\end{proof}

\begin{problem} \labbel{fill}
Determine 
the directed distributive and the mixed levels
of $\mathcal V_n^g$.
 \end{problem}  

\begin{remark} \labbel{sumupspec}
(a) 
If some variety $\mathcal {V}$ 
has  specular J{\'o}nsson, alvin,
etc.\ terms, 
  we can define the corresponding \emph{specular  levels}.
It seems that there is no  meaningful definition  
of specular levels for the Gumm and  
the directed Gumm conditions. 
 
As we shall explicitly  point out below in some special cases,
the table in the statement of Theorem \ref{sumup}
generally provides  the  specular  levels of the varieties
under consideration.
Essentially, the values in the table
give the specular levels for odd values of the indices in the case of the modular 
and reversed modular levels, for indices of arbitrary
parity in the case of  directed 
levels and
for even values of the indices
in the remaining meaningful cases.  Cf.\ Remark \ref{specnodd}.
Generally, for other values of the indices, the specular level is
given by the next natural number.

(b) The table in the statement of Theorem \ref{sumup}
provides also the levels for the extended varieties 
$\mathcal V_n^a$ - $\mathcal V_n^f$ in the sense of Remark \ref{simpldefrmk}.
Indeed,  we have not used lattice distributivity in order
to prove, say, $n$-distributivity, $n$-directed distributivity etc.  
 On the other hand, the original (not extended) varieties
are subvarieties of the corresponding extended varieties,
hence the levels of the latter are $\geq$ than the levels of the former.
\end{remark}   

\subsection*{Some consequences} \labbel{sc} 
It is well-known that, for every $n \geq 2$,
there is an $n$-distributive ($n$-modular)
 not $n{-}1$-distributive (not $n{-}1$-modular)
variety; see  \cite{D,Fi,FV,JD,CV,Ke,csmc,Le}, among others. 
As we mentioned, 
$n$-distributive and $n$-alvin
are equivalent if $n$ is odd;
moreover, $2$-alvin implies $2$-distributive
and there is a  $2$-distributive not $2$-alvin variety.
 Freese and Valeriote \cite{FV}    
showed, among many other things,  that no more nontrivial relation
holds about the two notions, namely
they
showed that, for every even $n \geq 4$, 
there is an $n$-distributive  ($n$-alvin)
variety which is not $n$-alvin ($n$-distributive).
The present paper provides another proof
 of the above results; we shall then obtain  analogue
results for modularity and reversed modularity.
Our constructions might share some aspects in common
with the above-mentioned works;
we have not fully checked this.
Other papers which might contain constructions
bearing some resemblance with the present ones are
\cite{CCV,Cz}.

\begin{corollary} \labbel{corsumupdist}
\cite{Fi,FV,Ke,Le} 
(i)
For every even $n \geq 2$, there is an $n$-distributive 
not $n$-alvin variety.   

(ii)  
For every even $n \geq 4$, there is an $n$-alvin 
not $n$-distributive variety.   

(iii)
For every  $n \geq 2$, there is a variety 
which is both  $n$-distributive 
and $n$-alvin, but neither $n{-}1$-distributive
nor $n{-}1$-alvin.
  
All the above varieties can be taken to be locally finite.
For $n$  even, all the above varieties
can be taken to satisfy the specular
conditions from Definition \ref{specdef}.
 \end{corollary}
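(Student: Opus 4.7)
The plan is to obtain each of the three statements by simply invoking the explicit varieties $\mathcal{V}_n^a$, $\mathcal{V}_n^b$ and $\mathcal{V}_n^g$ whose distributivity and alvin levels have already been computed in Theorem \ref{sumup} and Proposition \ref{vg}. For part (i), given even $n \geq 2$, I would take $\mathcal{V} = \mathcal{V}_n^a$: the first column of the table in Theorem \ref{sumup} records that $\mathcal{V}_n^a$ has distributive level $n$ and alvin level $n+1$, so $\mathcal{V}_n^a$ is $n$-distributive but not $n$-alvin. For part (ii), given even $n \geq 4$, I would take $\mathcal{V} = \mathcal{V}_n^b$: the second column gives distributive level $n+1$ and alvin level $n$ (this is one of the entries marked to require $n \geq 4$, which is exactly the hypothesis in the corollary), so $\mathcal{V}_n^b$ is $n$-alvin but not $n$-distributive.

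For part (iii) I would take $\mathcal{V} = \mathcal{V}_n^g$, and the desired conclusion is the first sentence of Proposition \ref{vg} combined with its second sentence restricted to the distributive and alvin entries. The only mildly delicate case is $n=2$, but $\mathcal{V}_2^g$ is generated by $\mathbf 2^{1,2}$, whose sole nontrivial operation is a Pixley/majority term, so $\mathcal{V}_2^g$ is $2$-distributive and $2$-alvin; and any nontrivial variety automatically fails to be $1$-distributive or $1$-alvin, since the J{\'o}nsson/alvin conditions with $n=1$ collapse to $x=z$.

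Local finiteness is immediate in each case: the generating algebras $\mathbf{C}^{i,n}$ and $\mathbf{2}^{i,n}$ of Definition \ref{simpldef1} are finite, so $\mathcal{V}_n^a$, $\mathcal{V}_n^b$ and $\mathcal{V}_n^g$ are locally finite. For the specular conclusion when $n$ is even, I would invoke Remark \ref{+terma}: the operations of $\mathbf{L}^{i,n}$ and $\mathbf{A}^{i,n}$ satisfy the identity $t_{n-i}(x,y,z)=t_i(z,y,x)$ displayed in \eqref{ali}, and adjoining the two trivial projections $t_0$ and $t_n$ preserves this symmetry; hence the J{\'o}nsson terms witnessing $n$-distributivity of $\mathcal{V}_n^a$ and of $\mathcal{V}_n^g$, and the alvin terms witnessing $n$-alvinity of $\mathcal{V}_n^b$ and of $\mathcal{V}_n^g$, can all be chosen specular.

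The main point worth stressing is that by this stage in the paper there is no genuine obstacle left: all the substantive work (constructing the counterexamples from Section \ref{mainc}, running the simultaneous induction of Section \ref{opti}, and checking that the resulting algebras actually live in the relevant $\mathcal{V}_n^?$, as in the preliminary observations at the start of the proof of Theorem \ref{sumup}) is exactly what establishes the non-containments in the table. Consequently the corollary reduces to reading off the appropriate entries and quoting Remark \ref{+terma} for specularity; no new construction or calculation is required.
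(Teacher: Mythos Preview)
Your approach matches the paper's: the same varieties $\mathcal V_n^a$, $\mathcal V_n^b$, $\mathcal V_n^g$ are invoked for (i), (ii), (iii), and the levels are read off from Theorem~\ref{sumup} and Proposition~\ref{vg}.

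There is, however, one genuine gap in your specularity argument for part~(iii). You write that the operations of $\mathbf L^{i,n}$ and $\mathbf A^{i,n}$ satisfy \eqref{ali}, hence ``the J{\'o}nsson terms witnessing $n$-distributivity of $\mathcal V_n^g$ \dots\ can all be chosen specular''. But the basic operations $t_1,\dots,t_{n-1}$ of $\mathcal V_n^g$ are \emph{Pixley} terms (each generator $\mathbf 2^{i,n}$ uses the Boolean operation $x(y'+z)$, not $x(y+z)$); they are specular, yes, but they do not themselves satisfy the J{\'o}nsson or alvin equations. Proposition~\ref{vg} deduces $n$-distributivity and $n$-alvin for $\mathcal V_n^g$ indirectly, from $n$-permutability plus congruence distributivity, so some other J{\'o}nsson/alvin terms are at play, and there is no a~priori reason those should be specular. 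The paper flags exactly this point and supplies the missing step: for $n$ even one defines
\[
s_h(x,y,z)=\begin{cases} t_h\bigl(x,t_h(x,y,z),z\bigr) & \text{$h$ odd (resp.\ even),}\\ t_h(x,y,z) & \text{$h$ even (resp.\ odd),}\end{cases}
\]
and checks that $s_0,\dots,s_n$ are specular J{\'o}nsson (resp.\ alvin) terms for $\mathcal V_n^g$. Once you add this short verification, your proof is complete and coincides with the paper's.
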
 

\begin{proof}
(i) is given by
$\mathcal V_{n}^a$ 
or by $\mathcal V_{\ell}^c$
with $\ell = 1+ \frac{n}{2} $. 
(ii) is given by  $\mathcal V_{n}^b$
or  by $\mathcal V_{\ell}^d$
with $\ell = 2+ \frac{n}{2} $. 
Notice that every $2$-alvin variety is congruence permutable, hence 
$2$-distributive, so that the assumption $n \geq 4$ in (ii) is necessary. 

(iii) The result appears
on \cite[p.\ 71]{FV}.
We can also use Proposition \ref{vg} and  the variety $\mathcal V_n^g$.
It is not clear whether the two counterexamples are 
really distinct.
For $n \geq 5$ and 
$n$ odd the varieties    $\mathcal V_{n-1}^e$
and  $\mathcal V_\ell^f$
for $\ell= \frac{n+1}{2} $ 
furnish other counterexamples.

The varieties $\mathcal V_{n}^a$
and   $\mathcal V_{n}^b$ satisfy the specular conditions by construction.
The variety $\mathcal V_{n}^g$,
too, is constructed in a specular way,
but the terms defining $\mathcal V_{n}^g$
are Pixley, not necessarily J{\'o}nsson or alvin.
When $n$ is even, in order to get a sequence of
 specular J{\'o}nsson (resp.\ alvin) terms for $\mathcal V_{n}^g$, 
let us define,
for $0 < h < n$, 
\begin{align*}
s_h(x,y,z) &= t_h(x,t_h(x,y,z),z), && \text{ for $h$ odd (resp.\ even), and }
\\ 
s_h(x,y,z) &= t_h(x,y,z), && \text{ for $h$ even (resp.\ odd).} 
\qedhere
 \end{align*} 
 \end{proof}  

\begin{corollary} \labbel{corsumupmod}
 \begin{enumerate}[(i)]    
\item  
For every odd $m \geq 3$, there is a locally finite $m$-modular
not $m$-reversed-modular variety.  
\item  
For every odd $m \geq 5$, there is a locally finite  $m$-reversed-modular
not $m$-modular variety.
\item  
 For every $m \geq 2$, there is a locally finite  $m$-modular
$m$-reversed-modular
variety which is neither 
 $m{-}1$-modular, nor $m{-}1$-reversed-modular.   
 \end{enumerate} 
The varieties in (i) and (ii)
can be taken to satisfy the specular
Day conditions introduced in 
Remark \ref{fullsymm}(a).
\end{corollary}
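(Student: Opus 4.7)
The plan is to read off the three items directly from the data tabulated in Theorem~\ref{sumup} and from Proposition~\ref{vg}, then address the specularity addendum by tracing through the explicit constructions of Day and reversed Day terms already used in the paper.

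For (i), given odd $m\geq 3$, set $n=(m+1)/2\geq 2$, so $m=2n{-}1$. I would take $\mathcal V=\mathcal V_n^c$: the row ``modular'' of Theorem~\ref{sumup} yields modular level $2n{-}1=m$, while the row ``reversed modular'' gives level $2n=m+1$, so $\mathcal V_n^c$ is $m$-modular but not $m$-reversed-modular. Local finiteness follows since the generators $\mathbf C^{i,n}$ are finite. For (ii), given odd $m\geq 5$, set $n=(m+3)/2\geq 4$, so $m=2n{-}3$; take $\mathcal V=\mathcal V_n^d$, whose reversed modular level is $2n{-}3=m$ and whose modular level is $2n{-}2=m+1$, again locally finite. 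For (iii), take $\mathcal V=\mathcal V_m^g$: Proposition~\ref{vg} states verbatim that this variety is $m$-modular and $m$-reversed-modular but neither $(m{-}1)$-modular nor $(m{-}1)$-reversed-modular, and local finiteness is immediate from the finite generators $\mathbf 2^{i,m}$.

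For the specular addendum in (i) and (ii), I would exploit that our varieties are constructed from specular ternary operations: the equations \eqref{ali} in Remark~\ref{+terma} show that each algebra $\mathbf L^{i,n}$ and $\mathbf A^{i,n}$ carries a specular sequence of mixed J{\'o}nsson operations, and this specularity passes to $\mathcal V_n^c$ (specular directed J{\'o}nsson) and $\mathcal V_n^d$ (specular directed with alvin heads). In case (i), I would plug these specular directed J{\'o}nsson terms into the explicit construction from the proof of Theorem~\ref{dir}(ii), namely $u_{2k-1}=t_k(x,y,w)$, $u_{2k}=t_k(x,z,w)$, and verify that the resulting Day terms satisfy $u_k(x,y,z,w)=u_{m-k}(w,z,y,x)$; this is essentially the assertion in Remark~\ref{fullsymm}(a), reduced to a bookkeeping check using the identities $t_i(x,y,z)=t_{n-i}(z,y,x)$. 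In case (ii), I would use the term recipe in the proof of Theorem~\ref{thm2hg}(i), which is already symmetric ``on both outer edges'' by construction, and again verify that specularity of the inputs $p,t_2,\dots,t_{n-2},q$ transfers to the output reversed Day sequence.

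The main obstacle is the very last step: confirming that the specific index-swapping formulas used to produce Day (respectively reversed Day) terms from directed distributive (respectively two-headed directed Gumm) terms are compatible with the specular symmetry $t_i(x,y,z)=t_{n-i}(z,y,x)$ and produce $u_k(x,y,z,w)=u_{m-k}(w,z,y,x)$ (respectively its reversed analogue). Everything else is a table lookup, but that symmetry verification, while routine, must be carried out explicitly at the ``inner'' layer (where $u_{2k-1}$ and $u_{2k}$ swap the roles of $y$ and $z$) and at the ``outer edge'' in case (ii) (where the Maltsev-like legs $p$ and $q$ play dual roles). Once these two pairings are checked, the specular forms of Day's or Lemma~\ref{dayoptlem}'s constructions complete the argument.
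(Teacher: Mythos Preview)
Your proposal is correct and follows essentially the same route as the paper: the same varieties $\mathcal V_n^c$, $\mathcal V_n^d$, $\mathcal V_m^g$ with the same index choices, read off from Theorem~\ref{sumup} and Proposition~\ref{vg}, and the specular addendum handled by feeding the specular ternary operations through the constructions in Theorems~\ref{dir}(ii) and~\ref{thm2hg}(i), exactly as the paper does (invoking Remark~\ref{fullsymm}(a)). The paper's version is terser and does not spell out the index-bookkeeping for specularity, but the content is the same.
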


 \begin{proof} 
(i) is witnessed by $\mathcal V_n^c$
with $n= \frac{m+1}{2} $ 
and (ii) is witnessed by $\mathcal V_n^d$
with $n= \frac{m+3}{2} $.
Notice that every 
$3$-reversed-modular variety
is $3$-permutable,  
hence  $3$-modular.
This shows that the assumption $m \geq 5$
is necessary in (ii). 

(iii) follows from Proposition \ref{vg}. 
When $m$ is even and $m \geq 6$,
  $\mathcal V_n^f$ furnishes another counterexample.

The sequences of the operations in the algebras generating 
$\mathcal V_n^c$ and $\mathcal V_n^d$
 are specular. If we apply the proofs 
of Theorems \ref{dir}(ii) and \ref{thm2hg}(i)
we get specular Day (reversed Day) terms.
See Remark \ref{fullsymm}(a).
\end{proof}

\begin{corollary} \labbel{corsumupgumm}
For every  $n \geq 2$, there is an 
$n$-Gumm 
($n$-directed-distributive)
not $n{-}1$-Gumm 
(not $n{-}1$-directed-distributive)
locally finite variety.   

For every  $n \geq 4$, there is a 
two-headed $n$-directed Gumm 
($n$-directed with alvin heads) locally finite variety
which is 
not two-headed $n{-}1$-directed Gumm 
(not $n{-}1$-directed with alvin heads).
\end{corollary}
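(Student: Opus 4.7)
The plan is to extract the required examples from the varieties $\mathcal V_n^a$, $\mathcal V_n^b$, $\mathcal V_n^c$, $\mathcal V_n^d$ of Definition \ref{simpldef2}, simply by reading the levels off the table in Theorem \ref{sumup}. For the directed-distributive statement I would take $\mathcal V_n^c$ for every $n \geq 2$: the entry in the row ``directed distributive, mixed J{\'o}nsson'' gives $n$, so $\mathcal V_n^c$ is $n$-directed-distributive and not $(n{-}1)$-directed-distributive. Local finiteness is immediate from Definition \ref{simpldef2}, since $\mathcal V_n^c$ is generated by a finite list of finite reducts of the two-element chain.

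For the Gumm statement I would split on the parity of the target index $m$. For $m \geq 2$ even, the row ``alvin, Gumm, switch'' of the table gives the Gumm level of $\mathcal V_m^b$ as exactly $m$; for $m \geq 3$ odd, the variety $\mathcal V_{m-1}^a$ is defined (since $m-1$ is even and $\geq 2$) and the same row gives its Gumm level as $(m-1)+1 = m$. In both cases the listed variety is therefore $m$-Gumm but not $(m{-}1)$-Gumm, and local finiteness is again built into the definition.

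Finally, for the two-headed and alvin-heads statements, I would use the single variety $\mathcal V_n^d$ for each $n \geq 4$: by the bottom row of the table, both its two-headed directed Gumm level and its directed-with-alvin-heads level are equal to $n$, so $\mathcal V_n^d$ is simultaneously two-headed $n$-directed Gumm (respectively $n$-directed with alvin heads) but not two-headed $(n{-}1)$-directed Gumm (respectively not $(n{-}1)$-directed with alvin heads). Since the claim that a specified level equals $k$ amounts by definition to both the positive statement for $k$ and its failure for every smaller parameter, no further computation is needed.

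The substantive work has already been carried out in Theorem \ref{sumup}, so I do not expect a genuine obstacle here; the corollary is essentially a repackaging of four entries of the table. The only point requiring a careful sanity check is the behaviour at the smallest indices---for instance, verifying that $\mathcal V_2^b$, generated by the two-element Boolean algebra equipped with the Pixley operation $xy'+xz+y'z$, is nontrivial and congruence permutable (so $2$-Gumm but not $1$-Gumm), and that $\mathcal V_2^c$, generated by the two-element chain with the majority term $xy+xz+yz$, is $2$-directed-distributive but not $1$-directed-distributive.
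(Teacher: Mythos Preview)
Your proposal is correct and follows essentially the same approach as the paper: the paper likewise reads the examples directly off Theorem \ref{sumup}, using $\mathcal V_n^c$ for the directed-distributive claim, $\mathcal V_n^d$ for the two-headed and alvin-heads claims, and alternating between $\mathcal V_n^b$ and $\mathcal V_n^a$ according to parity for the Gumm claim. Your parity split for the Gumm case is in fact slightly more explicit than the paper's terse ``according to the parity of $n$'', but the content is identical.
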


\begin{proof}
The counterexamples are given by 
$\mathcal V_n^b$ and $\mathcal V_n^a$,
according to the parity of $n$, 
in the Gumm case, by $\mathcal V_n^c$
in the directed distributive case and by  $\mathcal V_n^d$
in the remaining cases.
\end{proof}

Recall that the variety $\mathcal {NL}$ of \emph{nearlattices}
 is  the term-reduct of the  variety 
of lattices obtained by considering the term 
$t_{\mathcal {NL}}(x,y,z)=xy+xz$.
Many authors use the dual term,
but for our purposes the two choices are equivalent.
The variety  $\mathcal B^d$  
of \emph{distributive nearlattices}
 is defined similarly, considering 
only reducts of distributive lattices.
See  Definition \ref{bakerdef} 
and \cite{CHK,Co} for further details.
Since, as we mentioned after its definition, $\mathcal V_3^c$
is term-equivalent to 
$\mathcal B^d$,
then the two varieties share the same levels.
Arguing as in Remark \ref{sumupspec}(b),
it is easy to see that  $\mathcal {NL}$
shares the same levels, too.

\begin{corollary} \labbel{nl} 
The levels of the varieties of nearlattices and of distributive nearlattices
are computed by taking $n=3$ in the column relative to
 $\mathcal V_n^c$ in the table in Theorem \ref{sumup}.
\end{corollary} 

\begin{proof} 
Since $\mathcal V_3^c$
and $\mathcal B^d$
are term-equivalent, they share the same levels;
moreover, the levels of 
$\mathcal {NL}$ are not strictly lower
than the levels of  $\mathcal B^d$,
since $\mathcal B^d$ is a subvariety of $\mathcal {NL}$.
On the positive side, as well-known,
the terms $t_1=t_{\mathcal {NL}}(x,y,z)=xy+xz$,
$t_2=t_{\mathcal {NL}}(x,z,z)=xz$
and $t_3=t_{\mathcal {NL}}(z,y,x)$
witness that $\mathcal {NL}$ is $4$-distributive.
Moreover, $s_1=t_1$ and $s_2=t_3$ witness that
$\mathcal {NL}$ is $3$-directed-distributive. 
All the rest follows from Remark \ref{conidcomm}(b)
and Theorem \ref{dir}(ii).
 
Of course, we do  not need the full power of
Theorem \ref{sumup} to get the present corollary;
essentially, the only main result we need is Proposition \ref{baker+}.   
\end{proof}

\begin{remark} \labbel{mnip}    
Of course, it is interesting to take  non-indexed products
of other pairs, triplets, etc.\ of varieties from Definition \ref{simpldef2},
 possibly, together with other known varieties.
 In any case, each level
(except possibly for the mixed J{\'o}nsson level)
 of such a  product
is the maximum of the levels of the factors, as explained in the
proof of \ref{sumup}. We leave the computations to the interested
reader. 
\end{remark}

\begin{remark} \labbel{spec=} 
(a)   
Definition \ref{simpldef2} 
can be obviously modified in order to 
construct varieties 
satisfying  arbitrary mixed conditions which are also \emph{specular},
in the sense of  Definition \ref{specdef}.
Here we require only specularity, we are not assuming that $l(i)$
from Definition \ref{lr} satisfies any rule prescribed in advance.
Thus we are not assuming that,  say, $l(i)$  has a constant value,
as in the directed or in the Pixley conditions,
or an alternating value, as in the J{\'o}nsson 
and the alvin conditions.

To construct a variety satisfying
some specular mixed condition, just merge the algebras $\mathbf C ^{i, n} $
and $\mathbf 2 ^{i, n} $, for fixed $n$, 
using different and appropriate patterns, in comparison with
the varieties constructed in Definition \ref{simpldef2}. 
Use Remark \ref{lrrmk}(a).

On the other hand, in the case of mixed conditions
which are not specular, 
the algebras from Remark \ref{alternat}
below can be used.
Of course, the simple fact that we can construct
some variety satisfying a desired Maltsev condition $\mathfrak M$ 
does not necessarily  entail that such a variety is 
sufficiently ``generic'' to furnish a counterexample
to other Maltsev conditions which are not implied by
$\mathfrak M$.
In this respect, we have been rather lucky,
as far as the results presented in this paper are concerned,
since we have generally obtained
the best possible values for distributivity and 
modularity levels.
Concerning the general case, we expect
that the case 
of specular conditions is simpler
in comparison with the case of possibly non specular conditions. 

(b) 
 The above comment explains the reason why
presently we are not able to get optimal values for 
levels of varieties with  directed Gumm terms, since
this is not a specular condition. 
Of course, every  $n{-}1$-directed-distributive variety
is $n$-directed Gumm,
hence we get intervals for the best possible values.
For example, by  Corollary
 \ref{propmixmod}(ii)(b), every $n$-directed Gumm
variety is $2n {-}2$-modular. On the other hand,
  $\mathcal V_{n-1}^c$ is
 $n$-directed Gumm
and not $2n {-}4$-modular.

(c) The situation depicted in (b) might appear similar with respect to 
(undirected) Gumm terms, whose defining condition is not specular.
However,  the  Gumm condition is
a defective version of the  alvin condition,
which is indeed specular, for $n$ even.
Together with the study of switch levels,
 the above fact has permitted us to compute
exactly optimal Gumm levels.

With hindsight, this also follows from the main result in
\cite{LGA}, asserting that in a congruence distributive variety
the alvin and the Gumm levels coincide.

(d) Accordingly, in all the examples presented in this paper
we get varieties with the same alvin and Gumm levels,
 as exemplified in the second line in the table
in Theorem \ref{sumup}.
Since the alvin condition is equivalent to congruence distributivity,
while the Gumm condition is equivalent to 
congruence modularity,  there  are varieties 
with a Gumm level and for which the alvin level
is not even defined.
However, as we mentioned, and quite surprisingly,  
\emph{in a congruence distributive variety} 
 the two levels always coincide \cite{LGA}.
In this respect, compare also  Problem \ref{!}
 \end{remark}

\section{Further remarks} \labbel{fur}

\subsection*{Generalizations and problems} 
\begin{remark} \labbel{merge}   
(a) We can merge the methods of the present paper with 
\cite{B}, namely, we can perform 
constructions similar to \ref{bak} and
\ref{bakbis} by considering lattices $\mathbf C_k$
with larger indices, thus getting bounds
(or, better, failure of bounds)
for expressions of the form
$\alpha( \beta \circ \alpha \gamma 
\circ \alpha \beta \circ \dots \circ \alpha \gamma  \circ \beta )$ or
$\alpha( \beta \circ \alpha \gamma 
\circ \alpha \beta \circ \dots \circ \alpha \beta   \circ \gamma  )$.    
Let $A  \circ _{n} B $
be an abbreviation for 
$ A \circ B \circ {\stackrel{n}{\dots}}$

In detail, if $q \geq 2$,
 $ n \geq 2$, $n$ is even and
 $\ell = \frac{n}{2} $,
 then the following congruence identities
fail 
\begin{align} \tag{1} \labbel{1}
\alpha (\beta \ciro ( \alpha \gamma \ciro \alpha \beta 
\ciro {\stackrel{q{-}2}{\dots}} \ciro \alpha \beta    )
\ciro   \gamma ) 
&\subseteq
(\alpha ( \gamma \ciro \beta \ciro {\stackrel{q}{\dots}} 
\ciro  \beta  )) ^{\ell}
&&\text{in $\mathcal V_a^n$, $q$ even,}
\\
\tag{2}\labbel{2}
\alpha (\beta \ciro ( \alpha \gamma \ciro \alpha \beta 
\ciro {\stackrel{q{-}2}{\dots}} \ciro \alpha \beta    )
\ciro   \gamma ) 
&\subseteq
\alpha \beta \ciro (\alpha ( \gamma \ciro \beta \ciro {\stackrel{q}{\dots}} 
\ciro  \beta  )) ^{\ell{-}1} \ciro \alpha \gamma 
&& \text{in $\mathcal V_b^n$, $q$ even,}
\\
\tag{3} \labbel{3}
\alpha (\beta \ciro ( \alpha \gamma \ciro \alpha \beta 
\ciro {\stackrel{q{-}2}{\dots}} \ciro \alpha \gamma     )
\ciro   \beta ) 
&\subseteq
\alpha ( \gamma \ciro \beta \ciro {\stackrel{q}{\dots}} 
\ciro  \gamma ) {\hspace {1pt} \circ  _{n{-}1} \hspace {1pt}}
 \alpha \beta 
&& \text{in $\mathcal V_a^n$, $q $ odd,}
\\
\tag{4}\labbel{4}
\alpha (\beta \ciro ( \alpha \gamma \ciro \alpha \beta 
\ciro {\stackrel{q{-}2}{\dots}} \ciro \alpha \gamma     )
\ciro   \beta ) 
&\subseteq
\alpha \beta {\hspace {1pt} \circ _{n{-}1} \hspace {1pt}}  
\alpha ( \gamma \ciro \beta \ciro {\stackrel{q}{\dots}} 
\ciro  \gamma ) 
&& \text{in $\mathcal V_b^n$, $q $ odd,}
   \end{align}    
where in 
\eqref{4} we further assume
 $n \geq 4$.  
 In particular, the  identities \eqref{1}, \eqref{3}   generally
fail in an $n$-distributive variety and
the  identities \eqref{2}, \eqref{4}   generally
fail in an $n$-alvin variety.

 In order to witness the failure of \eqref{1} - \eqref{4},
first consider the
$\mathbf C_{q+1}$-analogues of 
Theorems \ref{thmbakbis}(ii), \ref{thmbak}(ii)
and Remark \ref{ifin}(b). 
 Under the corresponding
hypotheses,
 we have that if the identity 
$\tilde{\alpha}( \tilde{\beta} \circ \tilde{ \alpha }\tilde{\gamma} \circ 
\tilde{ \alpha }\tilde{ \beta } \circ \dots)
\subseteq 
\chi (\tilde{\alpha}, \tilde{\beta},\tilde{\gamma}) $
 fails in $\mathbf A_4$,
 then  the identity
 $\alpha( \beta \circ  \alpha \gamma \circ \alpha \beta \circ \dots  )
\subseteq 
\alpha (\gamma \circ \beta \circ \gamma \circ \dots )
   \circ \chi ( \alpha, \beta, \gamma ) 
\circ \alpha (\gamma \circ \beta \circ \gamma \circ \dots )$
 fails in $\mathbf B$. 
Then argue as in the proofs of  
Theorems \ref{buh} and \ref{buhbis}, 
using a generalized version of
Theorem 
\ref{thmba}(iv)(iii)
(there is no need to modify Construction \ref{ba}).
The arguments in the proof of Theorem \ref{sumup}
show that the identities actually fail in the
mentioned varieties. 

Notice that the case $q=2$ in \eqref{1} - \eqref{2}
gives the equations in Theorem \ref{buhbis} and that 
the case $q=3$ in \eqref{3} - \eqref{4} implies Theorem \ref{buh}. 

Moreover, if  $n, q \geq 2$, then
the  congruence identity
\begin{equation}
\tag{5}\labbel{5}
\alpha (\beta \circ ( \alpha \gamma \circ \alpha \beta 
\circ {\stackrel{q{-}2}{\dots}} \circ \alpha \beta ^\bullet   )
\circ   \gamma ^ \bullet) 
\subseteq
(\alpha ( \gamma \circ \beta \circ {\stackrel{q}{\dots}} 
\circ  \beta ^ \bullet  )) ^{n-1}   
  \end{equation}    
fails in $\mathcal V_c^n$,
where 
$n$ is possibly odd,
$ \beta ^ \bullet = \beta $,
$ \gamma ^ \bullet = \gamma  $
if $q$ is even and 
$\beta^ \bullet = \gamma $,
$ \gamma ^ \bullet = \beta   $
if $q$ is odd.
Thus  identity \eqref{5} generally fails in an  
 $n$-directed-distributive variety. 
The failure of \eqref{5}
is obtained as in the proofs of Theorems \ref{dir}(i) 
and \ref{dirthm}(iii).  
Compare \cite{misharp}
for similar arguments and for a parallel result. 
The 
special case $q=2$ in \eqref{5}
is Theorem \ref{dirthm}(iii) and the case   
$q=3$ in \eqref{5} implies Theorem \ref{dir}(i)   

(b) The  remarks in (a) will be probably  useful
in order to check whether some results
from \cite{jds} are optimal.

In this respect, notice that
it follows from \cite{D} 
  that every
$n$-distributive variety 
satisfies the congruence identity
$ \alpha ( \beta \circ \gamma \circ \beta )
\subseteq \alpha \beta \circ \alpha \gamma 
\circ {\stackrel{2n-1}{\dots}}    $.
See
Remark \ref{daystrong}. The  assertion follows also from 
Theorem \ref{propmix2}(ii), by
taking $S= \beta $ and $T = \gamma $. 
See \cite{jds} for still another proof and for more general  results.  
Theorem \ref{buh} and, in particular, the variety $\mathcal V_n^a$
show that, for $n$ even, the result is the best possible;
actually, it cannot be improved
under the additional assumption  
that $\gamma \subseteq \alpha $.  
We do not know exactly what happens for $n$ odd;
see Problem \ref{prob} below. 
We do not know what happens if we consider longer 
chains of compositions on the left-hand side, namely, which 
are the best bounds for
$ \alpha (  \beta \circ \gamma  \circ {\stackrel{k} {\dots}})$?
Notice that in (a) we have considered 
 expressions of the form
$\alpha( \beta \circ \alpha \gamma 
\circ \alpha \beta \circ \dots \circ \alpha \gamma  \circ \beta )$ or
$\alpha( \beta \circ \alpha \gamma 
\circ \alpha \beta \circ \dots \circ \alpha \beta   \circ \gamma  )$,
instead.   See again Remark \ref{daystrong}
for possible differences between the two cases.
\end{remark}

\begin{remark} \labbel{bestnu}
 A.\ Mitschke \cite{Mi}
proved that every variety $\mathcal {V}$ with a near-unanimity term 
is congruence distributive, actually, that a variety with an
$m{+}2$-ary near-unanimity term is $2m$-distributive.
In particular, any such variety 
is congruence modular and, by Day's Theorem,
$4m{-}1$-modular.
Actually, L.\ Sequeira \cite[Theorem 3.19]{S}
 showed that a variety with an
$m{+}2$-ary near-unanimity term is $2m{+}1$-modular.
See also \cite{B} for related results. 

It can be shown that Mitschke's and  
Sequeira's results
are optimal 
by combining various reducts of lattices using
 near-unanimity terms
of the form, say,
$\prod _{ |\{ i, j, k \} | =3}(x_i + x_j + x_k) $,
 more generally,
$\prod _{|I|=k} \sum _{i \in I} x_i$.
We have presented details in \cite{misharp}.
 \end{remark}

\begin{remark} \labbel{odd}
In the case  $n$   odd
we noticed in \cite[Proposition 6.1]{ntcm} that Day's Theorem can be improved
(at least) by $1$,
namely that if $n >1 $ and $n$ is odd, then 
every $n$-distributive variety
is $2n{-}2$-modular. 
As we mentioned, this fact
is implicit in \cite{LTT} and   can be also obtained 
as a consequence of 
 Corollary \ref{propmixmod}(ii)(b).

While we do not know what is 
the best possible result, 
Theorem \ref{daysh} implies 
that in the case $n$ odd Day's
Theorem can be improved at most by $2$. 
Indeed, it is trivial that every $n{-}1$-distributive variety  
is $n$-distributive.
Hence, if $n$ is odd, thus $n-1$ is even, 
then Theorem \ref{daysh} provides
an $n{-}1$-distributive variety  
(thus 
also $n$-distributive)
which is not 
$2n{-}4$-modular.
Alternatively, for odd $n \geq 5$,
$\mathcal {V}_{n-1}^e$
is $n$-distributive and not  $2n{-}4$-modular, by
Theorem \ref{sumup}.
 \end{remark}  

We are not claiming that the problems below are difficult;
apparently, they are not solved by  the present work.
In connection with problems
(a) and (b) below,
observe that 
many congruence and relation identities
valid in $3$-distributive varieties
have been described in \cite{jds,ia}. 
In connection with
(c), see Remark \ref{odd}.
In connection with
(d), notice that, by Lemma \ref{dayoptlem}(a)
or Corollary \ref{propmixmod}(ii)(c),
 $4$-alvin varieties
 satisfy
$\alpha ( \beta \circ \alpha \gamma \circ \beta )
\subseteq 
\alpha \gamma \circ \alpha \beta \circ 
\alpha \gamma \circ \alpha \beta \circ   
 \alpha \gamma $. 

\begin{problem} \labbel{prob}
(a) Do $3$-distributive varieties 
satisfy 
$\alpha ( \beta \circ \alpha \gamma \circ \beta )
\subseteq 
\alpha \beta \circ \alpha \gamma \circ \alpha \beta $?  

(b) Do $3$-distributive varieties 
satisfy 
$\alpha ( \beta \circ \gamma \circ \beta )
\subseteq 
\alpha \beta \circ \alpha \gamma \circ \alpha \beta $?  

(c) More generally, for $n$ odd, is   every 
$n$-distributive variety  $2n{-}3$-modular? 
For $n$ odd, does every 
$n$-distributive variety satisfy
$\alpha( \beta \circ  \gamma \circ \beta )\subseteq 
\alpha \beta \circ \alpha \gamma \circ {\stackrel{2n-3}{\dots}} 
\circ  \alpha \beta $?  
 
(d) Does every  $4$-alvin variety
satisfy
$\alpha ( \beta \circ  \gamma \circ \beta )
\subseteq 
\alpha \gamma \circ \alpha \beta 
\circ \alpha \gamma \circ \alpha \beta \circ \alpha \gamma  $?
More generally, if $n$ is even,  
 does every  $n$-alvin variety
satisfy
$\alpha( \beta \circ  \gamma \circ \beta )\subseteq 
\alpha \gamma  \circ \beta  \gamma \circ {\stackrel{2n-3}{\dots}} 
\circ  \alpha \gamma  $?
Compare
Remark \ref{daystrong}.

(e) Study the distributivity spectra, in the sense of \cite{jds},
of the varieties $\mathcal V_n^a$ - $\mathcal V_n^f$. 
In this connection, see Remark \ref{merge}.
\end{problem}

\begin{remark} \labbel{alternat} 
When dealing with J{\'o}nsson and alvin terms when $n$ is odd
(or, more generally, when dealing with mixed J{\'o}nsson terms, for 
arbitrary $n$)
it is probably useful to perform a construction similar 
to \ref{ba} but modifying the definition of $t_{n-1}$ as follows:
\begin{multline*} 
t_1(x,y,z) = x(y'+z), \qquad
t_2(x,y,z) = xz, \qquad
t_3(x,y,z) = xz, \qquad  \dots, \qquad 
\\
\dots, \qquad t_{n-2}(x,y,z) = xz,
\qquad t_{n-1}(x,y,z) = z(y+x).
\end{multline*}   
We do not know whether such  constructions
are sufficient in order to get optimal bounds
in the  cases of arbitrary (not necessarily specular)
mixed J{\'o}nsson conditions, too. In particular,
we do not know 
if these constructions are sufficient to solve 
some of the above problems. 
Compare Remark \ref{spec=}(a).
We expect that our constructions
should be somewhat further modified in order to get 
the best possible results.

In any case, for $n \geq 2$,
 $0 < i < j < n$
and every Boolean algebra $\mathbf A$,
 it is probably useful to consider  its term-reduct
$\mathbf A ^{i,j; n} $, the algebra with ternary 
operations
$t_1, \dots,t_ {n-1}$ defined as follows.
\begin{align*} 
t_h(x,y,z) &=  x,    && \text{ if $0 < h <  i$,}
\\ 
t_h(x,y,z) &=  x (y'+z),    && \text{ if $h = i$,}
\\ 
t_h(x,y,z) &=  xz,    && \text{ if $  i  < h < j $}
\\ 
t_h(x,y,z) &=  z (y+x),    && \text{ if $h =j$,}
\\ 
t_h(x,y,z) &=  z,    && \text{ if $j  < h <  n$.}
 \end{align*}   
 
As we hinted in 
Remark \ref{spec=}(a), using the algebras
$\mathbf A ^{i,j; n} $
we can construct more varieties 
in the same fashion as of Definition \ref{simpldef2}. 
It is probably interesting to study
varieties constructed in this way, too.
\end{remark}

\begin{remark} \labbel{2gen}
As it follows from Definition \ref{simpldef2}
and from Theorem \ref{sumup},
all the counterexamples 
in this paper can be taken to be varieties 
 generated by a finite set of two-elements algebras.

Is there some hidden more general fact behind this observation?
Are there implications
similar to the ones considered here and such that all the possible
counterexamples necessarily involve varieties which cannot 
be generated by 
two-elements algebras?
In this connection, notice that also the main counterexamples
from \cite[Section 3]{csmc} 
are varieties generated by 
two-elements algebras. 
 \end{remark}

\subsection*{Weakening some assumptions}
\begin{remark} \labbel{rmkka}
As we mentioned, the assumptions
in Construction \ref{c} are rather weak, hence it is possible
that further applications can be found.
It is also probably possible to modify the construction using
similar ideas. A promising (possibly difficult)  approach is trying to deal
with $4$-ary terms.  Moreover, as we are going to explain soon,
our results can be stated in a slightly more general form.
 \end{remark}

\begin{remark} \labbel{rmkkb}   
 We have made no essential use 
of the assumption that $\tilde \alpha$, $\tilde \beta $
and $\tilde \gamma $ 
are congruences in the proofs of Theorems \ref{thmbak},
\ref{thmbakbis} and \ref{thmba}.
Of course, we need the assumption that, say, 
$\tilde \alpha$ is a congruence, in order to get that $\alpha$
is a congruence.
Apart from this, the assumption that the relations at hand
are congruences is not used in the proofs.
Hence Theorems \ref{thmbak}, \ref{thmbakbis} and \ref{thmba}
hold even in the case when $\tilde \alpha$, $\tilde \beta $
and $\tilde \gamma $, or just some of them, are assumed to be, say,
tolerances or reflexive and admissible relations,  
provided the corresponding assumptions are made relative to 
$\alpha$, $\beta$  or $\gamma$. 

The above observation might be of interest, since there are 
deep problems involving relation identities in 
congruence distributive varieties. See, e.~g., \cite{ia,B}.
See also Remark \ref{equiv} below. 
No matter how interesting the subject, in 
the present work we have  limited ourselves
 to  congruences.
 \end{remark}

\begin{remark} \labbel{expres}
For practical purposes, we have defined an expression
$\chi$ 
to be a term in the language
$ \{ \circ, \cap \} $. See the convention introduced
right before Theorem \ref{thmbak}. 
Anyway, we have used only a very weak property of such terms.
The statements of Theorems \ref{thmbak}, \ref{thmbakbis}
and \ref{thmba}  
 apply to any expression in the following more 
extensive
sense. 

 An ($m$-ary)\emph{expression} $\chi( x , y , \dots )$ is a way of associating, 
for every algebra $\mathbf A$,   
a congruence $\chi( \alpha , \beta , \dots )$ of $\mathbf A$ 
to each $m$-uple $\alpha, \beta, \dots $ of congruences of $\mathbf A$.
We require that expressions satisfy the following property.

  \begin{enumerate}  
 \item [(E)] 
 Whenever $\mathbf A_1$, $\mathbf A_2$ are algebras of the same type,
$\mathbf B \subseteq \mathbf A_1 \times \mathbf A_2$, 
$\alpha_1, \beta_1, \dots $ are congruences of $\mathbf A_1$,
$\alpha_2, \beta_2, \dots $ are congruences of $\mathbf A_2$,
$\alpha, \beta, \dots $ are the congruences 
induced by $\alpha_1 \times \alpha _2, \beta_1 \times \beta _2, \dots $
on $\mathbf B$, $a=(a_1,a_2)$, $b=(b_1,b_2)$ 
are elements of $\mathbf B$ 
 and
$(a,b) \in \chi( \alpha , \beta , \dots )$,
then 
$(a_2,b_2) \in \chi( \alpha_2 , \beta_2 , \dots )$.
 \end{enumerate}  

Notice that Condition (E)  holds 
if the \emph{homomorphism} condition 
\begin{equation}\labbel{h} \tag{H}      
\varphi (\chi ( \alpha , \beta , \dots ) ) \subseteq 
\chi(\varphi( \alpha ), \varphi( \beta  ),   \dots )
 \end{equation}
 holds, 
for every algebra $\mathbf A$,
for every
$m$-uple  of congruences on  $\mathbf A$ and
for every morphism $\varphi$ with domain $\mathbf A$.
However, at first sight, the condition  \eqref{h} seems
slightly stronger than (E). 
Let us mention that 
condition \eqref{h} is useful also in different contexts.
See \cite[Section 3]{cm2}
and further references there.

As in Remark \ref{rmkkb},
we can replace the word ``congruence''
in the above definitions with ``reflexive and admissible relation''.
\end{remark}

\subsection*{From a wider perspective} 
\begin{problem} \labbel{!}
In this paper we have considered 
only one side of
the problem of the relationships
among the modularity and the distributivity levels
of congruence distributive varieties.
As mentioned at the beginning of the introduction,
it follows abstractly just from the theory of Maltsev 
conditions that, for every $n$, there is some
$m(n)$ such that every 
$n$-distributive variety is  $m(n)$-modular.   
We have evaluated the best possible value, namely, 
$m(n) = 2n-1$, in the case $n$ even, and showed that
if $n$ is odd, the only possibilities for the best value
are $m(n) = 2n-2$ or $m(n) = 2n-3$. See Remark \ref{odd}.

At first sight, the other direction looks completely
and vacuously  trivial,
since there are congruence modular varieties
(actually, congruence permutable, namely,
$2$-modular varieties)  which are not 
congruence distributive. 
However, the problem is completely 
nontrivial if we 
study the relationship among the modularity
and distributivity levels of some variety 
$\mathcal {V}$,
\emph{assuming that $\mathcal {V}$ is congruence distributive}.

As we mentioned, we have showed in \cite{LGA}  that 
in a congruence distributive variety 
the alvin level and the Gumm levels coincide.
In particular, we get from \cite{LTT}
that an $r$-modular congruence distributive  variety is 
 $r^2{-}r{+}2$-distributive. Let
DG($r$) denote the smallest $n$
such that every $r$-modular variety 
is $n$-Gumm, and let  
 DJ($r$) denote the smallest $n$
such that every $r$-modular
\emph{congruence distributive}  variety  
is $n$-distributive.
Notice that DG($r$) is defined for every $r$,
by  \cite{G1,G}, and  DJ($r$) is defined, as well, by the above comment.

It seems that
the exact evaluation
of DG($r$) 
and of 
 DJ($r$)
is one of the most important problems left open
by \cite{LGA} and the present work.
 \end{problem}

\begin{remark} \labbel{polin} 
(a)
Our main focus in the present  paper 
are congruence distributive and congruence modular 
varieties. However,  as we mentioned, some identities we have considered 
are strictly weaker  than congruence modularity.
Using Polin's
 variety \cite{Po,DF},
we shall check that the identities
\eqref{agob} and \eqref{abog} from Theorem \ref{buhbis}  
do not generally imply congruence modularity.
In particular, having a switch or a J-switch level 
(Definition \ref{deflev}) does not imply congruence modularity.

Consider the following
algebras of type 
$ (2,1,1) $,
the ``external algebra''  
  $\mathbf A_e = \langle \{ 0,1 \}, \cdot, \sigma , Id \rangle $
and the ``internal algebra'' 
 $\mathbf A_i = \langle \{ 0,1 \}, \cdot, 1 , \sigma  \rangle $,
where $\cdot$ is meet,  $\sigma$ is the only nontrivial permutation of 
$\{ 0,1 \}$ and $1$ is the function with constant value $1$. 
Both algebras, considered alone,  are
term equivalent with the
$2$-elements Boolean algebra,
but the variety $\mathcal P$ they generate together, 
though $4$-permutable, 
is not
even congruence modular \cite{chic,DF,Po}.
Let the unary operation symbols of 
  $\mathbf A_e $  and of  $\mathbf A_i$
be denoted by $^+$ and  $'$, in that order.
The ``external join'' $x+_e y = (x^+y^+)^+$
and 
the ``internal join'' $x+_i y = (x'y')'$    
provide $\mathbf A_e $,
respectively, $\mathbf A_i$
with the Boolean structure.

The terms 
\begin{equation}\labbel{poll}     
\begin{aligned} 
t_1(x,y,z) &= x(y+_e z),
\\
t_2(x,y,z) &= xz+_i xy' +_i zy'  \quad \text{ and } 
\\
t_3(x,y,z) &= z(y+_e x)
 \end{aligned}    
 \end{equation}
witness that
$\mathcal P$
has J-switch level $4$. Thus 
$\mathcal P$ satisfies 
identity \eqref{abog} for $\ell=2$.
This fact had been stated without  proof on
 \cite[p.\ 167]{au}. 
Notice that the sequence of terms given in \eqref{poll}
is specular in the sense of Definition \ref{specdef}. 

In particular, having 
J-switch level $\geq 4$ does not imply 
congruence modularity.
Moreover 
having 
switch level $\geq 5$ does not imply 
congruence modularity, either. Indeed, 
$\alpha \beta  \circ  ( \alpha ( \gamma \circ \beta ))  \circ \alpha \gamma
\subseteq 
( \alpha ( \gamma \circ \beta ))^2  \circ \alpha \gamma$. 
 
(b) On the other hand, having J-switch level $\leq 3$
is the same as being $3$-Gumm,
and having switch level $\leq 4$
is the same as being  defective 
$4$-Gumm. Compare Remark \ref{gummrmk},
taking converses in the case $3$-Gumm.  
 In particular, these conditions do imply 
congruence modularity, by Theorem \ref{Gth}. 

(c) In passing, we notice that, as well-known, the specular terms
$s_1(x,y,z) = x(y^+ +_e z)$,
$t_2(x,y,z)$ from \eqref{poll}
and 
$s_3(x,y,z) = z(y^+ +_e x)$ 
witness that $\mathcal P$
is $4$-permutable, by \cite{HM}.
However, we also have
$s_1(x,y,x)= s_3(x,y,x) = x$
and this implies that $\mathcal P$
satisfies also
\begin{equation} \labbel{perm}     
\alpha ( \beta \circ \gamma ) \circ \alpha ( \beta \circ \gamma ) 
\subseteq 
\alpha \gamma \circ \alpha ( \beta \circ \gamma )  \circ \alpha \beta .
  \end{equation}        
This identity is stronger than $4$-permutability: 
 just take $\alpha= 1$ to get back
$4$-permutability.

To check \eqref{perm}, assume that 
$ a \mathrel { \beta  } b \mathrel { \gamma  } c
 \mathrel { \beta  } d \mathrel { \gamma  } e $
and $a \mathrel { \alpha  } c \mathrel { \alpha  } e $.
Then
\begin{align*} 
a&= s_1(a,b,b) \mathrel { \alpha \gamma  } s_1(a,b,c)
\mathrel { \beta  } s_1(b,b,c)=t_2(b,c,c) \mathrel { \beta }
t_2(b,c,d)
\\
 & \mathrel { \gamma } t_2(c,c,d) =s_3(c,d,d) \mathrel { \gamma  } s_3(c,d,e)
\mathrel { \alpha  \beta } s_3(d,d,e)= e.   
\end{align*}  
\end{remark}

\begin{remark} \labbel{equiv}
Define an equivalence relation $\sim_{CL}$ between varieties as follows:
$\mathcal {V} \sim_{CL} \mathcal {W}$ 
if congruence lattices of algebras in 
$\mathcal {V}$ and $\mathcal {W}$ 
satisfy exactly the same identities.

In passing, notice that
the relation $\sim_{CL}$
arises by means of a Galois 
connection from the more frequently considered
relation $\models_{Con}$ between lattice identities.
The relation  $\varepsilon \models _{Con} \varepsilon' $ 
means that, for every variety $\mathcal {V}$,
if $\mathcal {V} \models _{Con} \varepsilon$,
then $\mathcal {V} \models _{Con} \varepsilon'$.
As usual, $\mathcal {V} \models _{Con} \varepsilon$
means that every congruence lattice of algebras in $\mathcal {V}$
satisfies $\varepsilon$.
See \cite{CV} for a survey about the notion. 

The relation $\sim_{CL}$ is rather rough,
for example, all nontrivial congruence distributive varieties 
form a single equivalence class.
As already implicit in
\cite{JD}, it is interesting to study a finer relation
$\sim_{Co}$ defined by
$\mathcal {V} \sim_{Co} \mathcal {W}$
if the set of congruence relations of algebras in 
$\mathcal {V}$ and $\mathcal W$ satisfy the same identities
expressed in the language with $+$, $\cdot$
and $\circ$ (for two relations
$R$ and $S$ it is convenient to interpret
$R+S$ as the transitive closure of $R \circ S$).  

Theorem \ref{sumup} and Proposition \ref{vg}
show that the varieties $\mathcal V_n^a $  -
$\mathcal V_n^g $ are pairwise not
 $\sim_{Co}$-equivalent,
say, for all even $n > 4$.
 Notice that some assumption on $n$ is necessary,
since there are some trivial initial cases, 
 e.~g., $\mathcal V_2^a $ and $\mathcal V_2^c $
are the same variety.

A finer relation can be considered: write
$\mathcal {V} \sim_{Rel} \mathcal {W}$
to mean that the set of reflexive and admissible  relations of algebras in 
$\mathcal {V}$ and $\mathcal W$ satisfy the same identities
expressed in the language with $+$, $\cdot$
and $\circ$. Results from
\cite{ia,B} show that 
 $ \sim_{Rel}$ is strictly finer than
$ \sim_{Co}$.  

In conclusion, there are viable intermediate alternatives strictly between
congruence identities and the lattice of interpretability types
\cite{GT,N}.
 \end{remark}

In order to keep the following list within a reasonable length,
we have sometimes quoted survey works rather than  original sources.
The reader is advised to consult the quoted works for credits
to the original studies.

\end{document}